\renewcommand{\labelenumi}{$\mathrm{(\roman{enumi})}$}
\renewcommand{\labelenumii}{$\mathrm{(\alph{enumii})}$}
\title{Sheaf quantization and intersection of \\ rational Lagrangian immersions\footnote{2010 Mathematics Subject Classification: 53D12, 37J10, 53D35, 35A27
\newline 
Keywords: Lagrangian immersions, displacement energy, microlocal theory of sheaves}}
\author{Tomohiro Asano \and Yuichi Ike}
\date{\today}
	\noindent \textit{E-mail address}: \texttt{tasano@ms.u-tokyo.ac.jp}, \texttt{tomoh.asano@gmail.com}
\begin{document}
\maketitle

\begin{abstract}
    We study rational Lagrangian immersions in a cotangent bundle, based on the microlocal theory of sheaves. 
    We construct a sheaf quantization of a rational Lagrangian immersion and investigate its properties in Tamarkin category.
    Using the sheaf quantization, we give an explicit bound for the displacement energy and a Betti/cup-length estimate for the number of the intersection points of the immersion and its Hamiltonian image by a purely sheaf-theoretic method.
\end{abstract}

\section{Introduction}\label{section:introduction}

\subsection{Sheaf-theoretic bound for displacement energy}

The microlocal theory of sheaves due to Kashiwara--Schapira~\cite{KS90} has been effectively applied to symplectic geometry for a decade. 
After the pioneering works by Nadler--Zaslow~\cite{NZ,Nad} and Tamarkin~\cite{Tamarkin}, numerous theorems related to symplectic geometry have been proved by sheaf-theoretic methods (for example, see~\cite{GKS,Chiu17,Gu19}).
Now the theory is considered to be a powerful tool other than Floer theory for the study of symplectic geometry. 

In~\cite{AI20}, the authors gave a purely sheaf-theoretic bound for the displacement energy of compact subsets in a cotangent bundle.
Let $M$ be a connected manifold without boundary and denote by $T^*M$ its cotangent bundle equipped with the canonical symplectic form $\omega$. 
For a compactly supported $C^\infty$-function $H=(H_s)_{s \in [0,1]} \colon T^*M \times [0,1] \to \bR$, we define
$\| H \| \coloneqq \int_0^1 \left(\max_p H_s(p) - \min_p H_s(p) \right) ds$ and let $\phi^H=(\phi^H_s)_s \colon T^*M \times [0,1] \to T^*M$ denote the generated Hamiltonian isotopy.
For given compact subsets $A$ and $B$ of $T^*M$, their displacement energy is the infimum of $\|H\|$ such that $A \cap \phi^H_1(B)=\emptyset$.
A sheaf-theoretic tool to estimate displacement energy is Tamarkin category~\cite{Tamarkin}.
We denote by $\cD(M)$ the Tamarkin category of $M$, which is defined as a quotient category of the bounded derived category $\Db(M \times \bR)$ of sheaves of vector spaces over the field $\bF_2=\bZ/2\bZ$ on $M \times \bR$.
For a compact subset $A$ of $T^*M$, $\cD_A(M)$ denotes the full subcategory of $\cD(M)$ consisting of objects whose microsupports are contained in the cone of $A$ in $T^*(M \times \bR)$.
We also denote by $\cHom^\star \colon \cD(M)^{\op} \times \cD(M) \to \cD(M)$ the canonical internal Hom functor.
For an object $F \in \cD(M)$ and $c \in \bR_{\ge 0}$ there exists a canonical morphism $\tau_{0,c}(F) \colon F \to {T_c}_*F$, where $T_c \colon M \times \bR \to M \times \bR$ is the translation by $c$ to the $\bR$-direction.

\begin{theorem}[{\cite[Thm.~4.18)]{AI20}}]\label{theorem:intro-previous}
    Denote by $q \colon M \times \bR \to \bR$ the projection and let $A,B$ be compact subsets of $T^*M$.
    For $F \in \cD_{A}(M), \allowbreak G \in \cD_{B}(M)$, if 
    \begin{equation}
        \| H \| 
        < 
        \inf \{ c \in \bR_{\ge 0} \mid \tau_{0,c}(Rq_* \cHom^\star(F,G))=0 \},
    \end{equation}
    then $A \cap \phi_1^H(B) \neq \emptyset$.
\end{theorem}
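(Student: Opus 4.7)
The plan is to argue by contrapositive: assuming $A \cap \phi_1^H(B) = \emptyset$, I will show that $\tau_{0, \|H\|}(Rq_* \cHom^\star(F,G)) = 0$, which forces the infimum in the hypothesis to be at most $\|H\|$, contradicting the strict inequality.

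The first ingredient is the Guillermou--Kashiwara--Schapira sheaf quantization. I would lift $H$ to the conic Hamiltonian $\tilde H(x,t,\xi,\tau) := \tau H(x,\xi/\tau,t)$ on $\{\tau > 0\} \subset T^*(M \times \bR)$. Its flow is a homogeneous lift of $\phi^H$ whose translation in the $t$-coordinate is the action integral, uniformly bounded in absolute value by $\|H\|$. By the GKS theorem this isotopy admits a sheaf quantization $(K_s)_{s \in [0,1]}$, yielding convolution functors $K_s \circ (-) \colon \cD(M) \to \cD(M)$ with $K_0 \circ (-) \simeq \mathrm{id}$, and such that $K_s \circ G$ has microsupport in the conification of $\phi_s^H(B)$ for every $G \in \cD_B(M)$.

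The second ingredient is a vanishing. Since $A \cap \phi_1^H(B) = \emptyset$, the microsupports of $F \in \cD_A(M)$ and $K_1 \circ G$ are disjoint in $\{\tau > 0\}$; together with the quotient construction defining $\cD(M)$ and the functorial microsupport estimates of Kashiwara--Schapira, this gives
\[ Rq_* \cHom^\star(F, K_1 \circ G) = 0. \]
The third and main ingredient is a canonical morphism of persistence type. Using the $t$-shift bound of the contact lift, one constructs natural transformations relating $K_1 \circ (-)$ to the translation $T_{\|H\|}$ in such a way that the induced diagram
\[ Rq_* \cHom^\star(F, G) \longrightarrow Rq_* \cHom^\star(F, K_1 \circ G) \longrightarrow T_{\|H\|*} Rq_* \cHom^\star(F, G) \]
commutes with composite equal to $\tau_{0, \|H\|}$. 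The middle term being zero forces $\tau_{0, \|H\|} = 0$, as required.

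The hard part is the third step: constructing the factorization functorially and showing that the resulting composite is exactly the canonical morphism $\tau_{0, \|H\|}$, rather than a translation by some larger constant. This amounts to interpolating the sheaf quantization along the parameter $s \in [0,1]$ and tracking how the $t$-coordinate shift of the contact flow (bounded above by $\int_0^1 \max H_s \, ds$ and below by $\int_0^1 \min H_s \, ds$) interacts with the adjunction defining $\cHom^\star$ and the projection $Rq_*$, so that the precise difference $\|H\|$ between these action bounds appears as the translation parameter.
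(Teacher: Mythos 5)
Your approach matches the paper's: it is exactly the combination of Tamarkin's separation theorem (paper's Proposition A.9) with the stability of the translation distance under Hamiltonian deformation (paper's Proposition A.13, proved via the GKS quantization), assembled as in Proposition A.16. One small but real imprecision in your third step: the GKS quantization and the Riemann-sum argument behind Proposition A.13 only give, for every $\varepsilon>0$, an $(a,b)$-interleaving between $G$ and $\Phi^H_1(G)$ with $a+b<\|H\|+\varepsilon$, not a factorization of $\tau_{0,\|H\|}$ on the nose; applying $Rq_*\cHom^\star(F,-)$ (which lands the middle term in $T_a Rq_*\cHom^\star(F,K_1\circ G)\simeq 0$, note the missing $T_a$ in your display) then shows $\tau_{0,\|H\|+\varepsilon}=0$ for every $\varepsilon>0$, so the infimum is $\le\|H\|$, contradicting the strict hypothesis. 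Thus the ``hard part'' you identify — forcing the translation parameter to be exactly $\|H\|$ — is in fact unnecessary and in general not achievable; the strict inequality in the hypothesis is precisely what lets you get away with the weaker $\varepsilon$-approximate statement.
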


\cref{theorem:intro-previous} asserts that if we find sheaves associated with given compact subsets, we can estimate their displacement energy using these sheaves.
However, it says nothing about the existence of such objects and we could only use sheaves associated with some concrete examples or compact exact Lagrangian submanifolds~\cite{Gu19}.
In this paper, for a certain class of Lagrangian immersions, we construct such objects that give an explicit bound of the displacement energy, based on sheaf quantization.

\subsection{Sheaf quantization}\label{subsec:intro-quantization}

For a subset of a cotangent bundle (resp.\ a cosphere bundle), in particular a Lagrangian (resp.\ Legendrian) submanifold, a \emph{sheaf quantization} is a sheaf whose microsupport coincides with the subset.
The process to associate a sheaf quantization with a given subset is also called sheaf quantization.
Since the microsupport of a sheaf is always conic, for a non-conic subset of $T^*M$, we conify it adding one variable $\bR$.
In this way, we obtain a conic subset of $T^*(M \times \bR)$ or equivalently a subset of the cosphere bundle $ST^*(M \times \bR)$ and we could construct a sheaf quantization of the subset.

Guillermou~\cite{Gu12,Gu19} constructed a sheaf quantization of a compact exact Lagrangian submanifold of $T^*M$, whose conification is a Legendrian submanifold of $ST^*(M \times \bR)$ with no Reeb chords.
He first constructed a sheaf on $M \times (0,+\infty)\times \bR$, which can be regarded as a family of objects of $\Db(M \times \bR)$ parametrized by $(0,+\infty)$, and then obtained a sheaf quantization on $M \times \bR$ as a limit of the family at $+\infty$.

In this paper, we construct a sheaf quantization of a general compact Legendrian submanifold of $ST^*(M \times \bR/\theta \bZ)$ with some $\theta \in \bR_{\ge 0}$ (see \cref{theorem:existence-quantization,remark:legendrian}).
Such a Legendrian is a conification of a strongly rational Lagrangian immersion (see \cref{definition:strongly-rational} and \cref{remark:legendrian}).
For the construction, we follow the idea of Guillermou~\cite{Gu19}.
In our setting, his construction is obstructed by the existence of Reeb chords and it only gives a family parametrized by $(0,a)$, where $a$ is a positive real number less than the shortest length of the Reeb chords. 
In this way, we obtain an object $G_{(0,a)}$ of the category $\Db_{/[1]}(M \times (0,a) \times \bR/\theta\bZ)$, where $\Db_{/[1]}(X)$ denotes the triangulated orbit category of sheaves on a manifold $X$ (see \cref{section:preliminaries}).
We also call this object $G_{(0,a)}$ a sheaf quantization.

\begin{remark}\label{remark:intro-modification}
    The reasons why we construct a sheaf quantization as an object of $\Db_{/[1]}(M \times (0,a) \times \bR/\theta\bZ)$ is the following threefold:
    \begin{enumerate}
        \item The appearance of $\bR/\theta \bZ$ comes from the fact that a primitive of the Liouville $1$-form on a strongly rational Lagrangian takes value only modulo $\theta$.
        \item Using a sheaf on $M \times (0,a) \times \bR/\theta\bZ$ rather than one on $M \times \bR/\theta\bZ$ is the essential idea to obtain better energy estimates as in \cref{theorem:introbetti,theorem:introcuplength} below.
        The restriction of the sheaf to $M \times \{u\} \times \bR/\theta\bZ$ for some $u \in (0,a)$ can also give an energy estimate but it is worse in general.
        \item We can only construct a sheaf quantization as an object of the triangulated orbit category $\Db_{/[1]}$ because of the existence of an obstruction class, which is related to the Maslov class (see \cite[\S10.3]{Gu19}).
        This is why we use the orbit category instead of the usual derived category.
    \end{enumerate}
\end{remark}

\subsection{Intersection of rational Lagrangian immersions}

Based on \cref{theorem:intro-previous} and sheaf quantization introduced in \cref{subsec:intro-quantization}, 
we give an explicit bound for the displacement energy of a rational Lagrangian immersion with a purely sheaf-theoretic method.
Not only a bound for the energy, we also give an estimate of the number of intersection points by the total Betti number and the cup-length of the Lagrangian.

\begin{definition}
    \begin{enumerate}
        \item A Lagrangian immersion $\iota\colon L\to T^*M$ is said to be \emph{rational} if there exists $\sigma(\iota) \in \bR_{\ge 0}$ such that
        \begin{equation}
            \left\{ \int_{D^2}v^*\omega \; \middle| \; (v,\bar{v}) \in \Sigma(\iota)
        \right\}
        =
        \sigma(\iota) \cdot \bZ,
        \end{equation}
        where
        \begin{equation}
        \Sigma(\iota)
         \coloneqq 
        \left\{ (v,\bar{v})
        \; \middle| \;
        \begin{aligned}
        & v \colon D^2 \to T^*M, \bar{v} \colon
        \partial D^2 \to L, \\
        & v|_{\partial D^2}=\iota \circ \bar{v}
        \end{aligned}
        \right\}. 
        \end{equation}
        \item For a rational Lagrangian immersion $\iota \colon L \to T^*M$, one defines
        \begin{equation}
        e(\iota)
         \coloneqq 
        \inf \left( \left\{ \int_{D^2}v^*\omega \; \middle| \; (v,\bar{v}) \in E(\iota) \amalg \Sigma(\iota) \right\} \cap \bR_{>0} \right),
        \end{equation}
        where
        \begin{equation}
        E(\iota)
         \coloneqq 
        \left\{ (v,\bar{v})
        \; \middle| \;
        \begin{aligned}
        & v \colon D^2 \to T^*M, \bar{v} \colon
        [0,1] \to L, \\
        & \bar{v}(0) \neq \bar{v}(1),
        \iota \circ \bar{v}(0) = \iota \circ \bar{v}(1), \\
        & v|_{\partial D^2}\circ \exp(2\pi \sqrt{-1} (-))=\iota \circ \bar{v}
        \end{aligned}
        \right\}.
        \end{equation}
    \end{enumerate}
\end{definition}

Here we put the following additional assumption. 

\begin{assumption}\label{assumption:intro}
	There exists no $(v,\bar{v}) \in E(\iota)$ with $\int_{D^2}v^*\omega =0$.
\end{assumption}

Our explicit bounds are the following.

\begin{theorem}[{see \cref{theorem:betti-estimate}}]\label{theorem:introbetti}
	Let $\iota \colon L \to T^*M$ be a compact rational Lagrangian
	immersion satisfying \cref{assumption:intro}.
	If $\|H\| <e(\iota)$ and $\iota \colon L \to T^*M$ intersects $\phi^H_1
	\circ \iota \colon L \to T^*M$ transversally, then
	\begin{equation}
		\# \left\{ (y,y') \in L \times L \; \middle| \; \iota(y)=\phi^H_1 \circ
		\iota(y') \right\}
		\ge
		\sum_{i=0}^{\dim L} b_i(L).
	\end{equation}
\end{theorem}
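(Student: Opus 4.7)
The plan is to compute, via sheaf quantization, a chain complex whose generators index the transverse intersection points and whose total cohomology has rank at least $\sum_i b_i(L)$, and then apply a Morse-type inequality. The energy hypothesis $\|H\| < e(\iota)$ is exactly what guarantees the cohomological lower bound.

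First, I would apply \cref{theorem:existence-quantization} to obtain the sheaf quantization $G \in \Db_{/[1]}(M \times (0,a) \times \bR/\theta\bZ)$ of the conification of $\iota$, choosing $a$ with $\|H\| < a$ and $a$ strictly less than the shortest Reeb chord length of the conification; this is possible because, under \cref{assumption:intro}, the action of every self-intersection of $\iota$ is at least $e(\iota) > \|H\|$, so any sufficiently large $a$ below that threshold works. Convolving with the Guillermou--Kashiwara--Schapira kernel $K_{\phi^H}$ of a suitable lift of $\phi^H$ to $T^*(M \times \bR/\theta\bZ)$ produces a second sheaf quantization $G'$ of $\phi^H_1 \circ \iota$ in the same category.

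Next I would pick $u \in (\|H\|,a)$ and study the hom complex
\begin{equation}
    \cC := R\pi_*\, \cHom^\star\bigl(G|_{M \times \{u\} \times \bR/\theta\bZ},\, G'|_{M \times \{u\} \times \bR/\theta\bZ}\bigr),
\end{equation}
where $\pi$ is the projection to a point. I expect $\cC$ to admit two simultaneous descriptions. On the one hand, transversality and a microlocal analysis of $G, G'$ near each intersection point of $\iota(L)$ and $\phi^H_1(\iota(L))$ should identify $\cC$, in the orbit category, with a finite complex of $\bF_2$-vector spaces whose underlying graded vector space has exactly one generator per pair $(y,y') \in L \times L$ satisfying $\iota(y) = \phi^H_1\iota(y')$; taking cohomology only shrinks the dimension, so $\dim_{\bF_2} H^*(\cC)$ is at most the number of such pairs. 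On the other hand, the persistence/interleaving argument underlying \cref{theorem:intro-previous}, with the slack $u - \|H\| > 0$ absorbing the Hamiltonian perturbation, combined with an immersed analogue of Guillermou's self-hom computation identifying the $H \equiv 0$ version of $\cC$ with $C^*(L;\bF_2)$ up to a shift, should force $\dim_{\bF_2} H^*(\cC) \ge \sum_i b_i(L)$.

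The main obstacle will be establishing the Morse-type local identification of $\cC$ with a complex freely generated by the intersection points in the immersed setting; here \cref{assumption:intro} is crucial to rule out zero-action self-intersection bigons that would otherwise contribute spurious generators or differentials. A secondary technical point is ensuring that the interleaving lower bound descends to $\Db_{/[1]}$: because every barcode of the persistence module associated with $G$ has length at least $e(\iota)$ and $\|H\| < e(\iota)$, no bar is short enough to be annihilated either by the Hamiltonian perturbation or by the shift functor defining the orbit quotient, so the total bar count computing $\sum_i b_i(L)$ is preserved.
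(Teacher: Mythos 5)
Your approach diverges from the paper at its central technical step, and in a way the paper explicitly flags as insufficient. You propose to fix $u \in (\|H\|,a)$ and work with the slice $G|_{M\times\{u\}\times\bR/\theta\bZ}$ in $\cD^{\pt}(M)_\theta$; the paper states, both in \cref{remark:intro-modification}(ii) and in the outline in \S1.3, that restricting to a single $u$-slice gives a strictly weaker energy bound and ``we cannot obtain such estimate with $G_{(0,a)}|_{M\times\{u\}\times\bR/\theta\bZ}$.'' The issue is not cosmetic. The microsupport of the slice $G_u$ contains both $\Lambda$ and the translate $T'_u\Lambda$, so $\mu hom(G_u,T_cG_u^H)$ picks up contributions from four families of intersections ($\Lambda\cap T'_c\Lambda^H$, $\Lambda\cap T'_{c+u}\Lambda^H$, $T'_u\Lambda\cap T'_c\Lambda^H$, $T'_u\Lambda\cap T'_{c+u}\Lambda^H$), the persistence module $\Hom(G_u,T_cG_u)$ is no longer constant on an interval of length $a$, and the self-hom $\End(G_u)$ is not obviously $H^*(L)$. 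The paper avoids all this by forming two distinct extensions across the boundary of the parameter interval, $F_{(0,a)}={j_a}_!G_{(0,a)}$ and $F_{[0,a]}=R{j_a}_*G_{(0,a)}$ in $\cD^{\bR}(M)_\theta$. Because $F_{(0,a)}$ and $F_{[0,a]}$ agree away from $\{u=a\}$, \cref{lemma:SSF0a} pins the residual microsupport at $\{\tau=0\}$ to $\{u=a\}$, and $\wh{\bfd}(a)\cap T'_{c'}\wh{\bfq}(a)$ is non-empty only for $c'=0$ or $[a]$ (\eqref{eqn:intdq}). This is what makes $\tau_{0,a'}\colon\Hom(F_{(0,a)},F_{[0,a]})\to\Hom(F_{(0,a)},T_{a'}F_{[0,a]})$ an isomorphism for all $a'<a$ (\cref{proposition:interleavingF0a}(iii)), which in turn feeds the factorization \eqref{equation:factorization} and the exact sequences of \cref{proposition:exact-triagle}; none of this is available at a single slice.

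Two further gaps. First, you omit the reduction from rational to strongly rational (\cref{lemma:stronglyrational,lemma:reduction}): the sheaf quantization is built for strongly rational immersions satisfying \cref{assumption:embedding}, and passing from the hypothesis of the theorem requires a covering $\tl{M}\to M$ and a shift by a closed $1$-form $\beta$ before the machinery applies. Second, the closing paragraph about barcodes is not a proof. The paper does not assert that ``every bar has length at least $e(\iota)$''; it proves $d_{\cD^{(0,a)}(M)_\theta}(G_{(0,a)},0)\le a$ and exploits constancy of $\Hom(F_{(0,a)},T_cF_{[0,a]})$ on $[0,a)$, then combines \cref{proposition:exact-triagle} with the identification $\dim W_t = $ (number of intersections with a fixed $g$-value) coming from \cref{proposition:transmuhom} to get $\sum_t\dim W_t=2\#C(\iota,H)\ge 2\dim H^*(L)$ (\cref{lemma:betti-morse-ineq}). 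The survival-of-bars intuition is in the right spirit, but it must be replaced by the explicit long-exact-sequence bookkeeping, and that bookkeeping is exactly what breaks with a single-slice sheaf.
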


\begin{theorem}[{see \cref{theorem:cuplength}}]\label{theorem:introcuplength}
	Let $\iota \colon L \to T^*M$ be a compact rational Lagrangian
	immersion satisfying \cref{assumption:intro}.
	If a Hamiltonian function $H$ satisfies $\|H\| < \min \left( \{e(\iota)\} \cup (\{ \sigma(\iota)/2 \}\cap \bR_{>0})\right)$, then
	\begin{equation}
		\# \left\{ (y,y') \in L \times L \; \middle| \; \iota(y)=\phi^H_1 \circ
		\iota(y') \right\}
		\ge
		\cl (L)+1,
	\end{equation}
	where
	$\cl (L)$ denotes the cup-length of $L$ over $\bF_2$.
\end{theorem}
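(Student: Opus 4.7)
The strategy refines the proof of Theorem~\ref{theorem:introbetti} by exploiting the cup product on $H^{\ast}(L;\bF_2)$ instead of merely the total Betti number. First I would fix $a\in\bR$ with $\|H\|<a<\min(\{e(\iota)\}\cup(\{\sigma(\iota)/2\}\cap\bR_{>0}))$ and, by the existence result outlined in~\S\ref{subsec:intro-quantization}, obtain sheaf quantizations $F\in\Db_{/[1]}(M\times(0,a)\times\bR/\theta\bZ)$ of $\iota$ and $G$ of $\phi_1^H\circ\iota$; the latter can be produced as the image of $F$ under the Guillermou--Kashiwara--Schapira quantization of $\phi^H$. The object $P:=Rq_*\cHom^\star(F,G)$ is then to be viewed as a persistence module in the $\bR/\theta\bZ$-direction, parametrised by $u\in(0,a)$.

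Next I would endow $P$ with the structure of a graded module over $H^{\ast}(L;\bF_2)$. Composition of internal Homs yields a natural pairing
\[
    Rq_*\cHom^\star(F,F)\otimes P\longrightarrow P,
\]
and the assumption $\|H\|<e(\iota)$ isolates the diagonal contribution to $Rq_*\cHom^\star(F,F)$ from the self-intersection contributions of $\iota$, so that inside the persistence window $[0,\|H\|]$ one can identify it with the free module on $H^{\ast}(L;\bF_2)$ equipped with its cup product. The additional bound $\|H\|<\sigma(\iota)/2$ rules out collisions of bars induced by the circular coordinate $\bR/\theta\bZ$, so that spectral invariants attached to cohomology classes are unambiguously defined in $[0,\|H\|]$.

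The cup-length estimate then follows a classical spectral-invariant argument transplanted to this sheaf setting. Fix $\alpha_1,\dots,\alpha_{\cl(L)}\in H^{>0}(L;\bF_2)$ whose cup product is nonzero and, for each $I\subseteq\{1,\dots,\cl(L)\}$, let $c_I$ be the spectral invariant of $\prod_{i\in I}\alpha_i$ acting on $P$ through the module structure of the previous step (with $c_\emptyset$ coming from the unit). The pairing above yields the subadditivity $c_{I\cup J}\leq c_I+c_J$, while Theorem~\ref{theorem:intro-previous} together with our energy bound gives $c_{\{i\}}\leq\|H\|$ for each $i$. Assumption~\ref{assumption:intro} and transversality make the $c_I$ correspond to actual intersection points, and a now-standard argument forces the $\cl(L)+1$ numbers $c_\emptyset,c_{\{1\}},c_{\{1,2\}},\dots,c_{\{1,\dots,\cl(L)\}}$ to be pairwise distinct, yielding the required lower bound.

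The main obstacle will be the second step: setting up the cup-product module structure rigorously inside the triangulated orbit category $\Db_{/[1]}$ and proving that its small-persistence part really coincides with $H^{\ast}(L;\bF_2)$ with its usual cup product. The interplay between the orbit identification, the parameter $u\in(0,a)$, and the periodic persistence modulus $\bR/\theta\bZ$ is where most of the technical work lies; once it is in place, the cup-length conclusion is a formal consequence of the spectral-invariant formalism.
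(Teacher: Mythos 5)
Your proposal is a genuinely different route from the paper's: you want to run the classical spectral-invariant / Lusternik--Schnirelmann scheme (subadditivity $c_{I\cup J}\le c_I+c_J$, bound each $c_{\{i\}}$ by $\|H\|$, conclude by distinctness), whereas the paper never introduces spectral invariants. Instead it defines an algebraic cup-length $\cl_R(A)$ for a right module $A$ over a non-unital ring $R=\bigoplus_{i\ge 1}H^i(L;\bF_2)$, decomposes $\Hom(F_{(0,a)},T_bF^H_{[0,a]})$ by short exact sequences of such modules indexed by the finitely many ``bar endpoints'' $c_1,\dots,c_n$, and obtains (\cref{lemma:cup-length-ineq}) the inequality $n+\sum_i\cl(W_{c_i})\ge \cl(L)+1$. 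The punchline is then \cref{proposition:muhom-cl}: the $H^*(L)$-action on each $W_{c_i}$ factors through $\End^\mu_{\rho^{-1}(U)}(F_{(0,a)})\simeq H^*(\iota^{-1}(U))$ for a small neighbourhood $U$ of the relevant intersection points, and since $\iota^{-1}(U)$ is a disjoint union of contractibles this factoring ring kills the positive-degree action, hence $\cl(W_{c_i})=0$.

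This last point is exactly where your proposal has a gap. You assert that ``a now-standard argument forces the $\cl(L)+1$ numbers $c_\emptyset,c_{\{1\}},\dots$ to be pairwise distinct''. That is not a standard consequence of subadditivity, and it is false in general: consecutive spectral invariants can coincide. The content of the classical LS argument is precisely that a coincidence $c_I=c_{I\cup\{j\}}$ forces a \emph{locality} statement — the critical set at that level must itself carry a nontrivial cup-product action, hence be ``large''. Establishing that locality in the Floer-theoretic setting is the hard part (it is why Liu and Akaho--Hong need their bespoke moduli spaces, as the introduction notes), and it is precisely what the paper replaces with the microlocal factoring argument of \cref{proposition:muhom-cl}. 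Your proposal flags the module-structure setup as the ``main obstacle'' but then waves at the step where the coincidences must be ruled out or handled; without some analogue of \cref{proposition:muhom-cl} (or an explicit locality lemma for your spectral invariants) the ``pairwise distinct'' claim does not follow. A secondary issue: you invoke transversality, but \cref{theorem:introcuplength} makes no transversality hypothesis; the paper only reduces to the case where $C(\iota,H)$ is discrete (which is WLOG, since otherwise the count is infinite), and \cref{proposition:muhom-cl} does the rest without a transverse intersection assumption.
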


In particular, \cref{theorem:introbetti} gives a bound for the displacement energy of the image of a rational Lagrangian immersion.
\smallskip

In what follows, we give the outline of our proof.

First, we can reduce the problems to the case of a strongly rational Lagrangian immersion, where \cref{assumption:intro} guarantees that the associated Legendrian is a submanifold of $ST^*(M \times \bR/\theta \bZ)$.
Let $a \in \bR_{>0}$ be less than the shortest length of the Reeb chords of the Legendrian, which is related to $e(\iota)$.
As mentioned in \cref{subsec:intro-quantization}, we can associate a sheaf quantization $G_{(0,a)} \in \Db_{/[1]}(M \times (0,a) \times \bR/\theta\bZ)$ with the Legendrian submanifold.
Using the quantization object, we define two objects $F_{(0,a)} \coloneqq R{j_{a}}_!G_{(0,a)}, F_{[0,a]} \coloneqq R{j_{a}}_*G_{(0,a)} \in \Db_{/[1]}(M \times \bR \times \bR/\theta\bZ)$, where $j_a$ is the inclusion $M \times (0,a) \times \bR/\theta\bZ \to M \times \bR \times \bR/\theta\bZ$.

To use the objects $F_{(0,a)}$ and $F_{[0,a]}$ effectively, we introduce a modified version of Tamarkin category $\cD^{P}(M)_{\theta}$ parametrized by a manifold $P$ with period $\theta$, which is defined as a quotient category of $\Db_{/[1]}(M \times P \times \bR/\theta\bZ)$.
In the case $P=\pt$ and $\theta=0$, the category recovers (an orbit version of) the usual Tamarkin category $\cD(M)$. 
Similarly to the case of $\cD(M)$, we can define a canonical internal Hom functor $\cHom^\star \colon \cD^P(M)^{\op}_\theta \times \cD^P(M)_\theta \to \cD^P(M)_\theta$.
For $c \in \bR$ we denote by $T_c$ the translation by $c$ on $\bR/\theta\bZ$ modulo $\theta$.
In this setting, we can show that the results in~\cite{AI20} including \cref{theorem:intro-previous} also hold (see \cref{section:Tamarkincat} and \Cref{section:tamarkin-precise}).
This modification corresponds to the family version of the previous results and gives a better energy estimate in some cases.
Indeed, $F_{(0,a)}$ and $F_{[0,a]}$ define objects of $\cD^\bR(M)_\theta$, and we can prove that $\tau_{0,c}(Rq_*\cHom^\star(F_{(0,a)},F_{[0,a]}))$ is non-zero for any $0\le c <a$, where $q \colon M \times \bR \times \bR/\theta \bZ \to \bR/\theta \bZ$ is the projection.
We cannot obtain such estimate with $G_{(0,a)}|_{M \times \{u\} \times \bR/\theta\bZ} \in \cD^{\pt}(M)_\theta$ for some $u \in (0,a)$ and this is why we use the parametrized version.

For the bounds for the number of the intersection points, similarly to~\cite{Ike19} we study the object $\cHom^\star(F_{(0,a)}, F_{[0,a]}^H)$, where $F_{[0,a]}^H$ denotes the Hamiltonian deformation of $F_{[0,a]}$ associated with $\phi^H_1$.
We find that its microsupport is related to the intersection $\# \left\{ (y,y') \in L \times L \; \middle| \; \iota(y)=\phi^H_1 \circ \iota(y') \right\}$ and for any $c \in \bR$
\begin{equation}\label{eq:intro-microlocal-stalk}
    H^*\RG_{[c,+\infty)}(\ell^! Rq_*\cHom^\star(F_{(0,a)},F^H_{[0,a]}))_{c}
    \simeq
    H^*\RG (\Omega_+; \mu hom(F_{(0,a)}, {T_{-c}}_*F^H_{[0,a]})),
\end{equation}
where $\ell\colon \bR\to \bR/ \theta \bZ$ is the quotient map and $\Omega_+=\{ \tau>0 \} \subset T^*(M \times \bR \times \bR/\theta \bZ)$ with $(t;\tau)$ being the homogeneous coordinate on $T^*(\bR/\theta \bZ)$.
We study an action of $H^*(L)=\bigoplus_{i \in \bZ}H^i(L;\bF_2)$ on \eqref{eq:intro-microlocal-stalk} and even compute the right-hand side explicitly in the case where the intersection is transverse. 
These computations are more difficult than previous works such as~\cite{Gu19,Ike19} because of the singularity of the microsupports of $F_{(0,a)}$ and $F_{[0,a]}$.
Combining the computations with the Morse inequality for sheaves and some properties of an algebraic counterpart of cup-length, we obtain the theorems. 
A benefit of the microlocal theory of sheaves especially appears in the proof of the cup-length bound.
We prove the triviality of $H^*(L)$-action on the each contribution by showing that the action factors through a microlocal category.
This microlocal sheaf-theoretic proof is more straightforward than that via Floer theory, which needs an unusual construction of suitable moduli spaces~\cite{Liu, AH16}. 

\subsection*{Related topics}

Sheaf quantization has been studied in several situations.
Guillermou--Kashiwara--Schapira \cite{GKS} constructed a sheaf quantization of the graph of a Hamiltonian isotopy.
Guillermou \cite{Gu12,Gu19} constructed a sheaf quantization of a compact exact Lagrangian submanifold of a cotangent bundle and applied it to the study of the topology of the Lagrangian.
Note that Viterbo~\cite{Vi19} also constructed a sheaf quantization of a compact exact Lagrangian submanifold, based on Floer theory. 
Jin--Treumann~\cite{JT17} studied the relation between sheaf quantization and brane structures.

The microlocal theory of sheaves is also applied to quantitative problems in symplectic geometry. 
Indeed, Tamarkin~\cite{Tamarkin} already mentioned an action of Novikov ring on Tamarkin category (see also~\cite[Rem.~4.21]{AI20}).
Chiu~\cite{Chiu17} proved a non-squeezing result based on a sheaf-theoretic method. 
See also the recent textbook by Zhang~\cite{Zhang20} for the quantitative aspect.

Results similar to \cref{theorem:introbetti,theorem:introcuplength} were also proved by Floer-theoretic methods for a compact symplectic manifold, without \cref{assumption:intro}.
Chekanov~\cite{Chekanov98} proved \cref{theorem:introbetti} for a rational embedding $\iota$ with $\sigma(\iota)>0$ and Akaho~\cite{Akaho} proved \cref{theorem:introbetti} for an exact immersion $\iota$, which corresponds to the condition $\sigma(\iota)=0$.
Liu~\cite{Liu} gave a proof of \cref{theorem:introcuplength} for a rational Lagrangian embedding with a better bound for $\|H\|$.
Floer-theoretic approach could give better estimates in the cases where a bounding cochain exists~\cite{FOOO09,FOOO092,FOOO13,AJ10}.
See also \cref{remark:comparisontoFloer,remark:comparisontoFOOO}. 

Both of the constructions of Floer homology groups and sheaf quantizations are obstructed by Reeb chords. 
Augmentations or bounding cochains were originally introduced to resolve the obstruction to construct Legendrian contact homology or Lagrangian Floer homology. 
Augmentations are also related to sheaf quantization of Legendrians \cite{NRSSZ20,ABS19,RS19}. 
In this work, we construct sheaf quantizations in a more general setting without assuming the existence of augmentations or bounding cochains, though our quantizations have less information than the quantizations in \cite{NRSSZ20,ABS19,RS19}.
Combining our argument in this paper with sheaf quantization with augmentations, we expect that we could obtain a better estimate for the displacement energy.

\subsection*{Organization}

The structure of the paper is as follows.
In \cref{section:preliminaries}, we give some results of the microlocal theory of sheaves in the triangulated orbit category.
In \cref{section:Tamarkincat}, we introduce the modified version of Tamarkin category and give some refined versions of the results of Asano--Ike~\cite{AI20}.
In \cref{section:constrution}, we construct a sheaf quantization of a strongly rational Lagrangian immersion in a cotangent bundle.
In \cref{section:rational-immersions}, we prove \cref{theorem:introbetti,theorem:introcuplength} based on the results obtained in the previous sections.
In \Cref{section:tamarkin-precise}, we give details on the modified version of Tamarkin category.

\subsection*{Acknoledgments}	
The authors would like to express their gratitude to St{\'e}phane Guillermou and Takuro Mochizuki.
Indeed, the essential idea of this paper was obtained through the discussion they organized for the authors.
The authors also thank Tatsuki Kuwagaki for drawing their attention to sheaf quantization of Lagrangian immersions, Manabu Akaho, Takahiro Saito, and Pierre Schapira for helpful discussions, and Mikio Furuta for his encouragement and helpful advice. 
They thank Wenyuan Li for pointing out some errors in the previous version of this paper.
They also thank the anonymous referee for the careful reading of the paper and the constructive suggestions.
TA was supported by Innovative Areas Discrete Geometric Analysis for Materials Design (Grant No.~17H06461).
YI was supported by JSPS KAKENHI (Grant No.~21K13801) and ACT-X, Japan Science and Technology Agency (Grant No.~JPMJAX1903). 

\section{Preliminaries on microlocal sheaf	theory}\label{section:preliminaries}

In this paper, we assume that all manifolds are real manifolds of class $C^\infty$ without boundary.
Throughout this paper, let $\bfk$ be the field $\bF_2=\bZ/2\bZ$.

In this section, we recall some definitions and results from \cite{KS90,Gu19} and prepare some notions. 
We mainly follow the notation in \cite{KS90}.
Until the end of this section, let $X$ be a manifold.

\subsection{Geometric notions}\label{subsection:geometric}

For a locally closed subset $Z$ of $X$, we denote by $\overline{Z}$
its closure.
We also denote by $\Delta_X$ the diagonal of $X \times X$.
We denote by $TX$ the tangent bundle and by $T^*X$ the cotangent bundle of $X$, and write $\pi_X \colon T^*X \to X$ or simply $\pi$ for the projection.
For a submanifold $M$ of $X$, we denote by $T^*_MX$ the conormal bundle to $M$ in $X$.
In particular, $T^*_XX$ denotes the zero-section of $T^*X$.
We set $\rT X \coloneqq T^*X \setminus T^*_XX$.
For two subsets $S_1$ and $S_2$ of $X$, we denote by $C(S_1,S_2)
\subset TX$ the normal cone of the pair $(S_1,S_2)$.

With a morphism of manifolds $f \colon X \to Y$, we associate the following morphisms and commutative diagram:
\begin{equation}\label{diag:fpifd}
\begin{aligned}
\xymatrix{
	T^*X \ar[d]_-{\pi_X} & X \times_Y T^*Y \ar[d]^-\pi \ar[l]_-{f_d}
	\ar[r]^-{f_\pi} & T^*Y \ar[d]^-{\pi_Y} \\
	X \ar@{=}[r] & X \ar[r]_-f & Y,
}
\end{aligned}
\end{equation}
where $f_\pi$ is the projection and $f_d$ is induced by the transpose
of the tangent map $f' \colon TX \to X \times_Y TY$.

We denote by $(x;\xi)$ a local homogeneous coordinate system on
$T^*X$.
The cotangent bundle $T^*X$ is an exact symplectic manifold with the
Liouville 1-form $\alpha=\langle \xi, dx \rangle$.
Thus the symplectic form on $T^*X$ is defined to be $\omega=d\alpha$.
We denote by $a \colon T^*X \to T^*X,(x;\xi) \mapsto (x;-\xi)$ the
antipodal map.
For a subset $A$ of $T^*X$, $A^a$ denotes its image under the antipodal map $a$.
We also denote by $\bfh \colon T^*T^*X \simto TT^*X$ the Hamiltonian
isomorphism given in local coordinates by
$\bfh(dx_i)=-\partial / \partial\xi_i$ and $\bfh(d\xi_i)=\partial / \partial x_i$. 
We will identify $T^*T^*X$ and $TT^*X$ by $-\bfh$. 

\begin{notation}\label{notation:Px}
	For notational simplicity, we sometimes write $\{P(x)\}_X$ for $\{x\in X\mid P(x)\}$ if there is no risk of confusion. 
\end{notation}

\subsection{Microsupports of objects in orbit category}

We denote by $\bfk_X$ the constant sheaf with stalk $\bfk$ and by $\Module(\bfk_X)$ the abelian category of sheaves of $\bfk$-vector spaces on $X$.
Moreover, we denote by $\Db(\bfk_X)$ the bounded derived category of sheaves of $\bfk$-vector spaces.
One can define Grothendieck's six operations $\cRHom,\allowbreak
\otimes, \allowbreak Rf_*,\allowbreak f^{-1},\allowbreak
Rf_!,\allowbreak f^!$ for a continuous map $f \colon X \to Y$ with suitable conditions.
For a locally closed subset $Z$ of $X$, we denote by $\bfk_Z$ the
zero-extension of the constant sheaf with stalk $\bfk$ on $Z$ to $X$,
extended by $0$ on $X \setminus Z$.
Moreover, for a locally closed subset $Z$ of $X$ and $F \in \Db(\bfk_X)$,
we define $F_Z, \RG_Z(F) \in \Db(\bfk_X)$ by
\begin{equation}
F_Z \coloneqq F \otimes \bfk_Z, \quad \RG_Z(F) \coloneqq \cRHom(\bfk_Z,F).
\end{equation}

Let us recall the definition of the \emph{microsupport} $\MS(F)$ of an
object $F \in \Db(\bfk_X)$. 
Remark that we can define the mircosupport of an object over any commutative ring, which we will use below for $\bK=\bfk[\varepsilon]/(\varepsilon^2)$.

\begin{definition}[{\cite[Def.~5.1.2]{KS90}}]\label{definition:microsupport}
	Let $F \in \Db(\bfk_X)$ and $p \in T^*X$.
	One says that $p \not\in \MS(F)$ if there is a neighborhood $U$ of
	$p$ in $T^*X$ such that for any $x_0 \in X$ and any
	$C^\infty$-function $\varphi$ on $X$ (defined on a neighborhood of
	$x_0$) satisfying $d\varphi(x_0) \in U$, one has
	$\RG_{\{ x \in X \mid \varphi(x) \ge \varphi(x_0)\}}(F)_{x_0} \simeq 0$.
\end{definition}

In this paper, we will work in the triangulated orbit category $\Db_{/[1]}(X)$ for sheaves studied in \cite{Gu12,Gu19}, which was originally defined in Keller~\cite{Keller05}. 
Here we recall its definition and properties.  
See \cite{Gu12,Gu19} for more details. 

Let $\bK$ be the $\bfk$-algebra $\bfk[\varepsilon]/(\varepsilon^2)$
and $\perf(\bK_X)$ be the full triangulated subcategory
of $\Db(\bK_X)$ generated by the image of
the functor $\mathfrak{e}\colon \Db(\bfk_X)\to \Db(\bK_X), F\mapsto
\bK_X\otimes_{\bfk_X} F$.
We denote by $\Db_{/[1]}(X)$ or $\Db_{/[1]}(\bfk_X)$ the quotient
category $\Db(\bK_X)/\perf(\bK_X)$. 
For any $F\in \Db_{/[1]}(X)$, $F[1]$ is isomorphic to $F$. 
We also denote by $\fraki$ the composite functor $\fraki \colon \Db(\bfk_X)\to \Db(\bK_X)\to \Db_{/[1]}(X)$, where
the former functor is induced by the natural ring homomorphism $\bK \to \bfk$ corresponding to the trivial $\varepsilon$-action and the latter is the quotient functor. 
The Grothendieck's six operations are defined also on the orbit categories and commute with $\fraki$. 
Adjunctions, natural transformations and natural isomorphisms between composites of the operations exist as in the usual case. 
A cohomological functor $H^* \colon \Db_{/[1]}(X)\to \Module (\bfk_X)$ is defined so that $H^*(F)$ is the sheafification of the presheaf $(U \mapsto \Hom_{\Db_{/[1]}(U)}(\bfk_U, F|_U)) $ on $X$. 
This functor satisfies $H^*(\fraki (F))= \bigoplus_{n \in \bZ}H^n(F)$ for $F$ of $\Db(\bfk_X)$. 
The functor $H^*$ for $X=\pt$ gives an equivalence between $\Db_{/[1]}(\pt)$ and $\Module (\bfk)$. 
Note also that 
\begin{equation}
    \Hom_{\Db_{/[1]}(X)}(\fraki(F),\fraki(G)) \simeq \bigoplus_{n \in \bZ} \Hom_{\Db(\bfk_X)}(F,G[n]) \quad \text{for $F,G \in \Db(\bfk_X)$}.
\end{equation}

Guillermou~\cite{Gu12,Gu19} also introduced the microsupport $\MS(F) \subset T^*X$ of an object $F \in \Db_{/[1]}(X)$. 
We define $\MS(F) \coloneqq \bigcap_{F'\simeq F} \MS_{\bK}(F')$, where $F'$ runs over objects of $\Db(\bK_X)$ that are isomorphic to $F$ in $\Db_{/[1]}(X)$ and $\MS_{\bK}(F')$ denotes the usual microsupport of $F'$ as an object of $\Db(\bK_X)$. 
We also set $\rMS(F) \coloneqq \MS(F) \cap \rT X=\MS(F) \setminus T^*_XX$.
One can check the following properties.

\begin{proposition}\label{proposition:properties-ms}
    \begin{enumerate}
    	\item The microsupport of an object in $\Db_{/[1]}(X)$ is a conic (i.e., 
    	invariant under the action of $\bR_{>0}$ on $T^*X$) closed subset of
    	$T^*X$.
    	\item The microsupports satisfy the triangle inequality: if $F_1 \to
    	F_2 \to F_3 \toone$ is an exact triangle in $\Db_{/[1]}(X)$, then
    	$\MS(F_i) \subset \MS(F_j) \cup \MS(F_k)$ for $j \neq k$.
    	\item For $F\in \Db_{/[1]}(\bR^n)$ with $\MS (F) \subset T^*_{\bR^n}\bR^n$, there exists $L \in\Module(\bfk)$ such that $F\simeq L_{\bR^n}$. 
    \end{enumerate}
\end{proposition}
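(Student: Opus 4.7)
The plan is to handle the three statements in order, exploiting throughout that $\Db_{/[1]}(X)$ is the Verdier quotient $\Db(\bK_X)/\perf(\bK_X)$, so that arguments can be pulled back to $\Db(\bK_X)$ where the classical microlocal theory of \cite{KS90} applies verbatim to $\bK$-sheaves.

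Part (i) is immediate: for any $F' \in \Db(\bK_X)$, the set $\MS_\bK(F')$ is a conic closed subset of $T^*X$, and arbitrary intersections preserve both properties, so $\MS(F) = \bigcap_{F' \simeq F} \MS_\bK(F')$ inherits them.

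For (ii), I would first use that the quotient functor $Q \colon \Db(\bK_X) \to \Db_{/[1]}(X)$ is triangulated, so every distinguished triangle in $\Db_{/[1]}(X)$ is isomorphic to the image under $Q$ of a distinguished triangle in $\Db(\bK_X)$. Fix such a lift $\tilde F_1 \to \tilde F_2 \to \tilde F_3 \toone$. The argument is then pointwise: let $p \notin \MS(F_j) \cup \MS(F_k)$ and pick representatives $F'_j, F'_k$ in $\Db(\bK_X)$ with $p \notin \MS_\bK(F'_j) \cup \MS_\bK(F'_k)$. The isomorphisms $\tilde F_j \simeq F'_j$ and $\tilde F_k \simeq F'_k$ in the quotient are represented by roofs in $\Db(\bK_X)$ whose connecting cones lie in $\perf(\bK_X)$. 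Splicing these roofs into the lifted triangle, and using the octahedral axiom to complete the resulting commutative squares, produces a new distinguished triangle $F''_1 \to F''_2 \to F''_3 \toone$ in $\Db(\bK_X)$, still representing the original triangle in $\Db_{/[1]}(X)$, and in which $F''_j$ and $F''_k$ differ from $F'_j$ and $F'_k$ only by perfect objects whose microsupports can be arranged to miss $p$. The usual triangle inequality for $\MS_\bK$ in $\Db(\bK_X)$ then gives $p \notin \MS_\bK(F''_i)$, hence $p \notin \MS(F_i)$.

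For (iii), the strategy is to upgrade the pointwise hypothesis $\MS(F) \subset T^*_{\bR^n}\bR^n$ into the existence of a \emph{single} representative $\tilde F_0 \in \Db(\bK_{\bR^n})$ of $F$ with $\MS_\bK(\tilde F_0) \subset T^*_{\bR^n}\bR^n$. The hypothesis provides, for each $p \in \rT \bR^n$, a representative of $F$ whose $\bK$-microsupport avoids $p$. Combined with the openness of the complement of $\MS_\bK$, a compact exhaustion of $\bR^n$, and an inductive cone construction---removing excess microsupport by taking cones against perfect objects that carry precisely the unwanted microsupport but vanish in the orbit category---one assembles a global representative $\tilde F_0$ with $\MS_\bK(\tilde F_0)$ in the zero section. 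The classical structure theorem for sheaves with microsupport in the zero section applied in $\Db(\bK_{\bR^n})$ then yields $\tilde F_0 \simeq N_{\bR^n}$ for a bounded complex $N$ of $\bK$-modules. Finally, the equivalence $H^* \colon \Db_{/[1]}(\pt) \to \Module(\bfk)$ produces $L \in \Module(\bfk)$ with $N \simeq L$ in $\Db_{/[1]}(\pt)$, and pulling back along the projection $\bR^n \to \pt$ gives $F \simeq L_{\bR^n}$ in $\Db_{/[1]}(\bR^n)$.

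The main obstacle throughout is that different representatives of the same object in $\Db_{/[1]}(X)$ can have wildly incomparable $\bK$-microsupports---objects of $\perf(\bK_X)$ are identified with zero yet can carry arbitrary microsupport---so one cannot simply work within a single fixed lift. The subtlest step is the consolidation in (iii): assembling a \emph{single} global representative with zero-section microsupport from a family of pointwise good ones. I expect this to rely on the openness of the condition ``$p \notin \MS_\bK$'' in $p$, the stability of $\perf(\bK_X)$ under the six operations, and standard non-characteristic deformation techniques in $\Db(\bK_{\bR^n})$.
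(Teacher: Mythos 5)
Part (i) is fine: $\MS(F)$ is an intersection of conic closed sets and inherits both properties.

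For (ii) and (iii), however, there is a genuine gap, and it is the same gap in both places. You repeatedly assert that the perfect objects arising from roofs and from the inductive cone construction ``can be arranged to miss $p$'' or ``carry precisely the unwanted microsupport,'' but this is exactly the hard point and you give no argument for it. Concretely, in (ii): after choosing $F'_j$ and $F'_k$ with $p\notin\MS_\bK(F'_j)\cup\MS_\bK(F'_k)$, the morphism $F'_j\to F'_k$ induced in $\Db_{/[1]}(X)$ is represented by a roof $F'_j\to F''_k \leftarrow F'_k$ whose right leg has a cone $P\in\perf(\bK_X)$; the cone of the $\bK$-linear morphism $F'_j\to F''_k$ does represent $F_i$, but its microsupport is only bounded by $\MS_\bK(F'_j)\cup\MS_\bK(F'_k)\cup\MS_\bK(P)$, and nothing constrains $\MS_\bK(P)$. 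Objects of $\perf(\bK_X)$ can carry arbitrary microsupport (they are generated by $\bK_X\otimes_{\bfk_X}G$, which has $\MS_\bK=\MS(G)$), and the filtered system of roofs representing a given morphism in a Verdier quotient does not become ``microsupport-small'': refining a roof only enlarges the perfect cone. So the lift of the triangle to $\Db(\bK_X)$ that you need is not available by splicing roofs and invoking the octahedral axiom alone. The definition $\MS(F)=\bigcap_{F'\simeq F}\MS_\bK(F')$ is a pointwise/classwise intersection, and it does \emph{not} automatically provide a single representative $F'$ with $\MS_\bK(F')$ avoiding a given open set, which is what both (ii) and the ``consolidation'' step of (iii) require.

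The missing ingredient is a local (stalkwise) characterization of $\MS$ in the orbit category: one should first prove that $p\notin\MS(F)$ is equivalent to the vanishing of $\bigl(\RG_{\{\varphi\ge\varphi(x_0)\}}F\bigr)_{x_0}$ in $\Db_{/[1]}(\pt)$ for $(x_0,d\varphi(x_0))$ near $p$. The implication ``intersection $\Rightarrow$ local vanishing'' is immediate (apply the operation to a good representative), but the converse requires a microlocal cut-off/excision argument to manufacture, from a representative $F''$ with $\bigl(\RG_{\{\varphi\ge 0\}}F''\bigr)_{x_0}\in\perf(\bK)$, a new representative on which the offending microlocal germ has been excised by a perfect object. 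Once the local characterization is in place, (ii) is immediate because $\bigl(\RG_{\{\varphi\ge\varphi(x_0)\}}(-)\bigr)_{x_0}\colon\Db_{/[1]}(X)\to\Db_{/[1]}(\pt)$ is triangulated and $\Db_{/[1]}(\pt)\simeq\Module(\bfk)$ has the two-out-of-three property for zero objects in short exact sequences; and (iii) follows by running the non-characteristic deformation lemma / propagation argument in $\Db_{/[1]}$ using that vanishing criterion, then identifying the constant value under $\Db_{/[1]}(\pt)\simeq\Module(\bfk)$. Your write-up inverts this logic and treats the local-to-global assembly of representatives as a routine step; it is the whole content of the proposition.
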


Using microsupports, we can microlocalize the category $\Db_{/[1]}(X)$ as follows.

\begin{definition}
    \begin{enumerate}
        \item For a subset $A$ of $\rT X$, one defines $\Db_{/[1],A}(X)$ as the triangulated subcategory of $\Db_{/[1]}(X)$ consisting of $F$ with $\rMS(F) \subset A$.
        \item For a subset $\Omega$ of $T^*X$, one defines $\Db_{/[1]}(X;\Omega)$ as the categorical localization of $\Db_{/[1]}(X)$ by $\Db_{/[1], \rT X \setminus \Omega}(X)$. 
        That is, $\Db_{/[1]}(X;\Omega) \coloneqq \Db_{/[1]}(X)/\Db_{/[1], \rT X \setminus \Omega}(X)$.
        \item For a subset $B$ of $\Omega$, $\Db_{/[1],B}(X;\Omega)$ denotes the full triangulated subcategory of $\Db_{/[1]}(X;\Omega)$ consisting of $F$ with $\rMS(F) \cap \Omega \subset B$.
        \item For a subset $B$ of $\rT X$, $\Db_{/[1], (B)}(X)$ denotes the full triangulated subcategory of $\Db_{/[1]}(X)$ consisting of $F$ for which there exists a neighborhood $U$ of $B$ such that $\rMS(F)\cap U \subset B$.
    \end{enumerate}
\end{definition}

The fact that $\Db_{/[1],A}(X)$ is a triangulated subcategory follows from the triangle inequality (\cref{proposition:properties-ms}(ii)).
Note that $\Db_{/[1],A}(X)$ contains locally constant sheaves on $X$.
Remark also that our notation is the same as in \cite{Gu12,Gu19} and slightly differs from that of \cite{KS90}.

The definition below is derived through the discussion with S.~Guillermou.

\begin{definition}\label{definition:microlocally-tame}
    Let $\mathcal{U}=\{U_\alpha \}_\alpha$ be an open covering of $X$ and $\cV=\{V_\alpha \}_\alpha$ be an open covering of an open subset $\Omega$ of $T^*X$. 
	\begin{enumerate}
	\item An object $F\in \Db_{/[1]}(X;\Omega)$ is said to be \emph{locally tame with respect to $\mathcal{U}$} if $F|_{U_\alpha}$ is isomorphic to some $\fraki(G_\alpha)$ as an object of $\Db_{/[1]}(U_\alpha;\Omega\cap T^*U_\alpha)$ for each $U_\alpha\in \mathcal{U}$.
	An object of $\Db_{/[1]}(X;\Omega)$ is said to be \emph{locally tame} if it is locally tame with respect to some open covering of $X$. 
	\item An object $F\in \Db_{/[1]}(X;\Omega)$ is said to be \emph{microlocally tame with respect to $\cV$} if $F$ is isomorphic to some $\fraki (G_\alpha)$ as an object of $\Db_{/[1]}(X;V_\alpha)$ for each $V_\alpha\in \cV$.
	An object of $\Db_{/[1]}(X;\Omega)$ is said to be \emph{microlocally tame} if it is microlocally tame with respect to some open covering of $\Omega$. 
	\item Let $B$ be a subset of $\rT X$.
	One defines $\BD_{/[1],(B)}^{\mathrm{b},\mathrm{mt}}(X)$ to be the full subcategory of $\Db_{/[1]}(X)$ consisting of $F$ for which there exists a neighborhood $\Omega$ of $B$ such that $\rMS(F) \cap \Omega \subset B$ and $F \in \Db_{/[1]}(X;\Omega)$ is microlocally tame. 
	\end{enumerate}
\end{definition}

\begin{remark}
    The reason why we introduce the notions of tame and microlocally tame objects is that we can deduce the statements in this section directly from the corresponding ones in \cite{KS90}.
\end{remark}

\subsection{Functorial operations}

We consider the behavior of the microsupports with respect to functorial operations.

\begin{proposition}[{cf.\ \cite[Prop.~5.4.4, 
		Prop.~5.4.13, and Prop.~5.4.5]{KS90}}]\label{proposition:SSpushpull}
	Let $f \colon X \to Y$ be a morphism of manifolds, $F \in \Db_{/[1]}(X)$,
	and $G \in \Db_{/[1]}(Y)$.
	\begin{enumerate}
		\item Assume that $f$ is proper on $\Supp(F)$.
		Then $\MS(f_*F) \subset f_\pi f_d^{-1}(\MS(F))$.
		\item Assume that $f$ is non-characteristic for $\MS(G)$ (see \cite[Def.~5.4.12]{KS90} for the definition).
		Then $\MS(f^{-1}G) \cup \MS(f^!G) \subset
		f_d f_\pi^{-1}(\MS(G))$.
		\item Assume that $f \colon X \to Y$ is a vector bundle over $Y$ and there exists an open covering $\cU$ of $Y$ such that $F$ is locally tame with respect to $f^{-1}\cU \coloneqq \{f^{-1}(U) \mid U\in\cU\}$. 
		Then $\MS(F) \subset f_d(X \times_{Y} T^*Y)$ if and only if the morphism $ f^{-1}Rf_*F\to F$ is an isomorphism. 
	\end{enumerate}
\end{proposition}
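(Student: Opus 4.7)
The plan is to deduce each of the three statements from the corresponding classical result for sheaves of $\bK$- or $\bfk$-modules cited in the statement, namely \cite[Prop.~5.4.4, Prop.~5.4.13, Prop.~5.4.5]{KS90}. This relies on two structural facts: first, Grothendieck's six operations descend from $\Db(\bK_X)$ to $\Db_{/[1]}(X)$ and commute with the quotient functor as well as with $\fraki$; second, the microsupport in the orbit category is by definition the intersection $\MS(F) = \bigcap_{F' \simeq F} \MS_\bK(F')$ taken over all lifts of $F$ to $\Db(\bK_X)$.

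For (i), I would pick any representative $F' \in \Db(\bK_X)$ of $F$. Then $f_*F$ is represented by $f_*F'$ in $\Db_{/[1]}(Y)$, and applying \cite[Prop.~5.4.4]{KS90} to $F'$ with $\bK$-coefficients gives $\MS_\bK(f_*F') \subset f_\pi f_d^{-1}(\MS_\bK(F'))$. Taking the intersection over $F' \simeq F$ and interchanging it with $f_\pi f_d^{-1}$ yields the desired bound
\begin{equation*}
\MS(f_*F) \subset f_\pi f_d^{-1}(\MS(F)).
\end{equation*}
The interchange is justified via the properness of $f$ on $\Supp(F)$: for each $y \in Y$, the compactness of $f^{-1}(y) \cap \Supp(F)$ lets one produce a single representative realizing the bound uniformly on the fiber. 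Part (ii) proceeds analogously, using \cite[Prop.~5.4.13]{KS90} with the non-characteristic hypothesis controlling the fibers of $f_\pi$ on $\MS(G)$.

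Part (iii) is more substantial, and I would handle it by reducing to $\bfk$-coefficients via the local tameness hypothesis. Both conditions in the ``if and only if" are local on $Y$, so it suffices to work on each $U \in \cU$, where by hypothesis $F|_{f^{-1}(U)} \simeq \fraki(G_U)$ in $\Db_{/[1]}(f^{-1}(U))$ for some $G_U \in \Db(\bfk_{f^{-1}(U)})$. For such an object coming from $\fraki$, the orbit-category microsupport agrees with the classical $\MS(G_U)$ in $\Db(\bfk_{f^{-1}(U)})$, and both $Rf_*$ and $f^{-1}$ commute with $\fraki$. Applying \cite[Prop.~5.4.5]{KS90} to $G_U$ therefore directly yields the equivalence locally, and stitching over $\cU$ gives the global statement.

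The main obstacle I anticipate is the intersection-over-representatives step in (i) and (ii): in general $\bigcap_i f_\pi f_d^{-1}(A_i)$ can strictly contain $f_\pi f_d^{-1}(\bigcap_i A_i)$, so controlling this discrepancy requires the proper or non-characteristic assumption together with the structure of the orbit category to make the compactness argument work uniformly in $F'$. A secondary difficulty for (iii) is verifying that the local isomorphisms $F|_{f^{-1}(U)} \simeq \fraki(G_U)$ are compatible with the adjunction morphism $f^{-1}Rf_*F \to F$; local tameness with respect to the vector-bundle-pullback covering $f^{-1}\cU$ is precisely what makes this compatibility hold.
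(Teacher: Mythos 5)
The paper gives no proof of this proposition; it is stated with a ``cf.'' citation, the understanding being that one adapts the arguments of \cite{KS90} to the orbit category along the lines laid out in \cite{Gu12,Gu19}. Your proposal attempts a different, more formal route for (i) and (ii), and that route has a genuine gap which you yourself flag but do not actually close.

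For (i) and (ii) you want to pass from $\MS(f_*F)\subset\bigcap_{F'}f_\pi f_d^{-1}\bigl(\MS_\bK(F')\bigr)$, which does follow directly from applying \cite[Prop.~5.4.4]{KS90} to each representative $F'$, to $\MS(f_*F)\subset f_\pi f_d^{-1}\bigl(\bigcap_{F'}\MS_\bK(F')\bigr)$. The interchange is exactly the problematic step, and the remedy you offer --- ``compactness of $f^{-1}(y)\cap\Supp(F)$ lets one produce a single representative realizing the bound uniformly on the fiber'' --- does not work as stated. Covering the fiber by finitely many conic open sets, one obtains finitely many representatives $F'_1,\dots,F'_n$, each with microsupport missing one piece of the fiber; but there is no general mechanism to merge these into a \emph{single} representative $F'$ with $\MS_\bK(F')\subset\bigcap_i\MS_\bK(F'_i)$. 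Two objects of $\Db(\bK_X)$ that become isomorphic after passing to the quotient by $\perf(\bK_X)$ are related only by a zig-zag of morphisms with perfect cones, and the microsupports of those cones are unconstrained; one cannot ``intersect'' representatives. The approach that actually works (and is what \cite{Gu12,Gu19} carry out) is to show that the intersection definition $\MS(F)=\bigcap_{F'}\MS_\bK(F')$ is equivalent to the local microlocal-vanishing characterization (as in \cite[Def.~5.1.2]{KS90}, with $\Db_{/[1]}(\pt)\simeq\Module(\bfk)$ replacing $\Db(\bfk)$); once that equivalence is in hand, the \cite{KS90} proofs of Propositions 5.4.4 and 5.4.13 go through essentially verbatim, because they rely only on the vanishing criterion, cut-off lemmas, and the non-characteristic deformation lemma, all of which Guillermou establishes in the orbit-category setting.

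For (iii) your reduction to $\bfk$-coefficients via local tameness is the right idea, but note one further point you glide over: you write that ``for such an object coming from $\fraki$, the orbit-category microsupport agrees with the classical $\MS(G_U)$,'' yet from the intersection definition one only gets $\MS(\fraki(G_U))\subset\MS_\bfk(G_U)$ a priori (since $G_U$ with trivial $\varepsilon$-action is one of the representatives). The reverse inclusion is precisely part of what must be established via the vanishing characterization; without it, the ``only if'' direction of (iii) does not follow from \cite[Prop.~5.4.5]{KS90} applied to $G_U$, because a priori $F$ could satisfy $\MS(F)\subset f_d(X\times_Y T^*Y)$ without $G_U$ doing so.
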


The following proposition is called the microlocal Morse lemma, which follows from \cref{proposition:properties-ms}(iii) and \cref{proposition:SSpushpull}(i). 

\begin{proposition}[{cf.\ \cite[Prop.~5.4.17]{KS90}}]\label{proposition:microlocalmorse}
	Let $F \in \Db_{/[1]}(X)$ and $\varphi \colon X \to \bR$ be a
	$C^\infty$-function.
    Moreover, let $a,b \in \bR$ with $a<b$.
	Assume
	\begin{enumerate}
		\renewcommand{\labelenumi}{$\mathrm{(\arabic{enumi})}$}
		\item $\varphi$ is proper on $\Supp(F)$,
		\item  $d\varphi(x) \not\in \MS(F)$ for any $x \in
		\varphi^{-1}([a,b))$.
	\end{enumerate}
	Then the canonical morphism
	\begin{equation}
	\RG(\varphi^{-1}((-\infty,b));F)
	\to
	\RG(\varphi^{-1}((-\infty,a));F)
	\end{equation}
	is an isomorphism.
\end{proposition}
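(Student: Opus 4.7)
The plan is to mirror the classical argument of \cite[Prop.~5.4.17]{KS90}, with the two cited propositions playing the roles of their KS90 analogues. First I would push forward to $\bR$: set $G := R\varphi_* F \in \Db_{/[1]}(\bR)$. Hypothesis~(1), together with base change in the orbit category, gives $\RG(\varphi^{-1}((-\infty,c));F) \simeq \RG((-\infty,c);G)$ for every $c \in \bR$, so it is enough to show that the restriction morphism $\RG((-\infty,b);G) \to \RG((-\infty,a);G)$ is an isomorphism.

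Second, I would control $\MS(G)$ via \cref{proposition:SSpushpull}(i), which yields $\MS(G) \subset \varphi_\pi\varphi_d^{-1}(\MS(F))$. A point $(t,\tau) \in \MS(G)$ with $t \in [a,b)$ and $\tau > 0$ would provide some $x \in \varphi^{-1}(t) \cap \Supp(F)$ with $(x;\tau\,d\varphi(x)) \in \MS(F)$, and conic invariance of $\MS(F)$ would then contradict hypothesis~(2). Since $\MS(F)$ is closed and $\varphi$ is proper on $\Supp(F)$, a compactness argument on $\varphi^{-1}([a,b']) \cap \Supp(F)$ for $b' < b$ shows that this vanishing of $\MS(G)$ in the positive-$\tau$ direction persists on an open enlargement of the form $(a',b)$ with $a' < a$.

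Third, I would deduce the desired isomorphism from a one-dimensional argument. The cofibre of $\RG((-\infty,b);G) \to \RG((-\infty,a);G)$ can be identified with a sections-supported-in-$[a,b)$ object, and its vanishing is forced by the microsupport estimate on $G$ above together with the microsupport of the cut-off sheaf $\bfk_{[a,b)}$. Equivalently, after a microlocal cut-off removing the possible $\tau < 0$ part of $\MS(G)$ outside the region of interest, one applies \cref{proposition:properties-ms}(iii) to identify a suitable restriction of $G$ to $(a',b) \simeq \bR$ with a constant sheaf, and then a Mayer--Vietoris decomposition of $(-\infty,b) = (-\infty,a) \cup (a',b)$ with overlap $(a',a)$ yields the result.

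The main obstacle is precisely this last step: \cref{proposition:properties-ms}(iii) requires $\MS$ to lie exactly in the zero section, while we only control the $\tau > 0$ half of $\MS(G)$. Bridging this gap is the classical microlocal cut-off / propagation argument on $\bR$, and I expect the same reduction to carry over to $\Db_{/[1]}$ since the triangle inequality, Grothendieck's operations, and conic invariance of microsupport are all available in the forms recorded in \cref{proposition:properties-ms} and \cref{proposition:SSpushpull}.
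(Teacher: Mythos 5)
Your overall strategy matches the paper's one-line remark: push forward along $\varphi$ using \cref{proposition:SSpushpull}(i), bound $\MS(G)$ for $G := R\varphi_*F$, and then handle the one-dimensional statement through \cref{proposition:properties-ms}(iii). Steps (1) and (2) are correct, including the compactness argument extending the vanishing of $\MS(G)\cap\{\tau>0\}$ to $[a',b)$ for some $a'<a$. The gap you flag in step (3) is genuine, and the Mayer--Vietoris variant you sketch does not close it: replacing $G$ by a cut-off such as $\bfk_{[0,+\infty)}\star G$ removes the $\tau<0$ part of the microsupport, but it also changes the sections $\RG((-\infty,c);-)$. So even if the cut-off is constant on $(a',b)$ and the Mayer--Vietoris argument goes through for it, that says nothing directly about the restriction map for $G$.

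The way to finish while staying within (iii) and (i) is closer to your first formulation via the cofibre. Set $H:=\cRHom(\bfk_{[a,b)},G)$ and $\hat H := Rj_*(H|_{(-\infty,b)})$ for the open inclusion $j\colon(-\infty,b)\hookrightarrow\bR$; then $\RG(\bR;\hat H)$ computes the cofibre of the restriction map, $\Supp(\hat H)\subset[a,b]$ is compact, and the estimates on $\MS(G)$, $\MS(\bfk_{[a,b)})^a$, and $N^*((-\infty,b))$ combine to give $\MS(\hat H)\subset\{\tau\le 0\}$. Since $\bfk_{(-\infty,0)}$ has $\MS\subset\{\tau\ge 0\}$ and $s(u,v)=u+v$ is proper on the relevant support (because $\Supp(\hat H)$ is compact), the pushforward and tensor estimates force $\MS(\bfk_{(-\infty,0)}\star\hat H)$ into the zero section; by (iii) this is a constant sheaf, but its support sits in $(-\infty,b]\neq\bR$, so it is zero. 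Hence $\bfk_{(-\infty,0]}\star\hat H\simeq\hat H$, and taking stalks at any $c<a$ identifies $\RG(\bR;\hat H)$ with $\hat H_c\simeq 0$. In short: you diagnosed the pressure point correctly, but the cut-off trick should be applied to the cofibre $\hat H$ rather than to $G$ itself.
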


For closed conic subsets $A$ and $B$ of $T^*X$, let us denote by $A+B$
the fiberwise sum of $A$ and $B$, that is,
\begin{equation}
    A+B
     \coloneqq 
    \left\{
        (x;a+b) \; \middle| \; 
        \begin{aligned}
            & x \in \pi(A) \cap \pi(B), \\
            & a \in A \cap \pi^{-1}(x), b \in B \cap \pi^{-1}(x) 
        \end{aligned}
    \right\} 
    \subset T^*X.
\end{equation}

\begin{proposition}[{cf.\ \cite[Prop.~5.4.14]{KS90}}]\label{proposition:SStenshom}
	Let $F, G \in \Db_{/[1]}(X)$.
	\begin{enumerate}
		\item If $\MS(F) \cap \MS(G)^a \subset T^*_XX$,
		then $\MS(F \otimes G) \subset \MS(F)+\MS(G)$.
		\item If $\MS(F) \cap \MS(G) \subset T^*_XX$,
		then $\MS(\cHom(F,G)) \subset \MS(F)^a+\MS(G)$.
	\end{enumerate}
\end{proposition}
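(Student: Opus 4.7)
The strategy is to reduce the statement to the classical Kashiwara--Schapira result \cite[Prop.~5.4.14]{KS90} applied to the ring $\bK=\bfk[\varepsilon]/(\varepsilon^2)$. Since the operations $\otimes$ and $\cHom$ on $\Db_{/[1]}(X)$ are induced from the corresponding operations on $\Db(\bK_X)$, any representatives $\tilde F, \tilde G \in \Db(\bK_X)$ of $F, G$ give representatives $\tilde F \otimes_{\bK} \tilde G$ and $R\cHom_{\bK}(\tilde F, \tilde G)$ of $F \otimes G$ and $\cHom(F, G)$ respectively. From the definition $\MS(H)=\bigcap_{H' \simeq H} \MS_{\bK}(H')$ we have $\MS(H) \subset \MS_{\bK}(\tilde H)$ for every representative, so to prove $p \notin \MS(F \otimes G)$ it suffices to exhibit a pair of representatives $(\tilde F,\tilde G)$ such that $p \notin \MS_{\bK}(\tilde F \otimes_{\bK} \tilde G)$.

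The technical core of the proof is the following \emph{selection lemma}: for every $F \in \Db_{/[1]}(X)$ and every open conic neighborhood $U$ of $\MS(F)$ in $T^*X$, one can find a representative $\tilde F \in \Db(\bK_X)$ with $\MS_{\bK}(\tilde F) \subset U$. Granting this, fix $p \notin \MS(F) + \MS(G)$. Since the fiberwise sum of two closed conic sets is closed and $\MS(F) \cap \MS(G)^a \subset T^*_X X$, one can choose open conic neighborhoods $U_F \supset \MS(F)$ and $U_G \supset \MS(G)$ satisfying $p \notin U_F + U_G$ and $U_F \cap U_G^a \subset T^*_X X$. Picking representatives $\tilde F, \tilde G$ with $\MS_{\bK}(\tilde F) \subset U_F$ and $\MS_{\bK}(\tilde G) \subset U_G$, the classical inequality of \cite[Prop.~5.4.14]{KS90} yields $\MS_{\bK}(\tilde F \otimes_{\bK} \tilde G) \subset U_F + U_G$ and hence $p \notin \MS(F \otimes G)$, proving (i). Part (ii) follows by the same mechanism, replacing $\otimes$ by $\cHom$ and $\MS(F)+\MS(G)$ by $\MS(F)^a+\MS(G)$, using the other half of \cite[Prop.~5.4.14]{KS90}.

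The main obstacle is the selection lemma. The orbit-category microsupport is defined as an intersection over \emph{all} representatives, so it is not automatic that a single representative realizes an arbitrary open neighborhood of $\MS(F)$ — this would require the family $\{\MS_{\bK}(\tilde F)\}_{\tilde F\simeq F}$ to be essentially directed under inclusion. To establish the lemma, I would first show that given two representatives $\tilde F_1, \tilde F_2$ of $F$, one can construct a third representative whose $\bK$-microsupport is contained in $\MS_{\bK}(\tilde F_1) \cap \MS_{\bK}(\tilde F_2)$, by using the fact that the difference $\tilde F_1 - \tilde F_2$ lies in $\perf(\bK_X)$ together with the distinguished triangles relating them; a covering-and-gluing argument on $T^*X$, combined with the triangle inequality of \cref{proposition:properties-ms}(ii), then produces a representative capturing the intersection up to any prescribed open neighborhood. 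This type of argument is implicit in Guillermou's treatment \cite{Gu12,Gu19}, and in the actual write-up I would either invoke his results directly or spell the construction out in a short preliminary lemma before applying the classical bounds.
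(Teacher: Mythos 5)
Your overall strategy --- reduce to the classical Kashiwara--Schapira estimate over the ring $\bK$ by choosing representatives with controlled microsupport --- is the natural one, and the deduction from your ``selection lemma'' to the conclusion is essentially correct (using that $\MS(F)+\MS(G)$ is closed under the transversality hypothesis). The gap, which you flag yourself as the main obstacle, is that the selection lemma is not proved, and the mechanism you sketch for it does not work as stated.

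The claim that for two representatives $\tilde F_1,\tilde F_2$ of $F$ one can produce a third with $\MS_\bK(\tilde F_3)\subset\MS_\bK(\tilde F_1)\cap\MS_\bK(\tilde F_2)$ is not supported by the triangle inequality. From a roof $\tilde F_1\leftarrow H\to\tilde F_2$ with cones $C_1,C_2\in\perf(\bK_X)$, \cref{proposition:properties-ms}(ii) only gives $\MS_\bK(H)\subset\MS_\bK(\tilde F_i)\cup\MS_\bK(C_i)$, an upper bound by a \emph{union}, not the required lower bound by the intersection. Moreover objects of $\perf(\bK_X)$ carry no microsupport constraint: for any $G\in\Db(\bfk_X)$, $\mathfrak{e}(G)=\bK_X\otimes_{\bfk_X}G\in\perf(\bK_X)$ has $\MS_\bK(\mathfrak{e}(G))=\MS(G)$, so the cones $C_i$ may be supported anywhere. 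A priori the family $\{\MS_\bK(\tilde F)\}_{\tilde F\simeq F}$ need not be directed downward under inclusion, and the construction you propose tends to enlarge the microsupport rather than shrink it.

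The route the paper implicitly intends (compare the remark just after \cref{definition:microlocally-tame}, which explains that the tameness notions are introduced precisely so that the estimates in this section can be ``deduced directly'' from \cite{KS90}; this is also the shape of Guillermou's treatment in \cite{Gu12,Gu19}) is local rather than global: one first establishes that the orbit-category microsupport is a local notion, then, for each $p\notin\MS(F)+\MS(G)$, chooses representatives that are well-behaved only near $p$ and applies the classical estimate there. No single global representative realizing an entire open neighborhood of $\MS(F)$ is ever needed. So your instinct to invoke Guillermou is the right one, but the directedness claim and the ``covering-and-gluing argument on $T^*X$'' as written are placeholders rather than proofs, and the former in particular would have to be replaced, not merely expanded.
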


Let $\varphi \colon X \to \bR$ be a
$C^\infty$-function and assume that $d\varphi(x) \neq 0$ for any $x
\in \varphi^{-1}(0)$.
Set $U \coloneqq \{x \in X \mid \varphi(x)<0\}$.
For such an open subset $U$ of $X$, we define
\begin{equation}
    N^*(U) \coloneqq \MS(\bfk_U)^a=T^*_XX|_U \cup
    \{(x;\lambda d\varphi(x)) \mid \varphi(x)=0, \lambda \le 0\}.
\end{equation}

\begin{lemma}\label{lemma:interiorboundary}
    Let $U$ be an open subset of $X$ as above, $j \colon U \to X$ be the open embedding, and $Z \coloneqq \overline{U}$. 
	Moreover, let $F\in \Db_{/[1]}(X)$ be a locally tame object. 
	\begin{enumerate}
		\item If $\Supp (F)\subset Z$ and $\MS(F)\cap N^*(U)\subset T^*_{X}X$,
		there exists a natural isomorphism $ F_U \simeq F$.
		\item If $\MS(F)\cap N^*(U)^a\subset T^*_{X}X$, there exists a natural isomorphism $F_Z\simeq Rj_*j^{-1}F$.
	\end{enumerate}
\end{lemma}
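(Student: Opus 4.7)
The strategy for both parts is to check the asserted isomorphism at every stalk: outside $\partial U = \varphi^{-1}(0)$ the support and geometric data force agreement, and the microsupport hypothesis is used at a point $x_0 \in \partial U$ via a single direct application of \cref{definition:microsupport}.

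For (i), I would start from the exact triangle $F_U \to F \to F_V \toone$ obtained by tensoring $F$ with $\bfk_U \to \bfk_X \to \bfk_V$, where $V := X \setminus U = \{\varphi \ge 0\}_X$, and reduce the claim to $F_V \simeq 0$. Since $\Supp(F) \subset Z$, the object $F_V$ automatically vanishes on $\{\varphi > 0\}_X$, and at $x_0 \in \partial U$ one has $(F_V)_{x_0} \simeq F_{x_0}$ because $(\bfk_V)_{x_0} = \bfk$. The hypothesis $\MS(F) \cap N^*(U) \subset T^*_X X$, combined with conicity of microsupport, gives $(x_0; -d\varphi(x_0)) \notin \MS(F)$; applying \cref{definition:microsupport} with $\psi := -\varphi$ yields $\RG_Z(F)_{x_0} = \RG_{\{\varphi \le 0\}_X}(F)_{x_0} \simeq 0$. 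But the condition $\Supp(F) \subset Z$ also forces $\RG_Z(F) \to F$ to be an isomorphism, since its cone $Rj'_* j'^{-1} F$ with $j' \colon X \setminus Z \hookrightarrow X$ vanishes because $j'^{-1}F \simeq 0$. Hence $F_{x_0} \simeq 0$, and consequently $F_V \simeq 0$.

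For (ii), I would take the natural morphism $F_Z \to Rj_* j^{-1} F$ to be the composite of the adjunction unit $F_Z \to Rj_* j^{-1} F_Z$ with the identification $j^{-1} F_Z \simeq j^{-1} F$ (valid because $U \subset Z$). On $U$ this morphism is the identity $F|_U \to F|_U$, and on $X \setminus Z$ both sides vanish. At $x_0 \in \partial U$ I would invoke the triangle $\RG_V(F) \to F \to Rj_* j^{-1} F \toone$ obtained by applying $\cRHom(-,F)$ to $\bfk_U \to \bfk_X \to \bfk_V$; the hypothesis $\MS(F) \cap N^*(U)^a \subset T^*_X X$ gives $(x_0; d\varphi(x_0)) \notin \MS(F)$, so \cref{definition:microsupport} applied to $\psi := \varphi$ yields $\RG_V(F)_{x_0} = \RG_{\{\varphi \ge 0\}_X}(F)_{x_0} \simeq 0$. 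Therefore $F_{x_0} \to (Rj_* j^{-1} F)_{x_0}$ is an isomorphism. Combining this with $(F_Z)_{x_0} \simeq F_{x_0}$ and the observation that the composite $F \to F_Z \to Rj_* j^{-1} F$ coincides with the adjunction unit $F \to Rj_* j^{-1} F$ (both being adjoint to $\mathrm{id}_{j^{-1}F}$) completes the stalkwise check.

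The step I expect to demand the most care is performing the microsupport reasoning inside the orbit category. \cref{definition:microsupport} as recalled in the excerpt is a statement about representatives over $\bK = \bfk[\varepsilon]/(\varepsilon^2)$, so I would use the local tameness of $F$ to select, near each $x_0 \in \partial U$, a representative $F'$ of the form $\fraki(G)$ with the required covector outside $\MS_\bK(F')$; the vanishing of $\RG_{\{\psi \ge \psi(x_0)\}_X}(F')_{x_0}$ obtained over $\bK$ then descends via $\fraki$ to $\Db_{/[1]}$, after which the argument is parallel to the corresponding Kashiwara--Schapira computation.
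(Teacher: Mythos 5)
Your proof is correct, but it takes a genuinely different route from the paper's. The paper works with global microsupport estimates: for (i) it forms the triangle $F_U \to F \to F_{\varphi^{-1}(0)} \toone$, applies the tensor estimate of \cref{proposition:SStenshom} together with the triangle inequality to get $\MS(F_{\varphi^{-1}(0)}) \cap N^*(U) \subset T^*_XX$, and then uses that a nonzero locally tame object supported on the submanifold $\varphi^{-1}(0)$ must have the conormal $T^*_{\varphi^{-1}(0)}X$ over its support inside its microsupport, which meets $N^*(U)$ outside the zero section --- a contradiction. For (ii), it estimates $\MS(F_Z)$ and $\MS(Rj_*j^{-1}F)$ via \cite[5.4.8]{KS90} and repeats the argument on the cone, which is supported on $\varphi^{-1}(0)$. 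You instead extract the vanishing of $\RG_Z(F)_{x_0}$ (resp.\ $\RG_V(F)_{x_0}$) at each boundary point directly from \cref{definition:microsupport} and conclude by a stalkwise comparison. This is more elementary, avoiding \cref{proposition:SStenshom} and the conormal-inclusion lemma, at the cost of requiring that stalkwise vanishing implies vanishing in $\Db_{/[1]}(X)$ --- a step the paper also uses implicitly when it passes from $\Supp(F_{\varphi^{-1}(0)})=\emptyset$ to $F_{\varphi^{-1}(0)}\simeq 0$.

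One adjustment to your closing remark on the orbit category: local tameness supplies a representative of the form $\fraki(G)$ near $x_0$, but it does not guarantee that the relevant covector avoids $\MS_\bK(\fraki(G))$ --- the orbit-category microsupport is $\bigcap_{F'}\MS_\bK(F')$ over \emph{all} representatives, and the tame one may have strictly larger $\MS_\bK$. What produces a good representative is the definition itself: $p \notin \MS(F)$ furnishes some $F'\in\Db(\bK_X)$, not necessarily tame, with $p\notin \MS_\bK(F')$, and then $\RG_{\{\psi\ge\psi(x_0)\}}(F')_{x_0}\simeq 0$ over $\bK$ passes to $\Db_{/[1]}(\pt)$ because the relevant functors descend to the quotient. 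Local tameness is still needed, but for the other step --- deducing $F_V\simeq 0$ (and, in (ii), the vanishing of the cone) from the vanishing of its stalks.
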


\begin{proof}
	\noindent(i) Consider the exact triangle $F_U \to F \to F_{\varphi^{-1}(0)}\toone $,
	where $\MS(F_U)\subset N^*(U)^a+\MS(F)$ and
	$(N^*(U)^a+\MS(F))\cap N^*(U)\subset T^*_XX$.
	By \cref{proposition:properties-ms}(ii), we have $\MS(F_{\varphi^{-1}(0)}) \cap N^*(U)\subset T^*_XX$.
	However, for any locally tame $G \in \Db_{/[1]}(X)$ supported on a closed submanifold $N$ of $X$, 
	the set $\MS(G)$ contains $T^*_NX|_{\Supp(G)}$, 
	which intersects with $N^*(U)$ outside 
	the zero-section unless $\Supp(G)=\emptyset$.
	Hence, we obtain $F_{\varphi^{-1}(0)} \simeq 0$.

	\noindent (ii) We obtain the morphism by applying $Ri_*$ to the morphism $i^{-1}F\to Rj'_*j'^{-1}i^{-1}F$, where $i\colon Z\to X$ and $j'\colon U\to Z$ are the inclusions. 
	The cone of $F_Z\to Rj_*j^{-1}F$ is supported on $\varphi^{-1}(0)$.
	By \cite[5.4.8]{KS90} and the locally tameness of $F$, $\MS (F_Z)\cup \MS (Rj_*j^{-1}F) \subset N^*(U)+\MS(F)$
	and hence the cone is $0$ as in (i).
\end{proof}

\begin{proposition}[{cf.\ \cite[Thm.~6.3.1]{KS90}}]\label{proposition:SSopenimm}
	Let $M$ be a manifold.
	Set $U \coloneqq M \times \bR_{<0}, X \coloneqq M\times \bR$ and denote by $j\colon U\to X$ the open embedding. 
	Moreover, let $G\in \Db_{/[1]}(U)$ and assume that there exists an open covering $\cV$ of the open subset $\{(x, u;\xi, \upsilon)\mid \xi \neq 0 \}$ of $T^*X=T^*(M \times \bR)$ such that
	\begin{enumerate}
        \renewcommand{\labelenumi}{$\mathrm{(\arabic{enumi})}$}
        \item $(x,u ;\xi, \upsilon+\lambda )\in V$ for any $V\in \cV$, $(x,u;\xi,\upsilon)\in V$ and $\lambda\in \bR$, 
        \item $G$ as an object of $\Db_{/[1]}(U;\{(x,u;\xi,\tau)\mid \xi\neq 0, u<0\})$ is microlocally tame with respect to $\{ V\cap T^*U \mid V\in\cV\}$.  
    \end{enumerate}
	Then, letting $\overline{\MS(G)}$ denote the closure of $\MS(G)$ in $T^*X$, one has the following.
	\begin{enumerate}
		\item If $\overline{\MS(G)}\cap N^*(U)^a\subset T^*_XX$,
		then $\MS(Rj_*G) \subset \overline{\MS(G)}+ N^*(U)$.
		\item If $\overline{\MS(G)}\cap N^*(U)\subset T^*_XX$,
		then $\MS(Rj_!G) \subset \overline{\MS(G)}+ N^*(U)^a$.
	\end{enumerate}
\end{proposition}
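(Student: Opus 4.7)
The plan is to reduce the statement to the classical [KS90, Thm.~6.3.1] applied over the ring $\bK=\bfk[\varepsilon]/(\varepsilon^2)$ by lifting $G$ locally to a genuine sheaf via the microlocally tame hypothesis, and then patch using the covering $\cV$. I will sketch (i); part (ii) follows by the same argument with reversed signs, or by applying (i) after duality. First, observe that the conclusion is local on $T^*X$. On $\pi^{-1}(U)$ we have $Rj_*G|_U \simeq G$, so the bound is trivial. On the zero-section, $\overline{\MS(G)}+N^*(U)$ contains $T^*_XX|_{\overline{U}}$, so there is nothing to check. On $\pi^{-1}(\{u>0\})$ the sheaf $Rj_*G$ vanishes. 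The real content is therefore the bound at points of the form $(x,0;\xi,\upsilon)$, and since the covering $\cV$ covers $\{\xi\neq 0\}$ it is enough to check the inclusion separately at each $V_\alpha$ and at $\{\xi=0,\upsilon\neq 0\}$.

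For a fixed $V_\alpha\in\cV$, the microlocally tame assumption gives $G\simeq \fraki(G_\alpha)$ in $\Db_{/[1]}(U;V_\alpha\cap T^*U)$ for some $G_\alpha\in\Db(\bfk_U)$; choose a lift $\widetilde G_\alpha:=\mathfrak{e}(G_\alpha)\in\Db(\bK_U)$. Since $\bK$ is free of rank $2$ over $\bfk$ the operations $\otimes_{\bfk_X}\bK_X$ and $Rj_*$ commute, so $\mathfrak{e}(Rj_*G_\alpha)\simeq Rj_*\widetilde G_\alpha$; hence $\fraki$ commutes with $Rj_*$, and the image of $Rj_*G$ in $\Db_{/[1]}(X;V_\alpha)$ agrees with $\fraki(Rj_*G_\alpha)$. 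The classical [KS90, Thm.~6.3.1] applied over the ring $\bK$ (which is valid since that theorem works for any commutative ring of coefficients) yields
\begin{equation*}
    \MS_{\bK}(Rj_*\widetilde G_\alpha) \subset \overline{\MS_{\bK}(\widetilde G_\alpha)} + N^*(U),
\end{equation*}
under the assumption $\overline{\MS_{\bK}(\widetilde G_\alpha)}\cap N^*(U)^a\subset T^*_XX$, which is inherited from the hypothesis on $\overline{\MS(G)}$ by the identification $\MS_{\bK}(\widetilde G_\alpha)=\MS(G_\alpha)$ restricted to $V_\alpha\cap T^*U$. Condition~(1) on $\cV$ is precisely the invariance of $V_\alpha$ under the vertical translations $(x,u;\xi,\upsilon)\mapsto(x,u;\xi,\upsilon+\lambda)$, which ensures that the fiber-sum with $N^*(U)$ stays inside $V_\alpha$; consequently the local bound detects the full microsupport of $Rj_*G$ on $V_\alpha$, and descending to the orbit category via the definition $\MS=\bigcap_{F'\simeq F}\MS_{\bK}(F')$ gives the desired inclusion on $V_\alpha$.

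Finally, on the region $\{\xi=0,\upsilon\neq 0\}$ (not covered by $\cV$), the required inclusion is that any such covector in $\MS(Rj_*G)$ lies in $\overline{\MS(G)}+N^*(U)$. This follows from the involutivity-type propagation along the $u$-direction: the restriction of $Rj_*G$ to a transversal slice $\{x=x_0\}$ is controlled by a one-dimensional version of the microlocal cut-off lemma, and the hypothesis $\overline{\MS(G)}\cap N^*(U)^a\subset T^*_XX$ rules out propagation past $u=0$ in the wrong direction. The main obstacle is the second step: one must make the passage between $\Db_{/[1]}(X)$ and $\Db(\bK_X)$ compatible with both $Rj_*$ and microsupport closures, and verify that the local lifts $\widetilde G_\alpha$ on different $V_\alpha$'s give compatible bounds. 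The microlocally tame hypothesis is tailored exactly so that this lift exists on each $V_\alpha$, and condition~(1) on $\cV$ makes the fiber-sum bound respect the covering, so the patching poses no genuine difficulty beyond bookkeeping.
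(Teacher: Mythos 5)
Your overall strategy --- reduce to the classical \cite[Thm.~6.3.1]{KS90} over $\bK$ via the microlocally tame hypothesis, patch over $\cV$, and handle $\{\xi=0\}$ separately --- is reasonable, but the step carrying all the weight is asserted rather than proved. You claim that the microlocal isomorphism $G\simeq\fraki(G_\alpha)$ in $\Db_{/[1]}(U;V_\alpha\cap T^*U)$ yields $Rj_*G\simeq\fraki(Rj_*G_\alpha)$ in $\Db_{/[1]}(X;V_\alpha)$ because ``$\fraki$ commutes with $Rj_*$.'' That functorial commutation is true but not sufficient: the isomorphism $G\simeq\fraki(G_\alpha)$ holds only after localization, so it is realized by a zig-zag whose cone $C\in\Db_{/[1]}(U)$ merely satisfies $\rMS(C)\cap V_\alpha=\emptyset$. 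To conclude you need $\rMS(Rj_*C)\cap V_\alpha=\emptyset$, but $Rj_*$ is not a microlocal operation --- bounding $\MS(Rj_*C)$ in terms of $\MS(C)$ is precisely the kind of estimate the proposition is trying to establish, so this step is either circular or requires a fresh argument. The difficulty is compounded by the fact that in the orbit category $\MS(C)$ is an \emph{intersection} over all representatives in $\Db(\bK_U)$; having $\rMS(C)\cap V_\alpha=\emptyset$ does not by itself produce a single representative $C'$ with $\MS_{\bK}(C')\cap V_\alpha=\emptyset$ to which the classical theorem could be applied. Condition~(1) (vertical invariance of each $V_\alpha$) is exactly the hypothesis meant to break this deadlock, but you invoke it only to remark that the fiber-sum with $N^*(U)$ stays inside $V_\alpha$, which controls the contribution modeled by $G_\alpha$, not the cone.

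The second gap is the region $\{\xi=0,\upsilon\neq 0\}$, which $\cV$ does not cover. The appeal to ``involutivity-type propagation'' and a ``one-dimensional microlocal cut-off lemma'' is not an argument. What is actually needed there is the non-characteristic hypothesis $\overline{\MS(G)}\cap N^*(U)^a\subset T^*_XX$: a covector $(x_0,0;0,\upsilon_0)$ with $\upsilon_0>0$ cannot lie in $\overline{\MS(G)}+N^*(U)$ precisely because of that hypothesis, so one must show it is not in $\MS(Rj_*G)$ at all, and this has to be argued directly (for instance via the characterization of $\MS$ through $\RG_{\{\varphi\ge 0\}}(\cdot)_{x_0}$ together with the fact that $Rj_*G$ near $(x_0,0)$ is determined by $G$ on $U$). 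As written, neither gap is filled, and together they constitute the actual mathematical content of the proposition.
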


\subsection{Composition of sheaves}\label{subsection:composition}

We recall the operation called the composition of sheaves.

For $i=1,2,3$, let $X_i$ be a manifold.
We write $X_{ij} \coloneqq X_i \times X_j$ and $X_{123} \coloneqq X_1 \times X_2 \times X_3$ for short.
We denote by $q_{ij}$ the projection $X_{123} \to X_{ij}$.
Similarly, we denote by $p_{ij}$ the projection $T^*X_{123} \to T^*X_{ij}$.
We also denote by $p_{12^a}$ the composite of $p_{12}$ and the antipodal map on $T^*X_2$.

Let $A \subset T^*X_{12}$ and $B \subset T^*X_{23}$.
We set
\begin{equation}\label{equation:compset}
    A \circ B
     \coloneqq 
    p_{13}(p_{12^a}^{-1}A \cap p_{23}^{-1}B) \subset T^*X_{13}.
\end{equation}
We define the composition of sheaves as follows:
\begin{equation}
\begin{aligned}
    \underset{X_2}{\circ} \colon \Db_{/[1]}(X_{12}) \times \Db_{/[1]}(X_{23}) 
    & \to \Db_{/[1]}(X_{13}) \\
    (K_{12},K_{23}) 
    & \mapsto K_{12} \underset{X_2}{\circ} K_{23}
     \coloneqq 
    R{q_{13}}_!\,(q_{12}^{-1}K_{12}\otimes q_{23}^{-1}K_{23}).
\end{aligned}
\end{equation}
If there is no risk of confusion, we simply write $\circ$ instead of
$\underset{X_2}{\circ}$.
By \cref{proposition:SSpushpull}(i) and~(ii) and \cref{proposition:SStenshom}, we have the following.

\begin{proposition}\label{proposition:SScomp}
	Let $K_{ij} \in \Db_{/[1]}(X_{ij})$ and set $\Lambda_{ij} \coloneqq \MS(K_{ij}) \subset T^*X_{ij} \ (ij=12,23)$.
	Assume
	\begin{enumerate}
		\renewcommand{\labelenumi}{$\mathrm{(\arabic{enumi})}$}
		\item $q_{13}$ is proper on $q_{12}^{-1}\Supp(K_{12}) \cap q_{23}^{-1}\Supp(K_{23})$,
		\item $p_{12^a}^{-1}\Lambda_{12} \cap p_{23}^{-1}\Lambda_{23} \cap (T^*_{X_1}X_1 \times T^*X_2 \times T^*_{X_3}X_3) \subset T^*_{X_{123}}X_{123}$.
	\end{enumerate}
	Then
	\begin{equation}
	\MS(K_{12} \underset{X_2}{\circ} K_{23}) \subset
	\Lambda_{12} \circ \Lambda_{23}.
	\end{equation}
\end{proposition}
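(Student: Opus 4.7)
The plan is to derive the microsupport bound on the composition by successively applying the three operations that define it, namely the non-characteristic pullback $q_{ij}^{-1}$, the tensor product, and the proper pushforward $R{q_{13}}_!$. All the required tools are already available in the orbit-category setting as \cref{proposition:SSpushpull} and \cref{proposition:SStenshom}.

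First, since $q_{12}$ and $q_{23}$ are smooth submersions they are non-characteristic for any object, so \cref{proposition:SSpushpull}(ii) yields
\[
\MS(q_{12}^{-1}K_{12}) \subset \Lambda_{12} \times T^*_{X_3}X_3, \qquad \MS(q_{23}^{-1}K_{23}) \subset T^*_{X_1}X_1 \times \Lambda_{23}.
\]
Unpacking definitions, a point of $\MS(q_{12}^{-1}K_{12}) \cap \MS(q_{23}^{-1}K_{23})^a$ satisfies $\xi_1 = \xi_3 = 0$, $(x_1, x_2; 0, \xi_2) \in \Lambda_{12}$ and $(x_2, x_3; -\xi_2, 0) \in \Lambda_{23}$, which coincides with the intersection in hypothesis~(2) up to the substitution $\xi_2 \leftrightarrow -\xi_2$ coming from the antipode built into $p_{12^a}$. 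Since the zero-section condition is invariant under this substitution, hypothesis~(2) is precisely the hypothesis of \cref{proposition:SStenshom}(i) and yields
\[
\MS(q_{12}^{-1}K_{12} \otimes q_{23}^{-1}K_{23}) \subset (\Lambda_{12} \times T^*_{X_3}X_3) + (T^*_{X_1}X_1 \times \Lambda_{23}).
\]

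By hypothesis~(1), $q_{13}$ is proper on the support of this tensor product, so $R{q_{13}}_!$ agrees with $R{q_{13}}_*$ on it, and \cref{proposition:SSpushpull}(i) bounds $\MS(K_{12} \underset{X_2}{\circ} K_{23})$ by $q_{13\pi}q_{13d}^{-1}$ of the right-hand side above. The proof is then concluded by an explicit identification: a typical element of the fiberwise sum has the form $(x_1, x_2, x_3; \xi_1, \xi_2^{(12)} + \xi_2^{(23)}, \xi_3)$ with $(x_1, x_2; \xi_1, \xi_2^{(12)}) \in \Lambda_{12}$ and $(x_2, x_3; \xi_2^{(23)}, \xi_3) \in \Lambda_{23}$; the preimage under $q_{13d}$ imposes $\xi_2^{(12)} + \xi_2^{(23)} = 0$; and $q_{13\pi}$ eliminates $x_2$. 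Setting $\xi_2 := -\xi_2^{(12)}$ recognises the output as $p_{13}(p_{12^a}^{-1}\Lambda_{12} \cap p_{23}^{-1}\Lambda_{23}) = \Lambda_{12} \circ \Lambda_{23}$ from~(\ref{equation:compset}), which is the claimed bound.

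The only delicate part of the argument is book-keeping the antipodal signs in the $T^*X_2$ factor that distinguish $p_{12^a}$ from $p_{12}$, and matching them against the antipode appearing inside the tensor-product hypothesis of \cref{proposition:SStenshom}(i); apart from this, the proof is purely formal and is the orbit-category counterpart of \cite[Prop.~5.4.4]{KS90}, obtained by reading the classical argument through $\fraki$ and the orbit-category versions of the three functorial bounds stated above.
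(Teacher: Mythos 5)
Your proof is correct and follows exactly the route the paper indicates—the paper states only that the result follows from \cref{proposition:SSpushpull} and \cref{proposition:SStenshom}, and your argument spells out that chain: pullback bound along the submersions $q_{12}, q_{23}$, the tensor bound (whose antipodal hypothesis you correctly match to hypothesis~(2)), and the proper pushforward bound, followed by the pointwise identification of $q_{13\pi}q_{13d}^{-1}$ of the fiberwise sum with $\Lambda_{12}\circ\Lambda_{23}$. No gaps; the sign bookkeeping around $p_{12^a}$ is handled correctly.
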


\subsection{\texorpdfstring{$\mu hom$}{mu hom} functor}

The bifunctor $\mu hom$ is originally defined in \cite[\S4.4]{KS90} for the usual derived category, and then generalized to the case of the triangulated orbit category by \cite{Gu12,Gu19} as $\mu hom \colon \Db_{/[1]}(X)^{\op} \times \Db_{/[1]}(X) \to \Db_{/[1]}(T^*X)$.
Here we recall its properties.

\begin{proposition}[{cf.\ \cite[Cor.~5.4.10 and
		Cor.~6.4.3]{KS90}}]\label{proposition:muhomss}
	Let $F,G \in \Db_{/[1]}(X)$ be microlocally tame.
	Then
	\begin{align}
	\Supp(\mu hom(F,G)) & \subset \MS(F) \cap \MS(G), \\
	\MS(\mu hom(F,G)) & \subset -\bfh^{-1}(C(\MS(G), \MS(F))).
	\end{align}
	(See \cref{subsection:geometric} for $C(S_1,S_2)$ and $\bfh \colon T^*T^*X \simto T T^*X$.) 
\end{proposition}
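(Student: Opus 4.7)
The plan is to reduce both inclusions to their classical analogues in \cite[Cor.~5.4.10 and Cor.~6.4.3]{KS90} by exploiting the microlocal tameness of $F$ and $G$. Since the support and microsupport of $\mu hom(F,G)$ are both detected locally on $T^*X$ and $T^*T^*X$ respectively, it suffices to cover these spaces by small opens on which $F$ and $G$ become isomorphic, within $\Db_{/[1]}(X;V)$ for appropriate $V\subset T^*X$, to objects of the form $\fraki(F'), \fraki(G')$ with $F',G' \in \Db(\bfk_X)$. The key compatibility to establish first is $\mu hom(\fraki(F'),\fraki(G')) \simeq \fraki(\mu hom(F',G'))$, which should follow since the orbit-category $\mu hom$ is defined through the same six-operation construction as the classical $\mu hom$, and these operations commute with $\fraki$ as emphasized in the preliminaries.

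For the support inclusion, I would fix $p \in T^*X$ outside $\MS(F)\cap \MS(G)$, pick an open neighborhood $V$ of $p$ disjoint from, say, $\MS(F)$, and shrink $V$ so that microlocal tameness provides $F \simeq \fraki(F')$ and $G \simeq \fraki(G')$ in $\Db_{/[1]}(X;V)$ with $p\notin \MS(F')$. The classical support bound $\Supp(\mu hom(F',G')) \subset \MS(F') \cap \MS(G')$, combined with the compatibility transferred through $\fraki$, then forces $\mu hom(F,G)|_V \simeq 0$. For the microsupport inclusion, the same local reduction combined with \cite[Cor.~6.4.3]{KS90} applied to the representatives $F', G'$ yields the normal-cone bound $\MS(\mu hom(F',G')) \subset -\bfh^{-1}(C(\MS(G'),\MS(F')))$ on each piece of a suitable covering of $T^*T^*X$, and the definition of $\MS$ in $\Db_{/[1]}$ as an intersection over $\Db(\bK_X)$-representatives transfers this to $\mu hom(F,G)$.

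The main obstacle I expect is the bookkeeping around the intersection defining $\MS$ in $\Db_{/[1]}$ and the local (in $T^*X$) nature of microlocal tameness. I plan to handle this by observing that whenever one representative realizes the KS90 bound on a given open piece, the orbit-category microsupport, being contained in that representative's microsupport, inherits the bound; iterating over a cover then yields the global statement. A subsidiary point is verifying compatibility on overlaps of the covering opens, where different local choices of $(F',G')$ could in principle produce different bounds, but since the normal cone $C(\cdot,\cdot)$ is a pointwise construction and the desired conclusion is a set-theoretic inclusion, the union of the local bounds still lies in the target set $-\bfh^{-1}(C(\MS(G),\MS(F)))$.
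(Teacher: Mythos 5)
The paper offers no written proof of this proposition: it is marked ``cf.\ \cite[Cor.~5.4.10 and Cor.~6.4.3]{KS90}'', and the remark following \cref{definition:microlocally-tame} explicitly says the (micro)local tameness notions are introduced so that such statements ``can be deduced directly from the corresponding ones in \cite{KS90}''. Your reduction strategy is therefore exactly the intended one, and for the first inclusion the argument is in fact simpler than you sketch: if $p\notin\MS(F)$ then $F$ is already zero in $\Db_{/[1]}(X;V)$ for small $V\ni p$, so $\mu hom(F,G)|_V\simeq 0$ with no use of tameness.

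There is, however, one step in your treatment of the second inclusion that is not actually justified by what you wrote. After invoking \cite[Cor.~6.4.3]{KS90} on the local representatives, you obtain the bound $\MS(\mu hom(F',G'))\subset -\bfh^{-1}(C(\MS(G'),\MS(F')))$ in terms of the \emph{classical} microsupports of $F',G'$, and the target involves the \emph{orbit-category} microsupports $\MS(F),\MS(G)$. Since $C(\cdot,\cdot)$ is monotone in both arguments, passing to the target requires $\MS(F')\cap V_\alpha\subset\MS(F)\cap V_\alpha$ (and likewise for $G$). But the isomorphism $F\simeq\fraki(F')$ in $\Db_{/[1]}(X;V_\alpha)$ only gives $\MS(F)\cap V_\alpha=\MS(\fraki(F'))\cap V_\alpha\subset\MS(F')\cap V_\alpha$, which is the \emph{opposite} inclusion and a priori strict, since the orbit-category $\MS$ is defined as an intersection over $\Db(\bK_X)$-representatives. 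Your paragraph on overlaps addresses a different concern (consistency of the local choices) and silently asserts that each local bound lands inside $-\bfh^{-1}(C(\MS(G),\MS(F)))$, which is precisely the step in question. The missing ingredient is the fact (used implicitly throughout Guillermou's framework) that $\fraki$ preserves microsupports: $\MS(\fraki(F'))=\MS(F')$ for $F'\in\Db(\bfk_X)$; one inclusion is immediate, and the reverse follows because $H^*$ on $\Db_{/[1]}$ recovers $\bigoplus_{n}H^n$ and hence detects nontriviality of the microlocal germs. Once this equality is in hand, $\MS(F')\cap V_\alpha=\MS(F)\cap V_\alpha$, and your local-to-global argument closes as intended.
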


If $F,G \in \Db_{/[1]}(X;\Omega)$ are microlocally tame, $\mu hom(F,G)|_\Omega \in \Db_{/[1]}(\Omega)$ is locally tame. 

The functor $\mu hom$ gives an enrichment in $\Db_{/[1]}(T^*X)$ to $\Db_{/[1]}(X)$. 
For each $F\in \Db_{/[1]}(X)$, $\id_F \in \Hom(F,F) \simeq \Hom(\bfk_{T^*X},\mu hom(F,F))$ induces a morphism 
\begin{equation}\label{equation:idmu}
    \id_F^{\mu}\colon \bfk_{T^*X}\to \mu hom(F,F).
\end{equation}
For each $F,G,H \in \Db_{/[1]}(X)$, a composition morphism 
\begin{equation}
	\circ_{F,G,H}^\mu \colon \mu hom (G,H)\otimes \mu hom (F,G) \to \mu hom(F,H)
\end{equation}
is defined. 
This composition is unital and associative. 

For an open subset $\Omega$ of $T^*X$, the restriction of $\mu hom$ to $\Omega$ also gives an enrichment in $\Db_{/[1]}(\Omega)$ to $\Db_{/[1]}(X;\Omega)$. 

\begin{definition}\label{definition:cat-muhom}
    Let $\Omega$ be an open subset of $T^*X$.
    \begin{enumerate}
        \item For $F,G \in \Db_{/[1]}(X;\Omega)$, define $\Hom_{\Omega}^\mu(F,G) \coloneqq H^*\RG(\Omega;\mu hom(F,G))$.
        One also defines a new category $\Dmu_{/[1]}(X;\Omega)$ as follows:
        \begin{equation}
        \begin{aligned}
            \Ob(\Dmu_{/[1]}(X;\Omega)) & \coloneqq \Ob(\Db_{/[1]}(X;\Omega)), \\
            \Hom_{\Dmu_{/[1]}(X;\Omega)}(F,G) & \coloneqq \Hom^\mu_\Omega(F,G) \ \text{for $F,G \in \Ob( \Dmu_{/[1]}(X;\Omega) )$}.
        \end{aligned}
        \end{equation}
        \item For $F,G\in \Db_{/[1]}(X;\Omega)$, $m_{F,G}\colon\Hom_{\Db_{/[1]}(X;\Omega)}(F,G) \to \Hom^\mu_\Omega(F,G)$ denotes the natural map, which induces a functor from $\Db_{/[1]}(X;\Omega)$ to $\Dmu_{/[1]}(X;\Omega)$.
        \item For $v\in \Hom_{\Db_{/[1]}(X;\Omega)}(F,G)$, one denotes by $v^\mu$ the corresponding morphism $\bfk_\Omega\to \mu hom (F,G)|_\Omega$ in $\Db_{/[1]}(\Omega)$. 
    \end{enumerate}
\end{definition}

Note that the notation in (iii) is compatible with \eqref{equation:idmu}.
We also use the notation $\End_{\Omega}^{\mu}(F) \coloneqq \Hom_{\Omega}^\mu(F,F)$ for $F \in \Db_{/[1]}(X;\Omega)$.

\subsection{Simple sheaves}\label{subsection:simple}

Let $\Lambda$ be a locally closed conic Lagrangian submanifold of $\rT X$ and $p \in \Lambda$.
Simple sheaves along $\Lambda$ at $p$ are defined in \cite[Def.~7.5.4]{KS90}, which we recall below.
For a $C^\infty$-function $\varphi \colon X \to \bR$ such that $\varphi(\pi(p))=0$ and $\Gamma_{d\varphi} \coloneqq \{ (x;d\varphi(x)) \mid x \in X \}$ intersects $\Lambda$ transversally at $p$, one can define $\tau_{\varphi}=\tau_{p,\varphi} \in \bZ$ 
(see \cite[\S7.5 and A.3]{KS90}).

\begin{proposition}[{\cite[Prop.~7.5.3]{KS90}}]\label{proposition:maslovshft}
For $i=1,2$, let $\varphi_i \colon X \to \bR$ be a
$C^\infty$-function such that $\varphi_i(\pi(p))=0$ and
$\Gamma_{d\varphi_i}$ intersects $\Lambda$ transversally at $p$.
Let $F \in \Db(\bfk_X)$ and assume that $\MS(F) \subset \Lambda$ in a
neighborhood of $p$.
Then
\begin{equation}
\RG_{\{\varphi_1 \ge 0\}}(F)_{\pi(p)}
\simeq
\RG_{\{\varphi_2 \ge
0\}}(F)_{\pi(p)}\left[\tfrac{1}{2}(\tau_{\varphi_2}-\tau_{\varphi_1})\right].
\end{equation}
\end{proposition}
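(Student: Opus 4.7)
The plan is to reduce the statement to a linear normal form via a homogeneous quantized contact transformation, compute both stalks on the model side, and then identify the resulting difference of shifts with $\frac{1}{2}(\tau_{\varphi_2}-\tau_{\varphi_1})$.

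First I would shrink to a conic neighborhood of $p$ and work in the microlocalized category $\Db(\bfk_X;p)$. Since $\Lambda$ is a smooth conic Lagrangian with $p \in \rT X$, there exists a homogeneous symplectic transformation $\chi$ defined near $p$ that maps $\Lambda$ onto an open piece of the zero section of $T^*\bR^n$ near $\chi(p)$. By the theorem on quantization of contact transformations (\cite[Thm.~7.2.1]{KS90}), $\chi$ lifts to an equivalence of microlocalized categories, sending $F$ to some $F'$ with $\MS(F')$ contained in the zero section near $\chi(p)$. The classical analogue of \cref{proposition:properties-ms}(iii) then yields $F' \simeq L_{\bR^n}$ for some $L \in \Db(\bfk)$, while each $\Gamma_{d\varphi_i}$ is transported to the graph $\Gamma_{d\psi_i}$ of a smooth function $\psi_i$ having a nondegenerate critical point at $x_0:=\pi(\chi(p))$.

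Next, the ordinary Morse lemma gives
\begin{equation}
    \RG_{\{\psi_i \ge 0\}}(L_{\bR^n})_{x_0} \simeq L[-\nu(\psi_i)],
\end{equation}
where $\nu(\psi_i)$ is the Morse index of $\psi_i$ at $x_0$. Since $\chi$ intertwines the stalk operation $\RG_{\{\varphi \ge 0\}}(-)_{\pi(p)}$ with its counterpart on the model side up to a single global shift that depends on $\chi$ but not on the test function, the two stalks $\RG_{\{\varphi_1 \ge 0\}}(F)_{\pi(p)}$ and $\RG_{\{\varphi_2 \ge 0\}}(F)_{\pi(p)}$ differ exactly by the shift $\nu(\psi_1)-\nu(\psi_2)$.

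Finally, I would identify $\nu(\psi_1)-\nu(\psi_2)$ with $\frac{1}{2}(\tau_{\varphi_2}-\tau_{\varphi_1})$ by unwinding the definition of $\tau_{p,\varphi}$ from \cite[App.~A.3]{KS90} under $\chi$. I expect this last step to be the main obstacle: $\tau_\varphi$ is a Maslov-type integer built from the triple $T_p\Lambda$, $T_p\Gamma_{d\varphi}$, and a chosen vertical Lagrangian, and one has to verify that applying the same $\chi$ to both $\varphi_1$ and $\varphi_2$ reduces the symplectic formula for $\tau_{\varphi_2}-\tau_{\varphi_1}$ to twice the Morse-index difference $\nu(\psi_2)-\nu(\psi_1)$. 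The factor $\frac{1}{2}$ reflects that $\tau_\varphi$ is a signature (positive minus negative inertia) while $\nu$ records only the negative inertia; the positive part and the ambient dimension depend on $\chi$ but cancel in the difference. With this linear-symplectic identity in hand, the desired isomorphism follows.
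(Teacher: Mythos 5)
The paper gives no proof of this statement: it is cited directly from \cite[Prop.~7.5.3]{KS90}, so the only question is whether your sketch is a correct derivation.

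The overall strategy you describe — reduce by a homogeneous quantized contact transformation to a linear model, compute the two stalks there, and match the shift against the inertia index — is indeed the shape of the argument in \cite{KS90}. But there is a concrete error in the first step that invalidates the rest. You propose a \emph{homogeneous} symplectic transformation $\chi$ near $p$ taking $\Lambda$ onto a piece of the zero section of $T^*\bR^n$ near $\chi(p)$. This is impossible: a homogeneous symplectic transformation is defined on $\rT X$ and carries it into $\rT \bR^n$, i.e.\ away from the zero section, and since $p\in\Lambda\subset\rT X$ we must have $\chi(p)\in\rT\bR^n$, so $\chi(\Lambda)$ cannot sit inside $T^*_{\bR^n}\bR^n$ near $\chi(p)$. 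The correct local normal form for a smooth conic Lagrangian germ through a point of $\rT X$ under a homogeneous contact transformation is the conormal $T^*_N Y$ to a hypersurface $N\subset Y$ (or, after a further choice, $N$ a linear hyperplane), not the zero section. Consequently the reduction to $F'\simeq L_{\bR^n}$ via the analogue of \cref{proposition:properties-ms}(iii) does not apply; what one gets instead is that $F'$ is, microlocally near $\chi(p)$, a shift of a constant sheaf on a closed half-space $Z$ with $\partial Z=N$. The transversality hypothesis then does not say that $\psi_i$ has a nondegenerate critical point on all of $\bR^n$; it says roughly that $d\psi_i(\pi(\chi(p)))$ is conormal to $N$ and the Hessian of $\psi_i|_N$ there is nondegenerate. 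So the stalk computation is not the plain Morse lemma you invoke, but a computation of $\RG_{\{\psi_i\ge 0\}}(\bfk_Z)$ at a boundary point of $Z$ in terms of the Morse index of $\psi_i|_N$.

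Beyond the normal-form error, the step you flag as ``the main obstacle'' — relating the difference of those Morse indices to $\tfrac12(\tau_{\varphi_2}-\tau_{\varphi_1})$, and checking that all dependence on the choice of $\chi$ cancels — is precisely where most of the content of \cite[Prop.~7.5.3]{KS90} lives (using the cocycle and signature properties of Kashiwara's inertia index from \cite[App.~A.3]{KS90}); leaving it as an expectation rather than an argument means the core of the proposition is not actually established. There is also a sign slip in the model computation: if $\RG_{\{\psi_i\ge 0\}}(L_{\bR^n})_{x_0}\simeq L[-\nu(\psi_i)]$ then the relative shift between the two stalks is $\nu(\psi_2)-\nu(\psi_1)$, not $\nu(\psi_1)-\nu(\psi_2)$. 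This is cosmetic, but with the factor $\tfrac12$ and the signature convention in play, signs need to be tracked carefully when matching with $\tau_{\varphi_2}-\tau_{\varphi_1}$.
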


\begin{definition}[{\cite[Def.~7.5.4]{KS90}}]
Let $F \in \Db(\bfk_X)$ such that $\MS(F) \subset \Lambda$ in a neighborhood of $p$.
Then $F$ is said to be \emph{simple} if $\RG_{\{\varphi \ge 0\}}(F)_{\pi(p)} \simeq \bfk[d]$ for some $d \in \bZ$, 
for some (hence for any) $C^\infty$-function $\varphi$ such that $\varphi(\pi(p))=0$ and $\Gamma_{d\varphi}$ intersects $\Lambda$ transversally at $p$.
If $F$ is simple at all points of $\Lambda$, one says that $F$ is
\emph{simple along $\Lambda$}.
\end{definition}

\begin{definition}
	An object $F \in \Db_{/[1]}(\bfk_X)$ is said to be \emph{simple} along $\Lambda$ if for each $p\in \Lambda$ there exist an open neighborhood $U$ of $p$ in $\rT X$ and $G\in \Db(X;U)$ that is simple along $\Lambda \cap U$ such that $\fraki(G)\simeq F$ in $\Db_{/[1]}(X;U)$.
\end{definition}

One can prove that if $F \in \Db_{/[1]}(X)$ is simple along $\Lambda$, then
$\id_{F}^\mu |_{\Lambda} \colon \bfk_{\Lambda}\to \mu hom(F,F)|_{\Lambda}$ is an isomorphism.

\begin{lemma}[{cf.\ \cite[Lem.~6.14]{Gu12}}]\label{lemma:simpleclean-orbit}
    Let $\Lambda_1$ and $\Lambda_2$ be two conic Lagrangian submanifolds of $T^*X$ that intersect cleanly.
    For $i=1,2$, let $F_i \in \Db _{/[1], (\Lambda_i)}(X)$ be simple along $\Lambda_i$. 
    Assume that there exists an open neighborhood $\Omega_i$ of $\Lambda_i$ for $i=1,2$ and an open covering $\cU$ of $\Omega_1\cap \Omega_2$ such that
    \begin{enumerate}
        \renewcommand{\labelenumi}{$\mathrm{(\arabic{enumi})}$}
        \item each connected component of $\Lambda_1 \cap \Lambda_2$ is contained in some element of $\cU$,
        \item $F_1$ and $F_2$ are microlocally tame with respect to $\cU$ as objects of $\Db_{/[1]}(X;\Omega_1\cap \Omega_2)$. 
    \end{enumerate}
	Then $\mu hom(F_1,F_2)|_{\Lambda_1 \cap \Lambda_2} \simeq \bfk_{\Lambda_1 \cap \Lambda_2}$. 
\end{lemma}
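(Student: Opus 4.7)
The plan is to reduce to the analogue of this statement for the usual bounded derived category, namely \cite[Lem.~6.14]{Gu12}, by producing local simple representatives in $\Db(\bfk_X)$ and transporting the conclusion along $\fraki$. Since the desired isomorphism is local on $\Lambda_1 \cap \Lambda_2$, I fix a connected component $C$ of $\Lambda_1 \cap \Lambda_2$ and use hypothesis~(1) to choose $U \in \cU$ with $C \subset U$. By hypothesis~(2), $F_1$ and $F_2$ are microlocally tame on $U$, so there exist $G_i \in \Db(X;U)$ with $\fraki(G_i) \simeq F_i$ in $\Db_{/[1]}(X;U)$. The definition of simplicity in the orbit category already supplies a simple representative near every point of $\Lambda_i$, and after possibly shrinking $U$ around $C$ we can arrange that the same $G_i$ is simple along $\Lambda_i \cap U$.

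With $G_1$ and $G_2$ now simple along $\Lambda_1 \cap U$ and $\Lambda_2 \cap U$ respectively, the clean-intersection hypothesis lets us apply \cite[Lem.~6.14]{Gu12} directly, which produces an isomorphism $\mu hom(G_1,G_2)|_{C} \simeq \bfk_{C}$ in $\Db(T^*X;U)$. Because the six operations and the specialization functor used to build $\mu hom$ are extended to the orbit category in \cite{Gu12,Gu19} in such a way that they commute with $\fraki$, the object $\mu hom(F_1,F_2)|_C = \mu hom(\fraki G_1, \fraki G_2)|_C$ is identified with $\fraki(\mu hom(G_1,G_2))|_C$, and we deduce $\mu hom(F_1,F_2)|_{C} \simeq \bfk_{C}$ in $\Db_{/[1]}(T^*X)$.

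It remains to glue these component-wise isomorphisms into a single isomorphism over $\Lambda_1 \cap \Lambda_2$. The components are disjoint, so the only coherence issue is uniqueness of the isomorphism $\bfk_C \simto \bfk_C$. This is automatic: working over $\bfk = \bF_2$, the only nonzero element is $1$, so the local isomorphisms are canonical and glue with no ambiguity. The main obstacle I expect is the upgrade in the first paragraph, namely verifying that the microlocally tame representative $G_i$ on $U$ can simultaneously be chosen simple along $\Lambda_i \cap U$; this requires carefully tracking how the microlocal stalk condition defining simplicity behaves under passage to $\Db(\bK_X)/\perf(\bK_X)$ and back, while the subsequent transfer across $\fraki$ and the gluing are essentially formal.
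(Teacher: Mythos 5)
The paper itself states this lemma with only the reference ``cf.\ \cite[Lem.~6.14]{Gu12}'' and gives no proof, so you are supplying an argument the authors omit; your reduction-via-$\fraki$ strategy matches the authors' declared purpose for introducing microlocal tameness (``we can deduce the statements in this section directly from the corresponding ones in [KS90]''), and I believe it is the intended argument.

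The one step you flag as a worry -- that the microlocally tame representative $G_i$ on $U$ is simultaneously simple along $\Lambda_i\cap U$ -- is in fact automatic, not something you must arrange, and you can close this gap as follows. At a point $p\in\Lambda_i\cap U$, simplicity of $F_i$ gives a simple $G_{i,p}$ with $\fraki(G_{i,p})\simeq F_i\simeq\fraki(G_i)$ in $\Db_{/[1]}(X;U\cap U_p)$. Near a smooth Lagrangian point the microlocal stalk functor $\mu_p(-)=\RG_{\{\varphi\ge 0\}}(-)_{\pi(p)}$ commutes with $\fraki$, so $\fraki(\mu_p G_i)\simeq\fraki(\mu_p G_{i,p})\simeq\fraki(\bfk[d])\simeq\fraki(\bfk)$ in $\Db_{/[1]}(\pt)$; applying $H^*$ (an equivalence $\Db_{/[1]}(\pt)\simeq\Module(\bfk)$ with $H^*\fraki V=\bigoplus_n H^n V$) forces $\bigoplus_n H^n(\mu_p G_i)\simeq\bfk$, hence $\mu_p G_i\simeq\bfk[d']$ and $G_i$ is simple at $p$. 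Two small imprecisions worth fixing: in $\Db(\bfk_X)$ the clean-intersection lemma gives $\mu hom(G_1,G_2)|_C\simeq\bfk_C[d_C]$ for some integer $d_C$ depending on the component (a Maslov-type shift, plus triviality of the rank-one local system because $\bfk=\bF_2$), and the shift only disappears after applying $\fraki$; and the final assembly over components is not a ``coherence'' question at all -- since the $C$'s are disjoint closed subsets, a sheaf on $\Lambda_1\cap\Lambda_2$ is just the direct sum of its restrictions, so the component-wise isomorphisms trivially give the global one, with no role for the cardinality of $\bfk^{\times}$.
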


\section{Tamarkin category and distance for sheaves}\label{section:Tamarkincat}

In this section, we introduce a modified version of Tamarkin category \cite{Tamarkin} and the translation distance \cite{AI20}. 
Since proofs for the results in this section are almost the same as those of \cite{GS14,AI20}, we omit the details here and discuss more precisely in \Cref{section:tamarkin-precise}.

From now on, until the end of this paper, let $M$ be a non-empty connected manifold
without boundary.
Recall also that $\bfk$ denotes the field $\bF_2=\bZ/2\bZ$.

\subsection{Definition of Tamarkin category}

In this subsection, we define a modified version of Tamarkin category $\cD^P(M)_\theta$, which is a crucial tool to give a better estimate of displacement energy.
For the original definition, see \cite{Tamarkin} (see also \cite{GS14}).
As mentioned in \cref{remark:intro-modification}, the modifications are the following threefold:
\begin{enumerate}
    \item replacing the additive variable space $\bR$ with $\Stheta=\bR/\theta \bZ$ for some $\theta \in \bR_{\ge 0}$,
    \item adding a parameter manifold $P$,
    \item using the triangulated orbit category instead of the usual derived category.
\end{enumerate}

Let $\theta\in\bR_{\geq 0}$ and set $\Stheta \coloneqq \bR /\theta \bZ$. 
Note that $\Stheta=\bR$ when $\theta=0$. 
We denote the image of $t\in \bR$ under the quotient map $\bR\to \Stheta$ by $[t]$ or simply $t$. 
Moreover, let $P$ be a manifold.
Denote by $(x;\xi)$ a local homogeneous coordinate system on $T^*M$,
by $(y;\eta)$ that on $T^*P$, and by $(t;\tau)$ the homogeneous
coordinate system on $T^*\Stheta$ and $T^*\bR$.
We define maps $\tilde{q}_1,\tilde{q}_2,s_\theta \colon M \times P \times \Stheta \times \Stheta \to M \times P \times \Stheta$ by
\begin{equation}
\begin{aligned}
\tilde{q}_1(x,y,t_1,t_2) & = (x,y,t_1), \\
\tilde{q}_2(x,y,t_1,t_2) & = (x,y,t_2), \\
s_\theta(x,y,t_1,t_2) & =(x,y,t_1+t_2).
\end{aligned}
\end{equation}
If there is no risk of confusion, we simply write $s$ for $s_\theta$.
We also set
\begin{equation}
\begin{aligned}
i \colon M \times P \times \Stheta \to M \times P \times \Stheta, \ (x,y,t)
\longmapsto (x,y,-t),\\
\ell\colon M \times P \times \bR \to M \times P \times \Stheta, \ (x,y,t)
\longmapsto (x,y,[t]).
\end{aligned}
\end{equation}

Note also that if $\theta=0$ then $\ell$ is the identity map.
We also write $\ell \colon \bR \to \Stheta, t \mapsto [t]$ by abuse of notation.

\begin{definition}
	For $F,G \in \Db_{/[1]}(M \times P \times \Stheta)$, one sets
	\begin{align}
	F \star G &  \coloneqq Rs_!(\tilde{q}_1^{-1}F \otimes \tilde{q}_2^{-1}G), \\
	\cHom^\star(F,G) &  \coloneqq R\tilde{q}_{1*} \cRHom(\tilde{q}_2^{-1}F,s^!G)
	\\
	& \ \simeq Rs_*\cRHom(\tilde{q}_2^{-1}i^{-1}F,\tilde{q}_1^!G). \notag
	\end{align}
\end{definition}

Note that the functor $\star$ is a left adjoint to $\cHom^\star$.

For a manifold $N$, we set $\Omega_+(N)_\theta \coloneqq T^*N \times \{ (t;\tau) \mid \tau >0\} \subset T^*(N \times \Stheta)$ and write $\Omega_{+} \coloneqq \Omega_+(M\times P)_\theta$ for short.
We define the map
\begin{equation}
\begin{aligned}
\xymatrix@R=10pt{
	\rho \colon \Omega_+ \ar[r] & T^*M \\
	\ (x,y,t;\xi,\eta,\tau) \ar@{|->}[r] \ar@{}[u]|-{\hspace{7pt}
		\rotatebox{90}{$\in$}} & (x;\xi/\tau).
	\ar@{}[u]|-{\rotatebox{90}{$\in$}}
}
\end{aligned}
\end{equation}
We also define an endofunctor $P_l$ of $\Db_{/[1]}(M \times P \times \Stheta)$ by $P_l \coloneqq R\ell_! \bfk_{M \times P \times [0,+\infty)} \star (-)$, which induces the equivalence of categories
\begin{equation}
	P_l \colon \Db_{/[1]}(M \times P \times \Stheta;\Omega_+) 
	\simto {}^\perp \Db_{/[1], \{\tau \le 	0\}}(M \times P \times \Stheta),
\end{equation}
where ${}^\perp (-)$ denotes the left orthogonal.

\begin{definition}
	One defines a category $\cD^P(M)_\theta$ by
	\begin{equation}
	\cD^P(M)_\theta
	 \coloneqq 
	\Db_{/[1]}(M \times P \times \Stheta;\Omega_{+})
	\end{equation}
	and identifies it with the left orthogonal ${}^\perp \Db_{/[1], \{\tau \le 0\}}(M \times P \times \Stheta)$.
	One also sets $\MS_+(F) \coloneqq \MS(F)\cap \Omega_+$ for $F\in \cD^P(M)_\theta$. 
	For a compact subset $A$ of $T^*M$, one defines a full subcategory
	$\cD^P_A(M)_\theta$ by
	\begin{equation}
	\cD^P_A(M)_\theta
	 \coloneqq 
	\Db_{/[1], \rho^{-1}(A) }(M \times P \times \Stheta;\Omega_{+}).
	\end{equation}
    If $P=\pt$, we omit $P$ from the above notation. 
\end{definition}

The bifunctor $\cHom^\star$ induces an internal Hom functor
$\cHom^\star \colon \cD^P(M)^{\op}_\theta \times \cD^P(M)_\theta \to
\cD^P(M)_\theta$ (see \cite{AI20} for the details).
An argument similar to \cite{GS14} proves the following.

\begin{proposition}[{cf.\ \cite[Lem.~4.18]{GS14}}]\label{proposition:morD}
    Let $q \colon M \times P \times \Stheta \to \Stheta$ be the projection.
	For $F,G \in \Db_{/[1]}(M \times P \times \Stheta)$, there are isomorphisms
	\begin{equation}
	\begin{aligned}
    	\Hom_{\cD^P(M)_\theta}(Q(F),Q(G)) 
    	&\simeq \Hom_{\Db_{/[1]}(M \times P \times \Stheta)}(P_l(F),G)\\
    	&\simeq
    	H^* \RG_{[0,+\infty)}(\bR;\ell^! Rq_* \cHom^\star(F,G)),
    \end{aligned}
	\end{equation}
	where $Q \colon \Db_{/[1]}(M \times P \times \Stheta) \to \cD^P(M)_\theta$ is the quotient functor.
	In particular, if $F \in {}^\perp \Db_{/[1], \{\tau \le 	0\}}(M \times P \times \Stheta)$, one has isomorphisms 
	\begin{equation}
	\begin{aligned}
    	\Hom_{\cD^P(M)_\theta}(Q(F),Q(G)) &\simeq \Hom_{\Db_{/[1]}(M \times P \times \Stheta)}(F,G)\\
    	&\simeq
    	H^* \RG_{[0,+\infty)}(\bR;\ell^! Rq_* \cHom^\star(F,G)).
    \end{aligned}
	\end{equation}
\end{proposition}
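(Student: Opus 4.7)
The plan is to prove both isomorphisms via a chain of adjunctions, closely following the proof of \cite[Lem.~4.18]{GS14} adapted to our orbit-category setting with parameter manifold $P$ and period $\theta$. For the first isomorphism, I would exploit the equivalence $P_l \colon \cD^P(M)_\theta \simto {}^\perp \Db_{/[1], \{\tau \le 0\}}(M \times P \times \Stheta)$ recalled immediately before the proposition. Applying this equivalence, together with the fact that ${}^\perp\Db_{/[1], \{\tau \le 0\}}$ is a full subcategory of $\Db_{/[1]}$, one obtains $\Hom_{\cD^P(M)_\theta}(Q(F), Q(G)) \simeq \Hom_{\Db_{/[1]}}(P_l(F), P_l(G))$. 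The semi-orthogonal decomposition $\Db_{/[1]}(M \times P \times \Stheta) = \langle {}^\perp\Db_{/[1], \{\tau \le 0\}}, \Db_{/[1], \{\tau \le 0\}}\rangle$ produces a distinguished triangle relating $G$ and $P_l(G)$ whose third term lies in $\Db_{/[1], \{\tau \le 0\}}$; applying $\Hom(P_l(F), -)$ and using that $P_l(F)$ lies in the left orthogonal of $\Db_{/[1], \{\tau \le 0\}}$ then yields $\Hom(P_l(F), P_l(G)) \simeq \Hom(P_l(F), G)$, which is the first desired isomorphism.

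For the second isomorphism, I would unwind $P_l(F) = R\ell_! \bfk_{M \times P \times [0,+\infty)} \star F$ and apply adjunctions step by step. The adjunction $(\star, \cHom^\star)$ converts $\Hom(P_l(F), G)$ into $\Hom(R\ell_! \bfk_{M \times P \times [0,+\infty)}, \cHom^\star(F, G))$. Next, the adjunction $(R\ell_!, \ell^!)$ gives $\Hom(\bfk_{M \times P \times [0,+\infty)}, \ell^! \cHom^\star(F, G))$. Writing $\bfk_{M \times P \times [0,+\infty)} \simeq q'^{-1}\bfk_{[0,+\infty)}$ for the projection $q' \colon M \times P \times \bR \to \bR$ and applying $(q'^{-1}, Rq'_*)$ produces $\Hom(\bfk_{[0,+\infty)}, Rq'_* \ell^! \cHom^\star(F, G))$. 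Since $\ell \colon \bR \to \Stheta$ is a covering map, hence étale with $\ell^! = \ell^{-1}$, and the natural square involving $q, q', \ell$ is Cartesian, base change provides $Rq'_* \ell^! \simeq \ell^! Rq_*$, yielding $\Hom_{\Db_{/[1]}(\bR)}(\bfk_{[0,+\infty)}, \ell^! Rq_* \cHom^\star(F, G))$. By the definition of sections supported on a closed subset together with the interpretation of $\Hom$ in the orbit category recalled in the preliminaries, this equals $H^* \RG_{[0,+\infty)}(\bR; \ell^! Rq_* \cHom^\star(F, G))$.

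The ``in particular'' clause then follows immediately: if $F \in {}^\perp \Db_{/[1], \{\tau \le 0\}}$, the distinguished triangle relating $F$ to $P_l(F)$ has third term simultaneously in $\Db_{/[1], \{\tau \le 0\}}$ and in its left orthogonal, hence vanishing, so $P_l(F) \simeq F$ canonically and $P_l(F)$ may be replaced by $F$ throughout. I expect the main obstacle to be the careful verification that each of these adjunctions and the base change isomorphism pass cleanly to the triangulated orbit category $\Db_{/[1]}$, together with establishing the precise form of the semi-orthogonal decomposition in the orbit-categorical setting and the explicit description of the morphism between $G$ and $P_l(G)$. These verifications, while conceptually routine given the assertions in Section~2 that the six operations descend to the orbit category along with their usual adjunctions and natural transformations, are precisely why the detailed proof is postponed to the appendix.
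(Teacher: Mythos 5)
Your proposal is correct and matches the approach the paper has in mind: the first isomorphism via the equivalence $P_l$ and the orthogonality of $P_l(F)$ against $\Db_{/[1],\{\tau\le 0\}}$, and the second via the adjunction chain $(\star,\cHom^\star)$, $(R\ell_!,\ell^!)$, $(q'^{-1},Rq'_*)$ followed by smooth base change, all of which descend to the orbit category by the assertions in Section~2. (One small stylistic note: your ordering of the two pieces in the ``semi-orthogonal decomposition'' notation might be reversed depending on convention, but the distinguished triangle $P_l(G)\to G\to R\toone$ with $R\in\Db_{/[1],\{\tau\le 0\}}$ you actually use is the right one.)
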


\subsection{Distance and stability with respect to Hamiltonian deformation}

We introduce a distance on $\cD^P(M)_\theta$ following~\cite{AI20}.
For $c \in \bR$, we define the translation map
\begin{equation}
    T_c \colon M \times P \times \Stheta \to M \times P \times \Stheta, (x,y,t) \mapsto (x,y,t+c).
\end{equation}
For $F \in \cD^P(M)_\theta$ and $c, d\in \bR$ with $c \le d$, there is a canonical morphism $\tau_{c,d}(F) \colon {T_c}_*F \to {T_{d}}_*F$ (see \Cref{subsec:appendix-distance}).
Using the morphism, we define the translation distance $d_{\cD^P(M)_\theta}$
as in \cite{AI20}.

\begin{definition}[{cf.\ \cite[Def.~4.4]{AI20}}]\label{definition:distance}
	 Let $F,G \in \cD^P(M)_\theta$.
	\begin{enumerate}
		\item Let $a,b \in \bR_{\ge 0}$.
		The pair $(F,G)$ is said to be \emph{$(a,b)$-interleaved}
		if there exist morphisms $\alpha, \delta \colon F \to {T_a}_*G$ and
		$\beta, \gamma \colon G \to {T_b}_*F$ satisfying the following conditions:
		\begin{enumerate}
			\renewcommand{\labelenumii}{$\mathrm{(\arabic{enumii})}$}
			\item $F \xrightarrow{\alpha} {T_a}_* G \xrightarrow{{T_a}_*\beta} {T_{a+b}}_*F$
			is equal to $\tau_{0,a+b}(F) \colon F \to {T_{a+b}}_*F$ and 
			\item $G \xrightarrow{\gamma} {T_b}_* F \xrightarrow{{T_b}_*\delta} {T_{a+b}}_*G$
			is equal to $\tau_{0,a+b}(G) \colon G \to {T_{a+b}}_*G$.
		\end{enumerate}
		\item One defines
		\begin{equation}
		d_{\cD^P(M)_\theta}(F,G)
		 \coloneqq 
		\inf
		\{
		a+b \in \bR_{\ge 0}
		\mid
		a,b \in \bR_{\ge 0},
		\text{$(F,G)$ is $(a,b)$-interleaved}
		\},
		\end{equation}
		and calls $d_{\cD^P(M)_\theta}$ the \emph{translation distance}.
	\end{enumerate}
\end{definition}

Now we consider Hamiltonian deformations of sheaves.
Let $I$ be an open interval containing the closed interval $[0,1]$.
Let $H \colon T^*M \times I \to \bR$ be a compactly supported
Hamiltonian function and denote by $\phi^H \colon T^*M \times I \to
T^*M$ the Hamiltonian isotopy generated by $H$.
We set
\begin{equation}
    \| H \|
     \coloneqq 
    \int_0^1 \left(\max_p H_s(p) - \min_p H_s(p) \right) ds.
\end{equation}
Moreover, let $K^H \in \Db(M \times \Stheta \times M \times \Stheta \times I)$ be the
sheaf quantization associated with $\phi^H$, whose existence was proved by Guillermou--Kashiwara--Schapira~\cite{GKS} (see also \Cref{subsection:appendix-sq-hamiltonian}).
For $s \in I$, we set $K^H_s \coloneqq K^H|_{M \times \Stheta \times M \times \Stheta \times \{s\}}$.
Then the composition with $K^H_s$ induces a
functor
\begin{equation}\label{eq:hamiltonian-induced-functor}
    \Phi^H_s \coloneqq K^H_s \circ (-) \colon
    \cD^P(M)_\theta \to \cD^P(M)_\theta,
\end{equation}
which restricts to $\cD^P_A(M) \to \cD^P_{\phi^H_s(A)}(M)$ for any
compact subset $A$ of $T^*M$, by \cref{proposition:SScomp}.
The following proposition is one of the main results of~\cite{AI20}, which is a stability result of the translation distance with respect to Hamiltonian deformation. 
For the outline of the proof, see \Cref{subsec:appendix-distance}. 

\begin{proposition}[{cf.\ \cite[Thm.~4.16]{AI20}}]\label{proposition:distance}
	Let $\phi^H \colon T^*M \times I \to T^*M$ be the Hamiltonian isotopy
	generated by a compactly supported Hamiltonian function $H \colon T^*M
	\times I \to \bR$ and denote by $\Phi^H_1 \colon \cD^P(M)_\theta \to \cD^P(M)_\theta$ the functor associated with $\phi^H_1$.
	Then for $F \in \cD^P(M)_\theta$, one has an inequality $d_{\cD^P(M)_\theta}(F,\Phi^H_1(F)) \le \| H \|$.
\end{proposition}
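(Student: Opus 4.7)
The plan is to adapt the proof of \cite[Thm.~4.16]{AI20} by extracting interleaving morphisms from the GKS sheaf quantization $K^H$, with translation amounts governed by the integrated extremes of $H$. Set $a := \int_0^1 \max_p H_s(p)\, ds$ and $b := -\int_0^1 \min_p H_s(p)\, ds$, so that $a, b \ge 0$ and $a + b = \|H\|$. It then suffices to produce morphisms $\alpha \colon F \to {T_a}_*\Phi^H_1(F)$ and $\beta \colon \Phi^H_1(F) \to {T_b}_*F$ satisfying ${T_a}_*\beta \circ \alpha = \tau_{0,a+b}(F)$ and ${T_b}_*\alpha \circ \beta = \tau_{0,a+b}(\Phi^H_1(F))$; this yields the $(a,b)$-interleaving of \cref{definition:distance} with $\delta := \alpha$ and $\gamma := \beta$, and hence the desired bound $d_{\cD^P(M)_\theta}(F, \Phi^H_1(F)) \le a + b = \|H\|$.

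I would construct $\alpha$ and $\beta$ at the kernel level via a family argument over $s \in I$. The microsupport of $K^H$ lies above the conification of the graph of $\phi^H$, along which the Tamarkin $t$-variable is shifted by the integrated primitive of $H$; combined with the normalization $K^H_0 \simeq \bfk_\Delta$ at $s = 0$, this produces canonical morphisms of kernels $K^H_0 \to {T_a}_* K^H_1$ and $K^H_1 \to {T_b}_* K^H_0$. They are obtained by restricting to $s = 1$ from family morphisms built out of the half-space cut-offs $\bfk_{\{t' - t - A(s) \ge 0\}}$ and $\bfk_{\{t - t' - B(s) \ge 0\}}$, where $A(s) := \int_0^s \max_p H_u(p)\, du$ and $B(s) := -\int_0^s \min_p H_u(p)\, du$ dominate the primitive of $H$ along every trajectory. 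Composing these kernel morphisms with $F$, and using $K^H_0 \circ F \simeq F$, then produces the desired $\alpha$ and $\beta$.

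The main technical obstacle is verifying the two interleaving identities. This reduces to a compatibility statement about $K^H$: under the canonical isomorphism $K^H_1 \circ K^{\overline{H}}_1 \simeq K^H_0 \simeq \bfk_\Delta$, where $\overline{H}$ is the reverse Hamiltonian, the composite of the two extremum morphisms corresponds to the translation morphism $\tau_{0,a+b}$ of the diagonal kernel. This is the content of the proof in \cite[\S4]{AI20} and transfers unchanged to the present setting because the three modifications are formal in nature: the orbit-category quotient is compatible with Grothendieck's operations via $\fraki$ and preserves the relevant adjunctions and natural transformations; the parameter $P$ acts as a passive spectator since the Hamiltonian lives only on $T^*M$; and the replacement of $\bR$ by $\Stheta = \bR/\theta\bZ$ is harmless because the translations $T_c$ are well-defined on $\Stheta$ and $K^H$ descends to a kernel on $M \times \Stheta \times M \times \Stheta \times I$ thanks to the compact support of $H$.
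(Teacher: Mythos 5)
Your proposal takes a genuinely different route from the paper, and I think there is a real gap in the middle of it.

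The paper's proof (outlined in \Cref{subsec:appendix-distance}) does \emph{not} construct the interleaving morphisms $\alpha,\beta$ with the full integral translate directly. It forms $\cH := K^H \circ F$, uses \cref{proposition:SScomp} to get the conic microsupport estimate $-\max_p H_s(p)\,\tau \le \sigma \le -\min_p H_s(p)\,\tau$, and then applies \cref{lemma:torhtpy}(iii), which is a \emph{constant-cone} statement (the cut-off lemma \cref{lemma:appendix-cutoff} only handles a single closed convex cone $\gamma_{a,b}$). A single application on $[0,1]$ would only give the weaker bound $\bigl(\max_s\max_p H_s + \max_s(-\min_p H_s)\bigr)$. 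To recover the integral $\|H\|$ one subdivides $[0,1]$ into $n$ pieces, applies \cref{lemma:torhtpy}(iii) on each, and lets $n\to\infty$ so that the Riemann sum converges. This is the step you have not replaced with anything.

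Your proposal instead asserts that the half-space cut-offs $\bfk_{\{t'-t-A(s)\ge 0\}}$ with \emph{time-dependent} thresholds $A(s), B(s)$ produce canonical kernel morphisms $K^H_0 \to {T_a}_*K^H_1$ and $K^H_1 \to {T_b}_*K^H_0$ with the exact integral translates $a+b=\|H\|$. This is precisely the point that is not automatic. The cut-off lemma that the paper possesses is a constant-cone statement; the time dependence of the threshold is exactly the phenomenon the Riemann sum is there to tame. One \emph{can} plausibly make your idea work by first applying a skewing diffeomorphism in the $(t,s)$-variables that straightens $\MS(\cH)$ into the constant cone $\{\sigma\ge 0\}$ (resp. $\{\sigma \le 0\}$) and then invoking the constant-cone cut-off; but you do not say this, and you do not address the resulting technical issues (e.g.\ that the skew only makes sense on the $\bR$-cover of $\Stheta$ and one must descend again, or why the restriction morphisms compose to $\tau_{0,a+b}$). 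Two further inaccuracies: the $t$-variable along $\Lambda_{\widehat\phi}$ is shifted by $u_s = \int_0^s(H_{s'}-\alpha(X_{s'}))\circ\phi^H_{s'}\,ds'$, not by the integral of $H$ alone — what the microsupport estimate controls is $\sigma/\tau = -H_s\circ\phi^H_s$, not the $t$-shift; and your appeal to "[\S 4]{AI20}" for the interleaving identity $T_a\beta\circ\alpha = \tau_{0,a+b}$ is misplaced, since AI20 (as cited in the paper's outline via Prop.~4.15 there) also uses the Riemann-sum route rather than verifying such identities for full-integral morphisms.

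In short: the final paragraph of your proposal, which declares the verification "the content of the proof in [\S 4]{AI20}" and "transfers unchanged," papers over the actual work. To close the gap you would either need to prove a time-dependent cut-off lemma (or establish the skew-then-cut-off reduction carefully, including the sign conventions for the direction of the induced restriction morphisms), or else follow the paper and invoke \cref{lemma:torhtpy}(iii) on a fine subdivision of $[0,1]$ and pass to the limit.
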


As explained in the introduction, one can obtain a sheaf-theoretic bound for the displacement energy of two compact subsets, using the proposition above (see \cref{proposition:energyestimate}).

\section{Sheaf quantization of rational Lagrangian immersions}
\label{section:constrution}

In this section, we prove the existence of sheaf quantizations of a certain class of Lagrangian immersions, following the idea of Guillermou~\cite{Gu12, Gu19}.

\subsection{Definitions and statement of the existence result}

First we introduce some notions for Lagrangian immersions.
We assume that $L$ is a compact connected manifold.

\begin{definition}\label{definition:strongly-rational}
	\begin{enumerate}
		\item A Lagrangian immersion $\iota\colon L\to T^*M$
		is said to be \emph{strongly rational} if there exists a non-negative number
		$\theta(\iota) \in \bR_{\geq 0}$ such that the image of the pairing map 
		$\langle \iota^*\alpha, - \rangle \colon H_1(L;\bZ)\to \bR, \gamma \mapsto \int_\gamma \iota^*\alpha$ is $\theta(\iota) \cdot \bZ$.
		We call $\theta(\iota)$ the \emph{period} of $\iota$.
		\item For a strongly rational Lagrangian immersion
		$\iota \colon L \to T^*M$, one defines
		\begin{equation}
		r(\iota)
		 \coloneqq 
		\inf
		\left( \left\{ \int_{l} \iota^*\alpha
		\; \middle| \;
		\begin{aligned}
		& l \colon [0,1] \to L, \\
		& \iota \circ l(0)= \iota \circ l(1)
		\end{aligned}
		\right\}
		\cap \bR_{>0}\right).
		\end{equation}
	\end{enumerate}
\end{definition}
Note that the infimum of the empty set is defined to be $+\infty$.

\begin{notation}\label{notation:conification}
	Let $\iota \colon L \to T^*M$ be a compact strongly rational Lagrangian
	immersion with period $\theta=\theta(\iota)$ and $f \colon L \to \Stheta$
	be a function satisfying $\iota^*\alpha=df$.
	One defines a conic Lagrangian immersion $\wh{\iota}_f$ by 
	\begin{equation}
	\wh{\iota} \coloneqq \wh{\iota}_f\colon L\times \bR_{>0} \to T^*(M\times \Stheta), (y,\tau)\mapsto (\tau \iota (y), (-f(y);\tau)) 
	\end{equation}
	and sets
	\begin{equation}\label{equation:def-conification-lambda}
    	\Lambda
    	 =
    	\Lambda_{\iota,f}
    	 \coloneqq 
    	\left\{ 
    	    (x,t;\xi,\tau) \in T^*(M \times \Stheta) 
    	    \; \middle| \; 
    	    \begin{aligned}
        	    & \text{$\tau >0$, there exists $y \in L$}, \\
        	    & (x;\xi/\tau)=\iota(y), t=-f(y) 
    	    \end{aligned}
    	\right\}. 
	\end{equation}
	One also sets 
	\begin{align*}
	\Lambda_q
	& =
	\Lambda_{\iota,f,q}
	 \coloneqq 
	\{ (x,u,t;\xi,0,\tau) 
	\mid (x,t;\xi,\tau) \in \Lambda_{\iota,f} \}, \\
	\Lambda_r
	& =
	\Lambda_{\iota,f,r}
	 \coloneqq 
	\{ (x,u,t;\xi,-\tau,\tau) \mid (x,t-u;\xi,\tau) \in \Lambda_{\iota,f} \}.
	\end{align*}
\end{notation}

Moreover, we make the following assumption.

\begin{assumption}\label{assumption:embedding}
	There exists no curve $l\colon [0,1]\to L$ with $l(0)\neq l(1)$, $\iota \circ l(0)= \iota \circ l(1)$, and $\int_{l} \iota^*\alpha=0$. 
\end{assumption}

Under \cref{assumption:embedding}, the conic Lagrangian immersion $\wh{\iota}$ is an embedding and we identify it with the conic Lagrangian submanifold $\Lambda$.
Without this assumption \cref{proposition:muhom-self,proposition:EndG} do not hold in general.
Moreover, thanks to the assumption, we can apply the method in \cite{Gu19} for the construction of a sheaf quantization.

The following is the existence result of a sheaf quantization of a strongly rational Lagrangian immersion, which we will prove in the next subsection.

\begin{theorem}\label{theorem:existence-quantization}
	Let $\iota \colon L \to T^*M$ be a compact strongly rational Lagrangian
	immersion with period $\theta=\theta(\iota)$ satisfying \cref{assumption:embedding}. 
	Take a function $f \colon L \to \Stheta$ and define $\Lambda, \Lambda_q, \Lambda_r$ as in \cref{notation:conification}.
	Then for each $a\in (0,r(\iota))$, there exists an object $G_{(0,a)} \in
	{}^\perp \Db_{/[1], \{\tau \le 	0\}}(M \times (0,a) \times \Stheta) \simeq \cD^{(0,a)}(M)_\theta$ satisfying the following conditions:
	\begin{enumerate}
		\renewcommand{\labelenumi}{$\mathrm{(\arabic{enumi})}$}
		\item $\rMS(G_{(0,a)})\subset \left( \Lambda_{q} \cup
		\Lambda_{r}\right) \cap T^*(M\times (0,a)\times \Stheta)$,
    	\item  $G_{(0,a)}$ is simple along $ \Lambda_{q} \cap T^*(M\times (0,a)\times \Stheta)$, 
		\item $F_0 \coloneqq (Rj_*G_{(0,a)})|_{M \times
			\{0\} \times \Stheta }$ is isomorphic to $0$, where $j$ is the inclusion $M \times (0,a) \times
		\Stheta \to M \times \bR \times \Stheta $, 
		\item there is an open covering $\{V_\alpha\}_\alpha$ of $\Omega_+(M)_\theta$ such that $G_{(0,a)}$ is microlocally tame with respect to $\{V_\alpha\times T^*(0,a) \}_\alpha$. 
	\end{enumerate}
	Moreover, the object $G_{(0,a)}$ automatically satisfies $d_{\cD^{(0,a)}(M)_\theta}(G_{(0,a)},0)\leq a$. 
\end{theorem}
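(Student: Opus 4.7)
The plan is to adapt Guillermou's construction in \cite{Gu12,Gu19} of sheaf quantization for compact exact Lagrangian submanifolds. Two essential modifications are forced by our setting. First, the parameter interval must be the bounded open interval $(0,a)$ with $a<r(\iota)$, because the shortest Reeb chord of length $r(\iota)$ obstructs further extension; restricting to $(0,a)$ keeps the two sheets $\Lambda_q$ and $\Lambda_r$ clean away from the zero-section. Second, the whole construction must live in the triangulated orbit category $\Db_{/[1]}$, in which the shift $[1]$ becomes isomorphic to the identity; this is precisely what is needed to kill the Maslov-type obstruction that would otherwise block a global assembly along the non-simply connected Lagrangian $L$. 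Geometrically, the parameter $u\in(0,a)$ plays the role of a ``height'' variable, and the tilted Reeb-like direction $\eta_u=-\tau$ of $\Lambda_r$ couples motion in $u$ to motion in $t$ so that the sheaf degenerates as $u\to 0^+$.

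For the local step, I would use \cref{assumption:embedding} to guarantee that $\wh{\iota}$ is an embedding and that $\Lambda=\Lambda_{\iota,f}$ is a smooth conic Lagrangian submanifold of $\rT(M\times \Stheta)$. I would then cover $\Lambda_q\cup \Lambda_r$ by small conic open neighborhoods in $T^*(M\times(0,a)\times\Stheta)$ on each of which a contact transformation identifies the Lagrangian with the conormal bundle of a smooth hypersurface. The corresponding simple sheaf model (the constant sheaf on an appropriate half-space, viewed microlocally) is built by hand, and the cover can be chosen refined enough to witness the microlocal tameness required in condition~(iv).

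The global object $G_{(0,a)}$ is then assembled by induction along an exhaustive filtration by open subsets of $M\times(0,a)\times\Stheta$. On each overlap, any two simple sheaves along the same conic Lagrangian agree up to an integer shift by \cref{proposition:maslovshft}, so the obstruction to gluing produces a \v{C}ech $2$-cocycle with values in $\bZ$ representing a Maslov-type class of $\wh{\iota}$. In $\Db(\bfk_X)$ this class is generally non-trivial, but it dies in $\Db_{/[1]}(X)$ since every object there is its own shift; resolving this obstruction is the hardest step of the proof and the core reason for the orbit-category formalism. The boundary condition $F_0\simeq 0$ is imposed by arranging, via \cref{proposition:SSopenimm} together with the non-vanishing $\eta_u$-component of $\Lambda_r$, that $Rj_*$ kills the sheaf at $u=0$.

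Once $G_{(0,a)}$ has been constructed, conditions~(i)--(iv) follow directly from the local models and \cref{proposition:SSpushpull,proposition:SScomp}, while $\rMS(G_{(0,a)})\subset\{\tau>0\}$ places the sheaf in the required left orthogonal. The final bound $d_{\cD^{(0,a)}(M)_\theta}(G_{(0,a)},0)\leq a$ is then automatic: by \cref{definition:distance} it suffices to exhibit the zero morphism as an $(a,0)$-interleaving with $0$, i.e.\ to show that $\tau_{0,a}(G_{(0,a)})\colon G_{(0,a)}\to {T_a}_*G_{(0,a)}$ vanishes. This follows from the structure of $\Lambda_r$, whose coupling between the $t$- and $u$-directions together with the vanishing of $F_0$ forces the $t$-translate by $a$ to factor through an object whose non-zero microsupport has left the Lagrangian, hence becomes zero in $\cD^{(0,a)}(M)_\theta$.
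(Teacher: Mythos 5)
Your high-level framing is right: adapt Guillermou's construction, use the orbit category to kill the Maslov obstruction, and let the Reeb chords (length $\geq r(\iota)$) bound the parameter interval. However, the proposal contains two substantive gaps and one incorrect argument.

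First, the assembly step is wrong in spirit. You describe gluing local simple models by a \v{C}ech argument over an exhaustion of $M\times(0,a)\times\Stheta$, with the orbit category killing a $\bZ$-valued Maslov $2$-cocycle. But a direct gluing of local simple sheaves on $M\times(0,a)\times\Stheta$ would have no reason to satisfy the boundary condition (3) ($F_0\simeq 0$), which is the crucial degeneration at $u\to 0^+$. The paper instead goes through the Kashiwara--Schapira stack $\muSh_{/[1],\Lambda}^{\mathrm{mt}}$ (\cref{proposition:simpleglobal}) to produce a simple object microlocally, and then the \emph{doubling construction} (\cref{definition:doubledsheaves}, \cref{proposition:existence-doubled}) to realize it as an honest sheaf $F\in\BD^{\mathrm{dbl}}_{/[1],\Lambda,\cU}(M\times\Stheta)$; the vanishing $F_0\simeq 0$ is then an automatic consequence of the doubling (\cref{lemma:F0=0}), not something you impose afterward. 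Moreover, this initial step only gives $G_{(0,\varepsilon)}$ for \emph{small} $\varepsilon$; the extension to an arbitrary $a\in(0,r(\iota))$ requires a homogeneous Hamiltonian isotopy $\tl{\phi}$ that fixes $\Lambda$ and translates the second sheet $T'_\varepsilon\Lambda$ to $T'_u\Lambda$, and this is exactly where the Reeb chord condition $T'_u\Lambda\cap\Lambda=\emptyset$ for $u<a<r(\iota)$ enters. Your proposal mentions Reeb chords as motivation but never uses them in the construction itself.

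Second, the argument for $d_{\cD^{(0,a)}(M)_\theta}(G_{(0,a)},0)\le a$ is not valid. You claim the translate $\tau_{0,a}$ factors through ``an object whose non-zero microsupport has left the Lagrangian, hence becomes zero in $\cD^{(0,a)}(M)_\theta$.'' That inference is false: in the Tamarkin-type category an object is zero iff its microsupport is contained in $\{\tau\le 0\}$, not iff it avoids $\Lambda_q\cup\Lambda_r$. The paper's actual argument constructs a parametrized family $\cG'$ over $s\in(-\infty,\tl{a})$ interpolating between $0$ and $G_{(0,a)}$ with the microsupport estimate $\MS(\cG')\subset T^*(M\times(0,a))\times\{0\le\sigma\le\tau\}$, and then applies the parametrized interleaving lemma (\cref{lemma:torhtpy}). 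Without such a homotopy argument there is no direct way to produce the required interleaving.
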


For the reason why we take $a \in (0,r(\iota))$, see \cref{remark:construction-r}.

\begin{example}[An object associated with the unit circle]
The embedding $\iota\colon \bR/2\pi\bZ\to T^*\bR, s\mapsto (\cos s,\sin s)$ is strongly rational with $\theta(\iota)=r(\iota)=\pi$. 
The function $f\colon\bR/2\pi\bZ\to S^1_{\pi}, s\mapsto \frac{1}{2}s-\frac{1}{4}\sin 2s$ satisfies $df=\iota^*\alpha$. 
Applying \cref{theorem:existence-quantization} to these $\iota$ and $f$, 
we obtain $G_{(0,a)} \in{}^\perp \Db_{/[1], \{\tau \le 	0\}}(\bR \times (0,a) \times S^1_{\pi})$ for each $a\in (0,\pi)$. 
The boundary set of the support of $G_{(0,a)}$ is 
\begin{equation}
\begin{aligned}
    \pi_{\bR\times (0,a)\times S^1_{\pi}}(\rMS(G_{(0,a)}))
    = \ &\{(\cos s, u, -f(s))\mid s\in \bR/2\pi\bZ , u\in(0,a)\}\\
    &\cup\{(\cos s, u, -f(s)+u)\mid s\in \bR/2\pi\bZ , u\in(0,a)\}\\
    \subset \ &\bR\times (0,a)\times S^1_{\pi}.
\end{aligned}
\end{equation}
The support of $G_{(0,a)}$ is the closure of 
the bounded regions enclosed by the boundary set with respect to the standard metric. 
The support of $G_{(0,a)}$ can also be  written as 
\begin{equation}
\begin{aligned}
    & \Supp (G_{(0,a)}) \\
    & = \{(x, u, t)\in\bR\times (0,a)\times S^1_{\pi}\mid (x-\cos g(-t))(x-\cos g(-t+u))\le 0\},
\end{aligned}
\end{equation}
where 
$g\colon S^1_{\pi}\to \bR/2\pi\bZ$ is the inverse of $f$. 
See \cref{fig:sq_example}.

At each interior point of $\Supp (G_{(0,a)})$, the stalk of $G_{(0,a)}$ is isomorphic to $\bfk$. 
At each boundary point of $\Supp (G_{(0,a)})$, the stalk of $G_{(0,a)}$ is isomorphic to $\bfk$ or $0$. 
Indeed, for any point $(x,u,t)\in\bR\times (0,a)\times S^1_{\pi}$, the stalk of $G_{(0,a)}$ at $(x,u,t)$ is isomorphic to $\bfk$ if and only if $(x,u,t+\varepsilon)\in \Supp (G_{(0,a)})$ for sufficiently small $\varepsilon >0$. 

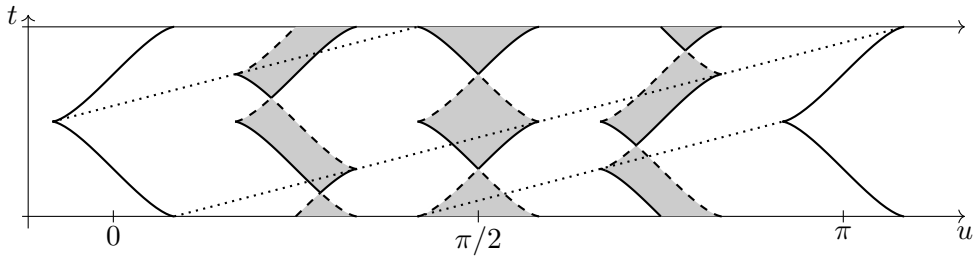
\begin{figure}[ht]
    \centering
    \begin{tikzpicture}[scale=0.8]
    \draw[->] (-1.5,0) -- (14,0) node[below] {$u$};
    \draw[->] (-1.5,pi) -- (14,pi);
    \draw[->] (-1.4,-0.2) -- (-1.4, pi+0.2) node[left] {$t$};
    
    \draw[thick,-] plot[domain=0:2*pi, smooth] ({cos(\x r)},{0.5* \x -0.25 * sin(2*\x r)});

    \path[fill=gray!40] plot[domain=1.98665-0.5*pi:1.98665+0.5*pi] ({cos((\x - 0.5*pi) r)+3},{0.5* \x -0.25 * sin(2* (\x - 0.5*pi) r ))}) plot[domain=1.15494+pi:1.15494] ({cos(\x r)+3},{0.5* \x -0.25 * sin(2*\x r)});
    \path[fill=gray!40] plot (4,0)--(3,0)-- plot[domain=0:1.98665-0.5*pi] ({cos((\x - 0.5*pi) r)+3},{0.5* \x -0.25 * sin(2* (\x - 0.5*pi) r ))}) plot[domain=1.15494:0] ({cos(\x r)+3},{0.5* \x -0.25 * sin(2*\x r)})--cycle;
    \path[fill=gray!40] plot (4,pi)--(3,pi)-- plot[domain=2*pi:1.98665+0.5*pi] ({cos((\x - 0.5*pi) r)+3},{0.5* \x -0.25 * sin(2* (\x - 0.5*pi) r ))}) plot[domain=1.15494+pi:2*pi] ({cos(\x r)+3},{0.5* \x -0.25 * sin(2*\x r)})--cycle;

    \draw[domain=0:1.15494, thick, dashed] plot ({cos(\x r)+3},{0.5* \x -0.25 * sin(2*\x r)});
    \draw[domain=1.15494:pi, thick] plot ({cos(\x r)+3},{0.5* \x -0.25 * sin(2*\x r)});
    \draw[domain=pi:1.15494+pi, thick, dashed] plot ({cos(\x r)+3},{0.5* \x -0.25 * sin(2*\x r)});
    \draw[domain=1.15494+pi:2*pi, thick] plot ({cos(\x r)+3},{0.5* \x -0.25 * sin(2*\x r)});
    
    \draw[domain=0:1.98665-0.5*pi, thick, dashed] plot ({cos((\x - 0.5*pi) r)+3},{0.5* \x -0.25 * sin(2* (\x - 0.5*pi) r ))});
    \draw[domain=1.98665-0.5*pi:0.5*pi, thick] plot ({cos((\x - 0.5*pi) r)+3},{0.5* \x -0.25 * sin(2* (\x - 0.5*pi) r ))});
    \draw[domain=0.5*pi:1.98665+0.5*pi, thick, dashed] plot ({cos((\x - 0.5*pi) r)+3},{0.5* \x -0.25 * sin(2* (\x - 0.5*pi) r ))});
    \draw[domain=1.98665+0.5*pi:1.5*pi, thick] plot ({cos((\x - 0.5*pi) r)+3},{0.5* \x -0.25 * sin(2* (\x - 0.5*pi) r ))});
    \draw[domain=1.5*pi:2*pi, thick, dashed] plot ({cos((\x - 0.5*pi) r)+3},{0.5* \x -0.25 * sin(2* (\x - 0.5*pi) r ))});

    \path[fill=gray!40] plot (5,0)--(7,0)-- plot[domain=0:0.5*pi] ({cos(\x r)+6},{0.5* \x -0.25 * sin(2*\x r)}) plot[domain=0.5*pi:0] ({cos((\x -pi) r)+6},{0.5* \x -0.25 * sin(2*(\x -pi) r)})--cycle;
    \path[fill=gray!40] plot[domain=0.5*pi:1.5*pi] ({cos(\x r)+6},{0.5* \x -0.25 * sin(2*\x r)}) plot[domain=1.5*pi:0.5*pi] ({cos((\x -pi) r)+6},{0.5* \x -0.25 * sin(2*(\x -pi) r)});
    \path[fill=gray!40] plot (5,pi)--(7,pi)-- plot[domain=2*pi:1.5*pi] ({cos(\x r)+6},{0.5* \x -0.25 * sin(2*\x r)}) plot[domain=1.5*pi:2*pi] ({cos((\x -pi) r)+6},{0.5* \x -0.25 * sin(2*(\x -pi) r)})--cycle;
    
    \draw[thick,dashed] plot[domain=0:0.5*pi, smooth] ({cos(\x r)+6},{0.5* \x -0.25 * sin(2*\x r)});
    \draw[thick,-] plot[domain=0.5*pi:pi, smooth] ({cos(\x r)+6},{0.5* \x -0.25 * sin(2*\x r)});
    \draw[thick,dashed] plot[domain=pi:1.5*pi, smooth] ({cos(\x r)+6},{0.5* \x -0.25 * sin(2*\x r)});
    \draw[thick,-] plot[domain=1.5*pi:2*pi, smooth] ({cos(\x r)+6},{0.5* \x -0.25 * sin(2*\x r)});
    \draw[thick,dashed] plot[domain=0:0.5*pi, smooth] ({cos((\x -pi) r)+6},{0.5* \x -0.25 * sin(2*(\x -pi) r)});
    \draw[thick,-] plot[domain=0.5*pi:pi, smooth] ({cos((\x -pi) r)+6},{0.5* \x -0.25 * sin(2*(\x -pi) r)});
    \draw[thick,dashed] plot[domain=pi:1.5*pi, smooth] ({cos((\x -pi) r)+6},{0.5* \x -0.25 * sin(2*(\x -pi) r)});
    \draw[thick,-] plot[domain=1.5*pi:2*pi, smooth] ({cos((\x -pi) r)+6},{0.5* \x -0.25 * sin(2*(\x -pi) r)});

    \path[fill=gray!40] plot (9,0)--(10,0)-- plot[domain=0:1.98665] ({cos(\x r)+9},{0.5* \x -0.25 * sin(2*\x r)}) [domain=0.5*pi:0] plot[domain=1.15494+0.5*pi:0] ({cos((\x -1.5*pi) r)+9},{0.5* \x -0.25 * sin(2*(\x -1.5*pi) r)})--cycle;
    \path[fill=gray!40] plot[domain=1.98665:1.98665+pi] ({cos(\x r)+9},{0.5* \x -0.25 * sin(2*\x r)}) plot[domain=1.15494+1.5*pi:1.15494+0.5*pi] ({cos((\x -1.5*pi) r)+9},{0.5* \x -0.25 * sin(2*(\x -1.5*pi) r)})--cycle;
    \path[fill=gray!40] plot(9,pi)--(10,pi)-- plot[domain=2*pi:1.98665+pi] ({cos(\x r)+9},{0.5* \x -0.25 * sin(2*\x r)}) plot[domain=1.15494+1.5*pi:2*pi] ({cos((\x -1.5*pi) r)+9},{0.5* \x -0.25 * sin(2*(\x -1.5*pi) r)})--cycle;
    
    \draw[thick,dashed] plot[domain=0:1.98665, smooth] ({cos(\x r)+9},{0.5* \x -0.25 * sin(2*\x r)});
    \draw[thick,-] plot[domain=1.98665:pi, smooth] ({cos(\x r)+9},{0.5* \x -0.25 * sin(2*\x r)});
    \draw[thick,dashed] plot[domain=pi:1.98665+pi, smooth] ({cos(\x r)+9},{0.5* \x -0.25 * sin(2*\x r)});
    \draw[thick,-] plot[domain=1.98665+pi:2*pi, smooth] ({cos(\x r)+9},{0.5* \x -0.25 * sin(2*\x r)});
    \draw[thick,-] plot[domain=0:1.15494, smooth] ({cos((\x -1.5*pi) r)+9},{0.5* \x -0.25 * sin(2*(\x -1.5*pi) r)});
    \draw[thick,dashed] plot[domain=1.15494:1.15494+0.5*pi, smooth] ({cos((\x -1.5*pi) r)+9},{0.5* \x -0.25 * sin(2*(\x -1.5*pi) r)});
    \draw[thick,-] plot[domain=1.15494+0.5*pi:1.5*pi, smooth] ({cos((\x -1.5*pi) r)+9},{0.5* \x -0.25 * sin(2*(\x -1.5*pi) r)});
    \draw[thick,dashed] plot[domain=1.5*pi:1.15494+1.5*pi, smooth] ({cos((\x -1.5*pi) r)+9},{0.5* \x -0.25 * sin(2*(\x -1.5*pi) r)});
    \draw[thick,-] plot[domain=1.15494+1.5*pi:2*pi, smooth] ({cos((\x -1.5*pi) r)+9},{0.5* \x -0.25 * sin(2*(\x -1.5*pi) r)});

    \draw[thick,-] plot[domain=0:2*pi, variable=\x, smooth] ({cos(\x r)+12},{0.5* \x -0.25 * sin(2*\x r)});

    \draw[thick, dotted] (-1,0.5*pi) -- (-1+6,pi);
    \draw[thick, dotted] (-1+6,0) -- (-1+12,0.5*pi);
    \draw[thick, dotted] (1,0) -- (13,pi);
    
    \draw (0,0.1)--(0,-0.1);
    \node[below] at (0,0) {$0$};
    \draw (6,0.1)--(6,-0.1);
    \node[below] at (6,0) {$\pi/2$};
    \draw (12,0.1)--(12,-0.1);
    \node[below] at (12,0) {$\pi$};

    \end{tikzpicture}
    \caption{$G_{(0,a)}$ associated with the unit circle}
    \label{fig:sq_example}
\end{figure}

\end{example}

\begin{remark}\label{remark:legendrian}
    For any Legendrian submanifold $\cL$ of $ST^*(M \times \Stheta)$, there exist $n\in \bZ_{\ge 0}$, a strongly rational immersion $\iota \colon L \to T^*M$ with period $n\theta$ satisfying \cref{assumption:embedding}, and a primitive function $f\colon L\to S^1_{n\theta}$ such that $p_n(\Lambda_{\iota,f})/\bR_{>0}=\cL$, 
    where $p_n\colon \rT(M \times S^1_{n\theta})\to \rT(M \times \Stheta)$ is the natural covering map. 
    Thus, \cref{theorem:existence-quantization} gives a sheaf quantization object for a Legendrian submanifold of $ST^*(M \times \Stheta)$.
\end{remark}

\subsection{Construction}\label{subsec:construction}

In this subsection, we prove \cref{theorem:existence-quantization} following the idea of \cite[Thm.~12.2.2 and \S12.3]{Gu19}. 
We prepare some notions introduced by \cite{Gu19}. 

\subsubsection{Kashiwara--Schapira stack}

First we introduce Kashiwara--Schapira stack \cite[Part~10]{Gu19}.
This is an important tool to construct the sheaf quantization $G_{(0,a)}$.

\begin{definition}
	Let $\Lambda$ be a locally closed conic Lagrangian submanifold of $T^*X$.
	\begin{enumerate}
	\item For $V\subset \Lambda$, one defines a category $\muSh_{/[1],\Lambda}^{\mathrm{mt},0}(V)$ as follows:
	\begin{equation}
	\begin{aligned}
	    \Ob\left( \muSh_{/[1],\Lambda}^{\mathrm{mt},0}(V) \right) & \coloneqq \Ob\left( \BD_{/[1],(V)}^{\mathrm{b},\mathrm{mt}}(X) \right), \\
	    \Hom_{\muSh_{/[1],\Lambda}^{\mathrm{mt},0}(V)}(F,G)
	    & \coloneqq \Hom_{\Db_{/[1]}(X;V)}(F,G) 
	    \\
	    & \text{for $F,G \in \Ob\left( \muSh_{/[1],\Lambda}^{\mathrm{mt},0}(V) \right)$}.
	\end{aligned}
	\end{equation}
	The correspondence $V \mapsto \muSh_{/[1],\Lambda}^{\mathrm{mt},0}(V)$ defines a prestack $\muSh_{/[1],\Lambda}^{\mathrm{mt},0}$ on $\Lambda$. 
	
	\item One defines the \emph{Kashiwara--Schapira stack} $\muSh_{/[1],\Lambda}^{\mathrm{mt}}$ on $\Lambda$ as the associated stack with $\muSh_{/[1], \Lambda}^{\mathrm{mt},0}$. 
    The quotient functor gives a functor $\frakm_{\Lambda} \colon 	\BD_{/[1],(\Lambda)}^{\mathrm{b},\mathrm{mt}}(X) \to \muSh_{/[1], \Lambda}^{\mathrm{mt}}(\Lambda)$ (see also \cref{definition:microlocally-tame})
    
	\item An object $\cF \in \muSh_{/[1], \Lambda}^{\mathrm{mt}}(V)$ is said to be \emph{simple} if $\cF$ is obtained by gluing simple objects. 
	\end{enumerate}
\end{definition}

\begin{remark}\label{remark:+mt}
	The stack $\muSh_{/[1], \Lambda}^{\mathrm{mt}}$ defined above is smaller than or equal to the Kashiwara--Schapira stack $\muSh_{/[1]}(\bfk_{\Lambda})$ in \cite{Gu19}. 
	We put the microlocally tameness condition since they are easier to treat. 
\end{remark}

Arguments similar to \cite[\S10.4]{Gu19} show the following proposition.

\begin{proposition}\label{proposition:simpleglobal}
	Let $\Lambda$ be a locally closed conic Lagrangian submanifold of $T^*X$. 
	The category $\muSh_{/[1], \Lambda}^{\mathrm{mt}}(\Lambda)$ has a unique simple object. 
\end{proposition}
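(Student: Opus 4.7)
The plan is to adapt Guillermou's argument in \cite[\S10.4]{Gu19} to the orbit-category setting, exploiting two key features: the fact that $F[1] \simeq F$ in $\Db_{/[1]}$ (which kills the Maslov-type shift obstruction) and the fact that $\bfk = \bF_2$ has trivial unit group (which kills automorphism cocycles). The argument splits naturally into three steps: local existence, local rigidity, and gluing.

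First I would establish \emph{local existence} on a neighborhood of each point $p \in \Lambda$. By a quantized contact transformation one reduces to the case where $\Lambda$ is locally the conormal bundle $T^*_N U$ of a submanifold $N$ of an open subset $U \subset X$. For this model, $\bfk_N \in \Db(\bfk_U)$ is simple along $T^*_N U$ in the usual sense of \cite[Def.~7.5.4]{KS90}; its image $\fraki(\bfk_N)$ is therefore a microlocally tame simple local section of the prestack $\muSh^{\mathrm{mt},0}_{/[1],\Lambda}$. Choose an open covering $\{V_\alpha\}$ of $\Lambda$ and local simple objects $\cF_\alpha$ obtained this way.

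Next I would establish \emph{local rigidity}: on any sufficiently small connected open $V \subset \Lambda$, any two simple objects $\cF_1, \cF_2$ of $\muSh^{\mathrm{mt}}_{/[1],\Lambda}(V)$ are isomorphic. In the ordinary derived category, Proposition~\ref{proposition:maslovshft} shows that two such objects differ by a shift by $\tfrac{1}{2}(\tau_{\varphi_2}-\tau_{\varphi_1}) \in \bZ$; passing to $\Db_{/[1]}$ trivializes this shift. Combined with the limiting case of Lemma~\ref{lemma:simpleclean-orbit} (applied with $\Lambda_1 = \Lambda_2 = \Lambda$), this gives $\mu hom(\cF_1,\cF_2)|_V \simeq \bfk_V$, so a constant nonzero section produces an isomorphism $g\colon \cF_1 \simto \cF_2$. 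The endomorphism sheaf of a simple object is identified with $\bfk_V$, so $\Aut(\cF_i|_V) = \bfk^\times = \{1\}$; hence $g$ is in fact unique.

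Finally I would carry out the \emph{gluing}. On double overlaps $V_{\alpha\beta}$ the above gives canonical isomorphisms $g_{\alpha\beta}\colon \cF_\alpha|_{V_{\alpha\beta}} \simto \cF_\beta|_{V_{\alpha\beta}}$. The cocycle obstruction $g_{\gamma\alpha} \circ g_{\beta\gamma} \circ g_{\alpha\beta}$ on $V_{\alpha\beta\gamma}$ lies in $\Aut(\cF_\alpha|_{V_{\alpha\beta\gamma}})$, which is trivial by the uniqueness of $g$ above (equivalently, $\bfk^\times = 1$ since $\bfk = \bF_2$). Stackification of $\muSh^{\mathrm{mt},0}_{/[1],\Lambda}$ then assembles the $\cF_\alpha$ into a global simple object $\cF \in \muSh^{\mathrm{mt}}_{/[1],\Lambda}(\Lambda)$. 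The same local rigidity argument, applied globally via a partition-of-unity/Mayer--Vietoris patching, shows that any two global simple objects admit a unique isomorphism, giving uniqueness.

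The main obstacle I anticipate is the second step: one must verify carefully that passing to the orbit category really does collapse the Maslov-class obstruction to gluing, i.e., that the local shift ambiguities $\tfrac{1}{2}(\tau_{\varphi_2}-\tau_{\varphi_1})$ can be absorbed compatibly. This is precisely the reason the orbit category, rather than $\Db(\bfk_X)$, is used; the verification amounts to checking that the descent data for $\muSh^{\mathrm{mt}}_{/[1],\Lambda}$ is preserved under the quotient by $[1]$, which is essentially the content of \cite[\S10]{Gu19} transported into our microlocally tame setting.
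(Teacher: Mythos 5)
Your proposal is correct and matches the approach the paper takes, which simply delegates to Guillermou~\cite[\S10.4]{Gu19}: the orbit category kills the Maslov-type shift obstruction and $\bF_2^\times=\{1\}$ kills the twist (automorphism) obstruction, so the stack of locally simple microlocally tame objects is a trivial gerbe admitting a unique global section. Your three-step reconstruction (local existence via quantized contact transformations to the conormal model, local rigidity from $\mu hom(\cF_1,\cF_2)|_V\simeq\bfk_V$ together with trivial automorphisms, and \v{C}ech gluing with vanishing cocycle obstruction) is exactly the intended argument, so there is nothing to add.
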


\subsubsection{Doubling functor and doubled sheaves}

First we construct $G_{(0,\varepsilon)}$ for a sufficiently small $\varepsilon>0$ so that $G_{(0,\varepsilon)}$ is locally isomorphic to an image of $\Psi_U$, which we define below.   
We introduce a variant of the convolution functor $\star$ in \cref{section:Tamarkincat}. 
Set $\gamma \coloneqq \{(u,t)\mid 0 \leq t <u \} \subset \bR_{>0} \times \bR$. 
For an open subset $U \subset M \times \Stheta$, we define
\begin{equation}
    U_\gamma \coloneqq \{(x,u,t)\in M\times \bR_{>0} \times \Stheta 
    \mid 
    \text{$(x,t-[c]) \in U$ for any $c \in [0,u]$} \}
\end{equation}
We also define a functor $\Psi_U\colon \Db_{/[1]}(U)\to \Db_{/[1]}(U_\gamma)$ by $\Psi_U(F) \coloneqq R{s_U}_!(F \boxtimes \bfk_\gamma)|_{U_\gamma}$, where $s_U \colon U\times \bR_{>0}\times \bR\to M\times \bR_{>0}\times \Stheta$ is $(x,t_1,u, t_2)\mapsto (x,u, t_1+[t_2])$. 

The next lemma follows from \cite[Thm.~11.1.7]{Gu19}. 

\begin{lemma}\label{lemma:F0=0}
	Let $s_{23}$ be the swapping map $M\times \Stheta \times \bR\to M\times \bR\times\Stheta,(x,t,u)\mapsto (x,u,t)$ and $j_U$ be the open embedding $U_\gamma\to U_\gamma\cup s_{23}(U\times \bR_{\leq 0})$. 
	Then $R{j_U}_*\Psi_U(F)|_{s_{23}(U\times \{0\})}\simeq 0$ for any $F\in \Db_{/[1]}(U)$. 
\end{lemma}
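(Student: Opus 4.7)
The plan is to verify the vanishing stalk-by-stalk. For $p = (x_0, 0, t_0) \in s_{23}(U \times \{0\})$, since $j_U$ is an open embedding, the stalk of $R{j_U}_*\Psi_U(F)$ at $p$ is the filtered colimit of $R\Gamma(W \cap U_\gamma; \Psi_U(F))$ over open neighborhoods $W$ of $p$ in $U_\gamma \cup s_{23}(U \times \bR_{\le 0})$. A cofinal family is given by product neighborhoods $W = B \times (-\varepsilon, \varepsilon) \times I$ with $B \times I \subset U$ and $\varepsilon$ small enough that $W \cap U_\gamma = B \times (0, \varepsilon) \times I$.

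The bulk of the work is then to show that $R\Gamma(B \times (0,\varepsilon) \times I;\Psi_U(F)) \simeq 0$. I would apply base change for $R{s_U}_!$ together with the change of coordinates $t = t_1 + [t_2]$ on the source to identify $s_U^{-1}(W \cap U_\gamma)$ with $B \times I \times \gamma_\varepsilon$, where $\gamma_\varepsilon := \gamma \cap \{0 < u < \varepsilon\}$, and $s_U$ with the projection dropping the $t_2$-coordinate. This reduces the computation to the global sections over $B \times (0, \varepsilon) \times I$ of a sheaf obtained by integrating $F$ against $\bfk_\gamma$ fiberwise along $t_2$.

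The key vanishing then comes from the half-open boundary structure of $\gamma$ (closed at $\{t_2 = 0\}$, open at $\{t_2 = u\}$). Applying the distinguished triangle associated to the pair $(\overline{\gamma}, \overline{\gamma} \setminus \gamma)$ and analyzing the connecting map shows that the contributions from the closed edge and the open diagonal edge cancel on the triangular image region, giving the desired vanishing. This is exactly the content of \cite[Thm.~11.1.7]{Gu19}, which packages the relative cohomology cancellation in precisely the form needed here.

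The main obstacle, and the reason one cannot simply read off the vanishing from pointwise stalks of $\Psi_U(F)$, is that those stalks on $W \cap U_\gamma$ are typically non-zero: a copy of $\bfk$ (up to shift, which is trivial in $\Db_{/[1]}$) appears along the image of the microsupport of $F$ inside the wedge $\{0 \le t - t' < u\}$ near $p$. The vanishing is therefore a genuinely global feature of the wedge-shaped image region, arising from the interplay between the closed and open boundary components of $\gamma$ together with the contractibility of the triangle and its open hypotenuse.
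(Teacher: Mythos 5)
Your proposal ultimately defers to \cite[Thm.~11.1.7]{Gu19}, which is exactly what the paper's (one-line) proof of this lemma does, so the two approaches are essentially the same. The motivational preamble is reasonable but a bit circular in the middle (the base change/change-of-coordinates step merely unwinds the definition of $\Psi_U(F)$), and the claim that $W\cap U_\gamma = B\times(0,\varepsilon)\times I$ for $W=B\times(-\varepsilon,\varepsilon)\times I$ needs the slightly stronger hypothesis that the backward translates $\{(x,t-[c]) : (x,t)\in B\times I,\; c\in[0,\varepsilon]\}$ land in $U$, not just $B\times I\subset U$.
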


\begin{definition}[{cf.\ \cite[Def.~11.4.1]{Gu19}}]\label{definition:adaptedcoverings}
	Let $\Lambda$ be a conic Lagrangian submanifold of $\Omega_+(M)_\theta$ such that $\Lambda/\bR_{>0}$ is compact and $\Lambda/\bR_{>0}\to M$ is finite. 
	A finite family $\cU=\{U_b\}_{b\in B}$ of open subsets of $M\times \Stheta$ is said to be \emph{adapted} to $\Lambda$ if it satisfies the following conditions: 
	\begin{enumerate}
	\renewcommand{\labelenumi}{$\mathrm{(\arabic{enumi})}$}
		\item $\pi_{M\times \Stheta }(\Lambda)\subset \bigcup_{b \in B} U_b$.
		\item For each $b\in B$, there exist an open subset $W_b$ of $M$ and a contractible open subset $I_b$ of $\Stheta$ such that $U_b=W_b\times I_b$ and $\pi_{M\times \Stheta }(\Lambda)\cap U_b\subset W_b\times K$ for some compact subset $K$ of $I_b$.
		\item For any $B_1\subset B$, $\cRHom(\bfk_{U^{B_1}},\bfk_{M\times\Stheta})\simeq \bfk_{\overline{U^{B_1}}}$ 
		where $U^{B_1} \coloneqq \bigcup_{b \in B_1}U_b$. 
		\item
		Setting $\Lambda_+ \coloneqq \Lambda\cup T^*_{M\times \Stheta}{(M\times \Stheta)}$, one has 
		\begin{equation}\label{equation:adapted}
		(\MS(\bfk_{U^{B_1}})\widehat{+}\MS(\bfk_{U^{B_2}})^a)\cap (\Lambda_+\widehat{+}(\Lambda_+)^a)\subset T^*_{M\times \Stheta}{(M\times \Stheta)}
		\end{equation}
		for any $B_1,B_2\subset B$. 
	\end{enumerate}
	See \cite[Def.~6.2.3(v) and Rem.~6.2.8(ii)]{KS90} for the definition of $\widehat{+}$ in \eqref{equation:adapted}. 
\end{definition}

Similarly to \cite[Lemma~11.4.2]{Gu19}, we obtain the following.

\begin{lemma}\label{lemma:genericallyadapted}
	Let $\Lambda$ be a conic Lagrangian submanifold of $\Omega_+(M)_\theta$ such that $\Lambda/\bR_{>0}$ is compact and let $\{\Lambda_j\}_{j\in J}$ be a finite open covering of $\Lambda$ by conic subsets. Then there exist
	\begin{enumerate}
	    \item a homogeneous Hamiltonian isotopy $\wh{\phi}$ on $\Omega_+(M)_\theta$, as closed to $\id$ as desired and
	    \item a finite family $\{U_b\}_{b\in B}$ of open subsets of $M \times \Stheta$ that is adapted to $\wh{\phi}(\Lambda)$
	\end{enumerate}
	 such that for each $b \in B$, each connected component of $\wh{\phi}(\Lambda)\cap T^*U_b$ is contained in $\wh{\phi}(\Lambda_j)$, for some $j\in J$.
\end{lemma}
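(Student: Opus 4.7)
The plan is to follow closely the strategy of \cite[Lem.~11.4.2]{Gu19}, adapting it to the periodic base $\Stheta$. The proof proceeds in three stages: a generic perturbation of $\Lambda$ by a homogeneous Hamiltonian isotopy $\wh{\phi}$, the construction of a sufficiently fine product cover, and the verification of the four conditions of \cref{definition:adaptedcoverings}, where condition~$(4)$ is the technical core.

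First, using compactness of $\Lambda/\bR_{>0}$, I would choose a homogeneous Hamiltonian isotopy $\wh{\phi}$ arbitrarily close to $\id$ on $\Omega_+(M)_\theta$ so that $\wh{\phi}(\Lambda)$ is in \emph{general position}. Concretely, the genericity needed is that at every point of $\pi_{M\times\Stheta}(\wh{\phi}(\Lambda))$ the tangent space to $\wh{\phi}(\Lambda)$ is transverse to the conormals of small product boxes, and moreover that generic fiberwise double-point configurations are avoided. Since homogeneous Hamiltonian isotopies act transitively on appropriate jet spaces of conic Lagrangians, such a $\wh{\phi}$ exists; the map $\Lambda/\bR_{>0}\to M\times \Stheta$ being finite (which may itself be ensured by the first perturbation, since the compactness of $\Lambda/\bR_{>0}$ makes this a codimension condition) is used in the subsequent step.

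Second, I cover $\pi_{M\times\Stheta}(\wh{\phi}(\Lambda))$ by finitely many open sets $U_b=W_b\times I_b$ with $W_b$ a convex (e.g.\ geodesic-ball) neighborhood in $M$ and $I_b$ a small contractible interval in a local chart of $\Stheta$. By compactness of $\wh{\phi}(\Lambda)/\bR_{>0}$, I can shrink the cover so that for each $b$ every connected component of $\wh{\phi}(\Lambda)\cap T^*U_b$ is contained in some $\wh{\phi}(\Lambda_j)$: the $\wh{\phi}(\Lambda_j)$ still form an open cover after perturbation, and local components of $\wh{\phi}(\Lambda)$ over $U_b$ can be separated by taking $U_b$ small enough because the projection is finite. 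Conditions $(1)$ and $(2)$ of \cref{definition:adaptedcoverings} then hold by construction, and condition $(3)$ follows from the product--convex shape of each $U_b$ and a further refinement if needed.

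The main obstacle is condition~$(4)$: for all $B_1,B_2\subset B$, the fiberwise sum $\MS(\bfk_{U^{B_1}})\widehat{+}\MS(\bfk_{U^{B_2}})^a$ must meet $\Lambda_+\widehat{+}\Lambda_+^a$ only in the zero section. Each microsupport $\MS(\bfk_{U^{B_i}})$ is supported on the conormals of the faces of the product stratification determined by $\{W_b\times I_b\}$, so this reduces to a transversality statement between $\wh{\phi}(\Lambda)$ and finitely many families of product conormals, together with a statement about the local self-intersection of $\wh{\phi}(\Lambda)$. I would exploit that the parenthesis $\Lambda_+\widehat{+}\Lambda_+^a$ sits in a codimension-controlled subset whose interaction with the conormals of product boxes is of expected codimension; then the genericity of $\wh{\phi}$ chosen in the first step makes the intersection with any given $\MS(\bfk_{U^{B_1}})\widehat{+}\MS(\bfk_{U^{B_2}})^a$ trivial outside the zero section. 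The coordination of the generic choice of $\wh{\phi}$ with the combinatorics of the cover (which must be performed simultaneously for the finitely many pairs $(B_1,B_2)$) is the delicate part, but since both choices are open and non-empty in suitable function spaces, a standard transversality/compactness argument supplies them together.
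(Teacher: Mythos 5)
The paper itself offers no written proof of this lemma; it simply remarks ``Similarly to [Gu19, Lemma~11.4.2], we obtain the following'' and states the result. Your proposal is precisely an elaboration of Guillermou's argument, adapted to the periodic base $\Stheta$, so it is the same route as the one the paper implicitly follows: generic perturbation by a homogeneous Hamiltonian isotopy $\wh{\phi}$, a fine cover by product boxes $W_b\times I_b$ with $I_b$ a proper arc in $\Stheta$, and a transversality check for condition~(4). A minor note on the phrasing of your first stage: the genericity one wants is not to \emph{avoid} double points (Reeb chords) but to place them in generic position — the relevant perturbation ensures that $\Lambda/\bR_{>0}\to M$ is finite and that over any base point the finitely many covectors in $\Lambda$ have differences that are not captured by the conormal cones of any of the finitely many boxes chosen afterwards. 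You frame the perturbation of $\wh{\phi}$ and the selection of the cover as a single ``simultaneous'' transversality problem; it is cleaner (and closer to Guillermou) to do them sequentially — first perturb $\wh{\phi}$ so the Legendrian front is generic, then perturb the \emph{boxes} (not $\wh{\phi}$) so that their walls are transverse to the finitely many conormal differences that occur — but your version is not wrong, only less modular. Overall the approach matches what the paper intends.
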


\begin{definition}\label{definition:doubledsheaves}
	Let $\Lambda$ and $\cU=\{U_b\}_{b \in B}$ be as in \cref{definition:adaptedcoverings}. 
	Let $U$ be an open subset of $M \times \Stheta$. 
	We denote by $\BD^{\mathrm{dbl}}_{/[1],\Lambda, \cU}(U)$ the subcategory of $\Db_{/[1]}({U\times \bR_{>0}})$ formed by $F$ such that, for sufficiently small $\varepsilon>0$ 
	\begin{enumerate}
	\renewcommand{\labelenumi}{$\mathrm{(\arabic{enumi})}$}
		\item $\Supp(F)\cap (U\times (0,\varepsilon)) \subset \{(x,t+[c],u)\in U\times (0,\varepsilon)\mid (x,t)\in \pi(\Lambda), c\in [0,u] \}$,
		\item every point of $U$ has a neighborhood $W$ such that $\pi_0(\Lambda\cap T^*W)=\{\Lambda_i\}_i$ is finite, and for each $\Lambda_i$ there exist a subset $B_i\subset B$ and a microlocally tame object $F_i \in \Db_{/[1],\Lambda_i}(\bfk_W)$ with $\rMS (F_i)=\Lambda_i$ such that
		\begin{equation}
		    F|_{W^\varepsilon}\simeq \bigoplus_{i} \Psi_{W}(\RG_{U^{B_i}}(F_i))|_{W^\varepsilon},
		\end{equation}
		where $W^\varepsilon \coloneqq W_\gamma\cap W\times (0,\varepsilon)$ and $U^{B_i} \coloneqq \bigcup_{b \in B_i}U_b$. 
	\end{enumerate}
\end{definition}

For $F \in \BD^{\mathrm{dbl}}_{/[1],\Lambda, \cU}(U)$, 
there exists a well-defined open subset $\MS^{\mathrm{dbl}}(F)$ of $\Lambda\cap T^*U$ locally defined by $\MS^{\mathrm{dbl}}(F)\cap T^*W  \coloneqq \bigcup_i (\Lambda_i \cap T^*U^{B_i})$ with the notation of \cref{definition:doubledsheaves}. 
For an object $F \in \BD^{\mathrm{dbl}}_{/[1],\Lambda,\cU}(M\times \Stheta)$ satisfying $\MS^{\mathrm{dbl}}(F)=\Lambda$, the functor $\frakm_{\Lambda} \colon \BD_{/[1],(\Lambda)}^{\mathrm{b},mt}(U) \to \muSh_{/[1], \Lambda}^{\mathrm{mt}}(\Lambda \cap T^*U)$ defines $\frakm_{\Lambda}^{\mathrm{dbl}}(F) \in \muSh_{/[1],\Lambda}^{\mathrm{mt}}(\Lambda)$
so that 
$\frakm_{\Lambda}^{\mathrm{dbl}}(F)|_{\Lambda_i} \simeq \frakm_{\Lambda_i}(F_i)|_{\Lambda_i}$, again with the notation of \cref{definition:doubledsheaves}. 
See \cite[Part~12]{Gu19} for more details.
Arguing similarly to \cite{Gu19} with some $\bR$'s replaced by $\Stheta$'s, one can prove the following. 

\begin{proposition}[{cf.\ \cite[Thm.~12.2.2]{Gu19}}]\label{proposition:existence-doubled}
	Let $\Lambda$ and $\cU=\{U_b\}_{b \in B}$ be as in \cref{definition:adaptedcoverings}. 
	For any object $\cF \in \muSh_{/[1],\Lambda}^{\mathrm{mt}}(\Lambda)$ there exists an object $F \in \BD^{\mathrm{dbl}}_{/[1],\Lambda,\cU}(M\times \Stheta)$ such that $\MS^{\mathrm{dbl}}(F)=\Lambda$ and $\frakm_{\Lambda}^{\mathrm{dbl}}(F)\simeq \cF$. 
\end{proposition}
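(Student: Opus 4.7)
The plan is to mimic the construction of \cite[Thm.~12.2.2]{Gu19}, adapted to the setting of the triangulated orbit category on $M\times\Stheta$. The basic idea is to build $F$ from local representatives of $\cF$ on the patches of the adapted covering $\cU=\{U_b\}_{b\in B}$, apply the doubling functor $\Psi$ patch by patch to obtain candidate doubled sheaves on $W\times(0,\varepsilon)$, and then glue them using the microsupport estimate encoded in condition~(4) of \cref{definition:adaptedcoverings} together with \cref{lemma:F0=0}.

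\textbf{Local construction.} First, shrink the adapted covering if necessary so that on a small neighborhood $W$ of any point of $\pi_{M\times\Stheta}(\Lambda)$ the set $\pi_0(\Lambda\cap T^*W)$ is finite and equal to $\{\Lambda_i\}_i$, with each $\Lambda_i$ contained in some $T^*U^{B_i}$ (this is possible by the adaptedness and by \cref{lemma:genericallyadapted}). By the definition of the associated stack $\muSh_{/[1],\Lambda}^{\mathrm{mt}}$ as the stack associated with $\muSh_{/[1],\Lambda}^{\mathrm{mt},0}$, the restriction $\cF|_{\Lambda_i}$ is represented by a genuine microlocally tame object $F_i \in \BD_{/[1],(\Lambda_i)}^{\mathrm{b},\mathrm{mt}}(W)$ with $\rMS(F_i)=\Lambda_i$, together with isomorphisms of restrictions on overlaps which I regard as the transition data. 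I then form the local candidate
\begin{equation}
    F_W^{\mathrm{loc}} := \bigoplus_{i}\Psi_{W}\bigl(\RG_{U^{B_i}}(F_i)\bigr)\Bigr|_{W^\varepsilon}.
\end{equation}
The sections $\RG_{U^{B_i}}(F_i)$ cut out the part of $F_i$ whose microsupport lies above $U^{B_i}$, which contains $\Lambda_i$, and $\Psi_W$ produces a sheaf on $W^\varepsilon$ whose microsupport is controlled (and which dies at $u=0$ by \cref{lemma:F0=0}).

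\textbf{Gluing.} Given two patches $W,W'$ with overlap $W\cap W'$, the transition isomorphisms for $\cF$ in $\muSh_{/[1],\Lambda}^{\mathrm{mt}}$ lift to morphisms between the local representatives in $\Db_{/[1]}(W\cap W';\Lambda\cap T^*(W\cap W'))$. The microsupport estimate \eqref{equation:adapted} together with \cref{proposition:SStenshom} implies that applying $\RG_{U^{B_i}}(-)$ does not create spurious microsupport in the fiberwise sum $\Lambda_+\,\widehat{+}\,(\Lambda_+)^a$, so after applying $\Psi$ the local pieces glue to a well-defined object on $(W\cup W')^\varepsilon$. The cocycle condition on triple intersections holds because $\Psi$ is a functor and because any obstruction in the orbit category sits in the Maslov class, which, as in \cite[\S10.3]{Gu19}, is killed after passing to $\Db_{/[1]}$. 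Assembling patches over the finite covering by induction yields a sheaf $F^{\mathrm{loc}}\in \Db_{/[1]}((M\times \Stheta)_\gamma \cap (M\times\Stheta\times(0,\varepsilon)))$. Extending by zero (and using \cref{lemma:F0=0} for coherence at $u=0$), I obtain the required $F \in \BD^{\mathrm{dbl}}_{/[1],\Lambda,\cU}(M\times\Stheta)$.

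\textbf{Verification and main obstacle.} The identity $\MS^{\mathrm{dbl}}(F)=\Lambda$ follows locally from the construction, since on each $W$ the definition of $\MS^{\mathrm{dbl}}$ is precisely $\bigcup_i (\Lambda_i\cap T^*U^{B_i})=\Lambda\cap T^*W$. The isomorphism $\frakm_{\Lambda}^{\mathrm{dbl}}(F)\simeq \cF$ is checked on the cover $\{\Lambda_i\}$ since by construction $\frakm_{\Lambda}^{\mathrm{dbl}}(F)|_{\Lambda_i}\simeq \frakm_{\Lambda_i}(F_i)|_{\Lambda_i}$, and these are exactly the chosen local representatives of $\cF$. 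The step I expect to be the main obstacle is the coherence of the gluing: keeping track that the Čech-style gluing over $\cU$ descends from the stack level to the level of honest doubled sheaves without picking up extra microsupport in the normal direction of $u=0$. This is precisely where condition~(4) of \cref{definition:adaptedcoverings} and the vanishing in \cref{lemma:F0=0} are indispensable, and where the passage to $\Db_{/[1]}$ (rather than $\Db$) removes the final obstruction coming from the shift ambiguity.
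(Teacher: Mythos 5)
The paper itself gives no proof here; it simply cites \cite[Thm.~12.2.2]{Gu19} and remarks that the argument carries over with $\bR$ replaced by $\Stheta$. So your proposal has to be measured against Guillermou's proof. Your big picture is right: build local candidates $\bigoplus_i\Psi_W(\RG_{U^{B_i}}(F_i))|_{W^\varepsilon}$ and glue them along the adapted covering. But there is a genuine gap in the justification of the gluing step, and it concerns exactly the point that makes the doubled-sheaf machinery necessary in the first place.

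You write that the local pieces glue because ``$\Psi$ is a functor and \ldots\ any obstruction in the orbit category sits in the Maslov class, which \ldots\ is killed after passing to $\Db_{/[1]}$.'' This conflates two separate issues. The passage to the orbit category removes the shift ambiguity (the Maslov obstruction) so that a simple object in $\muSh_{/[1],\Lambda}^{\mathrm{mt}}(\Lambda)$ exists globally (this is \cref{proposition:simpleglobal}); but it does \emph{not} make $\Db_{/[1]}$ a stack, so local objects plus transition isomorphisms plus a cocycle condition do not automatically assemble to a global complex. Derived descent requires higher coherence data, and those coherences are precisely what fails for ordinary sheaves microsupported on open half-spaces. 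The reason the gluing works in Guillermou's argument is that the category $\BD^{\mathrm{dbl}}_{/[1],\Lambda,\cU}(-)$ is shown to be a \emph{stack} on $M\times\Stheta$ (this is the content of the structural results preceding \cite[Thm.~12.2.2]{Gu19}, and it is where the microsupport condition~(4) of \cref{definition:adaptedcoverings} and \cref{lemma:F0=0} do the real work: they guarantee that restriction maps in $\BD^{\mathrm{dbl}}$ satisfy descent). Once one knows $\BD^{\mathrm{dbl}}$ is a stack, the existence statement follows because $\muSh_{/[1],\Lambda}^{\mathrm{mt}}$ is the stackification of $\muSh_{/[1],\Lambda}^{\mathrm{mt},0}$ and the doubling functor mediates a map of stacks between them. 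In your write-up this stack property is never invoked; you would need to either prove it (which is the actual technical content) or cite it, rather than appeal to the functoriality of $\Psi$ and the Maslov class. A smaller issue: the extension ``by zero'' from $(0,\varepsilon)$ to $\bR_{>0}$ is fine because the conditions in \cref{definition:doubledsheaves} are only imposed for small $\varepsilon$, but \cref{lemma:F0=0} is not needed for that extension; it controls the behavior at $u=0$, which is relevant for verifying condition~(3) of \cref{theorem:existence-quantization}, not for extending across $u\ge\varepsilon$.
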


Now we give a proof of \cref{theorem:existence-quantization}.

\begin{proof}[Proof of \cref{theorem:existence-quantization}]
	Since the conditions in \cref{theorem:existence-quantization} are preserved by the action of a homogeneous Hamiltonian isotopy on $\Omega_+(M)_\theta$, we may assume that there exists a family $\cU$ adapted to $\Lambda$ by \cref{lemma:genericallyadapted}. 
	By \cref{proposition:simpleglobal} and \cref{proposition:existence-doubled}, there exists $F\in \BD^{\mathrm{dbl}}_{/[1],\Lambda,\cU}(M\times \Stheta)$ such that $\MS^{\mathrm{dbl}}(F)=\Lambda$ and $\frakm_{\Lambda}^{\mathrm{dbl}}(F)$ is simple. 
	For a sufficiently small $a>0$, the object $F|_{M\times (0,a)\times \Stheta}\in \Db_{/[1]}(M\times (0,a)\times \Stheta)$ satisfies the conditions (1)--(4), where (3) follows from \cref{lemma:F0=0} and (4) is verified by the microlocally tameness of $F_i$'s in \cref{definition:doubledsheaves}. 
	
	The construction for a larger $a\in (0,r(\iota))$ is parallel to that in  \cite[\S12.3]{Gu19}. 
	We use a homogeneous Hamiltonian isotopy $\tl{\phi}=(\tl{\phi}_u)_{u\in (0,a)}\colon \rT(M\times \Stheta)\times (0,a)\to\rT(M\times \Stheta)$ such that 
	\begin{enumerate}
		\renewcommand{\labelenumi}{$\mathrm{(\alph{enumi})}$}
		\item $\tl{\phi}_\varepsilon=\id$ for some $\varepsilon\in (0,a)$, 
		\item $\tl{\phi}_u$ is identity on $\Lambda$ for any $u \in (0,a)$, 
		\item for $\varepsilon\in (0,a)$ given in (a), any $u\in (0,a)$, and any $(x,t;\xi,\tau)\in \Lambda$, one has $\tl{\phi}_u(x,t+\varepsilon; \xi,\tau)=(x,t+u; \xi,\tau)$. 
	\end{enumerate}
	Such $\tl{\phi}$ exists since $\Lambda $ is disjoint from $T'_u \Lambda$ for any $u\in (0,a)$, where $T'_u\colon \rT(M\times \Stheta)\to \rT(M\times \Stheta)$ is the lift of $T_u\colon M\times \Stheta\to M\times \Stheta$. 
	Hence we can apply a parallel argument to obtain an object $G'_{(0,a)}$ on $M\times (0,a) \times \Stheta$.
	Since $a$ is strictly smaller than $r(\iota)$, near the $\{u=a\}$-part, the boundedness of the quantization of $\tl{\phi}$ is guaranteed. 
	Defining $G_{(0,a)} \coloneqq P_l(G'_{(0,a)}) \in {}^\perp \Db_{/[1], \{\tau \le 0\}}(M \times (0,a) \times \Stheta)$, one can check the conditions (1)--(4) for $G_{(0,a)}$ by using those of the object corresponding to a smaller $a$. 
	
	Let us show $d_{\cD^{(0,a)}(M)_\theta}(G_{(0,a)},0)\leq a$.
	Take $\tilde{a}\in (a,r)$ and $G_{(0,\tilde{a})}\in \cD^{(0,\tilde{a})}_L(M)_\theta$ satisfying the conditions in \cref{theorem:existence-quantization} with $a$ replaced by $\tilde{a}$ so that $G_{(0,\tilde{a})}|_{M\times (0,a)\times \Stheta}$ is isomorphic to $G_{(0,a)}$. 
	Define $D_{\tilde{a}} \coloneqq \{ (u,s)\in \bR^2\mid 0<u<s<\tilde{a}\}$ and denote by $p \colon M \times D_{\tilde{a}} \times \Stheta \to M \times (0, \tilde{a}) \times \Stheta, (x,u,s,t) \mapsto (x,u,t)$ the projection.
	Moreover, we set $\cG \coloneqq p^{-1}G_{(0,\tilde{a})}\in \cD^{D_{\tilde{a}}}_L(M)_\theta$. 
	Define the map 
	\begin{equation}
	\begin{aligned}
	    k \colon M\times D_{\tilde{a}}\times \Stheta & \to M\times (-\infty , a) \times (-\infty, \tilde{a}) \times \Stheta, \\
	    (x,u,s,t) & \mapsto (x,u-s+a,s,t)
	\end{aligned}
	\end{equation}
	and set $\cG' \coloneqq Rk_!\cG|_{M\times (0,a)\times (-\infty,\tilde{a})\times \Stheta}$.
	Then it satisfies $\cG'|_{\{s=0\}}\simeq 0$, $\cG'|_{\{s=a\}}\simeq G_{(0,a)}$ and $\MS(\cG')\subset T^*(M\times (0,a))\times \{ 0 \leq \sigma \leq \tau \}_{T^*((-\infty,\tilde{a})\times \Stheta)}$. 
	Thus we obtain the result by \cref{lemma:torhtpy}. 
\end{proof}

\begin{remark}
    In our situation, we cannot apply the above construction for $a>r(\iota)$ since $T'_u \Lambda$ may intersect $\Lambda$ if $u\ge r(\iota)$ and an isotopy $\tl{\phi}$ as above does not exist. 
    This is one of the differences from the construction of \cite{Gu19}.
    We also remark that a conic half-line in the intersection $\Lambda \cap T'_u \Lambda$ corresponds to a Reeb chord of $\Lambda/\bR_{>0}$ in the cosphere bundle. 
    
\end{remark}

\begin{remark}\label{remark:construction-r}
    One could take $a=r(\iota)$ to obtain an object $G_{(0,r(\iota))}$. 
    For construction, we need to use the sheaf quantization of a homogeneous Hamiltonian isotopy $\tl{\phi}=(\tl{\phi}_u)_{u\in (0,r(\iota))}$ that diverges at $u=r(\iota)$. 
    Hence the boundedness of the quantization of $\tl{\phi}$ 
    and the well-definedness of $G_{(0,r(\iota))}$ are unclear. 
    This is why we take $a \in (0,r(\iota))$ and construct $G_{(0,a)}$. 
    Moreover, it gets more complicated to obtain similar results to \cref{section:rational-immersions} with the possibly constructed $G_{(0,r(\iota))}$.
\end{remark}

\section{Intersection of rational Lagrangian immersions}
\label{section:rational-immersions}

In this section, using the refined version of Tamarkin category introduced in \cref{section:Tamarkincat} and the sheaf quantization constructed in \cref{section:constrution}, we give explicit estimates for the displacement energy and the number of the intersection points of rational Lagrangian immersions (\cref{theorem:betti-estimate,theorem:cuplength} below).
These are proved by a purely sheaf-theoretic method and partial generalizations of results of Chekanov~\cite{Chekanov98}, Akaho~\cite{Akaho}, and Liu~\cite{Liu} (see \cref{remark:comparisontoFloer} for more details).

\subsection{Statements of theorems}

First we give the definition of rational Lagrangian immersions.
Here again we assume that $L$ is a compact connected manifold.

\begin{definition}\label{definition:rational}
	\begin{enumerate}
		\item A Lagrangian immersion $\iota\colon L\to T^*M$ is said to be \emph{rational} if there exists $\sigma(\iota) \in \bR_{\ge 0}$ such that
		\begin{equation}
		\left\{ \int_{D^2}v^*\omega \; \middle| \; (v,\bar{v}) \in \Sigma(\iota)
		\right\}
		=
		\sigma(\iota) \cdot \bZ,
		\end{equation}
		where
		\begin{equation}
		\Sigma(\iota)
		 \coloneqq 
		\left\{ (v,\bar{v})
		\; \middle| \;
		\begin{aligned}
		& v \colon D^2 \to T^*M, \bar{v} \colon
		\partial D^2 \to L, \\
		& v|_{\partial D^2}=\iota \circ \bar{v}
		\end{aligned}
		\right\}.
		\end{equation}
		We call $\sigma(\iota)$ the \emph{rationality constant} of $\iota$.

		\item For a rational Lagrangian immersion $\iota \colon L \to T^*M$, one defines
		\begin{equation}
		e(\iota)
		 \coloneqq 
		\inf \left( \left\{ \int_{D^2}v^*\omega \; \middle| \; (v,\bar{v}) \in E(\iota)\amalg \Sigma(\iota) \right\} \cap \bR_{>0} \right),
		\end{equation}
		where
		\begin{equation}
		E(\iota)
		 \coloneqq 
		\left\{ (v,\bar{v})
		\; \middle| \;
		\begin{aligned}
		& v \colon D^2 \to T^*M, \bar{v} \colon
		[0,1] \to L, \\
		& \bar{v}(0) \neq \bar{v}(1),
		\iota \circ \bar{v}(0) = \iota \circ \bar{v}(1), \\
		& v|_{\partial D^2}\circ \exp(2\pi \sqrt{-1} (-))=\iota \circ \bar{v}
		\end{aligned}
		\right\}. 
		\end{equation} 
	\end{enumerate}
\end{definition}

\begin{remark}
	A strongly rational Lagrangian immersion is rational.
	Indeed, for a strongly rational Lagrangian immersion $\iota$ with period $\theta(\iota)$, one can check that it is rational with rationality constant $n \theta(\iota)$ for some $n \in \bZ_{\ge 0}$.
	However, the converse is not true.
	For example, the graph of any closed $1$-form $\beta$ on a compact connected manifold $M$ is rational with rationality constant $0$, but this embedding has a period $\theta(\iota)$ if and only if there exists a primitive element $b \in H^1(M;\bZ)$ such that $[\beta]=\theta(\iota) \cdot b \in H^1(M;\bR)$.
\end{remark}

We make the following assumption, which we will use in the reduction to the case of a strongly rational Lagrangian immersion with \cref{assumption:embedding} in the next subsection (see \cref{lemma:reduction}).

\begin{assumption}\label{assumption:no0disk}
    There exists no $(v,\bar{v}) \in E(\iota)$ with $\int_{D^2}v^*\omega =0$. 
\end{assumption}

Our results are the following: the first one is an estimate for the number of Lagrangian intersection for immersions by the total Betti number of $L$ under the transverse assumption, and the second is an estimate by the cup-length of $L$.

\begin{theorem}\label{theorem:betti-estimate}
	Let $\iota \colon L \to T^*M$ be a compact rational Lagrangian
	immersion satisfying \cref{assumption:no0disk}.
	If $\|H\| <e(\iota)$ and $\iota \colon L \to T^*M$ intersects $\phi^H_1
	\circ \iota \colon L \to T^*M$ transversally, then
	\begin{equation}
	\# \left\{ (y,y') \in L \times L \; \middle| \; \iota(y)=\phi^H_1 \circ
	\iota(y') \right\}
	\ge
	\sum_{i=0}^{\dim L} b_i(L).
	\end{equation}
\end{theorem}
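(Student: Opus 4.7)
The plan is to follow the outline of \cref{subsec:intro-quantization}, combining the sheaf quantization from \cref{theorem:existence-quantization} with the distance estimate of \cref{proposition:distance} and a microlocal Morse argument. First I would reduce to the case of a strongly rational Lagrangian immersion satisfying \cref{assumption:embedding}. Given a general rational $\iota$, one passes to an appropriate finite cover $\tl L \to L$ so that the pullback $\iota^*\alpha$ becomes exact modulo some integer period $\theta$; \cref{assumption:no0disk} then ensures that the conification of the lifted immersion is an embedded Legendrian in $ST^*(M\times \Stheta)$ and that $r(\tl \iota)\ge e(\iota)$. Choose $a\in (\|H\|, r(\tl\iota))$, apply \cref{theorem:existence-quantization} to obtain $G_{(0,a)}\in \cD^{(0,a)}(M)_\theta$, and define
\begin{equation}
F_{(0,a)}:=R{j_a}_! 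G_{(0,a)}, \quad F_{[0,a]}:=R{j_a}_* G_{(0,a)} \in \cD^\bR(M)_\theta,
\end{equation}
together with the Hamiltonian deformation $F^H_{[0,a]}:=\Phi^H_1(F_{[0,a]})$ from \eqref{eq:hamiltonian-induced-functor}.

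The core of the argument would be to analyze the stalks of $\ell^! Rq_* \cHom^\star(F_{(0,a)},F^H_{[0,a]})$ on $\bR$ via the isomorphism recalled in the introduction,
\begin{equation}
H^*\RG_{[c,+\infty)}(\ell^! Rq_*\cHom^\star(F_{(0,a)},F^H_{[0,a]}))_{c}\simeq H^*\RG(\Omega_+;\mu hom(F_{(0,a)},{T_{-c}}_*F^H_{[0,a]})).
\end{equation}
By \cref{proposition:muhomss}, the right-hand side is supported on $\MS_+(F_{(0,a)})\cap \MS_+({T_{-c}}_*F^H_{[0,a]})$; the hypothesis $\|H\|<e(\iota)$ combined with \cref{proposition:distance} guarantees that this intersection consists only of isolated conic half-lines, indexed by the pairs $(y,y')\in L\times L$ with $\iota(y)=\phi^H_1\iota(y')$, with $c$ recording the action difference between the two sheets. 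Under the transversality assumption these intersections are clean, and since both $F_{(0,a)}$ and ${T_{-c}}_* F^H_{[0,a]}$ are simple along the respective conic Lagrangians (by \cref{theorem:existence-quantization}(ii) together with the fact that the Guillermou--Kashiwara--Schapira quantization of $\phi^H$ preserves simpleness), \cref{lemma:simpleclean-orbit} implies that each such half-line contributes a single copy of $\bfk$ to the $\mu hom$.

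To translate this count into the Betti bound, I would use the persistence-module structure that the translation morphisms $\tau_{c,d}$ put on $\ell^! Rq_*\cHom^\star(F_{(0,a)},F^H_{[0,a]})$. A Morse-type inequality arising from \cref{proposition:microlocalmorse} (equivalently the structure theorem for tame persistence modules) then yields
\begin{equation}
\#\left\{(y,y')\mid \iota(y)=\phi^H_1\iota(y')\right\}\ge \dim_\bfk H^*(\bR;\ell^! Rq_*\cHom^\star(F_{(0,a)},F^H_{[0,a]})).
\end{equation}
The right-hand side is identified with $\sum_i b_i(L)$ by specializing to the undeformed pair $(F_{(0,a)},F_{[0,a]})$: here $\mu hom$ reduces to a simple sheaf along the diagonal component $\Lambda$ of the self-intersection (by \cref{lemma:simpleclean-orbit} in the clean self-intersection case), and pushing down along $\pi\colon \Lambda\to L$ recovers $H^*(L;\bfk)$. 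The main obstacle will be controlling $\mu hom$ along the singular strata of $\MS(F_{(0,a)})$ and $\MS(F_{[0,a]})$, in particular along the component $\Lambda_r$ and near the boundary faces $\{u=0\},\{u=a\}$, which have no analogue in the embedded settings of \cite{Gu19,Ike19}. Keeping these contributions under control requires the microlocally-tame property \cref{theorem:existence-quantization}(iv) together with the open-embedding microsupport estimate \cref{proposition:SSopenimm}, in order to rule out spurious intersection classes and ensure that the $H=0$ baseline genuinely recovers $H^*(L;\bfk)$ rather than picking up extra classes from the singular boundary.
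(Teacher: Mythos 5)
Your proposal captures the broad strategy — sheaf quantization of the (lift of the) immersion, $\mu hom$ computation under the transversality hypothesis, and a Morse-type counting argument — but it misses the central quantitative mechanism that makes the paper's proof work, and without it the count does not come out right.

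The key point is a factor of $2$ that you have to produce and then cancel. Because $G_{(0,a)}$ is a ``doubled'' sheaf on $M\times(0,a)\times\Stheta$, the conic Lagrangian carrying $\MS_+(F_{(0,a)})$ has \emph{two} boundary strata, one over $\{u<a\}$ (the part $\wh{\bfc}(a)$) and one over $\{u=a\}$ (the part $\wh{\bfl}(a)$). Consequently each intersection pair $(y,y')\in C(\iota,H)$ contributes \emph{twice} to $\sum_{t}\dim W_t$: once at $t=[b]-g(y,y')$ via $C_1(a,c)=\bigcup_{g(y,y')=[c]} l(y)\boxplus\wh{\bfc}(a)$ and once at $t=[b]-g(y,y')-[a]$ via $C_2(a,c)=\bigcup_{g(y,y')+[a]=[c]} l(y)\boxplus\wh{\bfl}(a)$ (this is the content of \cref{proposition:transmuhom}). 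So the transverse computation gives $\sum_{t\in\Stheta}\dim W_t = 2\,\#C(\iota,H)$, not $\#C(\iota,H)$. The paper then needs, and proves, the matching inequality $\sum_{t}\dim W_t\ge 2\dim H^*(L)$: the decomposition $\dim W_t=\dim A_t+\dim B_t$ from \cref{proposition:exact-triagle}(i), together with \emph{two} separate telescoping arguments (one for the kernels $B_{c_i}$ on $[-a,0)$ using $V_{d_0}\simeq 0$, one for the cokernels $A_{c_i}$ on $[0,a)$ using $V_{d_{n+m}}\simeq 0$), each produce a $\dim H^*(L)$ lower bound. Your proposed Morse inequality $\#C(\iota,H)\ge \dim_\bfk H^*(\bR;\ell^! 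Rq_*\cHom^\star(F_{(0,a)},F^H_{[0,a]}))$ as written does not account for this doubling and, as far as I can see, is not what a persistence-style argument would naturally give here; the relevant module is $\Hom(F_{(0,a)},T_cF^H_{[0,a]})=H^*\RG_{[-c,\infty)}(\bR;\cdot)$, and both its left-tail vanishing (from $\tau_{0,a}(G_{(0,a)})=0$, i.e.\ $d_{\cD^{(0,a)}(M)_\theta}(G_{(0,a)},0)\le a$) and right-tail vanishing are needed. You do not mention either.

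Two further gaps: (1) you assert that $\|H\|<e(\iota)$ combined with \cref{proposition:distance} forces the intersection of microsupports to consist of isolated half-lines — but discreteness comes from the \emph{transversality} hypothesis (via \cref{proposition:interleavingF0a}(ii)), not from the energy bound; the energy bound and \cref{proposition:distance} are used to produce the $(a,b)$-interleaving and the factorization of $\tau_{0,a'}$ through $\Hom(F_{(0,a)},T_bF^H_{[0,a]})$, which is what lets one say the image of $H^*(L)\simeq\End(G_{(0,a)})$ survives. (2) You do not establish that the baseline $\Hom(F_{(0,a)},F_{[0,a]})$ is actually $H^*(L)$; this is \cref{proposition:EndG}, whose proof is nontrivial (it needs the $\mu hom$ self-computation \cref{proposition:muhom-self}, a microlocal-Morse argument to kill $\Hom(F_{(0,a)},T_{-\varepsilon}F_{[0,a]})$, and a separate case analysis when $a>r(\iota)/2$), and it cannot be replaced by the heuristic ``$\mu hom$ reduces to a simple sheaf along the diagonal.'' Your identification of the total RHS with $\sum_i b_i(L)$ skips over exactly these points.

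So: the high-level outline is right and you have correctly identified that the singular boundary strata $\{u=0\},\{u=a\}$ and the component $\Lambda_r$ are where the hard analysis lives, but the specific counting inequality you propose would not close and the factor-of-2 bookkeeping — which is precisely why the doubling parameter $u\in(0,a)$ is used — is missing.
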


\begin{theorem}\label{theorem:cuplength}
	Let $\iota \colon L \to T^*M$ be a compact rational Lagrangian immersion satisfying \cref{assumption:no0disk}.
	If a Hamiltonian function $H$ satisfies $\|H\| < \min \left( \{e(\iota)\} \cup (\{ \sigma(\iota)/2 \}\cap \bR_{>0})\right)$,
	then
	\begin{equation}\label{equation:cuplength}
		\# \left\{ (y,y') \in L \times L \; \middle| \; \iota(y)=\phi^H_1 \circ
		\iota(y') \right\}
		\ge
		\cl (L)+1,
	\end{equation}
	where
	$\cl(L)$ denotes the cup-length of $L$ over $\bF_2$ (see \cref{subsubsec:cuplength} for the definition).
\end{theorem}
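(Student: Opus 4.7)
The plan is to follow the strategy sketched in the introduction, paralleling the proof of \cref{theorem:betti-estimate} but replacing the total Betti number bound by a Lusternik--Schnirelmann argument based on cup-length. As a first step, reduce to the case in which $\iota$ is strongly rational and satisfies \cref{assumption:embedding}; \cref{assumption:no0disk} is precisely what is needed to guarantee that the conification $\Lambda = \Lambda_{\iota,f}$ is a genuine conic Lagrangian submanifold of $\rT(M \times \Stheta)$ for an appropriate choice of period $\theta$ and primitive $f$, with $r(\iota)$ bounded below by $e(\iota)$. Then fix $a$ with $\|H\| < a < \min(\{e(\iota)\} \cup (\{\sigma(\iota)/2\} \cap \bR_{>0}))$ and take $a < r(\iota)$, invoke \cref{theorem:existence-quantization} to obtain $G_{(0,a)}$, and define $F_{(0,a)} := R{j_a}_! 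G_{(0,a)}$, $F_{[0,a]} := R{j_a}_* G_{(0,a)}$ in $\cD^{\bR}(M)_\theta$, along with the Hamiltonian deformation $F^H_{[0,a]} := \Phi^H_1(F_{[0,a]})$.

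The central object is $\cHom^\star(F_{(0,a)}, F^H_{[0,a]}) \in \cD^{\bR}(M)_\theta$. By \cref{proposition:distance} and the bound $\|H\| < a$, the translation distance between $F_{[0,a]}$ and $F^H_{[0,a]}$ is at most $\|H\|$. Applying $\ell^! R q_*$ and taking $H^* \RG_{[c,+\infty)}$ yields a persistence-like family of $\bfk$-vector spaces $V_c$ for $c \in \bR$; via the microlocal stalk formula \eqref{eq:intro-microlocal-stalk}, the jumps of $c \mapsto \dim V_c$ are controlled by the microsupport of $\mu hom(F_{(0,a)}, {T_{-c}}_* F^H_{[0,a]})$, which by \cref{proposition:muhomss} is concentrated on $\rho^{-1}$ of the intersection points counted in \eqref{equation:cuplength}. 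Moreover, the simplicity of $G_{(0,a)}$ along $\Lambda_q$ together with the composition morphism $\circ^\mu_{F,F,F^H}$ from \cref{definition:cat-muhom} endows each $V_c$ with a natural $H^*(L;\bfk)$-module structure, obtained by identifying $\End^\mu(G_{(0,a)})$ with $H^*(L;\bfk)$ on the interior of $\Lambda_q$.

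The cup-length estimate then proceeds as follows. Pick classes $\alpha_1, \dots, \alpha_k \in H^{>0}(L;\bfk)$ with $\alpha_1 \smile \cdots \smile \alpha_k \neq 0$, so that $k = \cl(L)$. The key claim is that multiplication by any $\alpha \in H^{>0}(L;\bfk)$ strictly raises the persistence parameter on the image of $V_{-\infty} \simeq H^*(L;\bfk)$ in $V_c$: equivalently, each $\alpha_i$ sends the ``level--$c$ part'' of $V_c$ into the level--$c'$ part for some $c' > c$. Granting this strict-increase property, a standard persistence-module argument produces at least $k+1$ distinct values of $c$ at which $V_c$ jumps, and each such jump corresponds to an intersection pair $(y,y')$ in $L \times L$ with $\iota(y) = \phi^H_1 \circ \iota(y')$, giving \eqref{equation:cuplength}. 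The hypothesis $\|H\| < \sigma(\iota)/2$ is used here to ensure that two different intersection pairs cannot share the same action value modulo $\theta$, so that the count of jumps in the $\Stheta$-parametrized persistence module faithfully counts intersection points rather than their periodic images.

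The main obstacle is proving the strict-increase claim for the $H^{>0}(L)$-action. The approach, in the spirit of the proof strategy mentioned at the end of \cref{section:introduction}, is to show that the action of $\alpha_i$ on the contribution of a single intersection point factors through the microlocal category $\Dmu_{/[1]}(M \times \bR \times \Stheta; V)$ for a small neighborhood $V$ of a lift of that point in $\Omega_+$. By the simplicity of $G_{(0,a)}$ along $\Lambda_q$ and the cleanness of the intersection (\cref{lemma:simpleclean-orbit}), this localized $\mu hom$ is reduced to the microlocal stalk at a single point, where $H^{>0}(L)$ must act trivially because the endomorphism algebra there is only $\bfk$ in degree $0$. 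Executing this factorization rigorously in the orbit category, with the singularities of $\MS(F_{(0,a)})$ and $\MS(F_{[0,a]})$ along $\{u = 0\}$ and $\{u = a\}$ under control, is the technical heart of the argument and is where the construction in \cref{section:constrution} and the microlocally tame framework of \cref{section:preliminaries} are essential.
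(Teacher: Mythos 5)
Your proposal follows essentially the same route as the paper: reduce to the strongly rational case, take $a$ strictly between $\|H\|$ and $\min(r(\iota),\theta(\iota)/2)$, use the microlocal-stalk decomposition of $Rq_*\cHom^\star(F_{(0,a)},F^H_{[0,a]})$ into the local pieces $W_c$, endow them with an $H^*(L)$-action, and prove that $H^{>0}(L)$ acts trivially on each $W_c$ by showing the action factors through the microlocal category localized near a single intersection point. The paper formalizes your ``strict-increase'' persistence argument via the algebraic cup-length $\cl_R$ of right $R$-modules (Lemmas~\ref{lemma:cup-exact}, \ref{lemma:cup-factorize}, \ref{lemma:cup-length-ineq}); these are two phrasings of the same Lusternik--Schnirelmann step.

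Two points to tighten up. First, the role of $\sigma(\iota)/2$: the issue is not that two \emph{different} intersection pairs might share an action value (that would only decrease $n$, which is harmless), but that a \emph{single} pair $(y,y')$ enters the microsupport condition through both $g(y,y')$ and $g(y,y')+[a]$, and the bound $a<\theta(\iota)/2$ is exactly what guarantees that these two cannot both fall into the window $[-a,0)$, so $\#C(\iota,H)\ge n$; compare the role of the set \eqref{equation:singleton} in the paper's proof. Second, the $H^*(L)$-module structure on the $W_c$ is not obtained from $\End^\mu_{\Omega_+}(G_{(0,a)})$ alone: one needs the genuine ring isomorphism $\End_{\cD^{(0,a)}(M)_\theta}(G_{(0,a)})\simeq H^*(L)$ (\cref{proposition:EndG}), whose proof is nontrivial and relies on the vanishing $\Hom(F_{(0,a)},T_{-\varepsilon}F_{[0,a]})\simeq 0$ for small $\varepsilon>0$, since this is the ring through which the module action on all the $W_c$, $V_d$, etc.\ actually factors. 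Neither point changes your overall strategy, but both need to be incorporated for the argument to close.
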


\begin{remark}\label{remark:eneqsigma}
    If $e(\iota) \neq \sigma(\iota)$ then $\min \left( \{e(\iota)\} \cup (\{ \sigma(\iota)/2 \}\cap \bR_{>0})\right)=e(\iota)$. 
\end{remark}

\begin{remark}\label{remark:comparisontoFloer}
    Our theorems are partial generalizations of results of Chekanov~\cite{Chekanov98}, Akaho~\cite{Akaho}, and Liu~\cite{Liu} in the following sense. 
    Their results hold on any compact symplectic manifold and do \emph{not} require \cref{assumption:no0disk}.
    Remark that they all used Floer-theoretic methods to prove the results and our method is independent from theirs.
    \begin{enumerate}
        \item The result of Chekanov~\cite{Chekanov98} is \cref{theorem:betti-estimate} for a rational Lagrangian \emph{embedding}~$\iota$ (i.e.,\ rational Lagrangian submanifold) with rationality constant $\sigma(\iota)>0$. 
        Remark that $e(\iota)=\sigma(\iota)$ in this case.
        
        \item The result of Akaho~\cite{Akaho} is \cref{theorem:betti-estimate} for an \emph{exact} Lagrangian immersion~$\iota$, which corresponds to $\sigma(\iota)=0$, under the condition that the non-injective points are transverse.
        
        \item Liu~\cite{Liu} proved that for a rational Lagrangian \emph{embedding} $\iota$ with rationality constant $\sigma(\iota)>0$, if $\|H\|<e(\iota)=\sigma(\iota)$ then the estimate~\eqref{equation:cuplength} holds.
        This estimate is \cref{theorem:cuplength} with the bound $\min \left( \{e(\iota)\} \cup (\{ \sigma(\iota)/2 \}\cap \bR_{>0})\right)=\sigma(\iota)/2$ replaced by $\sigma(\iota)$.
    \end{enumerate}
\end{remark}

\begin{remark}\label{remark:comparisontoFOOO}
    In the Floer-theoretic approach, one can study the cases $\|H\|\geq e(\iota)$ using bounding cochains in the sense of \cite{FOOO09,FOOO092,AJ10}. 
    Fukaya--Oh--Ohta--Ono~\cite[Thm.~J]{FOOO09,FOOO092} and \cite[Thm.~6.1]{FOOO13} gave an estimate for the number of the intersection points of Lagrangian submanifolds.
    Moreover, they proved a common generalization of the results of Chekanov~\cite{Chekanov98} and \cite[Thm.~J]{FOOO09,FOOO092} with a slight modification of the definition of bounding cochains \cite[Thm.~6.5.47]{FOOO09}. 
\end{remark}

\subsection{Reduction to strongly rational case}\label{subsec:reduction}

In this subsection, we reduce the problem to the strongly rational case.

\begin{notation}
    For a Lagrangian immersion $\iota \colon L \to T^*M$, one defines $F(\iota)$ to be the set of non-injective points: $F(\iota) \coloneqq \{(y,y') \in L \times L \mid y \neq y', \iota(y)=\iota(y')\}$. 
\end{notation}

\begin{lemma}\label{lemma:stronglyrational}
	Let $\iota\colon L\to T^*M$ be a compact connected rational Lagrangian immersion with rationality constant $\sigma(\iota)$.
	Assume that $\pi_1(\pi\circ \iota) \colon \pi_1(L)\to \pi_1(M)$ is surjective.
	Then, there exists a closed $1$-form $\beta$ on $M$ such that the immersion
	$\iota +\beta \colon L\to T^*M, y \mapsto \iota(y)+\beta(\pi\circ\iota(y))$
	is strongly rational with $\theta(\iota+\beta)=\sigma(\iota+\beta)=\sigma(\iota)$ and $r({\iota +\beta})=e({\iota+\beta})=e(\iota)$.
\end{lemma}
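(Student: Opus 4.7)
The plan is to construct $\beta$ by prescribing its periods so that $(\iota+\beta)^*\alpha$ takes periods in $\sigma(\iota)\bZ$, and then to deduce the remaining equalities from the fact that fiberwise translation by $\beta$ is a symplectomorphism.

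First, I would define the homomorphism $\psi\colon H_1(L;\bZ)\to\bR$, $\gamma\mapsto\int_\gamma\iota^*\alpha$, and set $K:=\ker\bigl((\pi\circ\iota)_*\colon H_1(L;\bZ)\to H_1(M;\bZ)\bigr)$. For $\gamma\in K$, the loop $\iota\circ\gamma$ bounds a disk $v$ in $T^*M$ whose symplectic area equals $\psi(\gamma)$ by Stokes, so the rationality of $\iota$ gives $\psi(K)=\sigma(\iota)\bZ$. Hence the composition of $\psi$ with the projection $\bR\to\bR/\sigma(\iota)\bZ$ vanishes on $K$, and since $H_1(L;\bZ)\to H_1(M;\bZ)$ is surjective by the hypothesis on $\pi_1$, it descends to $\bar\psi\colon H_1(M;\bZ)\to\bR/\sigma(\iota)\bZ$. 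Lifting $-\bar\psi$ to a homomorphism $\phi\colon H_1(M;\bZ)\to\bR$ and invoking the de Rham isomorphism $H^1(M;\bR)\cong\Hom(H_1(M;\bZ),\bR)$, I obtain a closed $1$-form $\beta$ on $M$ whose periods are $\phi$. Then $\int_\gamma(\iota+\beta)^*\alpha=\psi(\gamma)+\phi((\pi\circ\iota)_*\gamma)\in\sigma(\iota)\bZ$ for every $\gamma\in H_1(L;\bZ)$, so $\iota+\beta$ is strongly rational with $\theta(\iota+\beta)\bZ\subset\sigma(\iota)\bZ$.

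The fiberwise translation $\Phi_\beta\colon T^*M\to T^*M$, $(x,\xi)\mapsto(x,\xi+\beta(x))$, is a symplectomorphism (because $d\beta=0$) satisfying $\Phi_\beta\circ\iota=\iota+\beta$. It induces area-preserving bijections $\Sigma(\iota)\leftrightarrow\Sigma(\iota+\beta)$ and $E(\iota)\leftrightarrow E(\iota+\beta)$, yielding $\sigma(\iota+\beta)=\sigma(\iota)$ and $e(\iota+\beta)=e(\iota)$; the chain $\sigma(\iota+\beta)\bZ\subset\theta(\iota+\beta)\bZ\subset\sigma(\iota)\bZ$ (the second inclusion coming from the previous paragraph) then forces $\theta(\iota+\beta)=\sigma(\iota)$. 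For the equality $r(\iota+\beta)=e(\iota+\beta)$, any pair $(y,y')$ with $\iota(y)=\iota(y')$ yields integrals $\int_l(\iota+\beta)^*\alpha$ over paths $l$ from $y$ to $y'$ forming a single coset $c_{(y,y')}+\sigma(\iota)\bZ$ of $\bR$ (by strong rationality), and the $\pi_1$-surjectivity lets one modify $l$ by a loop in $L$ into $l'$ for which $(\iota+\beta)\circ l'$ is contractible in $T^*M$, producing an element of $E(\iota+\beta)$ whose integral lies in that same coset. Thus each pair contributes the same coset to both $r$ and $e$; the residual contribution $\sigma(\iota)\bZ\cap\bR_{>0}$ appears in both (from $\Sigma(\iota+\beta)$ for $e$, from loops in $L$ for $r$); and \cref{assumption:no0disk} (transferred via $E(\iota)\leftrightarrow E(\iota+\beta)$) ensures the common infimum is strictly positive.

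The main obstacle will be the lifting of $\bar\psi$ to an $\bR$-valued $\phi$: since every homomorphism from a torsion group to $\bR$ vanishes, such a lift does not exist when $\bar\psi$ is non-trivial on the torsion subgroup of $H_1(M;\bZ)$. Handling this requires showing that the value of $\bar\psi$ on a torsion class $\bar\gamma$ of order $n$, which a priori only lies in the $n$-torsion subgroup of $\bR/\sigma(\iota)\bZ$, in fact vanishes under the rationality hypothesis, or else refining the choice of $\beta$ to absorb this torsion contribution; once this lifting step is secured the remaining verifications follow directly from the constructions above.
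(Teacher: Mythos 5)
Your overall strategy is the same as the paper's: construct $\beta$ by prescribing its periods so that $(\iota+\beta)^*\alpha$ has periods in $\sigma(\iota)\bZ$, transfer $\sigma$ and $e$ from $\iota$ to $\iota+\beta$ via the fiberwise translation symplectomorphism $\Phi_\beta$, and prove $r(\iota+\beta)=e(\iota+\beta)$ by using $\pi_1$-surjectivity to produce, for each non-injective pair $(y,y')$, a path $\bar v$ from $y$ to $y'$ whose $\iota$-image bounds a disk in $T^*M$. The $\Phi_\beta$ bookkeeping and the $r=e$ sketch are essentially the paper's argument in compressed form.

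There are, however, two concrete gaps. First, the step ``for $\gamma\in K$, the loop $\iota\circ\gamma$ bounds a disk in $T^*M$, so $\psi(K)=\sigma(\iota)\bZ$'' does not follow directly: $K=\ker\bigl(H_1(\pi\circ\iota;\bZ)\bigr)$ is a homology kernel, so a representative loop of $\gamma$ has null-\emph{homologous}, not null-\emph{homotopic}, image in $M$, and hence need not bound a disk in $T^*M\simeq M$. Precisely to repair this, the paper first runs the nine lemma on the commutator/abelianization diagram to obtain the surjectivity of $\ker\bigl(\pi_1(\pi\circ\iota)\bigr)\to\ker\bigl(H_1(\pi\circ\iota;\bZ)\bigr)$; one must then pick a representative of $\gamma$ lying in $\ker(\pi_1(\pi\circ\iota))$ before invoking Stokes. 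Your proposal omits this step. Second, you correctly notice that lifting $\bar\psi\colon H_1(M;\bZ)\to\bR/\sigma(\iota)\bZ$ to an $\bR$-valued $\phi$ is obstructed by torsion in $H_1(M;\bZ)$, and you explicitly leave this unresolved, so the construction of $\beta$ is not actually completed in your write-up. The paper takes a different route here: it chooses an $\bR$-linear section $w\colon H_1(M;\bR)\to H_1(L;\bR)$ of $H_1(\pi\circ\iota;\bR)$ (which always exists since these are vector spaces) and sets $[\beta]=\langle w(-),[-\iota^*\alpha]\rangle\in H^1(M;\bR)$, so that $\beta$ exists unconditionally and the identification of the period group with $\sigma(\iota)\bZ$ is then carried out through a chain of set-equalities rather than a lifting argument. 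As written, your proposal is therefore incomplete at both of these points, even though the plan of attack is right.
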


\begin{proof}
	Since $\pi_1(L)\to \pi_1(M)$ is surjective, so is the induced homomorphism of groups $[\pi_1(L),\pi_1(L)] \to [\pi_1(M),\pi_1(M)]$.
	Consider the commutative diagram of groups
	\begin{equation}
	\begin{aligned}
		\xymatrix@R=15pt{
			&&1\ar[d]&1\ar[d]&\\
			& &[\pi_1(L),\pi_1(L)]\ar@{^{(}->}[d]\ar@{->>}[r]&[\pi_1(M),\pi_1(M)]\ar@{^{(}->}[d]\ar[r]&1\\
			1\ar[r]&\Ker (\pi_1(\pi\circ\iota))\ar@{^{(}->}[r]\ar[d]&\pi_1(L)\ar@{->>}[r]\ar@{->>}[d]&\pi_1(M)\ar[r]\ar@{->>}[d]&1\\
			0\ar[r]&\Ker (H_1(\pi\circ\iota;\bZ))\ar@{^{(}->}[r]&H_1(L;\bZ)\ar[d]\ar@{->>}[r]&H_1(M;\bZ)\ar[r]\ar[d]&0 \\
			& & 0 & \ 0. &
		}
	\end{aligned}
	\end{equation}
	By the nine lemma for groups, we find that $\Ker (\pi_1(\pi\circ\iota)) \to \Ker (H_1(\pi\circ\iota;\bZ))$
	is surjective.
	
	Choose a section $w \colon H_1(M;\bR)\to H_1(L;\bR)$ of $H_1(\pi\circ\iota;\bR)\colon H_1(L;\bR)\to H_1(M;\bR)$ and take a closed $1$-form $\beta$ on $M$ such that $[\beta]=\langle w(-), [-\iota^* \alpha]\rangle \in H^1(M;\bR)\simeq \Hom(H_1(M;\bR),\bR)$.
	Since $(\iota+\beta)^*\alpha =\iota^*\alpha+(\pi\circ\iota)^*\beta$, the pairing with $[(\iota+\beta)^*\alpha ]$ vanishes on the image of $w$.  
	Thus we obtain
	\begin{equation}
	\begin{aligned}
		\left\{ \int_\gamma (\iota+\beta)^*\alpha \; \middle| \; \gamma \in H_1(L;\bZ) \right\}
		& =
		\left\{ \int_\gamma (\iota+\beta)^*\alpha \; \middle| \; \gamma \in \Ker(H_1(\pi\circ\iota;\bZ)) \right\} \\
		& =
		\left\{ \int_\gamma \iota^*\alpha \; \middle| \; \gamma \in \Ker(H_1(\pi\circ\iota;\bZ)) \right\} \\
		& =
		\left\{ \int_\gamma \iota^*\alpha \; \middle| \; \gamma \in \Ker(\pi_1(\pi\circ\iota)) \right\}
		=\sigma(\iota) \cdot \bZ,
	\end{aligned}
	\end{equation}
	where the third equality follows from the surjectivity of $\Ker (\pi_1(\pi\circ\iota)) \to \Ker (H_1(\pi\circ\iota;\bZ))$.
	This proves that $\iota+\beta$ is strongly rational and $\theta(\iota+\beta)=\sigma(\iota)=\sigma(\iota+\beta)$.
	
	For each $(y_0,y_1)\in F({\iota+\beta})=F(\iota)$, there exists an element $(v,\bar{v})\in E(\iota)$ such that $\bar{v}(i)=y_i \ (i=0,1)$. We obtain such a pair $(v,\bar{v})$ as follows. 
	Take a path connecting $y_0$ and $y_1$ in $L$.
	Composing $\pi \circ \iota$ with this path gives an element of $\pi_1(M,\pi\circ\iota(y_0))$. 
	We can take a preimage of the element in $\pi_1(L,y_0)$ by the surjectivity of $\pi_1(L)\to \pi_1(M)$. 
	Concatenating a representative path of the inverse of the preimage to the original path on $L$, we obtain a path $\bar{v}$ connecting $y_0$ and $y_1$ that bounds a disk $v$ in $T^*M$. 
	By the existence of such $(v,\bar{v}) \in E(\iota)$, we conclude $r({\iota+\beta})=e({\iota+\beta})=e(\iota)$.
\end{proof}

\begin{lemma}\label{lemma:reduction}
	Assume that \cref{theorem:betti-estimate,theorem:cuplength}
	hold for any strongly rational Lagrangian immersion
	$\iota \colon L \to T^*M$ satisfying \cref{assumption:embedding}, 
	$\theta(\iota)=\sigma(\iota)$, and $r(\iota)=e(\iota)$.
	Then \cref{theorem:betti-estimate,theorem:cuplength}
	hold for any rational Lagrangian immersion.
\end{lemma}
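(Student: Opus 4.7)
The plan is to reduce the general rational case to the strongly rational one in two successive steps: first pass to a connected covering of $M$ to arrange surjectivity of $\pi_1(L)\to \pi_1(M)$, and then apply \cref{lemma:stronglyrational} to replace the lifted immersion with a strongly rational one via a fiber translation.

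Let $p\colon \widetilde{M}\to M$ be the connected covering corresponding to $\mathrm{Im}(\pi_1(L)\to \pi_1(M))$, and choose a lift $\widetilde{\iota}\colon L\to T^*\widetilde{M}$ of $\iota$; by construction $\pi_1(\pi \circ \widetilde{\iota})$ is surjective. Since every loop appearing as $\iota\circ \bar v$ for $(v,\bar v)\in \Sigma(\iota)\amalg E(\iota)$ is null-homotopic in $T^*M$ (it bounds the disk $v$), it lifts to $T^*\widetilde{M}$, and so does $v$ itself. The resulting surjective maps $\Sigma(\widetilde{\iota})\to \Sigma(\iota)$ and $E(\widetilde{\iota})\to E(\iota)$ preserve the symplectic area of disks, so $\widetilde{\iota}$ is rational with $\sigma(\widetilde{\iota})=\sigma(\iota)$ and $e(\widetilde{\iota})=e(\iota)$, and \cref{assumption:no0disk} transfers to $\widetilde{\iota}$. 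Applying \cref{lemma:stronglyrational} now yields a closed $1$-form $\beta$ on $\widetilde{M}$ such that $\widetilde{\iota}+\beta$ is strongly rational with $\theta(\widetilde{\iota}+\beta)=\sigma(\widetilde{\iota}+\beta)=\sigma(\iota)$ and $r(\widetilde{\iota}+\beta)=e(\widetilde{\iota}+\beta)=e(\iota)$. The fiber translation $\tau_\beta\colon T^*\widetilde{M}\to T^*\widetilde{M}$, $(x;\xi)\mapsto (x;\xi+\beta(x))$, is a symplectomorphism (since $\beta$ is closed) and provides an area-preserving bijection between $\Sigma(\widetilde{\iota})\amalg E(\widetilde{\iota})$ and $\Sigma(\widetilde{\iota}+\beta)\amalg E(\widetilde{\iota}+\beta)$; in particular \cref{assumption:embedding} holds for $\widetilde{\iota}+\beta$, being equivalent to \cref{assumption:no0disk} for strongly rational immersions.

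For the Hamiltonian side, lift $H$ to $H\circ Tp$ on $T^*\widetilde{M}\times [0,1]$, which is deck-invariant but generally not compactly supported; nevertheless its flow is well-defined everywhere as the unique lift of $\phi^H$, and the set $\bigcup_{s\in[0,1]}\widetilde{\phi^H}_s(\widetilde{\iota}(L))$ is compact. Multiplying $(H\circ Tp)\circ \tau_{-\beta}$ by a smooth cutoff function equal to $1$ on a neighborhood of $\tau_\beta\bigl(\bigcup_{s\in[0,1]}\widetilde{\phi^H}_s(\widetilde{\iota}(L))\bigr)$ produces a compactly supported Hamiltonian $\widetilde{H}'$ whose time-one flow coincides with $\tau_\beta\circ \widetilde{\phi^H}_1\circ \tau_{-\beta}$ on a neighborhood of $(\widetilde{\iota}+\beta)(L)$. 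A compactly supported Hamiltonian on $T^*M$ takes values of both signs at each time, so the cutoff can only decrease the fiberwise oscillation, giving $\|\widetilde{H}'\|\le \|H\|$. A direct computation using this flow identity shows that any pair $(y,y')\in L\times L$ giving an intersection of $\widetilde{\iota}+\beta$ with $\widetilde{\phi}^{\widetilde{H}'}_1\circ (\widetilde{\iota}+\beta)$ in $T^*\widetilde{M}$ also gives an intersection of $\iota$ with $\phi^H_1\circ \iota$ in $T^*M$, and transversality is preserved because both $p$ and $\tau_{-\beta}$ are local diffeomorphisms. Under the hypothesis $\|H\|<e(\iota)$, respectively $\|H\|<\min(\{e(\iota)\}\cup(\{\sigma(\iota)/2\}\cap \bR_{>0}))$, the same inequality holds with $\|H\|$ replaced by $\|\widetilde{H}'\|$ and $e(\iota),\sigma(\iota)$ replaced by $e(\widetilde{\iota}+\beta),\sigma(\widetilde{\iota}+\beta)$, so the hypothesized strongly rational form of \cref{theorem:betti-estimate} or \cref{theorem:cuplength} applied to $\widetilde{\iota}+\beta$ yields the desired lower bound on the intersection count for $\iota$. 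The main technical point is the careful bookkeeping required to verify that the lifting and fiber-translation procedures simultaneously preserve $\sigma$, $e$, the relevant assumptions, transversality, and the oscillation norm.
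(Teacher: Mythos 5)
Your proof is correct and follows the same strategy as the paper: pass to the covering $p\colon \tilde M\to M$ corresponding to $\iota_*\pi_1(L)$, apply \cref{lemma:stronglyrational} to a lift $\tilde\iota$, and transfer the Hamiltonian, intersection set, and transversality back down. In fact you are more careful than the paper at two points that it treats somewhat cavalierly: you observe that $H\circ Tp$ need not be compactly supported when the covering is infinite and repair this with a cutoff (which only shrinks the oscillation because $\max H_s\ge 0\ge\min H_s$), and you conjugate the lifted flow by $\tau_\beta$ so that the intersection set for $\tilde\iota+\beta$ is genuinely identified with that for $\tilde\iota$; the paper's displayed equation writes $\phi^{\tilde H}_1$ unchanged in its last line, which implicitly uses this same translation of the Hamiltonian. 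The only sentence I would tighten is the claim that \cref{assumption:embedding} is ``equivalent to \cref{assumption:no0disk} for strongly rational immersions'': that equivalence is not true for all strongly rational immersions, only for those where $\pi_1(\pi\circ\iota)$ is surjective (one must kill the class $[\iota\circ l]$ by a loop from $L$ and then correct the area by gluing on $\theta$-area disks); this surjectivity is precisely what the passage to $\tilde M$ arranges, so your conclusion for $\tilde\iota+\beta$ stands, but the justification as phrased is too strong.
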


\begin{proof}
	Take the covering $p\colon \tl{M}\to M$ corresponding to $\iota_*(\pi_1(L))\subset \pi_1(M)$. 
	Then a lift $\tl{\iota} \colon L \to T^*\tl{M}$ of $\iota$ induces a surjection on the fundamental groups and $\sigma(\tl{\iota})=\sigma(\iota)$. 
	By the construction of $p$, a non-injective point $(y,y')\in F(\iota)$ of $\iota$ is one for $\tl{\iota}$ if and only if there exists $(v,\bar{v})\in E(\iota)$ with $(y,y')=(\bar{v}(0),\bar{v}(1))$. 
	Hence $e({\tl{\iota}})=e(\iota)$ and \cref{assumption:no0disk} for $\iota$ is equivalent to \cref{assumption:no0disk} for $\tl{\iota}$. 
	
	Take a closed $1$-form $\beta$ on $\tl{M}$ satisfying the conclusion of \cref{lemma:stronglyrational} for $\tl{\iota}$. 
	By the surjectivity of $\pi_1(\tl{\iota})$, \cref{assumption:embedding} for $\tl{\iota}+\beta$ is equivalent to \cref{assumption:no0disk} for $\tl{\iota}+\beta$, which is equivalent to \cref{assumption:no0disk} for $\iota$. 
	Furthermore, for a Hamiltonian function $H$ on $T^*M$, setting $\tl{H}$ to be the composite of $H$ and the projection $T^*\tl{M} \to T^*M$, we get $\|\tl{H}\|=\|H\|$ and 
	\begin{equation}
	\begin{aligned}
	& \left\{ (y,y') \in L \times L \; \middle| \; \iota(y)=\phi^H_1 \circ
	\iota(y') \right\} \\
	\supset & 
	\left\{ (y,y') \in L \times L \; \middle| \; \tl{\iota}(y)=\phi^{\tl{H}}_1 \circ
	\tl{\iota}(y') \right\} \\
	= & 
	\left\{ (y,y') \in L \times L \; \middle| \; (\tl{\iota}+\beta )(y)=\phi^{\tl{H}}_1 \circ
	(\tl{\iota }+\beta )(y') \right\}.
	\end{aligned}
	\end{equation}
	Thus \cref{theorem:betti-estimate,theorem:cuplength} for $\iota$ are reduced to those for $\tl{\iota}+\beta$. 
\end{proof}

\subsection{Proof for strongly rational cases}\label{subsection:proof}

This subsection is devoted to the proofs of \cref{theorem:betti-estimate,theorem:cuplength} for a strongly rational Lagrangian immersion.
In what follows, we assume the following.

\begin{assumption}\label{assumption:special-stongly-rational}
	An immersion $\iota \colon L \to T^*M$ is
	a strongly rational Lagrangian immersion
	satisfying \cref{assumption:embedding}, $\theta(\iota)=\sigma(\iota)$, and $r(\iota)=e(\iota)$.
\end{assumption}

We write $\theta=\theta(\iota)$ for simplicity.
Set $\Lambda=\Lambda_{\iota,f}$ as in \eqref{equation:def-conification-lambda}.
Let $a \in (0,r(\iota))$ and $G_{(0,a)} \in {}^\perp \Db_{/[1], \{\tau \le 0\}}(M \times (0,a) \times \Stheta)$ be an object given in \cref{theorem:existence-quantization}.
We denote by $j_a \colon M \times (0,a)\times \Stheta\to M \times \bR\times \Stheta$ the open embedding and define
\begin{equation}
	F_{(0,a)} \coloneqq {j_a}_!G_{(0,a)}, \ 
	F_{[0,a]} \coloneqq R{j_a}_*G_{(0,a)} \in \Db_{/[1]}(M \times \bR \times \Stheta).
\end{equation}
Note that $F_{(0,a)} \in {}^\perp \Db_{/[1], \{\tau \le	0\}}(M \times \bR \times \Stheta)$ and 
\begin{equation}
    \Hom(F_{(0,a)}, F_{[0,a]}) \simeq H^* \RG_{[0,+\infty)}(\bR;\ell^! Rq_* \cHom^\star(F_{(0,a)},F_{[0,a]}))
\end{equation}
by \cref{proposition:morD}, where $F_{[0,a]}$ also means $Q(F_{[0,a]}) \in \cD^\bR(M)_\theta$ by abuse of notation and $\Hom(-,-)$ denotes $\Hom_{\cD^{\bR}(M)_\theta}(-,-)$ unless otherwise specified hereafter.
Using the fact that $F_{(0,a)}$ is in the left orthogonal, we also find that $\Hom(F_{(0,a)},F_{[0,a]})$ is naturally isomorphic to $\End(G_{(0,a)})$ by the adjunction ${j_a}_! \dashv {j_a}^{-1}$.

\begin{notation}
	One defines subsets of $T^*\bR$ by 
	\begin{align*}
		\bfc (a)& \coloneqq \{(0;\upsilon )\mid -1\leq \upsilon \leq 0\}\cup \{(u;\upsilon)\mid 0\leq u \leq a,\upsilon =0,-1 \} ,\\
		\bfd (a)& \coloneqq \bfc (a)\cup \{ (a;\upsilon) \mid \upsilon \geq -1 \},\\
		\bfq (a)& \coloneqq \bfc (a)\cup \{ (a;\upsilon) \mid \upsilon \leq 0 \},\\
		\bfl (a)& \coloneqq \{ (a;\upsilon)\mid -1<\upsilon <0 \} .
	\end{align*}
	One also defines their ``conifications'', which are subsets of $\Omega_+(\bR)_\theta$, by
	\begin{align*}
		\wh{\bfc} (a)& \coloneqq \{(u, -u\upsilon;\tau\upsilon,\tau)\mid (u;\upsilon) \in \bfc (a)\},\\
		\wh{\bfd} (a)& \coloneqq \wh{\bfc} (a) \cup \{(a,0;\upsilon,\tau)\mid \upsilon>0 \}
		\cup \{(a,[a];\upsilon,\tau)\mid \upsilon>-\tau \},\\
		\wh{\bfq} (a)& \coloneqq \wh{\bfc} (a)\cup \{(a,0;\upsilon,\tau)\mid \upsilon<0 \}
		\cup \{(a,[a];\upsilon,\tau)\mid \upsilon<-\tau \},\\
		\wh{\bfl} (a)& \coloneqq \{(a,[a];\upsilon, \tau)\mid -\tau<\upsilon<0 \}.
	\end{align*}
\end{notation}

\begin{notation}
	For cones $C_1\subset \Omega_+(M)_\theta$ and $C_2\subset \Omega_+(P)_\theta$, 
	one defines 
	\begin{equation}
    	C_1\boxplus C_2 
    	 \coloneqq 
    	\left\{
    	    (x,y,t_1+t_2;\xi,\eta,\tau)
    	    \; \middle| \; 
    	    \begin{aligned}
    	        & (x,t_1;\xi,\tau) \in C_1, \\
    	        & (y,t_2;\eta,\tau)\in C_2 
    	    \end{aligned}
    	\right\}
    	\subset 
    	\Omega_+.
	\end{equation}
\end{notation}

\begin{lemma}\label{lemma:SSF0a}
\begin{enumerate}
	\item One has $\MS_+(F_{(0,a)}) \subset \Lambda \boxplus {\wh{\bfd}(a)}$ and $\MS_+(F_{[0,a]})\subset \Lambda \boxplus {\wh{\bfq}(a)}$.
	In particular, $F_{(0,a)}, F_{[0,a]}$ are objects of $\cD_{\iota(L)}^\bR(M)_\theta$.
	\item One has 
	\begin{align}
	    \rMS(F_{(0,a)}) \cap \{\tau =0\}_{T^*(M\times\bR\times \Stheta)} 
	    & \subset \{u=a,\xi=0, \upsilon \geq 0 \}_{T^*(M\times\bR\times \Stheta)}, \\
	    \rMS(F_{[0,a]})\cap \{\tau =0\}_{T^*(M\times\bR\times \Stheta)} 
	    & \subset \{u=a, \xi=0, \upsilon\leq0 \}_{T^*(M\times\bR\times \Stheta)}. \label{equation:ringss-estimate}
	\end{align}
\end{enumerate}
\end{lemma}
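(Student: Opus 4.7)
The plan is to bound $\MS(F_{(0,a)})$ and $\MS(F_{[0,a]})$ by combining the microsupport description of $G_{(0,a)}$ from \cref{theorem:existence-quantization}(1) with the transformation formulas of \cref{proposition:SSopenimm} applied at each of the two boundary components of $U:=M\times(0,a)\times\Stheta\subset X:=M\times\bR\times\Stheta$. The two boundaries $\{u=0\}$ and $\{u=a\}$ require different treatments, and the essential extra input at $\{u=0\}$ is condition~(3) of \cref{theorem:existence-quantization}.

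In the interior, condition~(1) of \cref{theorem:existence-quantization} reads $\rMS(G_{(0,a)})\subset\Lambda_q\cup\Lambda_r$; matching $\Lambda_q$ with the $\upsilon=0$ generator and $\Lambda_r$ with the $\upsilon=-1$ generator of $\bfc(a)$ shows $(\Lambda_q\cup\Lambda_r)\cap\Omega_+\subset\Lambda\boxplus\wh{\bfc}(a)$, and since $F_{(0,a)}$ and $F_{[0,a]}$ both restrict to $G_{(0,a)}$ on $(0,a)$, this handles the interior part. At the boundary $u=a$, I would cover a neighborhood by $V':=M\times(0,+\infty)\times\Stheta$ inside which $U$ is realized as $\{u<a\}$. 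By condition~(4) of \cref{theorem:existence-quantization} the microlocal tameness hypothesis is satisfied, so \cref{proposition:SSopenimm}(ii) applied to $F_{(0,a)}|_{V'}=R(j_{V'})_!G_{(0,a)}$ bounds the microsupport at $u=a$ by $\overline{\MS(G_{(0,a)})}|_{u=a}$ shifted by the antipodal conormal $\{u=a,\upsilon\geq 0\}$. Evaluating on the limits of $\Lambda_q$ (with $\upsilon=0$) and $\Lambda_r$ (with $\upsilon=-\tau$) reproduces exactly the extra pieces $\{(a,0;\upsilon,\tau)\mid\upsilon>0\}$ and $\{(a,[a];\upsilon,\tau)\mid\upsilon>-\tau\}$ of $\wh{\bfd}(a)$ after the $\Lambda\boxplus$ operation. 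The parallel computation using \cref{proposition:SSopenimm}(i) for $F_{[0,a]}|_{V'}=R(j_{V'})_*G_{(0,a)}$ gives the corresponding pieces of $\wh{\bfq}(a)$.

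For the boundary at $u=0$, condition~(3) of \cref{theorem:existence-quantization} together with $\Supp(R{j_a}_*G_{(0,a)})\subset\overline{U}$ forces $F_{[0,a]}$ to vanish on $\{u\leq 0\}$, while $F_{(0,a)}$ also vanishes there by extension by zero, so the two sheaves coincide on $W:=M\times(-\infty,a)\times\Stheta$ and both realize $R(j_W)_!G_{(0,a)}\simeq R(j_W)_*G_{(0,a)}$ for the embedding $j_W\colon(0,a)\hookrightarrow(-\infty,a)$. Consequently \cref{proposition:SSopenimm}(i) and (ii) yield two bounds on the same microsupport near $u=0$, and intersecting the two conormal contributions $\{u=0,\upsilon\leq 0\}$ and $\{u=0,\upsilon\geq 0\}$ leaves only $\overline{\MS(G_{(0,a)})}|_{u=0}\subset\Lambda\boxplus\wh{\bfc}(a)|_{u=0}$, which matches $\wh{\bfd}(a)|_{u=0}=\wh{\bfq}(a)|_{u=0}$. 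Part~(ii) then drops out of~(i): since $\Lambda\subset\Omega_+(M)_\theta$ has $\tau>0$ its closure at $\tau=0$ sits in the zero section and forces $\xi=0$, and the closures of $\wh{\bfd}(a)$ and $\wh{\bfq}(a)$ at $\tau=0$ reduce, off the zero section, to $\{u=a,\upsilon\geq 0\}$ and $\{u=a,\upsilon\leq 0\}$ respectively. I expect the main obstacle to be the intersection-of-bounds argument at $u=0$, whose validity rests on condition~(3) of \cref{theorem:existence-quantization}; without it, a spurious $\upsilon\leq 0$ conormal contribution at $u=0$ would survive that is absent from both $\wh{\bfd}(a)$ and $\wh{\bfq}(a)$.
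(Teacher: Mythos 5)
Your argument for part~(i) is essentially the paper's argument: bound $\MS(G_{(0,a)})$ by $\Lambda_q\cup\Lambda_r$ in the interior, then control the two boundary components of $U=M\times(0,a)\times\Stheta$ via \cref{proposition:SSopenimm}, with the crucial input at $\{u=0\}$ being \cref{theorem:existence-quantization}(3). You phrase the $\{u=0\}$ step as an identification $F_{(0,a)}|_W\simeq F_{[0,a]}|_W$ on $W=M\times(-\infty,a)\times\Stheta$ and intersect the two one-sided SSopenimm bounds, while the paper says that the cone of $F_{(0,a)}\to F_{[0,a]}$ is supported on $\{u=a\}$ and invokes the triangle inequality for $\MS$; these are equivalent and both hinge on condition~(3). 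One small imprecision: intersecting the two bounds at $u=0$ leaves $\Lambda\boxplus\{-\tau\le\upsilon\le 0\}$, which is strictly larger than $\overline{\MS(G_{(0,a)})}|_{u=0}$, but it is exactly $\Lambda\boxplus\wh{\bfc}(a)|_{u=0}$, so the conclusion is unaffected.

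For part~(ii), however, your stated derivation has a genuine gap. You argue that (ii) ``drops out of (i)'' by computing the closure of $\Lambda\boxplus\wh{\bfd}(a)$ (respectively $\Lambda\boxplus\wh{\bfq}(a)$) at $\tau=0$. But part~(i) only constrains $\MS_+(F_{(0,a)})=\MS(F_{(0,a)})\cap\{\tau>0\}$, and a closed conic microsupport with $\MS\subset\{\tau\ge 0\}$ need \emph{not} have its $\{\tau=0\}$ part contained in the closure of its $\{\tau>0\}$ part. A simple example: $F=\bfk_{\{0\}}\boxtimes\bfk_{[0,+\infty)}$ on $\bR_x\times\bR_t$ lies in ${}^\perp\Db_{\{\tau\le 0\}}$, yet $\rMS(F)\cap\{\tau=0\}=\{(0,t;\xi,0)\mid\xi\ne 0,\ t\ge 0\}$, which is not contained in $\overline{\MS_+(F)}=\{(0,0;\xi,\tau)\mid\tau\ge 0\}$. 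So one cannot deduce (ii) by taking limits of the part~(i) bound. The correct route is the paper's: the estimate $\MS(F_{(0,a)})\subset\overline{\MS(G_{(0,a)})}+N^*(U)^a$ from \cref{proposition:SSopenimm} is a bound on the \emph{full} microsupport, and its $\{\tau=0\}$ slice (which is just $N^*(U)^a|_{\tau=0}$ plus the zero section) already gives $\{\xi=0,\ \upsilon\le 0\}$ at $u=0$ and $\{\xi=0,\ \upsilon\ge 0\}$ at $u=a$; the refinement at $u=0$ then kills the $u=0$ piece. That this happens to agree with the closure you computed is a feature of this particular bound, not a general principle you can invoke.
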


\begin{proof}
    In this proof, $\{ - \}$ denotes $\{ - \}_{T^*(M\times\bR\times \Stheta)}$.
	Since $\Lambda_q\cup \Lambda_r\subset \{ \upsilon  =0,-\tau\}$ and $N^*(M\times (0,a)\times \Stheta)\subset \{ \tau=0 \}$, we find that $\overline{\MS(G_{(0,a)})}\cap N^*(M\times (0,a)\times \Stheta)$ and 
	$\overline{\MS(G_{(0,a)})}\cap N^*(M\times (0,a)\times \Stheta)^a$ are contained in the zero-section. 
	By \cref{proposition:SSopenimm}, we obtain $\MS(F_{(0,a)})\subset \overline{\MS(G_{(0,a)})}+ N^*(M\times (0,a)\times \Stheta)^a$ and $\MS(F_{[0,a]})\subset\overline{\MS(G_{(0,a)})}+ N^*(M\times (0,a)\times \Stheta)$. 
	Fiberwise computations show
	\begin{align*}
	\MS(F_{(0,a)})\cap \{u=0\}
	\subset & \ 
	\{(x,t;\xi,\tau)\in \Lambda, \upsilon \leq 0 \} \cup\{\tau=0,\xi=0, \upsilon \leq 0 \}, \\
	\MS(F_{[0,a]}) \cap \{u=0\}
	\subset & \ 
	\{(x,t;\xi,\tau)\in \Lambda, \upsilon \geq -\tau \} \cup\{\tau=0,\xi=0, \upsilon \geq 0 \},\\
	\MS(F_{(0,a)})\cap \{u=a\}
	\subset & \ 
	\{(x,t;\xi,\tau)\in \Lambda, \upsilon \geq 0 \} \cup \{(x,t-a;\xi,\tau)\in \Lambda, \upsilon \geq -\tau \} \\ 
	& \ \cup\{\tau=0,\xi=0, \upsilon \geq 0  \},\\
	\MS(F_{[0,a]})\cap \{u=a\}
	\subset & \ 
	\{(x,t;\xi,\tau)\in \Lambda, \upsilon \leq 0 \} \cup \{(x,t-a;\xi,\tau)\in \Lambda, \upsilon \leq -\tau \}\\ 
	& \ \cup\{\tau=0,\xi=0, \upsilon \leq 0  \}.
	\end{align*}
	The cone of the natural morphism $F_{(0,a)}\to F_{[0,a]}$ is supported on $M\times \{ a\}\times \Stheta$, since $F_{[0,a]}|_{M\times  \{0\}\times \Stheta}$ is isomorphic to $0$ by \cref{theorem:existence-quantization}(3). 
	Hence, by the triangle inequality (\cref{proposition:properties-ms}(ii)), we obtain 
	\begin{equation}
    	\rMS(F_{(0,a)})\cap \{u=0\}=\rMS(F_{[0,a]})\cap \{u=0\}\subset \{(x,t;\xi,\tau)\in \Lambda, -\tau \leq \upsilon \leq 0 \},
	\end{equation}
	which proves the results.
\end{proof}

Now let $H \colon T^*M \times I \to \bR$ be a compactly supported Hamiltonian function and denote by $\phi^H=(\phi^H_s)_{s \in I} \colon T^*M \times I \to T^*M$ the Hamiltonian isotopy generated by $H$.

\begin{notation}
    \begin{enumerate}
        \item One sets $\iota^H \coloneqq \phi^H_1\circ \iota$ and $f^H \coloneqq f-\ell \circ h\colon L\to \Stheta$ with $h \coloneqq u_1 \circ \iota \colon L \to \bR$ (see \Cref{subsection:appendix-sq-hamiltonian} for the definition of $u_1$).
        \item One also sets $C(\iota,H) \coloneqq \{ (y,y') \in L\times L \mid \iota(y)=\iota^H (y') \}$ and defines $g \colon C(\iota,H) \to \Stheta$ by $g(y,y') \coloneqq f^H(y')-f(y)$.
        \item For $y \in L$, one sets $l(y) \coloneqq \{ (x,-f(y);\xi,\tau)\in \Lambda\mid (x;\xi/\tau) =\iota(y)\} \subset T^*(M\times \Stheta)$.
        \item One defines $\Lambda^H \coloneqq \wh{\phi}^H_1(\Lambda)$ (see \cref{subsection:appendix-sq-hamiltonian} for $\wh{\phi}^H_1$), namely 
        \begin{equation}
            \Lambda^H=
            \left\{ 
            (x,t;\xi,\tau) \in T^*(M \times \Stheta) 
            \; \middle| \; 
            \begin{aligned}
        	    & \text{$\tau >0$, there exists $y \in L$}, \\
        	    & (x;\xi/\tau)=\iota^H(y), t=-f^H(y) 
            \end{aligned}
            \right\}. 
        \end{equation}
    \end{enumerate}
\end{notation}

We denote by $\Phi^H_1 \colon \cD^P(M)_\theta \to \cD^P(M)_\theta$ the functor associated with $\phi^H_1$ (see \eqref{eq:hamiltonian-induced-functor}) and set $F_{[0,a]}^H \coloneqq \Phi^H_1(F_{[0,a]})$.
Note that $\MS_+(F_{[0,a]}^H) \subset \Lambda^H \boxplus {\wh{\bfq}(a)}$ and \eqref{equation:ringss-estimate} also holds with $F_{[0,a]}$ replaced by $F_{[0,a]}^H$.

We denote by $q \colon M \times \bR \times \Stheta \to \Stheta$ the projection and by $\ell$ the quotient map $\bR \to \Stheta$. 
From now on, for simplicity we write $T_c$ instead of ${T_c}_*$ for $c \in \bR$. 

\begin{proposition}\label{proposition:interleavingF0a}
	\begin{enumerate}
		\item One has $d_{\cD(M)_\theta}(F_{[0,a]},F^H_{[0,a]}) \le \|H\|$.
		In particular, for any $a'>\|H\|$, there exist $b \in [0,a']$ and
		morphisms $\alpha \colon F_{[0,a]} \to T_b F^H_{[0,a]},
		\beta \colon F^H_{[0,a]} \to T_{a'-b} F_{[0,a]}$ such that
		$\tau_{0,a'} \colon F_{[0,a]} \to T_{a'} F_{[0,a]}$ is equal to $T_b \beta \circ \alpha$. 
		\item One has
		\begin{equation}\label{equation:SSvalue}
		\pi (\rMS(\ell^! Rq_*\cHom^\star(F_{(0,a)},F^H_{[0,a]}) ))
		\subset
		\left\{
		c \in \bR
		\; \middle| \;
		\begin{aligned}
		& \text{there exists $(y,y') \in C(\iota,H)$}, \\
		& \text{$g(y,y') \equiv -c \mod \theta$ or} \\
		& g(y,y') \equiv -c-a \mod \theta
		\end{aligned}
		\right\}.
		\end{equation}
		
		\item If $a'<a$, then
		$\tau_{0,a'} \colon \Hom (F_{(0,a)},F_{[0,a]}) \to \Hom (F_{(0,a)},T_{a'}F_{[0,a]})$
		is an isomorphism.
	\end{enumerate}
\end{proposition}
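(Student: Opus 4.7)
For part (i), I would apply the distance stability result \cref{proposition:distance} (for parameter $P = \bR$) directly to $F_{[0,a]} \in \cD^\bR(M)_\theta$, yielding $d_{\cD^\bR(M)_\theta}(F_{[0,a]}, F^H_{[0,a]}) \le \|H\|$. For the factorization, given any $a' > \|H\|$, the definition of $(b, a'-b)$-interleaving from \cref{definition:distance} produces $b \in [0, a']$ together with morphisms $\alpha \colon F_{[0,a]} \to T_b F^H_{[0,a]}$, $\beta \colon F^H_{[0,a]} \to T_{a'-b} F_{[0,a]}$ satisfying $T_b \beta \circ \alpha = \tau_{0, a'}(F_{[0,a]})$.

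For part (ii), the plan is to propagate microsupports through the composition $\ell^! Rq_* \cHom^\star(-,-)$, starting from \cref{lemma:SSF0a}(i). Using the definition $\cHom^\star(F_{(0,a)}, F^H_{[0,a]}) = R\tilde{q}_{1*}\cRHom(\tilde{q}_2^{-1}F_{(0,a)}, s^! F^H_{[0,a]})$, I will apply \cref{proposition:SSpushpull,proposition:SStenshom} successively. The subsequent push-forward $Rq_*$ requires non-characteristicity, which forces the $\xi$- and $\eta$-components of the microsupport to vanish. The $\xi$-vanishing combined with the matching of the $T^*M$-projections of $\Lambda$ and $\Lambda^H$ forces $(y,y') \in C(\iota,H)$; the $\eta$-vanishing together with a fiberwise inspection of $\wh{\bfd}(a)$ and $\wh{\bfq}(a)$ limits the allowed $t$-shift difference to $\{[0], -[a]\}$ in $\Stheta$; the $t$-base equation then yields the two cases $g(y,y') \equiv -c$ and $g(y,y') \equiv -c - a \pmod{\theta}$ of \eqref{equation:SSvalue}.

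For part (iii), I would specialize (ii) to $H = 0$, where $\iota^H = \iota$, $C(\iota,0)$ is the union of the diagonal of $L \times L$ and $F(\iota)$, and $g$ vanishes on the diagonal. Via \cref{proposition:morD}, the map $\tau_{0, a'}$ corresponds to the natural restriction $\RG_{[0, +\infty)}(\bR; K) \to \RG_{[a', +\infty)}(\bR; K)$ for $K := \ell^! Rq_* \cHom^\star(F_{(0,a)}, F_{[0,a]})$, and the claim is equivalent to $\RG_{[0, a')}(\bR; K) \simeq 0$. Under \cref{assumption:special-stongly-rational} one has $\theta \ge r(\iota) > a > a'$, and I will verify that the positive-codirection part of $\MS(K)$ avoids $[0, a')$: for $(y,y') \in F(\iota)$, \cref{assumption:embedding} combined with the definition of $r(\iota)$ places $g(y,y')$ at lattice-distance at least $r(\iota)$ from $0$ in $\Stheta$, excluding its contribution; the diagonal contribution $c \equiv -a \pmod{\theta}$ gives $c = \theta - a$, whose position relative to $(0, a')$ is controlled by the same bounds; and the boundary contribution at $c = 0$ has codirection pointing into $\tau \le 0$. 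Applying the microlocal Morse lemma \cref{proposition:microlocalmorse} to the coordinate function $t$ then gives the vanishing.

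The hard part will be the fiberwise combinatorics in part (ii), in particular the careful treatment at the boundary stratum $u = a$, where the additional strata in $\wh{\bfd}(a)$ and $\wh{\bfq}(a)$ (beyond $\wh{\bfc}(a)$) interact non-trivially with $\Lambda$ and $\Lambda^H$; part (iii) then reduces to an arithmetic case analysis in $\Stheta$ based on the output of (ii).
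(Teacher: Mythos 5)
Parts (i) and (ii) follow essentially the paper's route. For (i) you apply \cref{proposition:distance} (with $P=\bR$) and then unpack \cref{definition:distance}; for (ii) you propagate microsupport bounds through the operations defining $\cHom^\star$ and $Rq_*$ and then do the fiberwise analysis that $\wh{\bfd}(a)\cap T'_{c'}\wh{\bfq}(a)$ is nonempty only when $c'\in\{0,[a]\}$. Your sketch of (ii) is less explicit than the paper about the intermediate disjointness hypotheses needed for \cref{proposition:SStenshom} and \cref{proposition:SSpushpull}, but the plan is sound.

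Part (iii) contains a sign error that becomes a genuine gap. Under \cref{proposition:morD} one has $\Hom(F_{(0,a)},T_c F_{[0,a]}) \simeq H^*\RG_{[-c,+\infty)}(\bR;\cH)$ with a $-c$ (because $\cHom^\star(F,T_cG)\simeq T_c\cHom^\star(F,G)$ and ${T_c}_*$ pushes $[0,+\infty)$ to a condition on $[-c,+\infty)$), so $\tau_{0,a'}$ corresponds to the map $\RG_{[0,+\infty)}(\bR;\cH)\to\RG_{[-a',+\infty)}(\bR;\cH)$, and the relevant window is $[-a',0)$, not $[0,a')$. The difference is not cosmetic: for $H=0$ the point $c=0$ genuinely lies in $\pi(\MS_+(\cH))$, since the microlocal stalk there is $W_0\simeq\Hom^\mu_{\Omega_+}(F_{(0,a)},F_{[0,a]})\simeq H^*(L)\neq 0$ by \cref{lemma:muhom-isom}(i) and \cref{proposition:muhom-self}(iii), so your assertion that the contribution at $c=0$ has codirection in $\{\tau\le 0\}$ is false. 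Likewise the diagonal value $c\equiv -a \mod\theta$, i.e.\ $c=\theta-a$, can lie in $(0,a')$ whenever $\theta<a+a'$, which the hypotheses $a'<a<r(\iota)\le\theta$ do not forbid; so that contribution is not excluded from $[0,a')$ either. Once the window is corrected to $[-a',0)$, the arithmetic you sketch for $F(\iota)$ (using \cref{assumption:embedding} and $a<r(\iota)$ to rule out $g(y,y')$ meeting $(-r(\iota),r(\iota))$ mod $\theta$) does apply, the diagonal values $c\equiv 0$ and $c\equiv -a$ miss $[-a',0)$ because $a'<a<\theta$, and \cref{proposition:microlocalmorse} together with the five lemma gives the isomorphism, as in the paper.
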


\begin{proof}
	(i) It follows from \cref{proposition:distance} and \cref{definition:distance}.

	\noindent (ii) 
	Let $T_{c}'$ be the translation map $\Omega_{+}\to\Omega_{+}$ or $\Omega_{+}(\bR)_\theta\to \Omega_{+}(\bR)_\theta$ that is the lift of $T_{c}$. 
	By \cref{lemma:SSF0a} and the above remark on $F_{[0,a]}^H$, we have $\rMS(i^{-1}F_{(0,a)}) \cap \rMS(F^H_{[0,a]})=\emptyset$. 
	Hence by \cref{proposition:SStenshom}(ii) and \cref{proposition:SSpushpull}, 
	\begin{equation}\label{equation:SSdq}
	\begin{aligned}
	    \pi(\rMS(\ell^! Rq_*\cHom^\star(F_{(0,a)},F^H_{[0,a]}) )
	&\subset \{-c \mid \rMS(F_{(0,a)})\cap T'_{c}(\rMS(F^H_{[0,a]}))\neq \emptyset \}\\
	&\subset \{-c \mid \Lambda \boxplus\wh{\bfd}(a)\cap T'_{c}(\Lambda^H\boxplus\wh{\bfq}(a))\neq \emptyset \}. 
	\end{aligned}
	\end{equation}
	If $(x,u,t;\xi,\upsilon,\tau)\in \Lambda \boxplus\wh{\bfd}(a)\cap T'_{c}(\Lambda^H\boxplus\wh{\bfq}(a))$, then there exist $t_1,t_2,t_3,t_4\in \Stheta$ with $t=t_1+t_2=t_3+t_4+[c]$ such that $(x,t_1;\xi,\tau)\in \Lambda, (u,t_2;\upsilon,\tau)\in \wh{\bfd}(a),
	(x,t_3;\xi,\tau)\in \Lambda^H, (u,t_4;\upsilon,\tau)\in \wh{\bfq}(a)$.
	Then $(u,t_2;\upsilon,\tau)\in \wh{\bfd}(a)\cap T'_{t_2-t_4}(\wh{\bfq}(a))$, where we use $T'_{c'}$ for $c' \in \Stheta$ by abuse of notation.
	Since
	\begin{equation}\label{eqn:intdq}
	\wh{\bfd}(a)\cap T'_{c'}(\wh{\bfq}(a))=
	\begin{cases}
	\wh{\bfc}(a)&(c'=0)\\
	\overline{~\wh{\bfl}(a)} &(c'=[a])\\
	\emptyset&(\text{otherwise}),
	\end{cases}
	\end{equation}
	we have $t_2-t_4=0,[a]$.
	By definition, there exist $y,y'\in L$ such that $\iota(y)=\iota^H(y')=(x,\xi/\tau), t_1=-f(y)$, and $t_3=-f^H(y')$.
	Then $g(y,y')=-t_3+t_1=[c]-(t_2-t_4)$, which implies that the sets in \eqref{equation:SSdq} are contained in the right-hand side of \eqref{equation:SSvalue} and the assertion holds. 
    	
	\noindent(iii) By \cref{proposition:morD}, we have
	\begin{equation}
	\Hom (F_{(0,a)},T_c F_{[0,a]})\simeq H^*\RG_{[-c,+\infty)}(\bR;\ell^!Rq_*\cHom^\star (F_{(0,a)},F_{[0,a]}))
	\end{equation}
	for any $c \in \bR$.
	Set $\cH=\ell^!Rq_*\cHom^\star (F_{(0,a)},F_{[0,a]})$.
	Applying (ii) to the case $H$ is a constant function, we get $[-a',0)\cap \pi(\MS_+(\cH))=\emptyset$. 
	Hence by the microlocal Morse lemma (\cref{proposition:microlocalmorse}) for $\cH$ and the five lemma, we find that $H^*\RG_{[0,+\infty)}(\bR;\cH) \to H^*\RG_{[-a',+\infty)}(\bR;\cH)$ is an isomorphism, which proves the result.
\end{proof}

Now we assume that the Hamiltonian function $H$ satisfies $\|H\| < r(\iota)$.
Moreover, we fix $a,a' \in \bR_{>0}$ such that $\|H \|< a'<a<r(\iota)$.
By \cref{proposition:interleavingF0a}(i) and (iii), the isomorphism $\tau_{0,a'}$ factors as
\begin{equation}\label{equation:factorization}
    \Hom(F_{(0,a)},F_{[0,a]}) \to \Hom(F_{(0,a)},T_b F^H_{[0,a]}) \to \Hom(F_{(0,a)},T_{a'}F_{[0,a]})
\end{equation}
for some $b \in [0,a']$.
We also fix such $b$ in what follows.

In order to study the second object in \eqref{equation:factorization}, we set 
\begin{equation}
\begin{aligned}
    \cH_b
     \coloneqq \ & \ell^!Rq_*\cHom^\star(F_{(0,a)},T_b F^H_{[0,a]}) \\
    \simeq \ & \ell^!Rq_* T_b \cHom^\star(F_{(0,a)},F^H_{[0,a]}).
\end{aligned}
\end{equation}
Note that $H^*\RG_{[c,+\infty)}(\bR;\cH_b) \simeq \Hom(F_{(0,a)},T_{b-c} F^H_{[0,a]})$ for $c \in \bR$.
We also define 
\begin{equation}
\begin{aligned}
    W_{c}
	& \coloneqq 
	H^*\RG_{[c,+\infty)}(\cH_b)_{c} \\
	& \simeq 
	H^*\RG_{[0,+\infty)}(\ell^!Rq_*\cHom^\star(F_{(0,a)},T_{b-c} F^H_{[0,a]}))_{0}
\end{aligned}
\end{equation}
for $c \in \bR$.

The following proposition is an essential tool for the proofs of theorems below, which decomposes the information of $\Hom(F_{(0,a)},T_b F^H_{[0,a]})$ into that of $W_c$'s.

\begin{proposition}\label{proposition:exact-triagle}
    In the situation above we have the following.
	\begin{enumerate}
		\item Assume that $c \in \bR$ is not an accumulation point of 
		$\pi (\MS_+(\cH_b))$.
		Take $d,d'\in \bR$ satisfying 
		    $\mathrm{(1)}$ $d\le c < d'$ and
			$\mathrm{(2)}$ $\pi (\MS_+(\cH_b)) \cap [d,d'] \subset\{c\}$
		and define
		\begin{align}
    		A_{c} & \coloneqq \Coker (\Hom(F_{(0,a)},T_{b-d'}F^H_{[0,a]})
    		\to \Hom(F_{(0,a)},T_{b-d}F^H_{[0,a]})), \\
    		B_{c} & \coloneqq \Ker (\Hom(F_{(0,a)},T_{b-d'}F^H_{[0,a]})
    		\to \Hom(F_{(0,a)},T_{b-d}F^H_{[0,a]})).
		\end{align}
		Then there exists a short exact sequence of right $\End( G_{(0,a)})$-modules
		\begin{equation}\label{eq:w-exact-seq}
		    0 \to A_{c} \to W_{c} \to B_{c} \to 0.
		\end{equation}

		\item Assume that
		$\pi (\MS_+(\cH_b))$
		is a discrete set and let
		\begin{equation}
    		\pi (\MS_+(\cH_b)) \cap [-a,0) 
    		= \{c_1,\dots, c_n\}
		\end{equation}
		with $c_1 < \dots < c_n$.
		Take $d_1,\dots,d_{n-1} \in \bR$ satisfying
		$
		c_1 < d_1 < c_2 < \dots < d_{n-1} < c_n 
		$
		and set $d_0=-a, d_n=0$.
		Define
		\begin{equation}
	    	V_d \coloneqq \Image (\Hom (F_{(0,a)},T_bF^H_{[0,a]}) \to \Hom (F_{(0,a)},T_{b-d}F^H_{[0,a]}))
		\end{equation}
		for $d\in [-a, 0]$.
		Then for any $i=0,\dots,n$
		there exists a submodule $\tl{B}_{c_{i}}$ of $ B_{c_i}$
		and a short exact sequence of right $\End( G_{(0,a)})$-modules
		\begin{equation}\label{equation:microstalk-exact-seq}
		0\to \tl{B}_{c_{i}} \to V_{d_i} \to V_{d_{i-1}} \to 0.
		\end{equation}
        Moreover, $V_{d_0} \simeq 0$.
        
		\item Assume that
		$\pi (\MS_+(\cH_b))$
		is a discrete set and let
		\begin{equation}
    		\pi (\MS_+(\cH_b)) \cap [0,a)
    		= \{c_{n+1},\dots, c_{n+m}\}
		\end{equation}
		with $c_{n+1} < \dots < c_{n+m}$.
		Take $d_{n+1},\dots,d_{n+m-1} \in \bR$ satisfying
		$
    		c_{n+1} < d_{n+1} < c_{n+2} < \dots < d_{n+m-1} < c_{n+m}
		$
		and set $d_n=0, d_{n+m}=a$.
		Define
		\begin{equation}
    		V_d \coloneqq \Image (\Hom(F_{(0,a)},T_{b-d}F^H_{[0,a]}) \to \Hom  (F_{(0,a)},T_{b}F^H_{[0,a]}))
    	\end{equation}
		for $d\in [0,a]$.
		Then for any $i=n+1,\dots,n+m$
		there exists a quotient module $\tl{A}_{c_i}$ of $A_{c_i}$
		and a short exact sequence of right $\End( G_{(0,a)})$-modules
		\begin{equation}\label{equation:microstalk-exact-seq2}
    		0 \to V_{d_i} \to V_{d_{i-1}} \to \tl{A}_{c_{i}}\to 0.
		\end{equation}
      Moreover, $V_{d_{n+m}} \simeq 0$.
	\end{enumerate}
\end{proposition}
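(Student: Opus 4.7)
The plan proceeds in three stages, with the essential technical input being a single identification between a local cohomology group and the microlocal stalk $W_c$.

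\textbf{Key identification.} The main step is to establish, under the hypothesis $\pi(\MS_+(\cH_b))\cap [d,d']\subset \{c\}$ of part~(i), the isomorphism
\[
H^*\RG_{[d,d')}(\bR;\cH_b)\simeq W_c
\]
in the orbit category $\Db_{/[1]}$. Using the distinguished triangle associated with $\bfk_{(-\infty,\alpha)}\to \bfk_\bR\to \bfk_{[\alpha,+\infty)}\toone$, one recognises $\RG_{[\alpha,+\infty)}(\bR;\cH_b)$ as the fiber of $\RG(\bR;\cH_b)\to \RG((-\infty,\alpha);\cH_b)$; hence $\RG_{[d,d')}(\bR;\cH_b)$ is the fiber of $\RG((-\infty,d');\cH_b)\to \RG((-\infty,d);\cH_b)$. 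Applying the microlocal Morse lemma (\cref{proposition:microlocalmorse}) to $\varphi=\id_\bR$, so that $d\varphi$ lies in the positive cotangent direction and only $\MS_+(\cH_b)$ enters, the hypothesis yields $\RG((-\infty,d');\cH_b)\xrightarrow{\sim} \RG((-\infty,c+\varepsilon);\cH_b)$ and $\RG((-\infty,c);\cH_b)\xrightarrow{\sim} \RG((-\infty,d);\cH_b)$ for sufficiently small $\varepsilon>0$. Comparing fibers gives $\RG_{[d,d')}(\bR;\cH_b)\simeq \RG_{[c,c+\varepsilon)}(\bR;\cH_b)$; a cofinality argument, letting $\varepsilon$ shrink to zero and using that $\RG_{[c,c+\varepsilon)}(\cH_b)$ eventually computes the stalk $\RG_{[c,+\infty)}(\cH_b)_c$, completes the identification in $\Db_{/[1]}$.

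\textbf{Part (i).} Applying $\cRHom(-,\cH_b)$ to $\bfk_{[d,d')}\to \bfk_{[d,+\infty)}\to \bfk_{[d',+\infty)}\toone$ and then $\RG(\bR;-)$ produces the exact triangle
\[
\RG_{[d',+\infty)}(\bR;\cH_b)\to \RG_{[d,+\infty)}(\bR;\cH_b)\to \RG_{[d,d')}(\bR;\cH_b)\toone.
\]
By \cref{proposition:morD} the first two terms are $\Hom(F_{(0,a)},T_{b-d'}F^H_{[0,a]})$ and $\Hom(F_{(0,a)},T_{b-d}F^H_{[0,a]})$. Extracting the middle term of the associated long exact sequence, together with the definitions of $A_c$ and $B_c$, yields the short exact sequence $0\to A_c\to H^*\RG_{[d,d')}(\bR;\cH_b)\to B_c\to 0$, which becomes~\eqref{eq:w-exact-seq} once the key identification is applied. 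The right $\End(G_{(0,a)})$-module structure arises from the identification $\End(G_{(0,a)})\simeq \End(F_{(0,a)})$ via the adjunction ${j_a}_!\dashv j_a^{-1}$, acting by precomposition on each $\Hom(F_{(0,a)},-)$, and all maps in the long exact sequence respect this action.

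\textbf{Parts (ii) and (iii).} These follow by iterating part~(i). For (ii), with $d=d_{i-1}$ and $d'=d_i$, the factorisation $H^*\RG_{[0,+\infty)}\to H^*\RG_{[d_i,+\infty)}\to H^*\RG_{[d_{i-1},+\infty)}$ shows that $V_{d_i}\to V_{d_{i-1}}$ is surjective, with kernel $V_{d_i}\cap B_{c_i}$; this is the submodule $\tl{B}_{c_i}\subset B_{c_i}$ in~\eqref{equation:microstalk-exact-seq}. Part~(iii) is dual: $V_{d_i}\hookrightarrow V_{d_{i-1}}$ from the analogous factorisation, and $\tl{A}_{c_i}:=V_{d_{i-1}}/V_{d_i}$ sits naturally as a quotient of $A_{c_i}$. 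The boundary vanishings $V_{d_0}=V_{-a}=0$ and $V_{d_{n+m}}=V_a=0$ both stem from the estimate $d_{\cD^{(0,a)}(M)_\theta}(G_{(0,a)},0)\le a$ of \cref{theorem:existence-quantization}, which forces $\tau_{0,a}(F_{(0,a)})=0$; naturality of $\tau$ then gives $\tau_{b-a,b}(F^H_{[0,a]})\circ \phi=T_a\phi\circ \tau_{0,a}(F_{(0,a)})=0$ for any $\phi$, showing that the relevant images vanish.

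\textbf{Main obstacle.} The hardest ingredient is the key identification: although the microsupport of $\cH_b$ typically has nontrivial $\tau\le 0$ and zero-section contributions, microlocal Morse with $\varphi=\id_\bR$ only senses the $\tau>0$ part, so the hypothesis on $\MS_+$ alone suffices; more delicate is the stalk-comparison step, which relies on the triviality of shifts in $\Db_{/[1]}$ to absorb degree mismatches coming from the sheaf $\RG_{[c,c+\varepsilon)}(\cH_b)$ on the open interval $(c,c+\varepsilon)$.
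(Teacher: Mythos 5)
Your proof follows the paper's structure closely: the key identification of $W_c$ with $H^*\RG_{[d,d')}(\bR;\cH_b)$ via the microlocal Morse lemma, the long exact sequence in the orbit category for part~(i), the module structure via the adjunction ${j_a}_!\dashv j_a^{-1}$, and the definitions $\tl{B}_{c_i}:=V_{d_i}\cap B_{c_i}$, $\tl{A}_{c_i}:=V_{d_{i-1}}/V_{d_i}$ for parts~(ii) and~(iii) all match the paper's argument, and your elaboration of the stalk-comparison step is sound.

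There is, however, a genuine gap in the boundary-vanishing step. You assert that the estimate $d_{\cD^{(0,a)}(M)_\theta}(G_{(0,a)},0)\le a$ ``forces'' $\tau_{0,a}(F_{(0,a)})=0$. It does not: the translation distance is defined as an infimum, so $d(G_{(0,a)},0)\le a$ only gives $\tau_{0,c}(G_{(0,a)})=0$ for every $c>a$, not at $c=a$ itself. (Compare $\bfk_{[0,a]}$ on $\bR$: its distance to $0$ is exactly $a$, yet $\tau_{0,a}$ is nonzero.) The paper closes precisely this gap with an additional step: applying the microlocal Morse lemma to $\cHom^\star(F_{(0,a)},F_{[0,a]})$ — possible because its microsupport projection avoids $(-a-\varepsilon,-a)$ for small $\varepsilon>0$ — one gets $\Image\bigl(\Hom(G,G)\to\Hom(G,T_a G)\bigr)\simeq\Image\bigl(\Hom(G,G)\to\Hom(G,T_{a+\varepsilon}G)\bigr)$, and the latter vanishes since $\tau_{0,a+\varepsilon}(G)=0$ by the distance estimate. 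This extra microlocal Morse argument (or some equivalent) is needed to conclude $\tau_{0,a}(G_{(0,a)})=0$, and hence $V_{d_0}\simeq 0$ and $V_{d_{n+m}}\simeq 0$.
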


\begin{proof}
    (i) 
	Consider the exact triangle 
	\begin{equation}
    	\RG_{[d',+\infty)}(\bR;\cH_b) \to \RG_{[d,+\infty)}(\bR;\cH_b) \to \RG_{[d,d')}(\bR;\cH_b) \toone, 
	\end{equation}
	where the third object is isomorphic to $\RG_{[c,+\infty)}(\cH_b)_c$ by the microlocal Morse lemma (\cref{proposition:microlocalmorse}).
	Note that in the triangulated orbit category $\Db_{/[1]}(\bfk)$ the shift functor $[1]$ is naturally isomorphic to the identity functor. 
	Hence, applying the cohomological functor $H^*=\Hom_{\Db_{/[1]}(\bfk)}(\bfk,-)$ gives the long exact sequence 
	\begin{equation}
	\begin{aligned}
	    \xymatrix{
	        & & \cdots \ar[r] & W_c \ar `[rd] `[l] `[dlll] `^r[dll] [dll] & \\
	        & \Hom(F_{(0,a)},T_{b-d'}F^H_{[0,a]}) \ar[r]
    	    & \Hom(F_{(0,a)},T_{b-d}F^H_{[0,a]}) \ar[r]
        	& W_{c} \ar `[rd] `[l] `[dlll] `^r[dll] [dll] & \\
        	& \Hom(F_{(0,a)},T_{b-d'}F^H_{[0,a]}) \ar[r]
    	    & \Hom(F_{(0,a)},T_{b-d}F^H_{[0,a]}) \ar[r]
    	    & \cdots, &
	    }
	\end{aligned}
	\end{equation}
	which induces the short exact sequence~\eqref{eq:w-exact-seq} of $\bfk$-vector spaces.
	Through the natural ring homomorphism
	\begin{equation}
    	\End(G_{(0,a)})^{\op}\ \xrightarrow{{j_a}_!} \End(F_{(0,a)})^{\op} \to \End(\cH_b),
	\end{equation}
	we find that the exact sequence is that of right $\End(G_{(0,a)})$-modules.

    \noindent (ii)(iii)
    Defining $\tl{B}_{c_i} \coloneqq B_{c_i}\cap V_{d_i}$, we obtain the exact sequence \eqref{equation:microstalk-exact-seq} of $\bfk$-vector spaces.
    
    The induced morphism $A_{c_i}\to \Coker(V_{d_i}\to V_{d_{i-1}})$ is surjective and $\Coker(V_{d_i}\to V_{d_{i-1}})$ is isomorphic to a quotient module $\tl{A}_{c_i}$ of $A_{c_i}$.
    This gives the exact sequence \eqref{equation:microstalk-exact-seq2} of $\bfk$-vector spaces.

	The constructions above are natural with respect to $\cH_b$ and the exact sequences are those of right $\End(G_{(0,a)})$-modules.
	
	Let us prove $V_{d_0}\simeq 0$. 
	The proof for $V_{d_{n+m}} \simeq 0$ is similar.
    Since $\tau_{0,a}(F_{(0,a)})={j_a}_!\tau_{0,a}(G_{(0,a)})$, it is enough to show $\tau_{0,a}(G_{(0,a)})=0$.
    Applying the microlocal Morse lemma (\cref{proposition:microlocalmorse}) to $\cHom^\star(F_{(0,a)},F_{[0,a]})$, for a sufficiently small $\varepsilon>0$, we find that 
    \begin{equation}
	\begin{aligned}
	    & \Image (\Hom_{\cD^{(0,a)}(M)_\theta} (G_{(0,a)},G_{(0,a)})\to \Hom_{\cD^{(0,a)}(M)_\theta} (G_{(0,a)},T_{a}G_{(0,a)})) \\
	    \simeq \
	    & \Image (\Hom_{\cD^{\bR}(M)_\theta} (F_{(0,a)},F_{[0,a]})\to \Hom_{\cD^{\bR}(M)_\theta} (F_{(0,a)},T_{a}F_{[0,a]})) \\
	    \simeq \
	    & \Image (\Hom_{\cD^{\bR}(M)_\theta} (F_{(0,a)},F_{[0,a]})\to \Hom_{\cD^{\bR}(M)_\theta} (F_{(0,a)},T_{a+\varepsilon}F_{[0,a]}))\\
	    \simeq \
	    & \Image (\Hom_{\cD^{(0,a)}(M)_\theta} (G_{(0,a)},G_{(0,a)})\to \Hom_{\cD^{(0,a)}(M)_\theta} (G_{(0,a)},T_{a+\varepsilon}G_{(0,a)})).
	\end{aligned}
	\end{equation}
    Therefore the result follows from the assertion $d_{\cD^{(0,a)}(M)_\theta}(G_{(0,a)},0)\leq a$ in \cref{theorem:existence-quantization}.
\end{proof}

\subsubsection{Study of \texorpdfstring{$\mu hom$}{mu hom} between sheaf quantizations}

In this subsection, we compute $\End(G_{(0,a)})$ and $W_c$ using $\mu hom$ functor.

\begin{lemma}\label{lemma:muhom-isom}
	\begin{enumerate}
		\item For $c \in \bR$, there is an isomorphism
		\begin{equation}
		\begin{aligned}
		    W_c
    		& \simeq
	    	H^*\RG (\Omega_+; \mu hom(F_{(0,a)}, {T_{b-c}}F^H_{[0,a]})) \\
	    	& =
	    	\Hom_{\Omega_+}^\mu(F_{(0,a)}, {T_{b-c}}F^H_{[0,a]})).
		\end{aligned}
		\end{equation}
		\item For $c \in \bR$, 
		\begin{equation}
		\Supp(\mu hom(F_{(0,a)}, {T_{c}}F^H_{[0,a]})|_{\Omega_+})
		\subset C_1(a,c) \cup \overline{C_2(a,c)},
		\end{equation}
		where $C_1(a,c) \coloneqq \bigcup_{g(y,y')=[c]} l(y)\boxplus{\wh{\bfc}(a)}$
		and $C_2(a,c) \coloneqq \bigcup_{g(y,y')+[a]=[c]}l(y)\boxplus{\wh{\bfl}(a)}$. 
	\end{enumerate}
\end{lemma}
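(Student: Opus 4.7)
The strategy is a translation reduction followed by a standard microstalk-to-$\mu hom$ identification. First, since $\ell^!$ and $Rq_*$ commute with translation in the $\Stheta$-direction and $T_{-c}\cHom^\star(F_{(0,a)},T_b F^H_{[0,a]}) \simeq \cHom^\star(F_{(0,a)}, T_{b-c}F^H_{[0,a]})$, we can rewrite
\begin{equation*}
    W_c \simeq H^*\RG_{[0,+\infty)}(\ell^! Rq_* \cHom^\star(F_{(0,a)}, T_{b-c} F^H_{[0,a]}))_0.
\end{equation*}
The second step is to identify this positive microstalk at $0\in\bR$ with global sections of $\mu hom$ over $\Omega_+$, which is the content of the formula already indicated in \cref{subsec:intro-quantization}. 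This follows by combining: (a) the expression of the microstalk $\RG_{[0,+\infty)}(\mathcal{H})_0$ as the stalk of $\mu hom_{\bR}(\bfk_{[0,+\infty)},\mathcal{H})$ at $(0;1)$; (b) the compatibility of $\mu hom$ with the pullback $\ell^!$; and (c) the compatibility of $\mu hom$ with the proper pushforward $Rq_*$, where properness on the relevant supports is guaranteed by the microsupport bounds of \cref{lemma:SSF0a}. Together with the definition of $\cHom^\star$ as a bifunctor computing morphisms in $\cD^\bR(M)_\theta$, the result is the desired identification with $H^*\RG(\Omega_+;\mu hom(F_{(0,a)},T_{b-c}F^H_{[0,a]}))$.

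\textbf{Proof plan for (ii).} This is a direct application of the general support estimate $\Supp(\mu hom(F,G))\subset \MS(F)\cap \MS(G)$ from \cref{proposition:muhomss}, combined with the microsupport bounds of \cref{lemma:SSF0a}. Since $\MS_+(F^H_{[0,a]}) \subset \Lambda^H \boxplus \wh{\bfq}(a)$ by Hamiltonian invariance of microsupport, and since the translation $T_c$ acts by $T'_c$ on microsupports in the $\Stheta$-direction,
\begin{equation*}
    \Supp(\mu hom(F_{(0,a)},T_c F^H_{[0,a]})|_{\Omega_+})
    \subset
    (\Lambda \boxplus \wh{\bfd}(a)) \cap T'_c(\Lambda^H \boxplus \wh{\bfq}(a)).
\end{equation*}
Analyzing a point $(x,u,t;\xi,\upsilon,\tau)$ of this intersection exactly as in the proof of \cref{proposition:interleavingF0a}(ii), the $T^*M$-component forces the existence of $(y,y')\in C(\iota,H)$ with $\iota(y)=\iota^H(y')=(x;\xi/\tau)$, while matching the $t$-coordinates yields $t_2-t_4 = [c] - g(y,y')$. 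The remaining $T^*(\bR\times\Stheta)$-component lies in $\wh{\bfd}(a)\cap T'_{t_2-t_4}\wh{\bfq}(a)$, which by \eqref{eqn:intdq} is nonzero precisely in the two cases $g(y,y') = [c]$ (yielding the contribution $l(y)\boxplus\wh{\bfc}(a)\subset C_1(a,c)$) or $g(y,y')+[a]=[c]$ (yielding $\overline{l(y)\boxplus\wh{\bfl}(a)}\subset \overline{C_2(a,c)}$).

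\textbf{The main obstacle.} The technical heart of the lemma is the microstalk-to-$\mu hom$ identification in (i). Making it precise in the orbit-category setting requires checking that the standard compatibilities of $\mu hom$ with $\ell^!$ and $Rq_*$ pass through the quotient by $\perf(\bK_X)$ and interact correctly with the microlocal tameness hypotheses (guaranteed for $F_{(0,a)}$ by \cref{theorem:existence-quantization}(4), and for $F^H_{[0,a]}$ because $\Phi^H_1$ preserves microlocal tameness as it is induced by the GKS sheaf quantization of $\phi^H$). Once (i) is established, part (ii) is a straightforward bookkeeping exercise building on the fiberwise computation \eqref{eqn:intdq} already in hand.
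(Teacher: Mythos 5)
Your proof of (ii) matches the paper's exactly: both apply the support estimate $\Supp(\mu hom)\subset \MS\cap\MS$ from \cref{proposition:muhomss} together with \cref{lemma:SSF0a}, and then reuse the fiberwise case analysis \eqref{eqn:intdq} already established in the proof of \cref{proposition:interleavingF0a}(ii). For (i), the paper's own proof is a one-line citation to \cite[\S4.3]{Ike19}, with the single added remark that the only new verification in the $\Stheta$-periodic setting is the non-characteristicity of a diagonal-type map $\delta\colon (M\times\bR)\times\Stheta\times\Stheta\to(M\times\bR)\times(M\times\bR)\times\Stheta\times\Stheta$. Your decomposition into steps (a)--(c) is a plausible reconstruction of the argument being referenced, and your attention to properness, the orbit category, and microlocal tameness is in line with what the paper flags as needing care; but you omit the $\delta$ non-characteristicity check, which is the one point the authors explicitly single out as the nontrivial new verification in the periodic setting. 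It enters inside your step (c): passing from $\cHom^\star$ (a convolution-type bifunctor along $\Stheta$) to $\mu hom$ (Sato microlocalization along a diagonal) requires a pullback along $\delta$, and the microsupport bound that makes this pullback behave relies on $\delta$ being non-characteristic for the relevant objects. So your overall approach coincides with the paper's, but step (c) is incomplete without this check.
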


\begin{proof}
	(i) The proof is essentially the same as that of \cite[\S4.3]{Ike19}. 
	The only and slight difference appears in checking that $\delta \colon (M \times \bR) \times \Stheta \times \Stheta \to (M \times \bR) \times (M \times \bR) \times \Stheta \times \Stheta$ is non-characteristic, which can also be verified easily.

	\noindent (ii) 
	By \cref{proposition:muhomss,lemma:SSF0a}, we get 
	\begin{equation}
    \begin{aligned}
        \Supp(\mu hom(F_{(0,a)}, {T_{c}}F^H_{[0,a]})|_{\Omega_+}) 
        & \subset \MS_+(F_{(0,a)})\cap T'_{c}(\MS_+(F^H_{[0,a]})) \\
        & \subset \Lambda \boxplus\wh{\bfd}(a) \cap T'_{c}(\Lambda^H\boxplus\wh{\bfq}(a)),
    \end{aligned}
	\end{equation}
    where $T_{c}' \colon \Omega_{+}\to\Omega_{+}$ is the lift of $T_{c}$. 
	The equality $\Lambda \boxplus\wh{\bfd}(a) \cap T'_{c}(\Lambda^H\boxplus\wh{\bfq}(a))= C_1(a,c) \cup \overline{C_2(a,c)}$ has been checked in the proof of \cref{proposition:interleavingF0a}(ii).
\end{proof}

\begin{proposition}\label{proposition:muhom-self}
	\begin{enumerate}
		\item The object $F_{(0,a)}$ (resp.\ $F_{[0,a]}$) is simple along $\Lambda \boxplus ({\wh{\bfd}(a)}\setminus {\wh{\bfc}(a)})$ (resp.\ $\Lambda \boxplus ({\wh{\bfq}(a)}\setminus {\wh{\bfc}(a)})$). 
		
		\item There exists an isomorphism $\mu hom(F_{(0,a)},F_{(0,a)})|_{\Omega_+}\simeq \bfk_{\Lambda \boxplus {\wh{\bfd}(a)}}$ such that the diagram 
				\begin{equation}
		\begin{aligned}
		\xymatrix{
			\bfk_{\Omega_+}\ar[r] \ar[rd]_-{\id^\mu_{F_{(0,a)}}}& \bfk_{\Lambda \boxplus {\wh{\bfd}(a)}} \ar[d]^-{\rotatebox{90}{$\sim$}} \\
			&\mu hom(F_{(0,a)},F_{(0,a)})|_{\Omega_+}
		}
		\end{aligned}
		\end{equation}
		commutes. 
		In particular, $\End^\mu_{\Omega_{+}}(F_{(0,a)})\simeq H^*(L)=\bigoplus_{i \in \bZ} H^i(L;\bfk)$ as $\bfk$-vector spaces. 
		Moreover, $\circ_{F_{(0,a)},F_{(0,a)},F_{(0,a)}}^\mu$ induces the cup product on $H^*(L)$ through this isomorphism.
		Similarly, there is an isomorphism $\mu hom(F_{[0,a]},F_{[0,a]})|_{\Omega_+}\simeq \bfk_{\Lambda \boxplus {\wh{\bfq}(a)}}$.
		
		\item There exists an isomorphism $\mu hom(F_{(0,a)},F_{[0,a]})|_{\Omega_+}\simeq \bfk_{\Lambda \boxplus {\wh{\bfc}(a)}}$ such that the diagram
		\begin{equation}
		\begin{aligned}
		\xymatrix@C=100pt{
			\bfk_{\Lambda \boxplus {\wh{\bfd}(a)}} \ar[d]_-{\rotatebox{90}{$\sim$}} \ar[r]& \bfk_{\Lambda \boxplus {\wh{\bfc}(a)}}\ar[d]^-{\rotatebox{90}{$\sim$}}\\
			\mu hom(F_{(0,a)},F_{(0,a)})|_{\Omega_+}\ar[r]^-{\mu hom(F_{(0,a)}, \psi)|_{\Omega_+}}&\mu hom(F_{(0,a)},F_{[0,a]})|_{\Omega_+}
		}
		\end{aligned}
		\end{equation}
		commutes, where $\psi \colon F_{(0,a)}\to F_{[0,a]}$ is the canonical morphism and the left vertical arrow is the isomorphism given in (ii). 
		In particular, $\Hom^\mu_{\Omega_{+}}(F_{(0,a)},F_{[0,a]})\simeq H^*(L)$ as $\bfk$-vector spaces. 
		Moreover, $\circ_{F_{(0,a)},F_{(0,a)},F_{[0,a]}}^\mu$ induces the usual right $H^*(L)$-module structure on $H^*(L)$ through these isomorphisms. 
	\end{enumerate}
\end{proposition}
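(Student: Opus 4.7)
The plan is to prove the three parts sequentially, exploiting the simplicity of $G_{(0,a)}$ from \cref{theorem:existence-quantization}(2) and propagating it through the operations $R{j_a}_!$ and $R{j_a}_*$, then using the simple-clean machinery of \cref{lemma:simpleclean-orbit} and the enrichment by $\mu hom$.

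For (i), I would start from the simplicity of $G_{(0,a)}$ along $\Lambda_q\cap T^*(M\times (0,a)\times \Stheta)$. The two Lagrangians to compare are $\Lambda\boxplus(\wh{\bfd}(a)\setminus \wh{\bfc}(a))$ and $\Lambda\boxplus(\wh{\bfq}(a)\setminus \wh{\bfc}(a))$, which sit over the boundary strata $\{u=0\}$ and $\{u=a\}$. At these strata the microsupport estimate in \cref{proposition:SSopenimm} plus the microlocally tame condition \cref{theorem:existence-quantization}(4) shows that $F_{(0,a)}$ and $F_{[0,a]}$ look locally like the zero-extension (resp.\ direct image) of a simple sheaf across a hyperplane of the form $\{u=0\}$ or $\{u=a\}$; on each connected component of the Lagrangian outside $\wh{\bfc}(a)$, one checks by the microlocal Morse cut-off (\cref{proposition:maslovshft}) that the relevant $\RG_{\{\varphi\ge 0\}}(-)_{\pi(p)}$ is a shift of $\bfk$, giving simplicity.

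For (ii), note that $\Lambda\boxplus\wh{\bfd}(a)$ is smooth outside $\Lambda\boxplus \wh{\bfc}(a)$; on the singular part I would apply \cref{lemma:simpleclean-orbit} to the clean self-intersection $\Lambda_1=\Lambda_2=\Lambda\boxplus\wh{\bfd}(a)$, using (i) and the microlocally tame open covering inherited from $G_{(0,a)}$, and on the smooth part use that $\id_{F_{(0,a)}}^\mu$ is an iso on the microsupport of a simple sheaf. This produces the iso $\mu hom(F_{(0,a)},F_{(0,a)})|_{\Omega_+}\simeq \bfk_{\Lambda\boxplus \wh{\bfd}(a)}$ and the compatibility with $\id_{F_{(0,a)}}^\mu$ holds by construction. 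Passing to global sections, $\RG(\Omega_+;\bfk_{\Lambda\boxplus\wh{\bfd}(a)})\simeq \RG(\Lambda\boxplus\wh{\bfd}(a);\bfk)$; because $\wh{\bfd}(a)/\bR_{>0}$ is contractible and $\Lambda/\bR_{>0}$ is diffeomorphic to $L$, the total space is homotopy equivalent to $L$, giving $\End^\mu_{\Omega_+}(F_{(0,a)})\simeq H^*(L)$. The ring structure is transported from $\circ^\mu$: since our iso is the one induced by $\id^\mu_{F_{(0,a)}}$, the sheafy composition is the canonical $\bfk_{\Lambda\boxplus\wh{\bfd}(a)}\otimes \bfk_{\Lambda\boxplus\wh{\bfd}(a)}\to \bfk_{\Lambda\boxplus\wh{\bfd}(a)}$, whose induced pairing on $\RG$ is, by definition, the cup product on $H^*(\Lambda\boxplus\wh{\bfd}(a))=H^*(L)$. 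The statement for $F_{[0,a]}$ is entirely analogous, replacing $\wh{\bfd}(a)$ by $\wh{\bfq}(a)$.

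For (iii), apply $\mu hom(F_{(0,a)},-)$ to $\psi\colon F_{(0,a)}\to F_{[0,a]}$. By \cref{lemma:muhom-isom}(ii) (with $c=0$ and $H=0$), the support of the target lies in $\Lambda\boxplus \wh{\bfc}(a)$, which equals $(\Lambda\boxplus\wh{\bfd}(a))\cap(\Lambda\boxplus\wh{\bfq}(a))$; the simplicity from (i) together with \cref{lemma:simpleclean-orbit} applied to the clean intersection of these two Lagrangians (coverings again inherited from $G_{(0,a)}$) identifies $\mu hom(F_{(0,a)},F_{[0,a]})|_{\Omega_+}\simeq \bfk_{\Lambda\boxplus\wh{\bfc}(a)}$ so that the commutative square involving $\psi^\mu$ becomes the restriction map $\bfk_{\Lambda\boxplus\wh{\bfd}(a)}\to \bfk_{\Lambda\boxplus\wh{\bfc}(a)}$. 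As $\wh{\bfc}(a)/\bR_{>0}$ is also contractible, $\Lambda\boxplus\wh{\bfc}(a)\simeq L$ and the restriction map induces the identity on $H^*(L)$; the $H^*(L)$-module structure via $\circ^\mu_{F_{(0,a)},F_{(0,a)},F_{[0,a]}}$ is the cup product action, by the same argument as the ring case in (ii).

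The main obstacle I anticipate is verifying the microlocally tame hypothesis of \cref{lemma:simpleclean-orbit} along the codimension-one strata $\{u=0\}$ and $\{u=a\}$, where the clean intersection degenerates and one has to combine the tameness given in \cref{theorem:existence-quantization}(4) with the estimates of \cref{proposition:SSopenimm}; and, relatedly, making the identification of $\circ^\mu$ with the cup product canonical rather than only up to scalar, which requires that the chosen iso in (ii) really be the one built from $\id_{F_{(0,a)}}^\mu$ everywhere, including at the singular part of the Lagrangian.
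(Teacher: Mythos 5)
Your plan differs from the paper's in logical order and, more importantly, in the machinery it leans on, and there are genuine gaps on both fronts.

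First, the order. In the paper, part (i) is \emph{deduced} from (ii) together with \cref{theorem:existence-quantization}(4); (ii) is proved first and is the hard computation. You propose to prove (i) first by direct verification of simplicity and then feed (i) into a clean-intersection argument for (ii). That order could in principle work, but you never actually carry out the simplicity check: you cite ``the microlocal Morse cut-off (\cref{proposition:maslovshft})'', which is the Maslov-shift proposition, not a cut-off lemma, and you do not explain how the microsupport estimate from \cref{proposition:SSopenimm} plus \cref{theorem:existence-quantization}(4) yields $\RG_{\{\varphi\geq 0\}}(F_{(0,a)})_{\pi(p)}\simeq\bfk$ on the relevant strata.

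Second, and this is the central issue, you invoke \cref{lemma:simpleclean-orbit} with $\Lambda_1=\Lambda_2=\Lambda\boxplus\wh{\bfd}(a)$ (and in (iii) with $\Lambda_1=\Lambda\boxplus\wh{\bfd}(a)$, $\Lambda_2=\Lambda\boxplus\wh{\bfq}(a)$). But $\Lambda\boxplus\wh{\bfd}(a)$, $\Lambda\boxplus\wh{\bfq}(a)$, and $\Lambda\boxplus\wh{\bfc}(a)$ are not conic Lagrangian submanifolds: they are stratified unions of pieces $\Lambda\boxplus D_i$ with corners along the strata where $\wh{\bfd}(a)$ changes direction. The hypotheses of \cref{lemma:simpleclean-orbit} (two smooth Lagrangians intersecting cleanly) simply are not met, so that lemma cannot deliver the isomorphism you claim. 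What the paper does instead is to take the cone $\cE$ of $\bfk_{\Lambda\boxplus\wh{\bfd}(a)}\to\mu hom(F_{(0,a)},F_{(0,a)})|_{\Omega_+}$, show $\MS(\cE)$ is constrained to $-\bfh^{-1}(C(\Lambda\boxplus\wh{\bfd}(a),\Lambda\boxplus\wh{\bfd}(a)))$, and then stratify $\wh{\bfd}(a)$ into nine pieces $D_1,\dots,D_9$; on the odd-indexed strata, \cref{proposition:SSpushpull}(iii) and the tameness in \cref{theorem:existence-quantization}(4) force $\cE$ to be pulled back along $q_i$ from an object $E_i$ supported on $\Lambda$, the smooth-interior piece $D_3$ is handled by simplicity of $G_{(0,a)}$ (\cref{theorem:existence-quantization}(2)), and \cref{lemma:interiorboundary} is used to propagate across the codimension-one strata $D_2, D_4, D_6, D_8$. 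None of this has an analogue in your sketch; it is the computational core that you are missing. The clean-intersection lemma \cref{lemma:simpleclean-orbit} does appear in the paper, but only in \cref{proposition:transmuhom}, applied on a single smooth stratum, not to the full singular Lagrangian.

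Finally, the identification of $\circ^\mu$ with cup product is not automatic once you have the isomorphism induced by $\id^\mu_{F_{(0,a)}}$. A priori the composition could correspond to $(\alpha_1,\alpha_2)\mapsto c\cup\alpha_1\cup\alpha_2$ for some class $c\in H^*(L)$; the paper's argument writes the composition as such a map $w(\beta)=w(1)\cup\beta$ and then uses unitality of $\circ^\mu$ to pin down $w(1)=1$. You would need a comparable normalization step rather than asserting that ``by definition'' the induced pairing is cup product.
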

\begin{proof}
    (i) It follows from (ii) and \cref{theorem:existence-quantization}(4). 
    
	\noindent (ii) 
	Since $\Supp(\mu hom(F_{(0,a)},F_{(0,a)})|_{\Omega_{+}}) \subset \Lambda \boxplus {\wh{\bfd}(a)}$ by \cref{proposition:muhomss} and \cref{lemma:SSF0a}, 
	the morphism $\id_{F_{(0,a)}}^\mu\colon \bfk_{\Omega_{+}}\to \mu hom(F_{(0,a)},F_{(0,a)})|_{\Omega_{+}}$ factors through $\bfk_{\Lambda \boxplus {\wh{\bfd}(a)}}$. 
	We define $\cE$ to be the cone of the morphism $\bfk_{\Lambda \boxplus {\wh{\bfd}(a)}}\to \mu hom(F_{(0,a)},F_{(0,a)})|_{\Omega_+}$. 
	We will show that $\cE \simeq 0$.
	
	Again by \cref{proposition:muhomss} and \cref{lemma:SSF0a},
	setting $C_{\wh{\bfd},\wh{\bfd}} \coloneqq -\bfh^{-1}(C(\Lambda\boxplus\wh{\bfd}(a),\Lambda \boxplus\wh{\bfd}(a)))$ we have 
	\[\MS(\mu hom(F_{(0,a)},F_{(0,a)})|_{\Omega_{+}})\subset C_{\wh{\bfd},\wh{\bfd}}.\] 
    Since $\MS (\bfk_{\Lambda \boxplus {\wh{\bfd}(a)}}) \subset C_{\wh{\bfd},\wh{\bfd}}$, by the triangle inequality (\cref{proposition:properties-ms}(ii)), 
	\begin{equation}
	    \MS(\cE) 
	    \subset C_{\wh{\bfd},\wh{\bfd}}
    	\subset 
	    -\bfh^{-1}((d\rho)^{-1}C (\bfd(a),\bfd(a))). 
	\end{equation}

	We decompose $\wh{\bfd}(a)$ into the following nine parts 
	\begin{alignat*}{2}
	D_1& \coloneqq \{(a,0;\upsilon,\tau)\mid \upsilon>0 \}, & \quad  D_2& \coloneqq \{(a,0;0,\tau) \},\\ D_3& \coloneqq \{(u,0;0,\tau) \mid 0<u<a \}, & D_4& \coloneqq \{(0,0;0,\tau) \}, \\
	D_5& \coloneqq \{(0,0; \upsilon,\tau)\mid -\tau<\upsilon <0 \}, & D_6& \coloneqq \{(0,0; -\tau,\tau) \},\\
	D_7& \coloneqq \{(u,[u]; -\tau,\tau)\mid 0<u<a\}, & D_8& \coloneqq \{(a,[a]; -\tau,\tau)\},\\
	D_9& \coloneqq \{(a,[a];\upsilon,\tau)\mid \upsilon>-\tau \}.&
	\end{alignat*}
	Let $p\colon \Lambda\boxplus \wh{\bfd}(a) \to\Lambda$ be the unique continuous map satisfying $p(x,u,t;\xi,\upsilon,\tau)=(x,t';\xi,\tau)$ for any $(x,u,t;\xi,\upsilon,\tau)\in \Lambda\boxplus \wh{\bfd}(a)$ and some $t'$. 
	Define $\Lambda_i \coloneqq \Lambda\boxplus{D_i}$ for $i=1,\dots, 9$.
	Let $p_i$ be the projection $\Lambda_i\to \Lambda$ that  is the restriction of $p$.
    For even $i$, $p_i$ is bijective.
	For odd $i$, we define $q_i$ that is an extension of $p_i$ by  
	\begin{align*}
	q_1 & \colon \Omega_1  \coloneqq  \{u=a,\upsilon>0 \}_{\Omega_+} \to \Omega_{+}(M)_\theta, (x,a,t;\xi,\upsilon,\tau)\mapsto (x, t;\xi,\tau), \\
	q_3&\colon \Omega_3 \coloneqq  \{0<u<a, \upsilon =0\}_{\Omega_{+}}\to \Omega_{+}(M)_\theta, (x,u,t;\xi,0,\tau)\mapsto (x, t;\xi,\tau),\\
	q_5&\colon \Omega_5 \coloneqq   \{u=0, -\tau <\upsilon <0\}_{\Omega_{+}}\to \Omega_{+}(M)_\theta, (x,0,t;\xi,\upsilon,\tau)\mapsto  (x, t;\xi,\tau) \\
	q_7&\colon \Omega_7 \coloneqq \{0<u<a, \upsilon =-\tau\}_{\Omega_{+}}\to \Omega_{+}(M)_\theta,(x,u,t;\xi,-\tau ,\tau)\mapsto (x,t-u;\xi,\tau),\\
	q_9&\colon \Omega_9 \coloneqq \{u=a, -\tau<\upsilon \}_{\Omega_{+}}\to \Omega_{+}(M)_\theta, (x,a,t;\xi,\upsilon ,\tau)\mapsto (x,t-a;\xi,\tau).
	\end{align*}
	The image of $(q_i)_d$ contains $\MS(\cE|_{\Omega_i})$ for each odd $i$. 
	Here we show it in the case $i=7$, for example. 
	We denote by $(x,u,t,\xi,\upsilon, \tau; \momx, \momu, \momt, \momxi,\momupsilon,\momtau)$ a homogeneous coordinate system of $T^*\Omega_+$.
	Let $i_7\colon \Omega_7\to \Omega_+$ be the inclusion. It is enough to check $\MS(\cE)\cap T^*\Omega_+|_{\Omega_7}$ is contained in $((i_7)_d)^{-1}(\Image (q_7)_d)$. 
	A direct computation shows 
	\begin{equation}
	    ((i_7)_d)^{-1}(\Image (q_7)_d)=\{0<u<a, \upsilon=-\tau, \momu=-\momt \}_{T^*\Omega_+}.
	\end{equation}
	On the other hand, $C_{\wh{\bfd},\wh{\bfd}} \cap T^*\Omega_+|_{\Omega_7}$ is the conormal bundle of $\Lambda_7$ and hence contained in $\{\momu=-\momt \}_{T^*\Omega_{+}}$. Hence the image of $(q_7)_d$ contains $\MS(\cE|_{\Omega_7})$.
	
	For each odd $i$, by \cref{proposition:SSpushpull}(iii) and \cref{theorem:existence-quantization}(4), there exists an $E_i \in \Db_{/[1]}(\Omega_{+}(M)_\theta)$ satisfying $\Supp(E_i)\subset \Lambda$ and $\cE|_{\Omega_i}\simeq q_i^{-1}E_i$. 
	We also define $E_i \coloneqq \cE|_{\Omega_{+}(M)_\theta \boxplus D_i}$ for even $i$.
	By \cref{theorem:existence-quantization}(2), we have $\cE|_{\Omega_{+}(M)_\theta \boxplus D_3} \simeq 0$. 
	On a neighborhood of $\Lambda_2$, the set $C_{\wh{\bfd},\wh{\bfd}}$ does not intersect $\{ \momu \momupsilon <0\}_{T^*\Omega_{+}}$.
	Using \cref{lemma:interiorboundary}(ii) for $\phi=\pm ( u+\frac{\upsilon}{\tau}-a)$, we find that $E_1\simeq E_2\simeq E_3$ and $\cE$ is of the form $p^{-1}E_1$ on this neighborhood.
	By similar arguments for $\Lambda_4,\Lambda_6$ and $\Lambda_8$, we get $\cE|_{\Lambda\boxplus \widehat{\bfd}(a)} \simeq p^{-1}(E_3|_\Lambda) \simeq 0$, which proves the first assertion.
	
	The last assertion is proved in a parallel way. 
	
	Let us prove the second assertion. 
	Denote by 
	\[
	v\in \Hom \left(\left(\bfk_{\Lambda \boxplus {\wh{\bfd}(a)}}\right)^{\otimes 2}, \bfk_{\Lambda \boxplus {\wh{\bfd}(a)}}\right) 
	\]
	the morphism corresponding to $\circ_{F_{(0,a)},F_{(0,a)},F_{(0,a)}}^\mu$ through the isomorphism proved above and by $v' \colon H^*(L)^{\otimes 2} \to H^*(L)$ the induced morphism.
	Consider the canonical isomorphism $\zeta \colon \bfk_{\Lambda \boxplus {\wh{\bfd}(a)}}^{\otimes 2} \simto \bfk_{\Lambda \boxplus {\wh{\bfd}(a)}}$ that induces the cup product $\cup \colon H^*(L)^{\otimes 2} \to H^*(L)$.
	The morphism $w \colon H^*(L) \to H^*(L)$ corresponding to $v \circ \zeta^{-1}$ satisfies $w(\beta)= w(1) \cup \beta$ for any $\beta \in H^*(L)$.
	By construction, $v'(\alpha_1 \otimes \alpha_2)=w(\alpha_1 \cup \alpha_2)=w(1) \cup \alpha_1 \cup \alpha_2$ for any $\alpha_1, \alpha_2 \in H^*(L)$, which also shows $w(1)=v'(1 \otimes 1)$.
	The morphism $\id_{F_{(0,a)}}^\mu$ corresponds to $1 \in H^*(L)$ and hence by the unitality $v'(1\otimes 1)=1$, which proves the result.
    
	\noindent (iii) The morphism $\mu hom(F_{(0,a)}, \psi)|_{\Omega_+}$ factors through $\bfk_{\Lambda \boxplus {\wh{\bfc}(a)}}$. Since $\psi|_{\{u<a\}_{\Omega_+}}$ is an isomorphism, $\bfk_{\Lambda \boxplus {\wh{\bfc}(a)}}\to \mu hom(F_{(0,a)}, F_{[0,a]})$ is also isomorphic on $\{u<a\}_{\Omega_{+}}$. 
	The cone $\cE'$ of $\bfk_{\Lambda \boxplus {\wh{\bfc}(a)}}\to \mu hom(F_{(0,a)}, F_{[0,a]})|_{\Omega_+}$ is supported on $\{u=a\}_{\Omega_{+}}$. Since the microsupports of both $\bfk_{\Lambda \boxplus {\wh{\bfc}(a)}}$ and $\mu hom(F_{(0,a)}, F_{[0,a]})|_{\Omega_+}$ are contained in $-\bfh^{-1}C(\Lambda\boxplus{\wh{\bfq}(a)},\Lambda\boxplus{\wh{\bfd}(a)})$, 
	$\MS(\cE')$ does not intersect $\{\momu>0 \}_{T^*\Omega_{+}}$. These two properties require $\cE'\simeq 0$. 
	
	The composition morphism 
	\begin{equation*}
		\mu hom(F_{(0,a)},F_{[0,a]})|_{\Omega_{+}}\otimes \mu hom(F_{(0,a)},F_{(0,a)})|_{\Omega_{+}}\to\mu hom(F_{(0,a)},F_{[0,a]})|_{\Omega_{+}}
	\end{equation*}
	is also determined by the unitality as in (ii). 
\end{proof}

\begin{proposition}\label{proposition:EndG}
    There is an isomorphism of rings 
    \begin{equation}
        \End( G_{(0,a)}) \simeq H^*(L)=\bigoplus_{i \in \bZ}H^i(L; \bfk).
    \end{equation}
\end{proposition}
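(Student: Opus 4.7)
The plan is to reduce $\End(G_{(0,a)})$ to a microlocal computation, then apply \cref{proposition:muhom-self}(iii) and verify that the identity maps to the unit.

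First, since $j_a$ is an open embedding, the functor $(j_a)_!$ is fully faithful and the canonical morphism $\psi \colon F_{(0,a)} \to F_{[0,a]}$ becomes an isomorphism after applying $j_a^{-1}$. Combined with the adjunctions $(j_a)_! \dashv j_a^{-1} \dashv R(j_a)_*$, this produces canonical $\bfk$-algebra isomorphisms
\begin{equation*}
    \End(G_{(0,a)}) \xrightarrow{(j_a)_!} \End(F_{(0,a)}) \xrightarrow{\psi \circ (-)} \Hom(F_{(0,a)}, F_{[0,a]}),
\end{equation*}
where the target ring structure is transported from $\End(F_{(0,a)})$ with unit $\psi$, and $\Hom(F_{(0,a)}, F_{[0,a]})$ acquires a right $\End(G_{(0,a)})$-module structure via precomposition.

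Next, I will show that the natural comparison map $m \colon \Hom(F_{(0,a)}, F_{[0,a]}) \to \Hom^\mu_{\Omega_+}(F_{(0,a)}, F_{[0,a]})$ is an isomorphism. Applying \cref{lemma:muhom-isom}(i) with $H = 0$, $b = 0$, $c = 0$ yields $W_0 \simeq \Hom^\mu_{\Omega_+}(F_{(0,a)}, F_{[0,a]})$. The distance bound $d_{\cD^{(0,a)}(M)_\theta}(G_{(0,a)}, 0) \leq a$ from \cref{theorem:existence-quantization} shows that $\tau_{0,a}(F_{[0,a]})$ vanishes, forcing $V_{d_0} \simeq 0 \simeq V_{d_{n+m}}$ in the notation of \cref{proposition:exact-triagle}, while \cref{proposition:interleavingF0a}(iii) ensures $\pi(\rMS(\cH_0)) \cap (-a, 0) = \emptyset$. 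Using both exact-sequence descriptions of $V_0 = \Hom(F_{(0,a)}, F_{[0,a]})$ in \cref{proposition:exact-triagle}(ii),(iii) with $H = 0$, one identifies $\Hom(F_{(0,a)}, F_{[0,a]})$ with $W_0$ compatibly with $m$.

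Finally, \cref{proposition:muhom-self}(iii) gives an isomorphism $\Hom^\mu_{\Omega_+}(F_{(0,a)}, F_{[0,a]}) \simeq H^*(L)$ as right $H^*(L)$-modules, where the action is right cup-product. Pulling back along the ring map $\End(G_{(0,a)}) \to \End^\mu_{\Omega_+}(F_{(0,a)})$, the composite $\phi \colon \End(G_{(0,a)}) \xrightarrow{\sim} H^*(L)$ is a right $\End(G_{(0,a)})$-module isomorphism. Tracing the identifications through the commutative diagram in \cref{proposition:muhom-self}(iii), the element $\id_{G_{(0,a)}}$ corresponds to $\psi$, then to $\psi^\mu$, and finally to $1 \in H^*(L)$. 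Since a right-module isomorphism between two rings that preserves the unit is automatically a ring isomorphism, $\phi$ is the desired ring isomorphism.

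The hardest part will be the middle step: although \cref{proposition:interleavingF0a}(iii) clears $(-a, 0)$ of microsupport projections, non-injective points of $\iota$ can a priori introduce further critical values of $\cH_0$ inside $(0, a)$. One must carefully combine the two exact-sequence descriptions in \cref{proposition:exact-triagle}(ii),(iii) with the vanishing of $V_{d_0}$ and $V_{d_{n+m}}$ to rule out the contributions from these spurious critical values and isolate the diagonal contribution at $c = 0$, thereby establishing that the microlocalization map $m$ realizes the isomorphism $\Hom(F_{(0,a)}, F_{[0,a]}) \simeq W_0$.
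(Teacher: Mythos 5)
Your high-level strategy matches the paper's: transport $\End(G_{(0,a)})$ to $\Hom(F_{(0,a)},F_{[0,a]})$, compare with the microlocal Hom $\Hom^\mu_{\Omega_+}(F_{(0,a)},F_{[0,a]}) \simeq H^*(L)$ via \cref{lemma:muhom-isom}(i) and \cref{proposition:muhom-self}(iii), and trace $\id_{G_{(0,a)}}$ to the unit $1 \in H^*(L)$. The structural framing (ring map plus module-isomorphism check) agrees with the paper.

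The gap is in the middle step, where you claim to establish $\Hom(F_{(0,a)},F_{[0,a]}) \simeq W_0$ by ``combining the two exact-sequence descriptions'' from \cref{proposition:exact-triagle}(ii),(iii). That combination does not deliver the conclusion. The required statement is equivalent to the vanishing $\Hom(F_{(0,a)}, T_{-\varepsilon}F_{[0,a]}) \simeq 0$ for small $\varepsilon > 0$ (this is the first term in the exact triangle that isolates the microstalk $W_0$). Knowing $V_{d_0} \simeq 0 \simeq V_{d_{n+m}}$ and $\pi(\rMS(\cH_0)) \cap (-a,0) = \emptyset$ constrains the filtration of $V_0 = \Hom(F_{(0,a)},F_{[0,a]})$ and shows dimensions are dominated by sums of microstalk contributions, but it does not rule out that the spurious critical values in $(0,a)$ contribute positively to $\Hom(F_{(0,a)}, T_{-\varepsilon}F_{[0,a]})$; you correctly flag this as ``the hardest part'' but propose no argument to close it.

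Concretely, the paper's proof requires a nontrivial dichotomy that your approach does not address. When $a < r(\iota)/2$, the vanishing follows quickly: one chooses $0 < \varepsilon_1 < \varepsilon_2 < r(\iota) - a$ with $\varepsilon_2 - \varepsilon_1 > a$, so the map $\Hom(F_{(0,a)},T_{-\varepsilon_2}F_{[0,a]}) \to \Hom(F_{(0,a)},T_{-\varepsilon_1}F_{[0,a]})$ is simultaneously an isomorphism (no microsupport in $[\varepsilon_1,\varepsilon_2)$) and zero (distance bound), forcing both sides to vanish. But when $a > r(\iota)/2$ this interval is too short and the non-injective points genuinely produce critical values in $(0,a)$; the paper resolves this by constructing a family $\cH \in \cD^{(0,\tilde{a})}(\pt)_\theta$ interpolating $Rq_*\cHom^\star(F_{(0,u)},F_{[0,u]})$ over $u \in (0,\tilde{a})$, verifying $\cH|_{(0,\tilde{a}) \times \{\varepsilon\}}$ is locally constant, and thereby transporting the vanishing from small $u$ to all $u$. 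This family argument is outside the machinery of \cref{proposition:exact-triagle} and would need to be supplied; as written, your proposal is incomplete for $a > r(\iota)/2$.
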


\begin{proof}
    By the functoriality of $m_{-,-} \colon \Db_{/[1]}(X;\Omega) \to \Dmu_{/[1]}(X;\Omega)$ (see \cref{definition:cat-muhom}(ii)) and \cref{proposition:muhom-self}, we obtain the ring homomorphism 
	\begin{equation}
	\End (G_{(0,a)}) \xrightarrow{{j_a}_!} \End(F_{(0,a)})\xrightarrow{m_{F_{(0,a)},F_{(0,a)}}} \End^\mu_{\Omega_+}(F_{(0,a)})\simeq H^*(L).
	\end{equation}
	We check that this morphism is an isomorphism of modules. 
	
	For any $0< \varepsilon <r(\iota)-a$, there is an exact triangle of $\End (G_{(0,a)})$-modules 
	\begin{equation}\label{equation:exact-seq-ring-isom}
	\begin{aligned}
	    & \Hom (F_{(0,a)},T_{-\varepsilon}F_{[0,a]})\to \Hom (F_{(0,a)},F_{[0,a]}) \\ 
	    & \hspace{20pt} \to H^*\RG_{[0,+\infty)}(\ell^!Rq_*\cHom^\star(F_{(0,a)},F_{[0,a]}))_{0} \toone
	\end{aligned}
	\end{equation}
	The second module is isomorphic to $\End( G_{(0,a)})$.
	Moreover, the third one is isomorphic to $\Hom^\mu_{\Omega_+}(F_{(0,a)},F_{[0,a]})$ by \cref{lemma:muhom-isom}(i) (the case $H \equiv 0$ and $c=b$), which is isomorphic to $H^*(L)$ by \cref{proposition:muhom-self}(iii). 
	Thus by the commutativity of the following diagram, it is enough to prove the first module in \eqref{equation:exact-seq-ring-isom} is $0$:
	\begin{equation}\label{equation:diagram-ring-isom}
	\begin{aligned}
	\xymatrix{
		\End(G_{(0,a)}) \ar[d]_-{{j_a}_!} \ar[rd]&\\
		\End(F_{(0,a)}) \ar[d]_-{m_{F_{(0,a)},F_{(0,a)}}}\ar[r]^-{\psi \circ-} & \Hom (F_{(0,a)},F_{[0,a]}) \ar[d]^-{m_{F_{(0,a)},F_{[0,a]}}} \\
		\End^\mu_{\Omega_+}(F_{(0,a)}) \ar[r] \ar[d] & \Hom^\mu_{\Omega_{+}}(F_{(0,a)},F_{[0,a]}) \ar[d] \\
		H^*(L)\ar[r]&H^*(L),
	}
	\end{aligned}
	\end{equation}
	where $\psi \colon F_{(0,a)} \to F_{[0,a]}$ is the canonical morphism.
	All the arrows in the diagram are morphisms of right $\End(G_{(0,a)})$-modules and the three arrows in the left column are ring homomorphisms. 
	Note that the unlabeled arrows are all isomorphisms. 
	
	If $a <r(\iota)/2$, we can choose $0<\varepsilon_1, \varepsilon_2<r(\iota)-a$ so that $\varepsilon_2-\varepsilon_1>a$. 
	The isomorphism $\Hom (F_{(0,a)},T_{-\varepsilon_2}F_{[0,a]})\to \Hom (F_{(0,a)},T_{-\varepsilon_1}F_{[0,a]})$ is induced by $\tau_{-\varepsilon_2, -\varepsilon_1}(G_{(0,a)})$, which is the zero morphism since $d_{\cD^{(0,a)}(M)_\theta}(G_{(0,a)},0)\leq a$ by \cref{theorem:existence-quantization}.
	
	Now assume $a>r(\iota)/2$. 
	In this case, we take $a<\tilde{a}<r(\iota)$ and construct an object $\cH \in \cD^{(0, \tilde{a})}(\pt)_\theta$ such that $\cH|_{\Stheta \times \{u\}}\simeq {Rq}_*\cHom^\star(F_{(0,u)}, F_{[0,u]})$ for any $u\in(0,\tilde{a})$ as follows.
    Let $D_{\tilde{a}}$ and $\cG \in \cD^{D_{\tilde{a}}}_L(M)_\theta$ be as in the proof of \cref{theorem:existence-quantization}.
	Set 
	\begin{equation}
	\begin{aligned}
	    & \cF_{1} \coloneqq Rj_!\cG, \ 
	    \cF_{2} \coloneqq Rj_*\cG \in \cD^{\bR \times (0,\tilde{a})}(M)_\theta, \\ 
	    & \cH \coloneqq Rq'_* \cHom^\star (\cF_{1},\cF_{2}) \in \cD^{ (0,\tilde{a})}(\pt )_\theta,
	\end{aligned}
	\end{equation} 
	where $j\colon M \times D_{\tilde{a}}\times \Stheta\to M\times \bR\times (0,\tilde{a})\times \Stheta$ is the inclusion and $q' \colon M\times \bR\times (0,\tilde{a})\times \Stheta \to  (0,\tilde{a})\times \Stheta$ is the projection. 
	Then the object $\cH|_{(0,\tilde{a}) \times \{\varepsilon\}}$ is locally constant on $(0,\tilde{a})$ for $0< \varepsilon <r(\iota)-\tilde{a}$. 
	This shows that $\Hom (F_{(0,u)},T_{-\varepsilon}F_{[0,u]})$ does not depend on $u\in(0,\tilde{a})$ and the result follows from the case $a<r(\iota)/2$.
\end{proof}

By \cref{proposition:EndG}, the modules $W_c, A_c, B_c$, etc.\ that appeared in \cref{proposition:exact-triagle} are equipped with a $H^*(L)$-action.

\subsubsection{Betti number estimate: Proof of \texorpdfstring{\cref{theorem:betti-estimate}}{Theorem~5.4}}\label{subsubsec:betti}

In this subsection, in order to prove \cref{theorem:betti-estimate} for a strong rational Lagrangian immersion satisfying \cref{assumption:special-stongly-rational}, we assume the following.

\begin{assumption}
	The strongly rational Lagrangian immersion $\iota \colon L \to T^*M$ and
	the Hamiltonian function $H \colon T^*M \times I \to \bR$ satisfy the following conditions:
	\begin{enumerate}
		\renewcommand{\labelenumi}{$\mathrm{(\arabic{enumi})}$}
		\item $\| H\| < r(\iota)$, 
		\item $\iota$ and $\iota^H=\phi^H_1 \circ \iota$ intersect transversally.
	\end{enumerate}
\end{assumption}

Under the assumption, $\pi (\MS_+(\cH_b))$ is discrete by \cref{proposition:interleavingF0a}(ii).

\begin{lemma}\label{lemma:betti-morse-ineq}
    With the notation of \cref{proposition:exact-triagle}, for $t \in \Stheta$, take any $\tilde{t}\in \ell^{-1}(t)$ and set $W_{t} \coloneqq W_{\tilde{t}}, A_{t} \coloneqq A_{\tilde{t}}$, and $B_{t} \coloneqq B_{\tilde{t}}$, which do not depend on the choice of $\tl{t}$.
    Let $c_1,\dots, c_{n+m}$ be as in \cref{proposition:exact-triagle}(ii) and (iii).
	Then
	\begin{equation}
	\begin{aligned}
		& \sum_{t\in \Stheta} \dim B_{t}
		\ge
		\sum_{i=1}^{n} \dim B_{c_i}
		\ge
		\dim H^*(L), \\
		& \sum_{t\in {\Stheta}} \dim A_{t}
		\ge
		\sum_{i=n+1}^{n+m} \dim A_{c_i}
		\ge
		\dim H^*(L).
	\end{aligned}
	\end{equation}
	In particular,
	\begin{equation}
	\sum_{t\in \Stheta} \dim W_{t}
	\ge
	2\dim H^*(L).
	\end{equation}
\end{lemma}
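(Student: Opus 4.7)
The plan is to combine the two telescopes of \cref{proposition:exact-triagle} with a lower bound for their common middle term. First, transversality of $\iota$ and $\iota^H$ makes $C(\iota,H)$ finite, so by \cref{proposition:interleavingF0a}(ii) the set $\pi(\MS_+(\cH_b))$ is discrete, placing us in the hypotheses of \cref{proposition:exact-triagle}(ii) and (iii). Since $d_n=0$, both telescopes meet at the common module $V_{d_n}=\Hom(F_{(0,a)},T_bF_{[0,a]}^H)$; it therefore suffices to bound $\dim V_{d_n}\geq \dim H^*(L)$ and then propagate this bound in both directions.

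To establish the lower bound, I would invoke the factorization \eqref{equation:factorization}
\begin{equation*}
\Hom(F_{(0,a)},F_{[0,a]}) \to \Hom(F_{(0,a)},T_bF_{[0,a]}^H) \to \Hom(F_{(0,a)},T_{a'}F_{[0,a]}),
\end{equation*}
whose composite is $\tau_{0,a'}$. By \cref{proposition:interleavingF0a}(iii) this composite is an isomorphism because $a'<a$, so the first arrow is injective. Combined with the adjunction ${j_a}_!\dashv j_a^{-1}$ and \cref{proposition:EndG}, this yields
\begin{equation*}
\dim V_{d_n} \geq \dim\Hom(F_{(0,a)},F_{[0,a]}) = \dim\End(G_{(0,a)}) = \dim H^*(L).
\end{equation*}

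Telescoping \eqref{equation:microstalk-exact-seq} from $V_{d_0}\simeq 0$ gives $\dim V_{d_n}=\sum_{i=1}^{n}\dim\tilde B_{c_i}\leq \sum_{i=1}^{n}\dim B_{c_i}$, and telescoping \eqref{equation:microstalk-exact-seq2} from $V_{d_{n+m}}\simeq 0$ gives $\dim V_{d_n}\leq \sum_{i=n+1}^{n+m}\dim A_{c_i}$. To pass to sums over $\Stheta$, I observe that $r(\iota)\leq\theta$ when $\theta>0$ (any loop in $L$ is a path with $\iota\circ l(0)=\iota\circ l(1)$, so its integral lies in $\theta\bZ$), while $\Stheta=\bR$ when $\theta=0$; in either case, distinct $c_i,c_j$ lying in a common interval $[-a,0)$ or $[0,a)$ satisfy $|c_i-c_j|<a<r(\iota)$, so their classes in $\Stheta$ are distinct. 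Hence each $\dim B_{c_i}$ contributes a separate summand of $\sum_{t\in\Stheta}\dim B_t$, and likewise for $A$. The final inequality then follows by summing $\dim W_t=\dim A_t+\dim B_t$ from \eqref{eq:w-exact-seq}.

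The main obstacle is the bound $\dim V_{d_n}\geq\dim H^*(L)$: this is the step where the sheaf-theoretic content of $G_{(0,a)}$, namely $\End(G_{(0,a)})\simeq H^*(L)$, survives the Hamiltonian perturbation by way of the interleaving factorization. The rest is essentially bookkeeping with the short exact sequences of \cref{proposition:exact-triagle} and the discreteness of critical values.
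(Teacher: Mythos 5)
Your proof is correct and follows essentially the same route as the paper: bound $\dim V_{d_n}=\dim\Hom(F_{(0,a)},T_bF_{[0,a]}^H)$ from below by $\dim H^*(L)$ via the injectivity of the first arrow in \eqref{equation:factorization}, telescope the short exact sequences of \cref{proposition:exact-triagle}(ii) and (iii) from the vanishing endpoints $V_{d_0}\simeq 0$ and $V_{d_{n+m}}\simeq 0$, and add using $\dim W_t=\dim A_t+\dim B_t$. The only difference is that you spell out why distinct $c_i$'s in $[-a,0)$ give distinct classes in $\Stheta$ (namely $a<r(\iota)\le\theta$ when $\theta>0$), a point the paper leaves implicit.
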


\begin{proof}
	Since the composite \eqref{equation:factorization} is an isomorphism and $\Hom (F_{(0,a)},F_{[0,a]})\simeq H^*(L)$ by \cref{proposition:EndG}, we have
	\begin{equation}
	\dim \Hom (F_{(0,a)},T_b F^H_{[0,a]}) \ge \dim H^*(L).
	\end{equation}
	By \cref{proposition:exact-triagle}(ii), noticing that $V_{d_0} \simeq 0$ and
	$V_{d_n} \simeq \Hom (F_{(0,a)},T_b F^H_{[0,a]})$,
	we get
	\begin{equation}
	\sum_{i=1}^{n} \dim B_{c_i}
	\ge
	\sum_{i=1}^{n} \dim \tl{B}_{c_i}=
	\dim \Hom(F_{(0,a)},T_b F^H_{[0,a]}).
	\end{equation}
	Similarly by \cref{proposition:exact-triagle}(iii), noticing that $V_{d_{n+m}} \simeq 0$ and
	$V_{d_n} \simeq \Hom (F_{(0,a)},T_b F^H_{[0,a]})$,
	we get
	\begin{equation}
	\sum_{i=n+1}^{n+m} \dim A_{c_i}
	\ge
	\dim \Hom(F_{(0,a)},T_b F^H_{[0,a]}).
	\end{equation}
	Moreover, by \cref{proposition:exact-triagle}(i),
	$\dim B_{t}+\dim A_{t}=\dim W_t$.
	Combining these inequalities, we obtain the result.
\end{proof}

\begin{proposition}\label{proposition:transmuhom}
    For $c \in \bR$, there is an isomorphism
	 \begin{equation}
	 	\mu hom(F_{(0,a)}, {T_c}F^H_{[0,a]})|_{\Omega_+} \simeq
	 	\bigoplus_{g(y,y')=[c]} \bfk_{l(y)\boxplus{\wh{\bfc}(a)}}\oplus \bigoplus_{g(y,y')+[a]=[c]}\bfk_{l(y)\boxplus{\wh{\bfl}(a)} }.
	 \end{equation}
\end{proposition}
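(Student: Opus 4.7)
The plan is to combine the support estimate of \cref{lemma:muhom-isom}(ii) with a clean-intersection argument applied stratum-wise, exploiting the simpleness of the sheaf quantizations established in \cref{proposition:muhom-self}(i). I would first observe that under the transversality of $\iota$ and $\iota^H$ the set $C(\iota,H)$ is finite, and that \cref{assumption:embedding} forces distinct pairs $(y,y')\in C(\iota,H)$ to yield distinct rays $l(y)\subset \Lambda$; consequently the closed conic pieces $l(y)\boxplus \wh{\bfc}(a)$ with $g(y,y')=[c]$ and $\overline{l(y)\boxplus \wh{\bfl}(a)}$ with $g(y,y')+[a]=[c]$ are pairwise disjoint. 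Since by \cref{lemma:muhom-isom}(ii) the support of $\mu hom(F_{(0,a)},T_c F^H_{[0,a]})|_{\Omega_+}$ lies in their union, the sheaf decomposes as a direct sum indexed by these pieces, reducing the problem to identifying each summand with the corresponding constant sheaf.

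Next, fix a piece $l(y)\boxplus \wh{\bfc}(a)$ with $g(y,y')=[c]$. By \cref{proposition:muhom-self}(i), and using that simpleness is preserved under Hamiltonian isotopy via the GKS sheaf quantization of $\phi^H$, both $F_{(0,a)}$ and $T_cF^H_{[0,a]}$ are simple along the smooth parts of their microsupports $\Lambda\boxplus \wh{\bfd}(a)$ and $T'_c(\Lambda^H\boxplus \wh{\bfq}(a))$. Using the explicit identity \eqref{eqn:intdq} together with the transversality of $\iota$ and $\iota^H$, one checks that these Lagrangians intersect cleanly along $l(y)\boxplus \wh{\bfc}(a)$ on the smooth stratum. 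I would then invoke \cref{lemma:simpleclean-orbit} locally to obtain $\mu hom(F_{(0,a)},T_c F^H_{[0,a]}) \simeq \bfk$ there. Since $l(y)\boxplus \wh{\bfc}(a)$ is contractible (deformation retracting to the ray $l(y)$), propagating this identification across the remaining strata will yield the constant sheaf $\bfk_{l(y)\boxplus \wh{\bfc}(a)}$, and a parallel analysis will handle the pieces $l(y)\boxplus \wh{\bfl}(a)$.

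The main obstacle is the propagation step across the corners of $\wh{\bfc}(a)$ and $\wh{\bfl}(a)$ at $u=0$ and $u=a$, where the ambient Lagrangians fail to be smooth and \cref{lemma:simpleclean-orbit} does not apply directly. To overcome this I would mimic the stratified argument used in the proof of \cref{proposition:muhom-self}(ii): decompose each piece into smooth sub-strata analogous to the $D_1,\dots,D_9$ there, exhibit a natural candidate morphism $\bfk_{l(y)\boxplus \wh{\bfc}(a)} \to \mu hom(F_{(0,a)},T_c F^H_{[0,a]})|_{l(y)\boxplus\wh{\bfc}(a)}$, show via \cref{proposition:muhomss} and \cref{lemma:SSF0a} that the microsupport of its cone is confined to an appropriate half-space on each stratum, and finally apply \cref{lemma:interiorboundary} to conclude that the cone vanishes, pinning down the required isomorphism.
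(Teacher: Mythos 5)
Your overall strategy tracks the paper's closely: support decomposition via \cref{lemma:muhom-isom}(ii), clean intersection plus simpleness on smooth strata via \cref{lemma:simpleclean-orbit}, and a stratified propagation argument modeled on the proof of \cref{proposition:muhom-self}(ii) to get past the corners of $\wh{\bfc}(a)$ and $\wh{\bfl}(a)$. The key technical ingredients and decomposition are the same as those the authors use.

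However, the mechanism you propose for the corner propagation has a genuine gap. In \cref{proposition:muhom-self}(ii) the candidate morphism $\bfk_{\Lambda\boxplus\wh{\bfd}(a)}\to\mu hom(F_{(0,a)},F_{(0,a)})|_{\Omega_+}$ exists because it is sourced by $\id^\mu_{F_{(0,a)}}$, and in (iii) the morphism is sourced by the canonical map $\psi\colon F_{(0,a)}\to F_{[0,a]}$. For $\mu hom(F_{(0,a)},T_cF^H_{[0,a]})$ no such global source morphism is available: there is no canonical element of $\Hom^\mu_{\Omega_+}(F_{(0,a)},T_cF^H_{[0,a]})$ mapping the constant sheaf to the $\mu hom$ object, so the first step of ``exhibit a natural candidate morphism $\bfk_{l(y)\boxplus\wh{\bfc}(a)}\to\mu hom(\ldots)$'' is not justified as written. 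The contractibility of $l(y)\boxplus\wh{\bfc}(a)$ does not by itself produce the morphism or the local constancy.

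The paper sidesteps this by never constructing a candidate morphism. Instead, on each smooth stratum $\Omega_i$ (odd $i$) it checks that the microsupport of $\cF'|_{\Omega_i}$ lies in the image of $(q_i)_d$ for the projection $q_i\colon\Omega_i\to\Omega_+(M)_\theta$, then invokes \cref{proposition:SSpushpull}(iii) to identify $\cF'|_{\Omega_i}\simeq q_i^{-1}F_i'$ directly as a pullback, pins down $F_3'$ via \cref{lemma:simpleclean-orbit}, and propagates the identification to the boundary strata $\Lambda_2,\Lambda_4,\Lambda_6,\Lambda_8$ using \cref{lemma:interiorboundary}(ii) with well-chosen defining functions, together with the observation that the normal cone $-\bfh^{-1}C(\Lambda\boxplus\wh{\bfd}(a),\Lambda^H\boxplus\wh{\bfq}(a))$ avoids the offending covectors near $\Lambda_2$. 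A parallel argument with the extra stratum $\Omega_{11}$ and \cref{lemma:interiorboundary}(i) handles the $\wh{\bfl}(a)$ piece. If you replace the ``candidate morphism plus cone'' step with this direct pullback identification, your argument aligns with the paper's and the gap disappears.
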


\begin{proof}
    We argue similarly to \cref{proposition:muhom-self} and use the same notation as in its proof and \cref{lemma:muhom-isom}.
	Moreover, we set $\cF \coloneqq \mu hom(F_{(0,a)},F^H_{[0,a]})|_{\Omega_{+}} \in \Db_{/[1]}(\Omega_+), D_{10} \coloneqq \{(a,[a];0,\tau)\}, D_{11} \coloneqq \wh{\bfl}(a)$, and 
	\begin{equation}
	\begin{aligned}
	    q_{11}\colon \Omega_{11} \coloneqq \{u=a, -\tau<\upsilon<0 \}_{\Omega_+}\to \Omega_{+}(M)_\theta, (x,a,t;\xi,\upsilon, \tau) \mapsto (x,t-a;\xi,\tau).
	\end{aligned}
	\end{equation}
	
	By \cref{lemma:muhom-isom}(ii), $\Supp(\cF)\subset C_1(a,c) \cup \overline{C_2(a,c)}$. 
	Since $C_1(a,c)$ and $\overline{C_2(a,c)}$ are disjoint, $\cF$ admits a direct sum decomposition of the form $\cF\simeq \cF'\oplus\cF''$ with $\Supp(\cF')\subset C_1(a,c)$ and $\cF''\subset \overline{C_2(a,c)}$. 
	By \cref{proposition:muhomss} and \cref{lemma:SSF0a}, we have 
	\begin{equation}
	    \MS(\cF)\subset -\bfh^{-1}(C(\Lambda\boxplus\wh{\bfd}(a),\Lambda^H\boxplus\wh{\bfq}(a)))
	\subset 
	-\bfh^{-1}((d\rho)^{-1}C (\bfd(a),\bfq(a))).
	\end{equation}
	
	The image of $(q_i)_d$ contains $\MS(\cF'|_{\Omega_i})$ for each odd $i$. 
	Therefore, by \cref{proposition:SSpushpull}(iii), there exists a locally tame object $F_i' \in \Db_{/[1]}(\Omega_{+}(M)_\theta)$ with $\Supp(F_i')\subset \Lambda$ and $\cF'|_{\Omega_i}\simeq q_i^{-1}F_i'$ for any odd $i$. 
	We also define $F_i' \coloneqq \cF'|_{\Omega_{+}(M)_\theta \boxplus D_i}$ for even $i$.
	By \cref{theorem:existence-quantization,lemma:simpleclean-orbit}, we have $F_3'\simeq \bigoplus_{g(y,y')=c} \bfk_{l(y)\boxplus D_3}$. 
	On a neighborhood of $\Lambda_2$, the set $-\bfh^{-1}C(\Lambda\boxplus{\wh{\bfd}(a)},\Lambda\boxplus{\wh{\bfq}(a)})$ does not intersect $\{ \momu \momupsilon <0\}_{T^*\Omega_+}$.
	Using \cref{lemma:interiorboundary}(ii) for $\phi= u-\frac{\upsilon}{\tau}-a$, we find that $F_2'\simeq F_3'$ and $\cF'|_{\Lambda \boxplus \wh{\bfc}(a)}$ is of the form $p^{-1}F_3'$ on this neighborhood.
	By similar arguments for $\Lambda_4,\Lambda_6$ and $\Lambda_8$, we get $\cF'\simeq \bigoplus_{g(y,y')=[c]} \bfk_{l(y)\boxplus{\wh{\bfc}(a)}}$.
	
	By \cref{proposition:muhom-self}(i) and \cref{lemma:simpleclean-orbit}, $\cF''|_{\Omega_{11}}\simeq \bigoplus_{g(y,y')+[a]=[c]} \bfk_{l(y)\boxplus \wh{\bfl}(a)}|_{\Omega_{11}}$. 
	Using \cref{lemma:interiorboundary}(i) for $\phi=u-\frac{\upsilon}{\tau}-a,-(u+\frac{\upsilon}{\tau}-a)-1$, we obtain $\cF''\simeq \bigoplus_{g(y,y')+[a]=[c]} \bfk_{l(y)\boxplus \wh{\bfl}(a)}$. 
	
	Combining these isomorphisms, we obtain the result. 
\end{proof}

\begin{proof}[Proof of \cref{theorem:betti-estimate}]
    For $t \in \Stheta$, take $\tl{t} \in \ell^{-1}(t)$ as in \cref{lemma:betti-morse-ineq}.
	By \cref{lemma:muhom-isom}(i) and \cref{proposition:transmuhom}, 
	\begin{equation}
	\begin{aligned}
		\dim W_{t}
		= \ &
		\dim H^*\RG (\Omega_+; \mu hom(F_{(0,a)}, {T_{b-\tl{t}}}F^H_{[0,a]})) \\
		= \ &
		\# \left\{ (y,y') \in C(\iota,H) \mid g(y,y')=[b]-t \right\} \\
		& +
		\# \left\{(y,y') \in C(\iota,H) \mid g(y,y')+[a]=[b]-t \right\}
	\end{aligned}
	\end{equation}
	for any $t \in \Stheta$.
	Hence, we get
	$\sum_{t\in \Stheta} \dim W_{t}=2 \#C(\iota,H)$.
	Combining this equality with \cref{lemma:betti-morse-ineq},
	we obtain the theorem.
\end{proof}

\subsubsection{Cup-length estimate: Proof of \texorpdfstring{\cref{theorem:cuplength}}{Theorem 5.5}}\label{subsubsec:cuplength}

In this subsection, we give a proof of \cref{theorem:cuplength} for a strongly rational Lagrangian immersion satistying \cref{assumption:special-stongly-rational}.

First we introduce an algebraic counterpart of cup-length and study some properties.

\begin{definition}
	Let $R$ be an associative (not necessarily commutative nor unital) algebra over $\bfk$.
	For a right $R$-module $A$,
	define
	\begin{equation}
	\cl_R(A)
	 \coloneqq 
	\inf
	\left\{
	k-1
	\; \middle| \;
	\begin{aligned}
		& k \in \bZ_{\ge 0}, \ a_0 \cdot r_1\cdots r_k = 0 \\
		& \text{for any $(r_i)_{i} \in R^k$ and for any $a_0 \in A$}
	\end{aligned}	
	\right\}
	\in \bZ_{\geq -1} \cup \{\infty\}.
	\end{equation}
	Note that
	\begin{enumerate}
		\item $\cl_R(A)=-1$ if and only if $A=0$.
		\item $\cl_R(A)=0$ if and only if $A \neq 0$ and $ar=0$
		for any $a \in A$ and any $r \in R$.
	\end{enumerate}
	If there is no risk of confusion, we simply write $\cl(A)$ for $\cl_R(A)$.
\end{definition}

By definition, one can easily show the following two lemmas.

\begin{lemma}\label{lemma:cup-exact}
	For an exact sequence $A\to B\to C$ of right $R$-modules,
	$\cl(B)\leq \cl(A)+\cl(C)+1$.
\end{lemma}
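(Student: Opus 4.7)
The plan is to translate the definition of $\cl$ into the statement ``any $k+1$-fold right multiplication annihilates the module where $k=\cl(\cdot)$,'' and then chase elements through the exact sequence $A \xrightarrow{f} B \xrightarrow{g} C$. Set $p := \cl(A)$ and $q := \cl(C)$; if either is $\infty$ the claim is vacuous, so assume both are finite (the edge cases $p=-1$ or $q=-1$, corresponding to $A=0$ or $C=0$, will be handled uniformly by the same argument).

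The first step is to pick an arbitrary $b \in B$ together with elements $r_1, \dots, r_{p+q+2} \in R$, and argue that $b \cdot r_1 \cdots r_{p+q+2} = 0$. Applying $g$ to the partial product $b \cdot r_1 \cdots r_{q+1}$ gives $g(b) \cdot r_1 \cdots r_{q+1}$, which vanishes because $\cl(C)=q$ means that any $q+1$ right multiplications kill any element of $C$. Thus $b \cdot r_1 \cdots r_{q+1}$ lies in $\ker g$, and by exactness in $\im f$, so we can write it as $f(a)$ for some $a \in A$.

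The second step is to multiply by the remaining $p+1$ elements $r_{q+2}, \dots, r_{p+q+2}$ and use $R$-linearity of $f$:
\begin{equation*}
b \cdot r_1 \cdots r_{p+q+2}
= f(a) \cdot r_{q+2} \cdots r_{p+q+2}
= f\bigl(a \cdot r_{q+2} \cdots r_{p+q+2}\bigr).
\end{equation*}
Since $\cl(A)=p$, the inner product $a \cdot r_{q+2} \cdots r_{p+q+2}$ (which is a product of $a$ with $p+1$ ring elements) vanishes, hence so does $b \cdot r_1 \cdots r_{p+q+2}$. This shows $\cl(B) \le (p+q+2) - 1 = p+q+1$, as required.

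There is no real obstacle; the proof is just a direct element chase through the exact sequence. The only thing to watch is bookkeeping of the indices — making sure that $q+1$ multiplications are used to push $b$ into $\im f$ and that exactly $p+1$ more are used to annihilate the preimage in $A$, totalling $p+q+2$ and yielding the stated bound $p+q+1$.
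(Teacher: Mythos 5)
Your element chase is correct and is precisely the ``easily shown by definition'' argument the paper omits: push $b$ into $\ker g = \operatorname{im} f$ using $\cl(C)+1$ multiplications, then annihilate the preimage in $A$ with $\cl(A)+1$ more. The bookkeeping (total $p+q+2$ multiplications giving $\cl(B)\le p+q+1$) and the edge cases $p=-1$ or $q=-1$ both check out.
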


\begin{lemma}\label{lemma:cup-factorize}
	Let $R, R'$ be rings and $A$ be a right $R$-module.
    If $R'$ is a non-zero unital ring and the action of $R$ on $A$ factors as $R \to R' \to \End(A)^{\op}$, then $\cl_{R}(A) \le \cl_{R}(R')$.
\end{lemma}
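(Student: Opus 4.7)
The plan is to unpack the definition of $\cl_R(-)$ on both sides and observe that the statement reduces to a direct computation using the factorization. Write $\phi \colon R \to R'$ for the ring homomorphism through which the action factors, so that the right $R$-action on $A$ is given by $a \cdot r = a \cdot \phi(r)$, where the dot on the right-hand side is the $R'$-action coming from $R' \to \End(A)^{\mathrm{op}}$. Similarly, the right $R$-action on $R'$ (making $R'$ an $R$-module) is given by $r' \cdot r = r' \phi(r)$ using the multiplication of $R'$.

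Next I would analyze $\cl_R(R')$. By the definition, $\cl_R(R') \leq k-1$ means $a_0 \cdot r_1 \cdots r_k = a_0\, \phi(r_1)\cdots\phi(r_k) = 0$ in $R'$ for every $a_0 \in R'$ and every $r_1,\dots,r_k \in R$. Since $R'$ is unital, specializing to $a_0 = 1_{R'}$ shows this is equivalent to the condition $\phi(r_1)\cdots\phi(r_k) = 0$ in $R'$ for all $r_1,\dots,r_k \in R$.

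The conclusion is then immediate: assume $\cl_R(R') = k-1$, so that $\phi(r_1)\cdots\phi(r_k) = 0$ for all $r_i$. For any $a_0 \in A$ and any $r_1,\dots,r_k \in R$, the factorization gives
\begin{equation}
a_0 \cdot r_1 \cdots r_k
= a_0 \cdot \bigl(\phi(r_1)\cdots\phi(r_k)\bigr)
= a_0 \cdot 0 = 0,
\end{equation}
which yields $\cl_R(A) \leq k - 1 = \cl_R(R')$. The case $\cl_R(R') = \infty$ is vacuous, and the case $\cl_R(R') = -1$ (i.e.\ $R' = 0$) is excluded by the hypothesis that $R'$ is non-zero (though even that case would force $A = 0$ via $a_0 = a_0 \cdot 1 = 0$ and hence give $\cl_R(A) = -1$). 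No step is particularly delicate here; the only thing to be careful about is tracking the two different right actions of $R$ (on $R'$ and on $A$) and verifying that the factorization hypothesis makes them compatible in the required way.
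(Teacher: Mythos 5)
Your proof is correct and is exactly the argument the paper leaves implicit (the paper dismisses both \cref{lemma:cup-exact} and \cref{lemma:cup-factorize} with ``by definition, one can easily show\ldots''). You correctly identify the two key points: unitality of $R'$ lets you test the $\cl_R(R')$ condition at $a_0 = 1_{R'}$ to deduce $\phi(r_1)\cdots\phi(r_k)=0$, and the associativity of the $R'$-action on $A$ then transports this vanishing to $A$ via $a_0 \cdot r_1 \cdots r_k = a_0 \cdot_{R'}(\phi(r_1)\cdots\phi(r_k)) = 0$.
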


The usual notion of cup-length is related to the above definition as follows.
Let $X$ be a manifold.
We define the ring $R_X \coloneqq \bigoplus_{i\geq 1} H^i(X;\bfk)$ equipped with
the cup product and $\cl(X) \coloneqq \cl_{R_X}(H^*(X;\bfk))$.
The number $\cl(X) \in \bZ_{\geq -1} \cup \{\infty\}$ is called the \emph{cup-length} of $X$.

\medskip

Now we start the proof of \cref{theorem:cuplength}.
In what follows, we assume the following.

\begin{assumption}\label{assumption:cup-length}
	The Hamiltonian function $H$ satisfies $\| H\| < \min(r(\iota), \theta(\iota)/2)$.
\end{assumption}

Take $a \in \bR$ satisfying $\| H \|<a < \min(r(\iota), \theta(\iota)/2)$.
From now on, until the end of this subsection, set $R \coloneqq R_L=\bigoplus_{i \ge 1} H^i(L;\bfk)$.
Recall again that by \cref{proposition:EndG} the right $\End(G_{(0,a)})$-modules that appeared in \cref{proposition:exact-triagle} can be regarded as $H^*(L)$-modules and hence $R$-modules.

\begin{lemma}\label{lemma:cup-length-ineq}
	Assume that
	$\pi ( \MS_+(\cH_b))$ is a discrete set and let $c_1, \dots, c_n$ be as in \cref{proposition:exact-triagle}(ii).
	Then
	\begin{equation}
		n+
		\sum_{i=1}^{n} \cl(W_{c_i})
		\ge
		\cl(L)+1.
	\end{equation}
\end{lemma}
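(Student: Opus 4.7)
The plan is to combine the tower of short exact sequences supplied by Proposition 5.17(ii) with the sub-additivity property of $\cl$ (Lemma 5.25), sandwiching everything between $\cl(V_{d_0})=-1$ on the bottom and $\cl(L)$ on the top. The bottom is handed to us by Proposition 5.17(ii); the top comes from the factorization \eqref{equation:factorization} of the persistence isomorphism $\tau_{0,a'}$.

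First I would establish the top bound $\cl(V_{d_n}) \ge \cl(L)$. By Proposition 5.12(i) and (iii), for any $a' \in (\|H\|, a)$ the composite
\begin{equation}
\Hom(F_{(0,a)}, F_{[0,a]}) \xrightarrow{\alpha_\ast} V_{d_n} = \Hom(F_{(0,a)}, T_b F^H_{[0,a]}) \xrightarrow{(T_b\beta)_\ast} \Hom(F_{(0,a)}, T_{a'} F_{[0,a]})
\end{equation}
equals $\tau_{0,a'}$ and is an isomorphism, so $\alpha_\ast$ is injective. Post-composition with $\alpha$ commutes with the right $\End(G_{(0,a)})$-action (which is by pre-composition via ${j_a}_!$), so $\alpha_\ast$ is a monomorphism of right $R$-modules. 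Via the isomorphism $\Hom(F_{(0,a)}, F_{[0,a]}) \simeq \End(G_{(0,a)}) \simeq H^*(L)$ of Proposition 5.22, this embeds $H^*(L)$ as an $R$-submodule of $V_{d_n}$, and since $\cl_R$ is monotone under taking submodules, $\cl(V_{d_n}) \ge \cl_R(H^*(L)) = \cl(L)$.

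Second, I would iterate Lemma 5.25 on the short exact sequences $0 \to \tl{B}_{c_i} \to V_{d_i} \to V_{d_{i-1}} \to 0$ of Proposition 5.17(ii). Since $V_{d_0} \simeq 0$, an induction on $i$ gives
\begin{equation}
\cl(V_{d_n}) \le \sum_{i=1}^n \cl(\tl{B}_{c_i}) + (n-1).
\end{equation}
Because $\tl{B}_{c_i}$ is a submodule of $B_{c_i}$, and $B_{c_i}$ is a quotient of $W_{c_i}$ by the short exact sequence of Proposition 5.17(i), and $\cl_R$ is monotone under both submodules and quotients (an immediate consequence of the definition), we obtain $\cl(\tl{B}_{c_i}) \le \cl(B_{c_i}) \le \cl(W_{c_i})$. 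Combining the two inequalities yields
\begin{equation}
\cl(L) \le \cl(V_{d_n}) \le \sum_{i=1}^n \cl(W_{c_i}) + (n-1),
\end{equation}
which rearranges to the claim.

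The only delicate point is checking that the inclusion $H^*(L) \hookrightarrow V_{d_n}$ respects the right $R$-module structure, since the $R$-action on the various $\Hom$-spaces is defined via ${j_a}_!$ and pre-composition on $F_{(0,a)}$, whereas the ring identification $H^*(L) \simeq \End(G_{(0,a)})$ goes through Proposition 5.22 and the adjunction ${j_a}_! \dashv j_a^{-1}$. Once this bookkeeping is done (which is already implicit in the setup of \S5.3), the rest of the argument is a two-line formal manipulation, and no additional microlocal input is needed.
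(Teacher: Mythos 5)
Your proof is correct and follows essentially the same route as the paper's: you use the factorization \eqref{equation:factorization} to bound $\cl(V_{d_n})\ge\cl(L)$ from below, iterate \cref{lemma:cup-exact} over the tower \eqref{equation:microstalk-exact-seq} starting from $V_{d_0}\simeq 0$, and pass from $\tl B_{c_i}$ to $W_{c_i}$ via \eqref{eq:w-exact-seq}. You are simply a bit more explicit than the paper about the monotonicity of $\cl_R$ under submodules and quotients and about the $R$-linearity of $\alpha_\ast$, but those points are correct and indeed implicit in \cref{proposition:exact-triagle} and \cref{proposition:EndG}.
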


\begin{proof}
    First recall that the composite \eqref{equation:factorization} is an isomorphism and $\Hom (F_{(0,a)},F_{[0,a]}) \simeq H^*(L)$ as $R$-modules by \cref{proposition:EndG}.
	Hence we have
	\begin{equation}
	    \cl( \Hom (F_{(0,a)},T_b F^H_{[0,a]})) \ge \cl( H^*(L) ).
	\end{equation}
	Applying \cref{lemma:cup-exact} to the exact sequences~\eqref{equation:microstalk-exact-seq} and \eqref{eq:w-exact-seq} of right $R$-modules, we have
	\begin{equation}
		\cl(V_{d_i})
		\le
		\cl(W_{c_i}) + \cl(V_{d_{i-1}})+1.
	\end{equation}
	Noticing that $V_{d_0} \simeq 0$ and
	$V_{d_n} \simeq \Hom (F_{(0,a)},T_b F^H_{[0,a]})$,
	by induction we obtain
	\begin{equation}
	n+
	\sum_{i=1}^{n} \cl(W_{c_i})
	\ge
	\cl(\Hom (F_{(0,a)},T_b F^H_{[0,a]}))+1,
	\end{equation}
	which proves the result.
\end{proof}

It remains to see the action of $R$ on each $W_{c_i} \simeq \Hom_{\Omega_+}^\mu(F_{(0,a)},T_{b-c_i}F_{[0,a]}^H)$.
Recall that for any $c \in \bR$, 
\begin{equation}
    \Hom_{\rho^{-1}(U)}^\mu(F_{(0,a)},T_{c}F_{[0,a]}^H) \simeq H^*\RG(\rho^{-1}(U); \mu hom(F_{(0,a)}, {T_{c}} F^H_{[0,a]}))
\end{equation}
admits a right $\End(G_{(0,a)})$-module structure.
Hence it is equipped with a right $R$-module structure through the ring homomorphism $R \hookrightarrow H^*(L)\simeq \End(G_{(0,a)})$.

\begin{proposition}\label{proposition:muhom-cl}
	Let $U$ be an open subset of $T^*M$.
	Then for any $c \in \bR$,
	\begin{equation}\label{equation:muhom-cl}
		\cl \left(
			\Hom_{\rho^{-1}(U)}^\mu(F_{(0,a)},T_{c}F_{[0,a]}^H)
			\right)
		\le
		\cl (H^*(\iota^{-1}(U))).
	\end{equation}
\end{proposition}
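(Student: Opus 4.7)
The plan is to exhibit the right $R$-action on $\Hom_{\rho^{-1}(U)}^\mu(F_{(0,a)},T_{c}F_{[0,a]}^H)$ as one factoring through a ring that, as an $R$-module, is isomorphic to $H^*(\iota^{-1}(U))$, and then to conclude by two applications of \cref{lemma:cup-factorize}. The first step is to observe that the naturality of the $\mu hom$-composition $\circ^\mu_{F_{(0,a)},F_{(0,a)},T_cF^H_{[0,a]}}$ equips $\Hom_{\rho^{-1}(U)}^\mu(F_{(0,a)},T_c F_{[0,a]}^H)$ with a canonical right module structure over $\End_{\rho^{-1}(U)}^\mu(F_{(0,a)})$, and that the $R$-action --- coming from $R\hookrightarrow H^*(L)\simeq \End(G_{(0,a)})\to \End_{\Omega_+}^\mu(F_{(0,a)})$ via diagram \eqref{equation:diagram-ring-isom} --- factors through the restriction ring homomorphism $\End_{\Omega_+}^\mu(F_{(0,a)})\to \End_{\rho^{-1}(U)}^\mu(F_{(0,a)})$.

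The second, and key, step is to identify $\End_{\rho^{-1}(U)}^\mu(F_{(0,a)})$ with $H^*(\iota^{-1}(U))$ as a ring, in such a way that the restriction map from $\End_{\Omega_+}^\mu(F_{(0,a)})\simeq H^*(L)$ becomes the ordinary cohomology pullback. By \cref{proposition:muhom-self}(ii),
\begin{equation*}
    \End_{\rho^{-1}(U)}^\mu(F_{(0,a)}) \simeq H^*\bigl((\Lambda\boxplus \wh{\bfd}(a))\cap \rho^{-1}(U);\bfk\bigr).
\end{equation*}
The geometric claim is that the natural projection $(\Lambda\boxplus \wh{\bfd}(a))\cap \rho^{-1}(U)\to \Lambda\cap \rho^{-1}(U)$ is a homotopy equivalence onto $\iota^{-1}(U)\times \bR_{>0}\simeq \iota^{-1}(U)$, where the last identification uses \cref{assumption:embedding}. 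Indeed, for each fixed $\tau>0$ the slice $\wh{\bfd}(a)_\tau\subset \bR\times \Stheta\times \bR$ is a contractible one-dimensional CW complex, obtained by attaching two rays to the Y-shaped $\wh{\bfc}(a)_\tau$. The naturality part of \cref{proposition:muhom-self}(ii) then ensures that the composition becomes the cup product, and the restriction map from $H^*(L)$ becomes the ordinary pullback $H^*(L)\to H^*(\iota^{-1}(U))$.

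With this identification in hand, \cref{lemma:cup-factorize} applied with $R'=H^*(\iota^{-1}(U))$ (unital, and nonzero if $\iota^{-1}(U)\neq \emptyset$; the empty case is immediate from the support statement of \cref{lemma:muhom-isom}(ii)) yields
\begin{equation*}
    \cl_R\bigl(\Hom_{\rho^{-1}(U)}^\mu(F_{(0,a)},T_c F_{[0,a]}^H)\bigr)\le \cl_R(R').
\end{equation*}
Since the $R$-action on $R'$ itself factors through the cohomology restriction $R\to R_{\iota^{-1}(U)}$, the definition of $\cl$ gives directly $\cl_R(R')\le \cl_{R_{\iota^{-1}(U)}}(H^*(\iota^{-1}(U)))=\cl(H^*(\iota^{-1}(U)))$, completing the argument.

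The main obstacle is the second step: rigorously establishing the homotopy equivalence $(\Lambda\boxplus \wh{\bfd}(a))\cap \rho^{-1}(U)\simeq \iota^{-1}(U)$ together with compatibility of the ring isomorphism of \cref{proposition:muhom-self}(ii) with the cohomology restriction. This is delicate because of the piecewise structure of $\wh{\bfd}(a)$. The cleanest approach is to imitate the piecewise analysis already appearing in the proof of \cref{proposition:muhom-self}(ii): decompose $\wh{\bfd}(a)$ into the pieces $D_1,\dots,D_9$, apply \cref{lemma:interiorboundary} on each piece, and at each step verify compatibility with restriction to $\rho^{-1}(U)$.
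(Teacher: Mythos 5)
Your proposal follows the same route as the paper: factor the $R$-action through $\End_{\rho^{-1}(U)}^\mu(F_{(0,a)})$, identify this ring with $H^*(\iota^{-1}(U))$ via \cref{proposition:muhom-self}(ii), apply \cref{lemma:cup-factorize}, and settle the empty case via \cref{lemma:muhom-isom}(ii). One small inaccuracy worth correcting: $\wh{\bfc}(a)_\tau$ is not Y-shaped but is a simple arc (a ``C''-shaped polygonal path), so $\wh{\bfd}(a)_\tau$ is that arc with two rays attached at its two endpoints; this is still contractible, so the conclusion stands.
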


\begin{proof}
	By the functoriality of $m_{-,-}$ (see \cref{definition:cat-muhom}(ii)), the action of $\End(F_{(0,a)})$ on $\Hom_{\rho^{-1}(U)}^\mu(F_{(0,a)},T_{c}F_{[0,a]}^H)$ factors through $\End_{\rho^{-1}(U)}^\mu(F_{(0,a)})$, and so does the action of $R$.
	The ring $\End_{\rho^{-1}(U)}^\mu(F_{(0,a)})$ is isomorphic to $H^*\RG(\rho^{-1}(U);\bfk_{\Lambda \boxplus d(a) \cap \rho^{-1}(U)})\simeq H^*(\iota^{-1} (U))$ by \cref{proposition:muhom-self}(ii). 
	Hence the assertion follows from \cref{lemma:cup-factorize} if $\iota^{-1}(U)$ is non-empty. 
	
	If $\iota^{-1}(U)$ is empty, $\Hom_{\rho^{-1}(U)}^\mu(F_{(0,a)},T_{c}F_{[0,a]}^H)$ is zero by \cref{lemma:muhom-isom}(ii). Hence both sides of \eqref{equation:muhom-cl} are $-1$.
\end{proof}

\begin{proof}[Proof of \cref{theorem:cuplength}]
	We may assume that $C(\iota,H)$ is discrete and let $c_1,\dots,c_n$ be as in \cref{lemma:cup-length-ineq}.
	Since $a<\theta/2$, for any $(y,y') \in C(\iota,H)$, the set
	\begin{equation}\label{equation:singleton}
		\left\{
		c \in \bR
		\; \middle| \;
		\begin{aligned}
		& \text{$g(y,y') \equiv -c+b \mod \theta$ or} \\
		& g(y,y') \equiv -c+b-a \mod \theta
		\end{aligned}
		\right\}
		\cap [-a,0)
	\end{equation}
	is a singleton or empty.
	Hence, we have $\# C(\iota,H) \ge n$.

	Let $c$ be any of $c_1,\dots,c_n$ and set 
	\begin{equation}
	    \{ (y_{1},y'_{1}), \dots, (y_{k},y'_{k}) \} 
	     \coloneqq \left\{ (y,y') \in C(\iota,H) 
	    \; \middle| \;
	    \begin{aligned}
	        & g(y,y') \equiv -c+b \mod \theta \\
	        & \text{or $g(y,y') \equiv -c+b-a \mod \theta$}
	    \end{aligned} 
	    \right\}. 
	\end{equation}
	Take a sufficiently small contractible open neighborhood $U_j$ of $\iota(y_j)=\iota^H(y'_j)$ in $T^*M$ for $j=1,\dots,k$ and set $U \coloneqq \bigcup_{j=1}^{k} U_j$.
	Then, by \cref{lemma:muhom-isom}, we obtain
	\begin{equation}
	\begin{aligned}
		W_{c}
		& \simeq
		H^*\RG(\Omega_+; \mu hom(F_{(0,a)}, {T_{b-c}}F^H_{[0,a]})) \\
		& \simeq
		H^*\RG(\rho^{-1}(U);\mu hom(F_{(0,a)},{T_{b-c}}F^H_{[0,a]})).
	\end{aligned}
	\end{equation}
	Therefore, by \cref{proposition:muhom-cl},
	we have $\cl(W_{c}) \le \cl(H^*(\iota^{-1}(U)))=0$, which proves the theorem by \cref{lemma:cup-length-ineq}.
\end{proof}

\begin{remark}
    The quantity $\cl+1$ in the proof of \cref{theorem:cuplength} and $\dim$ in the proof of \cref{theorem:betti-estimate} play similar but a bit different roles in the following sense. 
    For a short exact sequence $0 \to A \to B \to C \to 0$ of right $R$-modules, $\cl(B)+1$ can be strictly smaller than $\cl(A)+1+\cl(C)+1$ while $\dim B= \dim A+ \dim C$ always holds. 
    Because of this difference, the proof of \cref{lemma:betti-morse-ineq} with $\dim$ replaced by $\cl +1$ does not proceed in the same way.
\end{remark}

\begin{remark}
    If $\min \left( \{r(\iota)\} \cup (\{ \theta(\iota)/2 \}\cap \bR_{>0})\right)\neq r(\iota)$, then $r(\iota)=\theta(\iota)$ as remarked in \cref{remark:eneqsigma}. 
    In such a case, for $a\in (\theta/2, r(\iota))$, the set \eqref{equation:singleton} may have two elements and our estimate for $\sigma(\iota)/2\leq \|H\|<e(\iota)$ can be worse than \eqref{equation:cuplength}. 
    
    A possible way to prove \eqref{equation:cuplength} for $\sigma(\iota)/2\leq \|H\|<e(\iota)$ is to take $a=r(\iota)$ as mentioned in \cref{remark:construction-r}.
    In this case, the set \eqref{equation:singleton} is a singleton for each $(y,y')\in C(\iota ,H)$. 
    However, in the case $a=r(\iota)$, \cref{lemma:muhom-isom} does not hold and proofs for \cref{proposition:EndG,proposition:muhom-cl} become more complicated. 
\end{remark}

Note that we have proved $\# g(C(\iota,H))\geq \cl(L)+1$ assuming $C(\iota,H)$ is discrete in the proof of \cref{theorem:cuplength}. 
More generally, we obtain the following.
We denote by $\operatorname{pr}_1 \colon C(\iota,H) \to L$ the first projection.

\begin{proposition}\label{proposition:cuplengthgeneral}
    Assume that the number of the values of $g$ is finite.
    Let $g(C(\iota,H))=\{t_1,\dots ,t_l\}$ and set $T_i \coloneqq g^{-1}(t_i)$ for $i=1,\dots,l$. 
    Moreover, let $V_i$ be an open neighborhood of $\operatorname{pr}_1(T_i)$ in $L$ for $i=1,\dots,l$.
    Then
    \begin{equation}
        l + \sum_{i=1}^{l} \cl(H^*(V_i)) \ge \cl(L) + 1.
    \end{equation}
\end{proposition}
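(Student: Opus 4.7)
The plan is to refine the proof of \cref{theorem:cuplength} by counting in terms of the $g$-values $\{t_1,\dots,t_l\}$ rather than individual intersection points. The setup (strongly rational case, $a$ with $\|H\| < a < \min(r(\iota),\theta(\iota)/2)$, the modules $W_c$ and the sheaf $\cH_b$) is inherited from the proof of \cref{theorem:cuplength}. Since $g$ takes only finitely many values, \cref{proposition:interleavingF0a}(ii) forces $\pi(\MS_+(\cH_b))$ to be discrete, so \cref{lemma:cup-length-ineq} applies and yields $n + \sum_{i=1}^n \cl(W_{c_i}) \ge \cl(L) + 1$, where $c_1,\dots,c_n$ denote the critical values of $\cH_b$ in $[-a,0)$. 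For each $i$ I set $S_i := \{[b-c_i],[b-c_i-a]\} \cap \{t_1,\dots,t_l\} \subset \Stheta$. Arguing exactly as in the proof of \cref{theorem:cuplength} (using $a < \theta(\iota)/2$), for each $t_j$ at most one $c \in [-a,0)$ satisfies $t_j \in \{[b-c],[b-c-a]\}$, so the $S_i$'s are pairwise disjoint nonempty subsets of $\{t_1,\dots,t_l\}$ of cardinality $1$ or $2$; in particular $n \le \sum_i |S_i| \le l$.

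The key step is to prove $\cl(W_{c_i}) \le \sum_{j \in S_i} \cl(H^*(V_j))$ for each $i$. For each $j \in S_i$, the corresponding piece of $\Supp(\mu hom(F_{(0,a)},T_{b-c_i}F^H_{[0,a]})|_{\Omega_+})$ is $\bigcup_{y \in \operatorname{pr}_1(T_j)} l(y) \boxplus \wh{\bfc}(a)$ if $t_j = [b-c_i]$, and the analogous subset of $l(y) \boxplus \overline{\wh{\bfl}(a)}$ if $t_j = [b-c_i-a]$. I would take a sufficiently small conic open $\Omega_i^{(j)} \subset \Omega_+$ around this piece such that $\Omega_i^{(j)} \cap (\Lambda \boxplus \wh{\bfd}(a))$ deformation retracts onto a subset homotopy equivalent to $V_j$; this is possible because a small neighborhood of $\wh{\bfc}(a)$ (resp.\ $\overline{\wh{\bfl}(a)}$) inside $\wh{\bfd}(a)$ is contractible. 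By \cref{proposition:muhom-self}(ii) one has $\End^\mu_{\Omega_i^{(j)}}(F_{(0,a)}) \simeq H^*(V_j)$, and the argument of \cref{proposition:muhom-cl} with $\rho^{-1}(U)$ replaced by $\Omega_i^{(j)}$ (and \cref{lemma:cup-factorize} applied) yields $\cl(\Hom^\mu_{\Omega_i^{(j)}}(F_{(0,a)},T_{b-c_i}F^H_{[0,a]})) \le \cl(H^*(V_j))$.

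When $|S_i| = 2$, the two support pieces have disjoint interiors (touching only along the lower-dimensional boundary ray $\{u = a,\, t = [a],\, \upsilon = -\tau\}$, on which the $\wh{\bfl}$-contribution of $\mu hom$ vanishes, in analogy with the simple-sheaf decomposition of \cref{proposition:transmuhom}), so the $\Omega_i^{(j)}$'s can be arranged with $\mu hom$ vanishing on each pairwise intersection $\Omega_i^{(j_1)} \cap \Omega_i^{(j_2)}$. Mayer--Vietoris then gives $W_{c_i} \simeq \bigoplus_{j \in S_i} \Hom^\mu_{\Omega_i^{(j)}}(F_{(0,a)},T_{b-c_i}F^H_{[0,a]})$ as $R_L$-modules; since the cup-length of a direct sum equals the maximum of those of its summands (each nonnegative as $V_j \ne \emptyset$), we conclude $\cl(W_{c_i}) \le \max_{j \in S_i} \cl(H^*(V_j)) \le \sum_{j \in S_i} \cl(H^*(V_j))$.

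Combining these bounds via the disjointness of the $S_i$'s and $n \le l$:
\[
    \cl(L) + 1 \le n + \sum_{i=1}^n \cl(W_{c_i}) \le n + \sum_{j \in \bigcup_i S_i} \cl(H^*(V_j)) \le l + \sum_{j=1}^l \cl(H^*(V_j)).
\]
The main obstacle will be the microlocal decomposition step: verifying, in the possibly non-transverse setting, the direct-sum decomposition of $\mu hom(F_{(0,a)},T_{b-c_i}F^H_{[0,a]})|_{\Omega_+}$ into pieces localized near each $j \in S_i$, and constructing $\Omega_i^{(j)}$'s whose intersections with $\Lambda \boxplus \wh{\bfd}(a)$ retract to $V_j$ even when distinct $V_j$'s overlap in $L$.
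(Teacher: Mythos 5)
Your outline matches the paper's (very terse) proof: the paper also uses \cref{lemma:cup-length-ineq}, the observation $l\ge n$ coming from the disjointness of the $S_i$'s for $a<\theta/2$, and then ``a slight modified version of \cref{proposition:muhom-cl}'' to bound each $\cl(W_{c_i})$. The gap is in the step you yourself flag as the main obstacle: the direct-sum decomposition of $W_{c_i}$ via Mayer--Vietoris with vanishing $\mu hom$ on the overlap. The two support pieces $\bigcup l(y)\boxplus\wh{\bfc}(a)$ and $\bigcup l(y)\boxplus\overline{\wh{\bfl}(a)}$ are \emph{not} disjoint in general (they share the ray $\{u=a,\,t=[a],\,\upsilon=-\tau\}$), and on that ray only the $\wh{\bfl}$-contribution of $\mu hom$ vanishes in \cref{proposition:transmuhom}; the $\wh{\bfc}$-contribution $\bfk_{l(y)\boxplus\wh{\bfc}(a)}$ is nonzero there. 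So any neighborhood $\Omega_i^{(j_2)}$ of the $\overline{\wh{\bfl}}$-part and $\Omega_i^{(j_1)}$ of the $\wh{\bfc}$-part must overlap along this ray where $\mu hom$ need not vanish, and the claimed $R_L$-module splitting $W_{c_i}\simeq\bigoplus_{j\in S_i}\Hom^\mu_{\Omega_i^{(j)}}$ is not established (in the non-transverse case one cannot invoke \cref{proposition:transmuhom} either).

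The over-engineering is unnecessary: you are trying to prove $\cl(W_{c_i})\le\max_{j\in S_i}\cl(H^*(V_j))$ when only the weaker estimate $\cl(W_{c_i})+1\le\sum_{j\in S_i}(\cl(H^*(V_j))+1)$ is needed. The ``slight modification'' of \cref{proposition:muhom-cl} the paper has in mind is to replace the open set $\rho^{-1}(U)$ by a \emph{single} conic open $\Omega\subset\Omega_{+}$ containing the support of $\mu hom(F_{(0,a)},T_{b-c_i}F^H_{[0,a]})$ and such that $(\Lambda\boxplus\wh{\bfd}(a))\cap\Omega$ is homotopy equivalent to $\bigcup_{j\in S_i}V_j$; this is possible because $\Lambda\boxplus\wh{\bfd}(a)\to L$ is a fibration with contractible fibers and the support sits over $\bigcup_{j\in S_i}\operatorname{pr}_1(T_j)$. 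Then \cref{proposition:muhom-self}(ii) and \cref{lemma:cup-factorize} give $\cl(W_{c_i})\le\cl(H^*(\bigcup_{j\in S_i}V_j))$, and \cref{lemma:cup-length-mv} (already proved in the paper) gives $\cl(W_{c_i})+1\le\sum_{j\in S_i}(\cl(H^*(V_j))+1)$. Summing over $i$, using the disjointness of the $S_i$'s, and padding with $\cl(H^*(V_j))\ge 0$ for $j\notin\bigcup_i S_i$ yields $n+\sum_i\cl(W_{c_i})\le l+\sum_j\cl(H^*(V_j))$, which combined with \cref{lemma:cup-length-ineq} finishes the proof without ever needing a direct-sum decomposition.
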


\begin{proof}
    Let $c_1,\dots,c_n$ be as in \cref{lemma:cup-length-ineq}. 
    Then $l \ge n$ and we obtain the result by \cref{lemma:cup-length-ineq} and a slight modified version of \cref{proposition:muhom-cl}.
\end{proof}

We can also deduce a similar statement without mentioning $g$. 

\begin{proposition}\label{proposition:cuplengthgeneral2}
    Assume that the number of the path-connected components of $C(\iota,H)$ is finite and let $\{C_1,\dots ,C_m\}$ be the set of the path-connected components.
    Moreover, let $U_i$ be an open neighborhood of $\operatorname{pr}_1(C_i)$ in $L$ for $i=1,\dots,m$.
    Then
    \begin{equation}
        m + \sum_{i=1}^{m} \cl(H^*(U_i)) \ge \cl(L)+ 1.
    \end{equation}
\end{proposition}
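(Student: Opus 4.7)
The plan is to mimic the strategy of \cref{theorem:cuplength} and \cref{proposition:cuplengthgeneral}, but to partition $C(\iota,H)$ by its path components rather than by the level sets of $g$. The new geometric input is that $g$ is locally constant on $C(\iota,H)$: for any smooth path $s\mapsto (y_s,y_s')$ in $C(\iota,H)$, differentiating the identity $\iota(y_s)=\iota^H(y_s')$ gives $d\iota(\dot y_s)=d\iota^H(\dot y_s')$, so
\[
\tfrac{d}{ds}g(y_s,y_s')=(\iota^H)^*\alpha(\dot y_s')-\iota^*\alpha(\dot y_s)=\alpha(d\iota^H(\dot y_s'))-\alpha(d\iota(\dot y_s))=0.
\]
Hence $g$ takes a single value $t_{\phi(j)}\in\Stheta$ on each path component $C_j$, so $\#g(C(\iota,H))\le m$ and, by \cref{proposition:interleavingF0a}(ii), $\pi(\MS_+(\cH_b))$ is finite. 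Applying \cref{lemma:cup-length-ineq} yields $n+\sum_{i=1}^n\cl(W_{c_i})\ge \cl(L)+1$ with $\{c_1,\dots,c_n\}=\pi(\MS_+(\cH_b))\cap[-a,0)$.

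I would then bound $n\le m$ exactly as in the proof of \cref{theorem:cuplength}: because $a<\theta(\iota)/2$, for any single $g$-value $t$ at most one $c\in[-a,0)$ satisfies $t\equiv b-c$ or $t\equiv b-c-a\pmod\theta$. Setting
\[
J_i:=\{\,j\in\{1,\dots,m\}\mid t_{\phi(j)}\equiv b-c_i\text{ or }b-c_i-a\pmod\theta\,\},
\]
the same computation shows that the $J_i$ are pairwise disjoint, and each is nonempty since $c_i$ is witnessed by at least one element of $C(\iota,H)$; hence $n\le\sum_i|J_i|\le m$.

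The core step is to bound each $\cl(W_{c_i})$ by the prescribed neighborhoods $U_j$ for $j\in J_i$. Since the components $\{C_j\}_{j\in J_i}$ are pairwise disjoint compact subsets of $L\times L$, their contributions to $\Supp(\mu hom(F_{(0,a)},T_{b-c_i}F^H_{[0,a]})|_{\Omega_+})$ described in \cref{lemma:muhom-isom}(ii) can be separated by pairwise disjoint conic open subsets $\Omega_j\subset\Omega_+$. This yields a direct-sum decomposition of $W_{c_i}$ as a right $H^*(L)$-module with summands $W_{c_i}^{(j)}:=H^*\RG(\Omega_j;\mu hom(F_{(0,a)},T_{b-c_i}F^H_{[0,a]}))$. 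A slight modification of \cref{proposition:muhom-cl} adapted to a general conic open subset of $\Omega_+$ shows that the $R$-action on $W_{c_i}^{(j)}$ factors through $H^*(U_j)$, so $\cl(W_{c_i}^{(j)})\le\cl(H^*(U_j))$ by \cref{lemma:cup-factorize}. Since $\cl$ of a direct sum is the maximum and $\cl(H^*(U_j))\ge 0$ for nonempty $U_j$,
\[
\cl(W_{c_i})\le\max_{j\in J_i}\cl(H^*(U_j))\le\sum_{j\in J_i}\cl(H^*(U_j)).
\]
Combining these ingredients,
\[
\cl(L)+1\le n+\sum_{i=1}^n\cl(W_{c_i})\le m+\sum_{i=1}^n\sum_{j\in J_i}\cl(H^*(U_j))\le m+\sum_{j=1}^m\cl(H^*(U_j)),
\]
using the disjointness of the $J_i$ in the final inequality.

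The hard part will be the last step: choosing $\Omega_j$ so that the $R$-action on $W_{c_i}^{(j)}$ genuinely factors through $H^*(U_j)$ rather than through $H^*$ of some larger open in $L$ arising from $\iota$-preimages of $T^*M$-neighborhoods. Because $\iota$ is only an immersion, a naive choice of $\Omega_j$ of the form $\rho^{-1}(U)$ for $U\subset T^*M$ may pick up extraneous branches from non-injective points that lie outside $U_j$. The argument must work microlocally in $\Omega_+$ and use the compactness of each $C_j$ to isolate the stratum of $\Lambda\boxplus\wh{\bfd}(a)\cap T_{b-c_i}'(\Lambda^H\boxplus\wh{\bfq}(a))$ attached to $C_j$ through its first-factor projection, so that the underlying open in $L$ can indeed be chosen inside the prescribed $U_j$.
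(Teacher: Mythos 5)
Your proposal takes a genuinely different route from the paper's. The paper observes that $g$ is constant on path components, so \cref{proposition:cuplengthgeneral} applies with $V_j:=\bigcup_{\kappa(i)=j}U_i$ for the $g$-level grouping $\kappa$, and then descends from $\cl(H^*(V_j))$ to $\sum_{\kappa(i)=j}\cl(H^*(U_i))$ by iterated Mayer--Vietoris via \cref{lemma:cup-length-mv}. You instead re-argue from \cref{lemma:cup-length-ineq} and try to produce a finer, component-indexed microlocal direct-sum decomposition of each $W_{c_i}$.

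There is a genuine gap in the decomposition step. You assert that since the $\{C_j\}_{j\in J_i}$ are pairwise disjoint in $L\times L$, their contributions to $\Supp(\mu hom(F_{(0,a)},T_{b-c_i}F^H_{[0,a]})|_{\Omega_+})$ can be separated by disjoint conic opens. But by \cref{lemma:muhom-isom}(ii) the contribution of a pair $(y,y')$ depends only on the \emph{first} coordinate $y$: it is $l(y)\boxplus\wh{\bfc}(a)$ or $l(y)\boxplus\overline{\wh{\bfl}(a)}$ according as $g(y,y')\equiv b-c_i$ or $b-c_i-a\pmod\theta$. If a single $y$ carries pairings $(y,y_1')\in C_j$ and $(y,y_2')\in C_{j'}$ feeding the two different congruences (equivalently $f^H(y_1')-f^H(y_2')\equiv\pm a\pmod\theta$), then because $\wh{\bfc}(a)\cap\overline{\wh{\bfl}(a)}=\{(a,[a];-\tau,\tau)\}$ the two supports share the ray $l(y)\boxplus\{(a,[a];-\tau,\tau)\}$, so no disjoint separation and hence no $R$-module direct sum exists. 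Disjointness of the $C_j$ in $L\times L$ does not imply disjointness of their $\operatorname{pr}_1$-images, and this is the real obstruction — distinct from the immersion-branch issue you flagged, which \emph{is} resolved by working in $\Omega_+$ where $\Lambda$ is embedded (and which, within a single $g$-level, \cref{assumption:embedding} already handles, since $\operatorname{pr}_1$ is injective on each $g$-fiber). The overlap can be avoided by choosing the auxiliary constant $a\in(\|H\|,\min(r(\iota),\theta(\iota)/2))$ generically so that $a\not\equiv g(C_j)-g(C_{j'})\pmod\theta$ for all $j,j'$ — possible since $g$ takes only finitely many values under your hypothesis — but this step is missing from your argument and must be added. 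Alternatively, the paper's route avoids the decomposition altogether: it bounds $\cl(W_{c_i})$ by the cup-length of a single neighborhood of the union of the relevant $\operatorname{pr}_1(C_j)$'s (the ``slightly modified'' \cref{proposition:muhom-cl}), and absorbs the resulting union into $\sum\cl(H^*(U_i))$ topologically with \cref{lemma:cup-length-mv}, which also handles overlapping $U_j$'s for free.
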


\begin{proof}
    Since $g$ is constant on each path-connected component, the assumption of \cref{proposition:cuplengthgeneral} is satisfied. 
    We use the same notation as in \cref{proposition:cuplengthgeneral}. 
    We define $\kappa \colon \{1,\dots, m \}\to\{1,\dots,l\}$ so that $C_i\subset D_{\kappa(i)}$ for each $i=1,\dots,m$. 
    Set $V_j \coloneqq \bigcup_{i\in \kappa^{-1}(j)}U_i$. 
    It is enough to check that 
    \begin{equation}
        \sum_{i\in \kappa^{-1}(j)} (\cl(H^*(U_i))+ 1) \ge \cl(H^*(V_j))+ 1
    \end{equation}
    for each $j\in \{1,\dots ,l\}$. 
    This is obtained by iterative use of \cref{lemma:cup-length-mv} below.
\end{proof}

\begin{lemma}\label{lemma:cup-length-mv}
    For open subsets $W_1$ and $W_2$ of $L$, 
    \begin{equation}
         \cl(H^*(W_1))+ \cl(H^*(W_2))+1\ge \cl(H^*(W_1\cup W_2)).
    \end{equation}
\end{lemma}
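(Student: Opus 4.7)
Setting $c_i := \cl(H^*(W_i))$ for $i=1,2$ and assuming both are finite (otherwise the bound is vacuous), the plan is to show directly from the definition of $\cl$ that for any $a_0 \in H^*(W_1 \cup W_2;\bfk)$ and any $r_1, \dots, r_{c_1+c_2+2} \in R_{W_1 \cup W_2}$, the product $a_0 \cdot r_1 \cdots r_{c_1+c_2+2}$ vanishes; this will immediately yield $\cl(H^*(W_1 \cup W_2)) \le c_1 + c_2 + 1$.

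The key ingredient I would invoke is the following classical consequence of the relative cup product. For open subsets $U, V$ of a space $X = U \cup V$, if $\alpha, \beta \in H^*(X;\bfk)$ satisfy $\alpha|_U = 0$ and $\beta|_V = 0$, then $\alpha \cup \beta = 0$ in $H^*(X;\bfk)$. The justification is standard: using the long exact sequences of the pairs $(X,U)$ and $(X,V)$, one lifts $\alpha$ to $\tilde\alpha \in H^*(X,U;\bfk)$ and $\beta$ to $\tilde\beta \in H^*(X,V;\bfk)$, and then observes that the relative cup product $\tilde\alpha \cup \tilde\beta$ lives in $H^*(X, U \cup V;\bfk) = H^*(X,X;\bfk) = 0$ and maps to $\alpha \cup \beta$ under the canonical restriction.

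Given this input, I would split the product by setting
\begin{equation}
\alpha := a_0 \cdot r_1 \cdots r_{c_1+1}, \qquad \beta := r_{c_1+2} \cdots r_{c_1+c_2+2}.
\end{equation}
By the definition of $c_1$, the restriction $\alpha|_{W_1}$ is the product of $a_0|_{W_1} \in H^*(W_1;\bfk)$ with $c_1+1$ classes in $R_{W_1}$, hence vanishes; symmetrically $\beta|_{W_2}$ is a product of $c_2+1$ classes in $R_{W_2}$ and also vanishes. Applying the cited fact with $X = W_1 \cup W_2$, $U = W_1$, $V = W_2$ then produces $a_0 \cdot r_1 \cdots r_{c_1+c_2+2} = \alpha \cup \beta = 0$, which is exactly what is needed.

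The only substantive input is the classical relative cup product lemma, which is well known, so I do not foresee any real obstacle; the entire proof will amount to the bookkeeping described above.
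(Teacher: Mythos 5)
Your argument is correct, but it takes a genuinely different route from the paper's. The paper derives the inequality from the Mayer--Vietoris sequence $H^{*-1}(W_1\cap W_2)\to H^*(W_1\cup W_2)\to H^*(W_1)\oplus H^*(W_2)$ of $R$-modules together with the abstract \cref{lemma:cup-exact} (subadditivity of $\cl$ along an exact sequence), plus the observations that $\cl(H^*(W_1\cap W_2))\le\min$ and $\cl(H^*(W_1)\oplus H^*(W_2))=\max$; this recycles machinery the paper has already set up for other estimates. You instead use the classical Lusternik--Schnirelmann trick: lift $\alpha$ and $\beta$ to $H^*(X,W_1)$ and $H^*(X,W_2)$ respectively and kill the relative cup product in $H^*(X,W_1\cup W_2)=0$. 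Both are standard; your argument is more self-contained at the cost of importing the relative cup product, while the paper's is shorter given \cref{lemma:cup-exact} and generalizes more readily to non-topological $R$-modules. One small point of bookkeeping: in the surrounding subsection all cup-lengths are taken over the fixed ring $R=R_L$ (acting by restriction), so the $r_i$ in your argument should be drawn from $R_L$ rather than $R_{W_1\cup W_2}$; the restrictions $r_i|_{W_1}$ and $r_i|_{W_2}$ then land in the correct rings and the computation of $\alpha|_{W_1}=0$ and $\beta|_{W_2}=0$ goes through verbatim, so this is a notational adjustment rather than a gap.
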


\begin{proof}
    By the Mayer–Vietoris sequence and \cref{lemma:cup-exact}, we get 
    \begin{equation} \cl(H^*(W_1\cup W_2))\le \cl(H^*(W_1\cap W_2))+\cl(H^*(W_1)\oplus H^*(W_2))+1.
    \end{equation}
    Since $\cl(H^*(W_1\cap W_2))\le \min \{ \cl(H^*(W_1)), \cl(H^*(W_1))\}$ and $\cl(H^*(W_1)\oplus H^*(W_2))=\max \{\cl(H^*(W_1)),\cl(H^*(W_2))\}$, the assertion holds. 
\end{proof}

Although in \cref{proposition:cuplengthgeneral2} we state the result for a strongly rational Lagrangian immersion satisfying \cref{assumption:cup-length}, 
we can show that the statement holds for any rational Lagrangian immersion as in \cref{theorem:cuplength} by an argument similar to \cref{subsec:reduction}.

\appendix

\section{Modified Tamarkin category and energy estimate}
\label{section:tamarkin-precise}

In this section, we give a more detailed exposition on the modified version of Tamarkin category $\cD^P(M)_\theta$.
We continue to use the notation in \cref{section:Tamarkincat}.

\subsection{Separation theorem}\label{subsection:appendix-separation}

First noticing that $\ell \colon M \times P \times \bR \to M \times P \times \Stheta$ is a covering map, we obtain the following.

\begin{lemma}\label{lemma:bRtoS1}
	\begin{enumerate}
		\item Let $G\in \Db_{/[1]}(M \times P \times \Stheta)$.
		If $\ell^{!}G\simeq 0$ then $G\simeq 0$.
		\item
		The functor $\ell^{!}\colon \Db_{/[1]} (M \times P \times \Stheta)
		\to \Db_{/[1]} (M \times P \times \bR)$ is conservative.
		That is, a morphism $f$ in $\Db_{/[1]}(M \times P \times
		\Stheta)$ is an isomorphism if and only if so is $\ell^{!}f$.
	\end{enumerate}
\end{lemma}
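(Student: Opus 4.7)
Statement (ii) follows from (i) by the usual cone argument. Since $\ell$ is a covering map, hence a local diffeomorphism, we have $\ell^{!}\simeq \ell^{-1}$ and this functor is triangulated; consequently the cone of $\ell^{!}f$ is naturally isomorphic to $\ell^{!}$ of the cone of $f$ for any morphism $f \colon F \to G$ in $\Db_{/[1]}(M \times P \times \Stheta)$. If $\ell^{!}f$ is an isomorphism, then its cone vanishes, so $\ell^{!}(\mathrm{Cone}(f))\simeq 0$, and applying (i) gives $\mathrm{Cone}(f)\simeq 0$, so $f$ is an isomorphism. Thus the real content lies in (i).

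For (i), the key geometric input is again that $\ell$ is a covering map. Every point $p\in M\times P\times \Stheta$ admits an open neighborhood $U$ such that $\ell^{-1}(U)=\bigsqcup_{n\in\bZ}\widetilde{U}_n$ with each restriction $\ell_n:=\ell|_{\widetilde{U}_n}\colon \widetilde{U}_n\to U$ a homeomorphism (an evenly-covered neighborhood). Fixing any such $n$, we then obtain the natural chain of isomorphisms
\begin{equation}
    G|_U \;\simeq\; (\ell_n)_*\,(\ell_n)^{-1}(G|_U) \;\simeq\; (\ell_n)_*\bigl((\ell^{!}G)|_{\widetilde{U}_n}\bigr)
\end{equation}
in $\Db_{/[1]}(U)$, using $\ell^{!}\simeq \ell^{-1}$. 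Under the hypothesis $\ell^{!}G \simeq 0$ in $\Db_{/[1]}(M\times P\times \bR)$, the right-hand side vanishes. Varying $p$ and $U$, this shows that $G$ vanishes on each piece of an open cover of $M\times P\times \Stheta$, and a local-to-global argument will then deliver $G\simeq 0$.

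\textbf{Main obstacle.} The most delicate step is the last reduction, namely that local vanishing in the orbit category entails global vanishing. In the ordinary derived category $\Db(\bK_X)$ this is immediate from the stalkwise characterization of the zero object, but $\Db_{/[1]}(X)$ is defined as the Verdier quotient $\Db(\bK_X)/\perf(\bK_X)$, so one has to verify that membership in $\perf(\bK_X)$ is a local condition on $X$. This relies on the fact that $\perf(\bK_X)$ is the triangulated subcategory generated by the essentially surjective-on-stalks family $\mathfrak{e}(F)=\bK_X\otimes_{\bfk_X}F$ for $F\in \Db(\bfk_X)$, together with the stack-like properties of $\Db_{/[1]}$ exploited in \cite{Gu12,Gu19}. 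Once that is granted, the proof of (i) closes as described.
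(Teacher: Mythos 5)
Your approach coincides with the paper's: the paper gives no written proof, simply noting before the lemma that $\ell$ is a covering map, and your argument makes precise exactly why that observation suffices. The reduction of (ii) to (i) via the cone, the identification $\ell^{!}\simeq\ell^{-1}$, and the use of local sections of $\ell$ over evenly-covered opens $U$ to conclude $G|_U\simeq 0$ are all correct.

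The step you flag as "the most delicate"---that local vanishing in $\Db_{/[1]}$ entails global vanishing, i.e.\ that membership in $\perf(\bK_X)$ is a local condition---deserves a concrete argument rather than a vague appeal to stack-like properties, and there is a short one in the situation at hand. Since $\Stheta=\bR/\theta\bZ$ is compact (the case $\theta=0$ being trivial, where $\ell=\id$), one covers $M\times P\times\Stheta$ by just \emph{two} opens of the form $U_j=M\times P\times I_j$ with $I_1,I_2\subset\Stheta$ evenly-covered open intervals (say $I_1=(0,\theta)$, $I_2=(-\theta/2,\theta/2)$ modulo $\theta$). For any open embedding $j\colon U\hookrightarrow X$, the projection formula gives
\begin{equation}
    j_{!}\bigl(\bK_U\otimes_{\bfk_U}F\bigr)\;\simeq\;\bK_X\otimes_{\bfk_X}j_{!}F\;=\;\mathfrak{e}(j_{!}F),
\end{equation}
so $j_{!}$ carries $\perf(\bK_U)$ into $\perf(\bK_X)$. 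Combined with the Mayer--Vietoris exact triangle $G_{U_1\cap U_2}\to G_{U_1}\oplus G_{U_2}\to G\toone$, this shows that if $G|_{U_1}$, $G|_{U_2}$ lie in $\perf$, then so does $G$; there is no need for an infinite cover or a descent argument. With that supplied, your proof of (i), and hence (ii), is complete.
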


Applying the proper base change and the projection formula, one can prove the following.

\begin{lemma}\label{lemma:S1star}
	For $F,G \in \Db_{/[1]}(M \times P \times \bR)$,
	there is an natural isomorphism
	\begin{equation}
	R\ell_!F\star R\ell_!G \simeq R\ell_!(F\star G),
	\end{equation}
	where $\star$ in the right-hand side stands for the convolution $\star$ for the case $\theta=0$.
\end{lemma}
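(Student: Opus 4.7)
The plan is to reduce the statement to a Künneth-type compatibility for the covering map $\ell$ applied along the $\bR$-direction, and then to derive this from proper base change and the projection formula as in the classical case. Since the six operations together with their standard compatibilities descend to the triangulated orbit category (as recalled in \cref{section:preliminaries}), the classical arguments carry over without modification.

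More concretely, let us temporarily write $s_0$ and $\tilde q_{i,0}$ for the sum and projection maps on $M\times P\times \bR\times \bR$ associated with the case $\theta=0$, and set $\ell^{(2)}:=\id_{M\times P}\times \ell\times \ell$. The obvious identity $\ell\circ s_0=s_\theta\circ \ell^{(2)}$ and the composition rule for $R(-)_!$ give
\[
    R\ell_!(F\star G)\simeq Rs_{\theta,!}\, R\ell^{(2)}_!\bigl(\tilde q_{1,0}^{-1}F\otimes \tilde q_{2,0}^{-1}G\bigr),
\]
so by the definition of $\star$ on the $\theta$-side it will suffice to establish
\[
    R\ell^{(2)}_!\bigl(\tilde q_{1,0}^{-1}F\otimes \tilde q_{2,0}^{-1}G\bigr)
    \simeq \tilde q_{1,\theta}^{-1}R\ell_!F\otimes \tilde q_{2,\theta}^{-1}R\ell_!G.
\]
For this I would factor $\ell^{(2)}=f_1\circ f_2'$ with $f_1=\id\times \ell\times \id_{\Stheta}$ and $f_2'=\id\times \id\times \ell$, and invoke proper base change for each of the two evident cartesian squares relating $\tilde q_{i,\theta}$ with its pullback along $\ell$. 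This rewrites each $\tilde q_{i,\theta}^{-1}R\ell_!(-)$ as a pushforward of a pullback, and two applications of the projection formula then collapse the two tensor factors into a single $R\ell^{(2)}_!$ applied to $\tilde q_{1,0}^{-1}F\otimes \tilde q_{2,0}^{-1}G$.

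The main obstacle is not conceptual but bookkeeping: one must keep track of several cartesian and commutative squares among the various projections and covering maps. Once the factorizations are chosen, the required identifications reduce to a mechanical sequence of base changes and projection formulas, all of which are available in $\Db_{/[1]}$.
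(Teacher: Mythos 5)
Your argument is correct and fills in precisely the sketch the paper gives, which simply cites proper base change and the projection formula. The reduction via $\ell\circ s_0=s_\theta\circ\ell^{(2)}$, followed by factoring $\ell^{(2)}$ into two one-variable covering maps and applying base change and the projection formula to each factor, is the natural way to carry out that outline, and the two squares you invoke are indeed Cartesian.
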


We define endofunctors $P_l$ and $P_r$ of $\Db_{/[1]}(M \times P \times \Stheta)$ by $P_l \coloneqq R\ell_! \bfk_{M \times P \times [0,+\infty)} \star (-)$ and $P_r \coloneqq \cHom^\star(R\ell_! \bfk_{M \times P \times [0,+\infty)}, -)$. 
Using \cref{lemma:bRtoS1,lemma:S1star} and arguing similarly to \cite{GS14}, we can show the equivalence of categories 
\begin{align*}
	P_l	\colon \Db_{/[1]}(M 	\times P \times \Stheta;\Omega_+) 
	& \simto {}^\perp \Db_{/[1], \{\tau \le 0\}}(M \times P \times \Stheta), \\
	P_r	\colon \Db_{/[1]}(M \times P \times \Stheta;\Omega_+) 
	& \simto \Db_{/[1], \{\tau \le 0\}}(M \times P \times \Stheta)^\perp,
\end{align*}
where ${}^\perp (-)$ (resp.\ $(-)^\perp$) denotes the left (resp.\ right) orthogonal.

For an object $F \in \cD^P(M)$, we take the canonical representative $P_l(F) \in
{}^\perp \Db_{\{\tau \le 0\}}(M \times P \times \bR)$ unless otherwise specified.
The support of an object $F \in \cD^P(M)$ is defined to be that of $P_l(F)$.
For a compact subset $A$ of $T^*M$ and $F \in \cD^P_A(M)$, the canonical representative $P_l(F) \in {}^\perp \Db_{\{\tau \le 0\}}(M
\times P \times \bR)$ satisfies $\MS(P_l(F)) \subset \overline{\rho^{-1}(A)}$.

The following is a slight generalization of Tamarkin's separation theorem.

\begin{proposition}[{see also \cite[Thm.~3.2 and Lem.~3.8]{Tamarkin} and \cite[Thm.4.28]{GS14}}]\label{proposition:separation}
    Let $q$ denote the projection $M \times P\times \Stheta \to \Stheta$.
	Let $A,B$ be compact subsets of $T^*M$ and $F \in \cD^P_{A}(M)_\theta,\allowbreak G \in \cD^P_{B}(M)_\theta$.
	Assume 
	\begin{enumerate}
		\renewcommand{\labelenumi}{$\mathrm{(\arabic{enumi})}$}
		\item $A \cap B=\emptyset$,
		\item $q$ is proper on $\Supp(F) \cup \Supp(G)$.
	\end{enumerate}
	Then $R{q}_* \cHom^\star(F,G) \simeq 0$.
\end{proposition}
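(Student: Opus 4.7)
My plan is to adapt Tamarkin's separation argument to the modified Tamarkin category setting. The essential content is a microsupport computation exploiting $A\cap B=\emptyset$, combined with a standard orthogonality argument coming from the adjunction $\star \dashv \cHom^\star$.

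First I would replace $F$ and $G$ by their canonical representatives $P_l(F), P_l(G) \in {}^\perp \Db_{/[1],\{\tau\le 0\}}(M\times P\times \Stheta)$, so that their microsupports are contained in $\overline{\rho^{-1}(A)}$ and $\overline{\rho^{-1}(B)}$ respectively. If it simplifies the microsupport bookkeeping, I would then use \cref{lemma:bRtoS1}(i) to reduce to the case $\theta=0$ by applying $\ell^!$ together with base change along the Cartesian square formed by $\ell$ and $q$, so that one works over $M\times P\times \bR$.

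The core step is a microsupport computation. Using the formula
\[
\cHom^\star(F,G) \simeq Rs_*\cRHom(\tilde q_2^{-1} i^{-1} F, \tilde q_1^! G)
\]
and the estimates \cref{proposition:SSpushpull}(i), (ii) and \cref{proposition:SStenshom}(ii), the portion of $\MS(\cHom^\star(F,G))$ lying in $\Omega_+$ is controlled by the fiberwise sum $\overline{\rho^{-1}(A)}^a \widehat{+}\, \overline{\rho^{-1}(B)}$. Any point of this sum with $\tau>0$ would supply two points over a common $(x,y,t)$ projecting under $\rho$ to a shared element of $A\cap B$, contradicting the disjointness hypothesis (1). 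Hence $\MS(\cHom^\star(F,G)) \cap \Omega_+ \subset T^*_{M\times P\times \Stheta}(M\times P\times \Stheta)$, and \cref{proposition:SSpushpull}(i) together with the properness hypothesis (2) transfer this to $\MS(Rq_*\cHom^\star(F,G)) \subset \{\tau\le 0\}\subset T^*\Stheta$, i.e., $Rq_*\cHom^\star(F,G)\in \Db_{/[1],\{\tau\le 0\}}(\Stheta)$.

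To conclude, the adjunction $\star \dashv \cHom^\star$ forces $\cHom^\star(F,G)$ into the right orthogonal $\Db_{/[1],\{\tau\le 0\}}(M\times P\times \Stheta)^\perp$: for any $K\in \Db_{/[1],\{\tau\le 0\}}$, the convolution $K\star F$ again has microsupport in $\{\tau\le 0\}$ because $s$ preserves the $\tau$-coordinate, and hence $\Hom(K, \cHom^\star(F,G))\simeq \Hom(K\star F, G)=0$ by the left orthogonality of $G$. This property is preserved by $Rq_*$, so $Rq_*\cHom^\star(F,G)$ lies in $\Db_{/[1],\{\tau\le 0\}}(\Stheta)^\perp$. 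Being simultaneously in $\Db_{/[1],\{\tau\le 0\}}(\Stheta)$ and its right orthogonal, it must vanish. The main technical obstacle is the boundary behavior at $\{\tau=0\}$: the closures $\overline{\rho^{-1}(A)}$ and $\overline{\rho^{-1}(B)}$ always meet along the zero section, so the transversality in \cref{proposition:SStenshom}(ii) must be verified carefully on $\Omega_+$ only; a secondary issue is that every microsupport bound and adjunction has to be invoked in its orbit-category incarnation, and the $\ell^!$ reduction (if used) requires compatibility of covering base change with $\Db_{/[1]}$.
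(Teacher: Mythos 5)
Your overall strategy — a microsupport estimate showing $Rq_*\cHom^\star(F,G)\in \Db_{/[1],\{\tau\le 0\}}(\Stheta)$ followed by an orthogonality argument — is the one the paper points to via \cite{GS14,AI20}, but two of your intermediate steps are incorrect as stated. The claim that $\MS(\cHom^\star(F,G))\cap\Omega_+\subset T^*_{M\times P\times\Stheta}(M\times P\times\Stheta)$ is false, and the fiberwise-sum bound does not deliver it. Working through $Rs_*\cRHom(\tilde q_2^{-1}i^{-1}F,\tilde q_1^!G)$ with \cref{proposition:SSpushpull} and \cref{proposition:SStenshom}, a point $(x,y,t;\xi,\eta,\tau)\in\MS(\cHom^\star(F,G))$ with $\tau>0$ arises from covectors $(x,y,\cdot;\xi_F,\eta_F,\tau)\in\MS(F)$ and $(x,y,\cdot;\xi_G,\eta_G,\tau)\in\MS(G)$ at the \emph{same} $\tau$ (forced by $s_d$) but possibly different $t$'s, with $\xi=\xi_G-\xi_F$, $\eta=\eta_G-\eta_F$. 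Here $\xi_F/\tau\in A$ and $\xi_G/\tau\in B$ are two generically different points, so $A\cap B=\emptyset$ gives no contradiction and $\MS(\cHom^\star(F,G))\cap\Omega_+$ is in general nonempty (take $A=\{(0,1)\}$, $B=\{(0,2)\}$ in $T^*\bR$). The vanishing appears only after the further $Rq_*$: properness lets you apply \cref{proposition:SSpushpull}(i) for $q$, and the resulting estimate picks out covectors with $\xi=\eta=0$, which forces $\xi_F=\xi_G$ and therefore a common point of $A\cap B$. So the bound $\MS(Rq_*\cHom^\star(F,G))\subset\{\tau\le 0\}$ is what you prove; the stronger intermediate claim on $\cHom^\star(F,G)$ itself should be dropped.

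Your orthogonality step is also reversed. For $G\in{}^\perp\Db_{/[1],\{\tau\le 0\}}$ (the paper's canonical representative $P_l(G)$), left orthogonality says $\Hom(G,-)=0$ on $\Db_{/[1],\{\tau\le 0\}}$, which does \emph{not} give $\Hom(K\star F,G)=0$. Two repairs work. Either take the representative $G=P_r(G)\in\Db_{/[1],\{\tau\le 0\}}^\perp$, for which the vanishing $\Hom(K\star F,G)=0$ is literally the defining property once $K\star F\in\Db_{/[1],\{\tau\le 0\}}$; or keep $G=P_l(G)$ and observe that, since $F=P_l(F)=R\ell_!\bfk_{M\times P\times[0,+\infty)}\star F_0$, one has $K\star F\simeq R\ell_!\bfk_{M\times P\times[0,+\infty)}\star(K\star F_0)\in{}^\perp\Db_{/[1],\{\tau\le 0\}}$, so $K\star F$ lies in $\Db_{/[1],\{\tau\le 0\}}\cap{}^\perp\Db_{/[1],\{\tau\le 0\}}=0$ and the vanishing is trivial. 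With these two corrections the remaining steps (passage through $Rq_*$, and the final conclusion that an object in $\Db_{/[1],\{\tau\le 0\}}(\Stheta)$ and its right orthogonal must be zero) are sound.
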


\subsection{Sheaf quantization of Hamiltonian isotopies}\label{subsection:appendix-sq-hamiltonian}

In this subsection, we briefly recall the existence theorem of a sheaf quantization of a Hamiltonian isotopy due to Guillermou--Kashiwara--Schapira~\cite{GKS}, 
with a slight modification so that it can be applied to our setting.

Let $I$ be an open interval containing the closed interval $[0,1]$.
Let $H \colon T^*M \times I \to \bR$ be a compactly supported Hamiltonian function and denote by $X_s$ the associated Hamiltonian vector field on $T^*M$ defined by $d\alpha(X_s,-)=-dH_s$.
We also denote by $\phi^H \colon T^*M \times I \to T^*M$ the Hamiltonian isotopy generated by $X_s$.	
We consider the conification of $\phi^H$ as follows.
Define $\wh{H} \colon T^*M \times \rT \Stheta \times I \to \bR$ by $\wh{H}_s(x,t;\xi,\tau) \coloneqq \tau \cdot H_s(x;\xi/\tau)$.
Note that $\wh{H}$ is homogeneous of degree $1$, that is, $\wh{H}_s(x,t;c\xi,c\tau)=c \cdot \wh{H}_s(x,t;\xi,\tau)$ for any $c \in \bR_{>0}$.
The Hamiltonian isotopy $\wh{\phi} \colon T^*M \times \rT \Stheta \times I \to T^*M \times \rT \Stheta$ associated with $\wh{H}$
makes the following diagram commute (recall that we have set $\rho \colon \Omega_+ \to T^*M, (x,t;\xi,\tau) \mapsto (x;\xi/\tau)$):
\begin{equation}\label{diag:homog}
\begin{aligned}
\xymatrix{
	\Omega_+ \times I \ar[r]^-{\wh{\phi}} \ar[d]_-{\rho \times \id} & \Omega_+ \ar[d]^-{\rho} \\
	T^*M \times I \ar[r]_-{\phi^H} & T^*M.
}
\end{aligned}
\end{equation}
Defining a $C^\infty$-function $u=(u_s)_{s \in I} \colon T^*M \times I \to \bR$ by $u_s(p) \coloneqq \int_0^s (H_{s'}-\alpha(X_{s'}))(\phi_{s'}^H(p))ds'$, we find that 
\begin{equation}
\wh{\phi}_s(x,t;\xi,\tau)
=
(x',t+[u_s(x;\xi/\tau)];\xi',\tau),
\end{equation}
where $(x';\xi'/\tau)=\phi^H_s(x;\xi/\tau)$.
By construction, $\wh{\phi}$ is a homogeneous Hamiltonian isotopy: $\wh{\phi}_s(x,t;c\xi,c\tau)=c \cdot \wh{\phi}_s(x,t;\xi,\tau)$ for any $c \in \bR_{>0}$.
We define a conic Lagrangian submanifold $\Lambda_{\wh{\phi}} \subset T^*M \times \rT \Stheta \times T^*M \times \rT \Stheta \times T^*I$ by
\begin{equation}\label{equation:deflambdahatphi}
\Lambda_{\wh{\phi}}
 \coloneqq 
\left\{
\left(
\wh{\phi}_s(x,t;\xi,\tau), (x,t;-\xi,-\tau), (s;-\wh{H}_s \circ \wh{\phi}_s(x,t;\xi,\tau)) \right)
\; \middle| \;
\begin{aligned}
(x;\xi) & \in T^*M,  \\
(t;\tau) & \in \rT \Stheta, \\
s & \in I
\end{aligned}
\right\}.
\end{equation}
By construction, we have
\begin{equation}
\wh{H_s} \circ \wh{\phi}_s(x,t;\xi,\tau)
=
\tau \cdot (H_s \circ \phi^H_s(x;\xi/\tau)).
\end{equation}
Note also that
\begin{equation}
\begin{aligned}
\Lambda_{\wh{\phi}} \circ T^*_sI
&=
\left\{\left(\wh{\phi}_s(x,t;\xi,\tau), (x,t;-\xi,-\tau)\right) \; \middle| \; (x,t;\xi,\tau) \in T^*M \times \rT \Stheta \right\} \\
& \subset T^*M \times \rT \Stheta \times T^*M \times \rT \Stheta
\end{aligned}
\end{equation}
for any $s \in I$ (see \eqref{equation:compset} for the definition of $A \circ B$).
The following was proved by Guillermou--Kashiwara--Schapira~\cite{GKS}.

\begin{theorem}[{cf.\ \cite[Thm.~4.3]{GKS}}]\label{thm:GKS}
	In the preceding situation, there exists a unique object $K^H \in \Db(M \times \Stheta \times M \times \Stheta \times I)$ satisfying the following conditions:
	\begin{enumerate}
		\renewcommand{\labelenumi}{$\mathrm{(\arabic{enumi})}$}
		\item $\rMS(K^H) \subset \Lambda_{\wh{\phi}}$,
		\item $K^H|_{M \times \Stheta \times M \times \Stheta \times \{0\}} \simeq \bfk_{\Delta_{M \times \Stheta}}$, where $\Delta_{M \times \Stheta}$ is the diagonal of $M \times \Stheta \times M \times \Stheta$.
	\end{enumerate}
	Moreover both projections $\Supp(K^H) \to M \times \Stheta \times I$ are proper. 
\end{theorem}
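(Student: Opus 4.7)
The plan is to reduce the statement to the Guillermou–Kashiwara–Schapira existence theorem \cite[Thm.~4.3]{GKS} applied to the manifold $N := M \times \Stheta$ and the homogeneous Hamiltonian isotopy $\wh{\phi}$ on $\rT N = T^*M \times \rT\Stheta$. The only novelty compared to the original statement is the presence of the $\Stheta$-factor and the fact that $\wh{H}$ is only compactly supported in the $T^*M$-direction; the plan is therefore essentially a bookkeeping exercise checking that the GKS machinery still applies.

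First I would verify that $\wh{\phi}$ meets the compact-support hypothesis of \cite[Thm.~4.3]{GKS}. Since $H$ is compactly supported in $T^*M \times I$, there is a compact subset $K \subset T^*M$ such that $H_s \equiv 0$ outside $K \times I$. Consequently $\wh{H}_s(x,t;\xi,\tau) = \tau\cdot H_s(x;\xi/\tau)$ vanishes off the $\Stheta$-invariant, $\bR_{>0}$-conic subset $\rho^{-1}(K) \subset \rT N$; in particular $\wh{\phi}_s$ is the identity outside this region, $\Stheta$-equivariant, and homogeneous of degree $0$ on fibers. These are exactly the conditions under which \cite[Thm.~4.3]{GKS} produces a unique object $K^H \in \Db(N\times N\times I)$ with $\rMS(K^H) \subset \Lambda_{\wh{\phi}}$ and $K^H|_{N\times N \times \{0\}} \simeq \bfk_{\Delta_N}$.

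Second I would dispose of the case $\theta > 0$ (so that $\Stheta$ is compact) by a direct application of the cited theorem: the compactness of $\Stheta$ trivializes any global support issues in the $t$-direction. For $\theta = 0$ the statement is the original GKS theorem applied to the open manifold $N = M\times\bR$. Alternatively, one can first apply the theorem on the universal cover $\bR \to \Stheta$ to obtain a $\theta\bZ$-equivariant sheaf quantization, and then descend to $\Stheta$; the descent is unambiguous because $\wh{\phi}$ is $\Stheta$-translation invariant.

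Finally, the properness of the two projections $\Supp(K^H) \to M \times \Stheta \times I$ follows from the microsupport estimate (1) combined with the initial condition (2): outside $\rho^{-1}(K)$ the isotopy $\wh{\phi}$ is the identity, so $K^H$ agrees with $\bfk_{\Delta_N \times I}$ there by uniqueness, and its support off the compact region $\pi(\rho^{-1}(K)) \times \Stheta$ is confined to the diagonal, which projects properly. The main (mild) obstacle is the first step: checking that the non-compactness of $\wh{H}$ in the $t$-direction does not break the GKS argument. As explained above, this is handled by the $\Stheta$-equivariance and the fact that $\wh{\phi}$ is the identity off $\rho^{-1}(K)$; no essentially new ideas beyond \cite{GKS} are needed.
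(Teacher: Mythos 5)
The paper does not actually prove this statement; it explicitly defers to \cite[Thm.~4.3 and \S A.3]{GKS}, noting only that a ``slight modification'' is needed to replace $\bR$ with $\Stheta$. Your proposal follows exactly the same route, so at the level of strategy you are doing what the paper does.

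That said, there is a genuine slip that needs to be repaired before the reduction to \cite[Thm.~4.3]{GKS} is legitimate. You write ``the homogeneous Hamiltonian isotopy $\wh{\phi}$ on $\rT N = T^*M \times \rT\Stheta$,'' but this equality is false for $\dim M \geq 1$: one has $T^*M \times \rT\Stheta = \{\tau \neq 0\}$, whereas $\rT N = T^*(M\times\Stheta)\setminus T^*_{N}N$ additionally contains the nonempty locus $\{\tau = 0,\ \xi \neq 0\}$. The isotopy $\wh{\phi}$ and the Hamiltonian $\wh{H}_s(x,t;\xi,\tau) = \tau H_s(x;\xi/\tau)$ are \emph{a priori} defined only on $\{\tau\neq 0\}$, whereas \cite[Thm.~4.3]{GKS} requires a homogeneous Hamiltonian isotopy of the full $\rT N$. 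This is precisely the gap that \cite[\S A.3]{GKS} is designed to close: because $H$ is compactly supported in $T^*M$, $\wh{H}$ extends smoothly by $0$ across $\{\tau = 0\}$, and hence $\wh{\phi}$ extends to a homogeneous Hamiltonian isotopy of all of $\rT N$ that is the identity off a set that is proper over $N$ in the fiber directions. Without saying this explicitly, your appeal to \cite[Thm.~4.3]{GKS} is not yet justified. (Relatedly, $\rho^{-1}(K)$ sits only in $\Omega_+ = \{\tau > 0\}$, not in all of $\rT N$, and the phrase ``homogeneous of degree $0$ on fibers'' should be ``commutes with the $\bR_{>0}$-action,'' i.e.\ homogeneous of degree $1$ for $\wh{H}$.)

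Two smaller remarks. The universal-cover/descent alternative for $\theta > 0$ is an acceptable variant, but it is not required: GKS applies to any $N$, compact or not, once the extension across $\{\tau = 0\}$ has been made; and a clean descent argument would need to propagate the uniqueness statement through the $\theta\bZ$-equivariance, which you only gesture at. For properness, ``by uniqueness $K^H$ agrees with $\bfk_{\Delta_N\times I}$ off $\rho^{-1}(K)$'' is the right idea but uses a local comparison that the global uniqueness in the theorem statement does not immediately provide; the precise local-constancy statement used in \cite{GKS} is what actually gives this, and it is worth citing it rather than attributing it to ``uniqueness.''
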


The object $K^H$ is called the \emph{sheaf quantization} of $\wh{\phi}$ or associated with~$\phi^H$.

\subsection{Hamiltonian deformation for sheaves and translation distance}\label{subsec:appendix-distance}

In this subsection, we give the outline of the proof of \cref{proposition:distance}.
	
Let $F \in \cD^P(M)_\theta$.
Then the canonical morphism $R\ell_! \bfk_{M \times P \times [0,+\infty)} \star F \to F$ is an isomorphism.
Moreover, for any $c \in \bR$, we have an isomorphism ${T_c}_*(R\ell_! \bfk_{M \times P \times [0,+\infty)} \star F) \simeq R\ell_! \bfk_{M \times P \times [c,+\infty)} \star F$.
Hence, for any $c,d \in \bR$ with $c \le d$, the canonical morphism $\bfk_{M \times P \times [c,+\infty)} \to \bfk_{M \times P \times [d,+\infty)}$ induces a morphism $\tau_{c,d}(F) \colon {T_c}_*F \to {T_{d}}_*F$ in $\cD^P(M)_\theta$.
Using the morphism, we define the translation distance as in \cref{definition:distance}.

The following is a modified version of the key lemma in \cite{AI20}, which we used once in the proof of \cref{theorem:existence-quantization}.

\begin{lemma}[{cf.\ \cite[Prop.~4.3]{AI20}}]\label{lemma:torhtpy}
    Denote by $q \colon M \times P\times \Stheta \times I \to M \times P \times \Stheta$ the projection.
	Let $\cH \in \Db_{\{\tau \ge 0\}}(M\times P\times \Stheta \times I)$ and $s_1<s_2$ be in $I$.
	Assume that there exist $a,b,r \in \bR_{>0}$ satisfying
	\begin{equation}
	\MS(\cH) \cap \pi^{-1}(M \times P \times \Stheta \times (s_1-r,s_2+r)) \subset T^*(M \times P) \times (\Stheta \times I) \times \gamma_{a,b},
	\end{equation}
	where 
	$\gamma_{a,b} \coloneqq \{(\tau,\sigma) \in \bR^2 \mid -a \tau \le \sigma \le b \tau \} \subset \bR^2$.
	Then
	\begin{enumerate}
		\item $d_{\cD^P(M)_\theta}(Rq_*(\cH_{M \times P \times \Stheta \times [s_1,s_2)}),0) \le a(s_2-s_1)$,
		\item $d_{\cD^P(M)_\theta}(Rq_*(\cH_{M \times P \times \Stheta \times (s_1,s_2]})),0) \le b(s_2-s_1)$,
		\item $d_{\cD^P(M)_\theta}(\cH|_{M \times P \times \Stheta \times \{s_1\}}, \cH|_{M \times P \times \Stheta \times \{s_2\}}) \le (a+b)(s_2-s_1)$.
	\end{enumerate}
\end{lemma}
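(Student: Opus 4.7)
The plan is to adapt the proof of \cite[Prop.~4.3]{AI20} to the present setting, handling three modest generalizations: the extra parameter factor $P$, the replacement of $\bR$ with $\Stheta$, and working in the triangulated orbit category $\Db_{/[1]}$ in place of $\Db(\bfk)$. None of these affects the substance of the argument. The projection $q$, the translation $T_c$, and the convolution $\star$ all leave the factor $P$ inert, so $P$ can be carried through all operations with no effect on the estimates. For the passage from $\bR$ to $\Stheta$, I would apply $\ell^!$ in view of the conservativity of \cref{lemma:bRtoS1} together with \cref{lemma:S1star}; the microsupport hypothesis pulls back to the same cone $\gamma_{a,b}$ on each fundamental domain of the cover, reducing us to the case $\theta = 0$ already handled in \cite{AI20}. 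Finally, being zero in $\Db_{/[1]}$ is weaker than being zero in $\Db(\bfk)$, so any interleaving established upstairs descends to $\cD^P(M)_\theta$.

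After these reductions, the core argument runs as follows. For (i), set $F := Rq_*(\cH_{M\times P\times \Stheta \times [s_1, s_2)})$ and prove that $\tau_{0, a(s_2-s_1)}(F) = 0$ in $\cD^P(M)_\theta$. The microsupport bound $\sigma \ge -a\tau$ on the enlarged strip $(s_1 - r, s_2 + r)$ renders the affine map
\begin{equation*}
    (x,y,t,s) \longmapsto (x,y, t + a(s - s_1), s)
\end{equation*}
non-characteristic for $\cH$, which implements $\tau_{0, a(s_2-s_1)}(F) = 0$ through a sheaf-theoretic homotopy in the $s$-direction; the slack $r > 0$ is precisely what is needed to apply \cref{proposition:microlocalmorse} at the two endpoints. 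Assertion (ii) follows symmetrically from the bound $\sigma \le b\tau$. For (iii), apply $Rq_*$ to the exact triangles relating $\cH_{[s_1, s_2)}$, $\cH_{(s_1, s_2]}$, and the slices $\cH|_{s_1}$, $\cH|_{s_2}$, and combine them with (i) and (ii) to produce morphisms
\begin{equation*}
    \alpha \colon \cH|_{s_1} \to T_{b(s_2 - s_1)} \cH|_{s_2}, \qquad \beta \colon \cH|_{s_2} \to T_{a(s_2 - s_1)} \cH|_{s_1}
\end{equation*}
witnessing a $(b(s_2-s_1), a(s_2-s_1))$-interleaving of $\cH|_{s_1}$ and $\cH|_{s_2}$.

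The main obstacle will be verifying that the composites $T_{b(s_2-s_1)} \beta \circ \alpha$ and $T_{a(s_2-s_1)} \alpha \circ \beta$ genuinely recover the canonical morphisms $\tau_{0, (a+b)(s_2-s_1)}$ on the two slices, rather than merely some pair of parallel morphisms. This identification is exactly what distinguishes the existence of two maps from a bona fide interleaving in the sense of \cref{definition:distance}, and it ultimately reduces to a microlocal Morse-theoretic bookkeeping of characteristic sheaves of translated half-planes in $\Stheta \times I$, made legitimate by the microsupport estimate on the enlarged neighborhood $(s_1 - r, s_2 + r)$ prescribed in the hypothesis.
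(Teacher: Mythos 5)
Your overall plan matches the paper's at a high level (adapt the proof of \cite[Prop.~4.3]{AI20} for (i) and (ii), then deduce (iii) from exact triangles and the triangle inequality for $d$), but there are two substantive departures, one of which is a real gap.

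The paper does not reduce to the case $\theta = 0$ via $\ell^!$. Instead it states and uses the modified cut-off \cref{lemma:appendix-cutoff} for $M\times P\times \Stheta\times\bR$ directly, rerunning the $\gamma$-convolution argument of \cite{AI20} in the $\Stheta$-category. Your proposed reduction via ``conservativity'' has a gap: \cref{lemma:bRtoS1} says that $\ell^!$ reflects isomorphisms (equivalently, $\ell^!G\simeq 0$ forces $G\simeq 0$), but what your reduction requires is faithfulness, i.e.\ $\ell^!f=0 \Rightarrow f=0$, so that the vanishing of $\tau_{0,c}(\ell^!F)$ forces the vanishing of $\tau_{0,c}(F)$. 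This fails for the covering $\ell\colon \bR\to\Stheta$ when $\theta>0$: for example, $\Hom_{\Db_{/[1]}}(\bfk_{\Stheta},\bfk_{\Stheta})\simeq H^0(\Stheta)\oplus H^1(\Stheta)$ contains a nonzero class coming from $H^1$, and its image under $\ell^!=\ell^{-1}$ in $\Hom_{\Db_{/[1]}}(\bfk_\bR,\bfk_\bR)\simeq H^0(\bR)$ is zero. So conservativity alone does not justify passing to $\bR$, and a direct $\Stheta$-version of the cut-off (as the paper does) is needed.

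For (iii) you propose to construct $\alpha$ and $\beta$ explicitly and then check that the composites agree with $\tau_{0,(a+b)(s_2-s_1)}$, and you correctly flag this identification as the main obstacle. The paper sidesteps the bookkeeping entirely by first extracting an abstract lemma (the analogue of \cite[Lem.~4.14]{AI20}): if $F\to G\to H\toone$ is an exact triangle in $\Db_{/[1],\{\tau\ge 0\}}(M\times P\times\Stheta)$ and $d_{\cD^P(M)_\theta}(F,0)\le c$, then $d_{\cD^P(M)_\theta}(G,H)\le c$. Applying this to the two exact triangles
\begin{equation}
\begin{aligned}
 Rq_*(\cH_{M\times P\times\Stheta\times (s_1,s_2]}) &\to Rq_*(\cH_{M\times P\times\Stheta\times [s_1,s_2]}) \to \cH|_{M\times P\times\Stheta\times\{s_1\}}\toone ,\\
 Rq_*(\cH_{M\times P\times\Stheta\times [s_1,s_2)}) &\to Rq_*(\cH_{M\times P\times\Stheta\times [s_1,s_2]}) \to \cH|_{M\times P\times\Stheta\times\{s_2\}}\toone ,
\end{aligned}
\end{equation}
together with (i), (ii), and the triangle inequality for $d_{\cD^P(M)_\theta}$, gives (iii) immediately. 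Isolating and proving that lemma once (its proof is the naturality of the $\tau$ morphisms with respect to a morphism of triangles) is cleaner than verifying the composite conditions from scratch for these particular $\alpha$ and $\beta$.
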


\begin{proof}[Outline of the proof]
	We can prove (i) and (ii) similarly to that of \cite[Prop.~4.3]{AI20}, 
	using \cref{lemma:appendix-cutoff} below		
	instead of the usual microlocal cut-off lemma.
	Similarly to \cite[Lem.~4.14]{AI20}, 
	we can show that if $F \to G \to H \toone$ is an exact triangle in 
	$\Db_{\{\tau \ge 0\}}(M\times P\times \Stheta)$ 
	and $d_{\cD^P(M)_\theta}(F,0) \le c$ with $c \in \bR_{\ge 0}$, 
	then $d_{\cD^P(M)_\theta}(G,H) \le c$.
	Hence, applying it to the exact triangles 
	\begin{equation}
	\begin{aligned}
		& Rq_*(\cH_{M \times P \times \Stheta \times (s_1,s_2]})
		\to
		Rq_*(\cH_{M  P \times \Stheta \times [s_1,s_2]})
		\to
		\cH|_{M \times P \times \Stheta \times \{s_1\}}
		\toone, \\
		& Rq_*(\cH_{M \times P \times \Stheta \times [s_1,s_2)})
		\to
		Rq_*(\cH_{M \times P \times \Stheta \times [s_1,s_2]})
		\to
		\cH|_{M \times P \times \Stheta \times \{s_2\}}
		\toone,
	\end{aligned}
	\end{equation}
	we obtain (iii) by the triangle inequality for $d_{\cD^P(M)_\theta}$.
\end{proof}

\begin{lemma}\label{lemma:appendix-cutoff}
	Define
	\begin{align*}
		\bar{s} \colon M\times P\times \Stheta\times \bR \times \bR\times \bR
		& \to M\times P \times \Stheta\times \bR, \\
		(x,y,t_1,s_1,t_2,s_2) & \mapsto (x,y,t_1+[t_2],s_1+s_2), \\
		\bar{q}_1
	\colon M \times P\times \Stheta\times \bR \times \bR\times \bR & \to M \times P\times \Stheta \times \bR, \\
	(x,y,t_1,s_1,t_2,s_2) & \mapsto (x,y,t_1,s_1), \\
	\bar{q}_2\colon M\times P\times \Stheta \times \bR \times \bR\times \bR & \to M\times P\times \bR\times \bR, \\
	(x,y,t_1,s_1,t_2,s_2) & \mapsto (x,y,t_2,s_2).
	\end{align*}
	Let $\gamma$ be a closed convex cone in $\bR^2$ with $0 \in \gamma$
	and $\cH \in \Db(M \times P \times \Stheta \times \bR)$.
	Then the canonical morphism
	$R\bar{s}_*(\bar{q}_1^{-1} \cH \otimes \bar{q}_2^{-1}\bfk_{M\times P\times \gamma}) \to \cH$ is an isomorphism if and only if
	$\MS(\cH) \subset T^*(M\times P) \times (\Stheta \times \bR) \times \gamma^\circ$, where $\gamma^\circ$ denotes the polar cone of $\gamma$.
\end{lemma}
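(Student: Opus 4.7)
The plan is to reduce the statement to the classical microlocal cut-off lemma of Kashiwara--Schapira~\cite[Prop.~5.2.3]{KS90} on $M \times P \times \bR \times \bR$ by pulling back along the étale covering $\ell \colon \bR \to \Stheta$. By \cref{lemma:bRtoS1}(ii) the functor $\ell^!$ is conservative, and since $\ell$ is étale one has $\ell^! \simeq \ell^{-1}$; hence a morphism in $\Db(M \times P \times \Stheta \times \bR)$ is an isomorphism if and only if its pullback under $\ell^{-1}$ is. The microsupport condition $\MS(\cH) \subset T^*(M \times P) \times (\Stheta \times \bR) \times \gamma^\circ$ is likewise equivalent to the analogous condition for $\ell^{-1}\cH$, via the étale identification of cotangent fibers.

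The central step will be to establish a natural isomorphism
\begin{equation}
\ell^{-1} R\bar{s}_*(\bar{q}_1^{-1}\cH \otimes \bar{q}_2^{-1}\bfk_{M\times P \times \gamma})
\simeq
R\bar{s}'_*(\bar{q}_1'^{-1}\ell^{-1}\cH \otimes \bar{q}_2'^{-1}\bfk_{M\times P \times \gamma}),
\end{equation}
compatibly with the cut-off morphisms, where $\bar{s}', \bar{q}_1', \bar{q}_2'$ denote the evident analogues on $M \times P \times \bR^4$ with $\Stheta$ replaced by $\bR$. To do this, I will introduce the étale map
\begin{equation}
    \tl{m} \colon M \times P \times \bR^4 \to M \times P \times \Stheta \times \bR^3, \quad (x,y,t_1,s_1,t_2,s_2) \mapsto (x,y,[t_1],s_1,t_2,s_2),
\end{equation}
and verify directly that the square formed by $\bar{s}, \bar{s}', \ell, \tl{m}$ is Cartesian and that $\bar{q}_1 \circ \tl{m} = \ell \circ \bar{q}_1'$ while $\bar{q}_2 \circ \tl{m} = \bar{q}_2'$. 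Smooth base change along the étale morphism $\ell$ combined with these compatibilities then gives the displayed isomorphism.

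Once this is in place, applying the classical microlocal cut-off to $\ell^{-1}\cH$ yields the desired equivalence for $\ell^{-1}\cH$, and conservativity of $\ell^!$ transfers it back to $\cH$. The subtle point I expect to verify carefully is the Cartesianness of the square above, since neither $\bar{s}$ nor $\ell$ is proper when $\theta>0$: concretely, the map sending $(t_1, s_1, t_2, s_2)$ to $(([t_1], s_1, t_2, s_2), (t_1 + t_2, s_1 + s_2))$ from $M \times P \times \bR^4$ to the set-theoretic fiber product of $\bar{s}$ and $\ell$ should be a diffeomorphism, with smooth inverse sending a fiber-product point $(([u], s_1, t_2, s_2), (t, s_1+s_2))$ (satisfying $[t] = [u] + [t_2]$) to $(t - t_2, s_1, t_2, s_2)$; after this verification, smooth base change applies and the proof concludes.
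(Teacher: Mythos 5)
The paper states \cref{lemma:appendix-cutoff} without proof, so there is no proof to compare against; your reduction to the classical cut-off lemma via pullback along the covering map $\ell$ is the natural argument and, after checking the details, I believe it is correct. The Cartesianness of the square
\begin{equation*}
\xymatrix{
M\times P\times\bR^4 \ar[r]^-{\bar s'} \ar[d]_-{\tl m} & M\times P\times\bR\times\bR \ar[d]^-{\ell\times\id_\bR} \\
M\times P\times\Stheta\times\bR^3 \ar[r]^-{\bar s} & M\times P\times\Stheta\times\bR
}
\end{equation*}
does hold, with the inverse you describe, and the compatibilities $\bar q_1\circ\tl m=(\ell\times\id_\bR)\circ\bar q_1'$, $\bar q_2\circ\tl m=\bar q_2'$ are immediate. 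Your handling of the properness worry is the right one: one should use the unconditional base-change identity $g^!Rf_*\simeq Rf'_*\,g'^!$ and then invoke étaleness of $\ell$ and $\tl m$ to replace $!$ by $-1$; smoothness of the change-of-base map, not properness of $\bar s$, is what is relevant. Two small points worth making explicit in a written-up version: first, \cref{lemma:bRtoS1}(ii) is stated for $\ell\colon M\times P\times\bR\to M\times P\times\Stheta$, whereas you need conservativity of $(\ell\times\id_\bR)^!=(\ell\times\id_\bR)^{-1}$ on $M\times P\times\Stheta\times\bR$; the argument (a covering map is a surjective local homeomorphism) is of course identical, but it is a different map. Second, the ``classical'' cut-off you invoke has to be the parametrized form on $V\times Z$ with $V=\bR^2$ and $Z=M\times P$, which is indeed how \cite[Prop.~5.2.3]{KS90} is stated, so the citation is appropriate. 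With these noted, the argument is complete.

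An alternative, equally valid route that the authors may have had in mind is to repeat the proof of the classical cut-off lemma directly on $\Stheta\times\bR$ (the Fourier--Sato/convolution argument goes through in the additive group $\Stheta\times\bR$ with only cosmetic changes); your reduction has the advantage of reusing the statement of \cite[Prop.~5.2.3]{KS90} as a black box rather than reproving it.
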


\begin{proof}[Outline of the proof of \cref{proposition:distance}]
	Let $K^H$ be the sheaf quantization associated with $\phi^H$.
	Define $\cH \coloneqq K^H \circ F \in \Db(M \times P \times \Stheta \times I)$.
	Then we have $\cH|_{M \times P \times \Stheta \times \{0\}} \simeq F$ 
	and $\cH|_{M \times P \times \Stheta \times \{1\}} \simeq \Phi^H_1(F)$.
	By \cref{proposition:SScomp} and \eqref{equation:deflambdahatphi}, we get
	\begin{equation}
	\MS(\cH) \subset T^*(M \times P) \times \left\{ (t,s;\tau,\sigma) \;\middle|\; -\max_p H_s(p) \cdot \tau \le \sigma \le -\min_p H_s(p) \cdot \tau \right\}.
	\end{equation}
	Using \cref{lemma:torhtpy}(iii) 
	and arguing similarly to \cite[Prop.~4.15]{AI20}, 
	for any $n \in \bZ_{\ge 0}$ we obtain 
	\begin{equation}\label{equation:appendix-riemannsum}
		d_{\cD^P(M)_\theta}(F,\Phi^H_1(F)) 
		\le 
		\sum_{k=0}^{n-1} \frac{1}{n} \cdot 
		\left(
		\max_{s \in \left[ \frac{k}{n}, \frac{k+1}{n} \right]} f(s)
		+
		\max_{s \in \left[ \frac{k}{n}, \frac{k+1}{n} \right]} g(s)
		\right),
	\end{equation}
	where $f(s)=\max_p H_s(p)$ and $g(s)=-\min_p H_s(p)$.
	For any $\varepsilon \in \bR_{>0}$, there exists $n \in \bZ_{\ge 0}$ 
	such that the right-hand side of \eqref{equation:appendix-riemannsum}
	is less than $\|H\|+\varepsilon$, which proves the result.		
\end{proof}

As an application, we give a sheaf-theoretic bound for the displacement energy of two compact subset of $T^*M$.
For compact subsets $A$ and $B$ of $T^*M$, we define their displacement energy $e(A,B)$ by 
\begin{equation}
    e(A,B)
     \coloneqq 
    \inf
    \left\{
        \| H \| \; \middle| \;
        \begin{aligned}
            & \text{$H \colon T^*M \times I \to \bR$ with compact support}, \\
            & A \cap \phi^H_1(B)=\emptyset
        \end{aligned}
    \right\}
\end{equation}

Using $\cHom^\star$ and the translation distance on $\cD(\pt)_\theta$, we introduce a sheaf-theoretic energy.

\begin{definition}[{cf.\ \cite[Def.~4.17]{AI20}}]
    Let $q$ denote the projection $M \times P\times \Stheta \to \Stheta$.
    One defines
	\begin{equation}
	\begin{aligned}
		e_{\cD^P(M)_\theta}(F,G)
		 \coloneqq &
		d_{\cD(\pt)_\theta} \left(R{q}_*\cHom^\star(F,G),0 \right)\\
		=&\inf \left\{c \in \bR_{\ge 0} \; \middle| \; \tau_{0,c}(R{q}_*\cHom^\star(F,G))=0 \right\}.
	\end{aligned}
	\end{equation}
\end{definition}

Note that by \cref{proposition:morD}, we have
\begin{equation}
	e_{\cD^P(M)_\theta}(F,G)
	\ge
	\inf
	\left\{
    	c \in \bR_{\ge 0}
    	\; \middle| \;
    	\text{$\Hom_{\cD^P(M)_\theta}(F,G) \to \Hom_{\cD^P(M)_\theta}(F,{T_c}_*G)$ is zero}
	\right\}.
\end{equation}
		 
Combining \cref{proposition:separation} with \cref{proposition:distance},
we obtain the following refined version of the main theorem of \cite{AI20}. 
Note that we do not use this result in the previous sections, since we need more precise arguments for the estimates of the number of the intersection points.

\begin{proposition}[{cf.\ \cite[Thm.~4.18]{AI20}}]\label{proposition:energyestimate}
    Let $q$ denote the projection $M \times P\times \Stheta \to \Stheta$.
	Moreover, let $A$ and $B$ be compact subsets of $T^*M$.
	Then for any $F \in \cD^P_{A}(M)_\theta$ and $G \in \cD^P_{B}(M)_\theta$ such that $q$ is proper on $\Supp(F) \cup \Supp(G)$,
	\begin{equation}
	e(A,B) \ge e_{\cD^P(M)_\theta}(F,G).
	\end{equation}
	In particular, for such $F$ and $G$,
	\begin{equation}
	e(A,B)
	\ge
	\inf \{c \in \bR_{\ge 0} \mid \text{$\Hom_{\cD^P(M)_\theta}(F,G) \to
		\Hom_{\cD^P(M)_\theta}(F,{T_c}_*G)$ is zero} \}.
	\end{equation}
\end{proposition}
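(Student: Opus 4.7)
The plan is to pick a compactly supported Hamiltonian $H$ with $A \cap \phi^H_1(B)=\emptyset$, then combine the two main tools developed in \cref{section:Tamarkincat}: the Hamiltonian stability of the translation distance (\cref{proposition:distance}) and Tamarkin's separation theorem (\cref{proposition:separation}). The conclusion is that $\tau_{0,c}$ acting on $Rq_*\cHom^\star(F,G)$ vanishes whenever $c$ slightly exceeds $\|H\|$, which will give the bound after taking infima.

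\medskip

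First I would observe the following. Let $H$ be any compactly supported Hamiltonian function displacing $A$ from $B$, and set $G':=\Phi^H_1(G)$. Since $\Phi^H_1$ restricts to $\cD^P_B(M)_\theta\to \cD^P_{\phi^H_1(B)}(M)_\theta$ (as recalled just after \eqref{eq:hamiltonian-induced-functor}), and since $q$ remains proper on $\Supp(F)\cup \Supp(G')$ (the functor $\Phi^H_1$ only changes supports by the compactly supported Hamiltonian isotopy, so properness with respect to $q\colon M\times P\times \Stheta\to\Stheta$ is unaffected), \cref{proposition:separation} applied to the disjoint pair $(A,\phi^H_1(B))$ yields
\begin{equation}
    Rq_*\cHom^\star(F,G')\simeq 0.
\end{equation}
On the other hand, \cref{proposition:distance} gives $d_{\cD^P(M)_\theta}(G,G')\le \|H\|$; so for any $\varepsilon>0$ there exist $a,b\in\bR_{\ge 0}$ with $a+b\le \|H\|+\varepsilon$ and morphisms $\alpha\colon G\to {T_a}_*G'$, $\beta\colon G'\to {T_b}_*G$ whose composition ${T_a}_*\beta\circ\alpha$ equals $\tau_{0,a+b}(G)$.

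\medskip

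Next I would push this interleaving through the functor $Rq_*\cHom^\star(F,-)$. Writing $K:=Rq_*\cHom^\star(F,G)$ and $K':=Rq_*\cHom^\star(F,G')$, and using that $\cHom^\star(F,-)$ and $Rq_*$ commute with the translation $T_c$ and transform the canonical morphism $\tau_{0,c}$ to its counterpart on the target, we obtain induced morphisms $\alpha_*\colon K\to {T_a}_*K'$ and $\beta_*\colon K'\to {T_b}_*K$ whose composite is $\tau_{0,a+b}(K)$. Since $K'\simeq 0$, this composite is zero, i.e.\ $\tau_{0,a+b}(K)=0$. By the definition of $e_{\cD^P(M)_\theta}$, this forces
\begin{equation}
    e_{\cD^P(M)_\theta}(F,G)\le a+b\le \|H\|+\varepsilon.
\end{equation}
Letting $\varepsilon\to 0$ and then taking the infimum over all admissible $H$ yields $e(A,B)\ge e_{\cD^P(M)_\theta}(F,G)$, and the second assertion is then immediate from \cref{proposition:morD}.

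\medskip

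The only non-routine point is bookkeeping: one must check that the functor $Rq_*\cHom^\star(F,-)$ intertwines $\tau_{0,c}$ on the source with $\tau_{0,c}$ on the target, so that the interleaving data on $G$ genuinely factors $\tau_{0,a+b}(K)$ through $K'\simeq 0$. This is a formal consequence of the naturality of $\tau_{c,d}$ (induced, as recalled at the beginning of \Cref{subsec:appendix-distance}, by the canonical morphism $\bfk_{M\times P\times [c,+\infty)}\to\bfk_{M\times P\times [d,+\infty)}$) and of the commutation of $Rq_*$ and $\cHom^\star$ with ${T_c}_*$, so no serious obstacle arises.
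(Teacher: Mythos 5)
Your proof is correct and is precisely the argument the paper is signalling when it says the proposition is obtained ``by combining \cref{proposition:separation} with \cref{proposition:distance}'': apply the separation theorem to $F$ and $\Phi^H_1(G)$ (so $Rq_*\cHom^\star(F,\Phi^H_1(G))\simeq 0$), then push the $(a,b)$-interleaving between $G$ and $\Phi^H_1(G)$ supplied by \cref{proposition:distance} through the functor $Rq_*\cHom^\star(F,-)$, which commutes with ${T_c}_*$ and preserves $\tau_{0,c}$, forcing $\tau_{0,a+b}(Rq_*\cHom^\star(F,G))=0$ and hence $e_{\cD^P(M)_\theta}(F,G)\le a+b$; taking infima gives the claim, and the second inequality follows from \cref{proposition:morD}. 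No essential gap.
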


\newcommand{\etalchar}[1]{$^{#1}$}
\def\cprime{$'$}

\end{document}